\newcommand{\bC}{{\mathbb C}}
\newcommand{\bP}{{\mathbb P}}
\newcommand{\bR}{{\mathbb R}}
\newcommand{\bZ}{{\mathbb Z}}
\newcommand{\bN}{{\mathbb N}}
\newcommand{\bF}{{\mathbb F}}
\newcommand{\bQ}{{\mathbb Q}}
\newcommand{\bRZ}{\bR/2 \pi \bZ}
\newcommand{\fF}{\pi}
\newcommand{\cF}{\mathcal F}
\newcommand{\cA}{\mathcal A}
\newcommand{\cB}{\mathcal B}
\newcommand{\cC}{\mathcal C}
\newcommand{\cD}{\mathcal D}
\newcommand{\cE}{\mathcal E}
\newcommand{\cX}{\mathcal X}
\newcommand{\cN}{\mathcal N}
\newcommand{\cM}{\mathcal M}
\newcommand{\cO}{\mathcal O}
\newcommand{\cU}{\mathcal U}
\newcommand{\cV}{\mathcal V}
\newcommand{\cL}{\mathcal L}
\newcommand{\cR}{\mathcal R}
\newcommand{\cT}{\mathcal T}
\newcommand{\cH}{\mathcal H}
\newcommand{\cJ}{\mathcal J}
\newcommand{\scrF}{\mathscr F}
\newcommand{\scrN}{\mathscr N}
\newcommand{\scrP}{\mathscr P}
\newcommand{\fd}{\mathfrak{d}}
\newcommand{\Mbar}{\overline{\cM}}
\newcommand{\Rbar}{\overline{\cR}}
 \newcommand{\Aut}{\operatorname{Aut}}
\newcommand{\Ob}{\operatorname{Ob}}
\newcommand{\id}{\operatorname{id}}
\newcommand{\Ch}{\operatorname{Ch}}
\newcommand{\Floer}{\operatorname{CF}}
\newcommand{\Hom}{\operatorname{Hom}}
\newcommand{\sqset}{\mathbf{scSet}}
\newcommand{\Top}{\mathbf{Top}}
\newcommand{\Color}{\operatorname{Col}}
\DeclareMathOperator*{\hocolim}{\operatorname{hocolim}}
\DeclareMathOperator{\Sec}{Sec}
\DeclareMathOperator{\inj}{inj}
\def\co{\colon\thinspace}
\newcommand{\pairofpants}{    \draw (180:.5 and .1) arc   (180:360:.5 and .1);
    \draw[gray,dashed] (0:.5 and .1) arc   (0:180:.5 and .1);
    \begin{scope}[shift={(1,2)}]
      \draw (0,0) ellipse  (.5 and .1);
    \end{scope}
     \begin{scope}[shift={(-1,2)}]
       \draw (0,0) ellipse  (.5 and .1);
     \end{scope}
     \draw (.5,0) .. controls (.5,1) and (1.5,1) .. (1.5,2);
     \draw (-.5,0) .. controls (-.5,1) and (-1.5,1) .. (-1.5,2);
     \draw (.5,2) .. controls (.5,1) and (-.5,1) .. (-.5,2);
 }
 \newcommand{\threeinputs}{ \draw (180:.5 and .1) arc   (180:360:.5 and .1);
         \draw[gray,dashed] (0:.5 and .1) arc   (0:180:.5 and .1);
          \begin{scope}[shift={(0,3)}]
      \draw (0,0) ellipse  (.5 and .1);
    \end{scope}
    \begin{scope}[shift={(2,3)}]
      \draw (0,0) ellipse  (.5 and .1);
    \end{scope}
     \begin{scope}[shift={(-2,3)}]
       \draw (0,0) ellipse  (.5 and .1);
     \end{scope}
     \draw (.5,0) .. controls (.5,1) and (2.5,2) .. (2.5,3);
     \draw (-.5,0) .. controls (-.5,1) and (-2.5,2) .. (-2.5,3);
     \draw (-.5,3) .. controls (-.5,2) and (-1.5,2) .. (-1.5,3);
\draw (.5,3) .. controls (.5,2) and (1.5,2) .. (1.5,3);  }
\numberwithin{equation}{section}
\newtheorem{thm}{Theorem}[section]
\newtheorem{cor}[thm]{Corollary}
\newtheorem{lem}[thm]{Lemma}
\newtheorem{lemma}[thm]{Lemma}
\newtheorem{prop}[thm]{Proposition}
\newtheorem{defin}[thm]{Definition}
\newtheorem{def-lem}[thm]{Definition-Lemma}
\theoremstyle{remark}
\newtheorem{rem}[thm]{Remark}
\newtheorem{example}[thm]{Example}
\author{Mohammed Abouzaid, Yoel Groman, Umut Varolgunes}
\title{Framed $E_2$ structures in Floer theory}
\begin{document}
\maketitle
\begin{abstract}
 We resolve the long-standing problem of constructing the action of the
operad of framed (stable) genus-$0$ curves on Hamiltonian Floer
theory; this operad is equivalent to the framed $E_2$ operad. We
formulate the construction in the following general context: we
associate to each compact subset of a closed symplectic manifold a new
chain-level model for symplectic cohomology with support, which we show
carries an action of a model for the chains on the moduli space of
framed genus $0$ curves. This construction turns out to be strictly
functorial with respect to inclusions of subsets, and the action of
the symplectomorphism group. In the general context, we appeal to
virtual fundamental chain methods to construct the operations over
fields of characteristic $0$, and we give a separate account, over
arbitrary rings, in the special settings where Floer's classical
transversality approach can be applied. We perform all constructions
over the Novikov ring, so that the algebraic structures we produce are
compatible with the quantitative information that is contained in
Floer theory. Over fields of characteristic $0$, our construction can
be combined with results in the theory of operads to produce explicit
operations encoding the structure of a homotopy $BV$ algebra. In an
appendix, we explain how to extend the results of the paper from the
class of closed symplectic manifolds to geometrically bounded ones.
\end{abstract}
\setcounter{tocdepth}{1}
\tableofcontents
\section{Introduction}

\subsection{Hamiltonian Floer theory}
\label{sec:oper-hamilt-floer}

In his study of the Arnol'd conjecture \cite{Floer1989b}, Floer associated a cohomology group to each non-degenerate Hamiltonian $H \co M \times S^1 \to \bR$ on a closed symplectic manifold, based on the gradient flow of the action functional on the free loop space of $M$. Such gradient flow lines correspond to cylinders satisfying a deformation of the holomorphic curve equation. The fact that one can study analogous equations on more general Riemann surfaces was first observed by Donaldson, leading to the construction of an associative product on Floer cohomology associated to pairs of pants, which was later shown by Piunikhin--Salamon--Schwartz \cite{PiunikhinSalamonSchwarz1996} to yield a ring that is isomorphic to the quantum cohomology ring for those manifolds satisfying the property that the class of the symplectic form is a positive multiple of the first Chern class. Separately, and in the technically different context of exact symplectic manifolds with contact boundary, Viterbo \cite{Viterbo1999} introduced a circle action in Floer theory, which takes the form of a degree $-1$ operator
\begin{equation}
  \Delta \co SH^*(M)  \to  SH^{*-1}(M), 
\end{equation}
on a variant of Hamiltonian Floer cohomology, called \emph{symplectic cohomology}, which goes back to Hofer and Floer's work \cite{FloerHofer1994} on the symplectic topology of open subsets of $\bC^n$.

In this paper, we consider a version of Floer cohomology \cite{Seidel2012b,Groman2015,Varolgunes2018,Venkatesh2018}, which we call \emph{symplectic cohomology with prescribed support}, in a change from the previous terminology, which vastly generalises both of these frameworks, but we shall temporarily suppress this point.

The pair of pants product makes sense in the context of symplectic cohomology as well and, together with the operator $\Delta$ introduced by Viterbo, is known  to satisfy the relation
\begin{multline} \label{eq:BV-relation}
  \Delta(xyz) = \Delta(xy) z  + (-1)^{|x|} x \Delta(y z ) + (-1)^{(|x|+1)|y|} z \Delta(xy) \\
  - \Delta(x) yz -  (-1)^{|x|} x \Delta(y) z -  (-1)^{|x| + |y|} x y \Delta(z),
\end{multline}
which implies that symplectic cohomology forms a \emph{Batalin-Vilkovisky algebra}; this is a folklore result, whose proof appears, for example, in \cite[\S 2.5]{Abouzaid2013}.

The geometry giving rise to Equation \eqref{eq:BV-relation} can be expressed in terms of the homology of the moduli space $f \Mbar^{\bR}_{0,4}$ of \emph{framed stable genus $0$ curves,} where the notion of framing corresponds to a choice of tangent ray at each marked point; the left hand side corresponds to the class in the first homology of  $f \Mbar^{\bR}_{0,4}$ associated to rotating the tangent ray at a specific marked point which is distinguished as output, and the right hand side expresses this class as a sum of the classes associated to rotating each input and those associated to breaking the domain into two components (each with three marked points) glued along the node, and rotating the tangent ray on one side of the node.

This geometric picture suggests that the correct chain-level structure that gives rise to the Batalin-Vilkovisky structure on symplectic cohomology is that of an algebra over the operad formed by the moduli spaces $f \Mbar^{\bR}_{0,k+1}$ (the case of $k=1$ is exceptional, and we set it to be equal to the circle $S^1 $). The operad structure arises from the concatenation of Riemann surfaces with marked points to nodal Riemann surfaces, which induces a map of chain complexes
\begin{equation} \label{eq:concatenation_moduli_all_marked_points}
 C_*( f \Mbar^{\bR}_{0,k_1+1}) \otimes \cdots \otimes  C_*( f \Mbar^{\bR}_{0,k_n+1})  \otimes   C_*( f \Mbar^{\bR}_{0,n+1})  \to  C_*( f \Mbar^{\bR}_{0,\sum k_i +1}),
 \end{equation}
 whereas the algebra structure on symplectic cochains is a collection of operations
\begin{equation} \label{eq:operad_action_framed_moduli}
 \underbrace{SC^*(M) \otimes \cdots \otimes SC^*(M)}_{k}  \otimes  C_*( f \Mbar^{\bR}_{0,k+1}) \to SC^*(M),  
\end{equation}
for some model $C_*( f \Mbar^{\bR}_{0,k+1}) $ of the singular chains on $f \Mbar^{\bR}_{0,k+1}$, and some chain complex $SC^*(M)$ whose homology is symplectic cohomology, satisfying the following properties (we suggest \cite{LeinsterBook} as a reference for operads and algebras over operads):
\begin{enumerate}
\item Invariance under the action of the symmetric group on $k$-letters, acting by permuting the first $k$ marked point of elements of $f \Mbar^{\bR}_{0,k+1} $, and the copies of $SC^*(M)  $ in the domain of \eqref{eq:operad_action_framed_moduli}.
\item Compatibility with the operations associated to concatenation of the moduli spaces of framed curves (discussed in Appendix \ref{sec:stab-ksv-moduli}), in the sense that the map
  \begin{equation}
    \begin{tikzcd}
SC^*(M)^{\otimes \sum_{i=1}^{n} k_i}  \otimes  C_*( f \Mbar^{\bR}_{0,k_1+1}) \otimes \cdots \otimes  C_*( f \Mbar^{\bR}_{0,k_n+1}) \otimes    C_*( f \Mbar^{\bR}_{0,n+1})  \ar[d] \\ 
SC^*(M)^{\otimes \sum_{i=1}^{n} k_i} \otimes C_*( f \Mbar^{\bR}_{0,\sum_{i=1}^{n} k_i +1})   
    \end{tikzcd}
  \end{equation}
  obtained either by first separately applying the maps in Equation \eqref{eq:operad_action_framed_moduli} for  $k=k_i$, then applying it for $k=n$, agrees with the map obtained by first applying Equation \eqref{eq:concatenation_moduli_all_marked_points}, followed by Equation \eqref{eq:operad_action_framed_moduli} for the sum $k = \sum k_i $. 
\end{enumerate}


The main result of the paper establishes the existence of such a structure, which we moreover show to satisfy the following fundamental properties:
\begin{itemize}
\item Independence of auxiliary choices.
\item Strict functoriality under restriction maps.
  \item Compatibility with quantitative structures.
\end{itemize}
We shall give a precise formulation of our main result in Theorem \ref{thm:main_thm} below, which we precede by a necessary overview of the notion of support for symplectic cohomology. Afterwards, we shall discuss our strategy for the proof of these results, as well as the relationship between the operad formed by the moduli spaces $f \Mbar^{\bR}_{0,k+1}$, and the framed $E_2$ operad mentioned in the title, which is more extensively studied in the literature.

\begin{rem} \label{sec:PROP-structure}
It will be apparent from our methods that one can construct a model for symplectic cochains, satisfying the above list of properties, and carrying an action of the operad formed by the chains of the union of the moduli spaces of framed Riemann surfaces $f \Mbar^{\bR}_{g,k+1} $ of arbitrary genus (more precisely, one has to shift the chains by a function of the genus to account for the degree of the corresponding operations). We leave the details of such an extension to the reader largely because it is orthogonal to the interesting operations in higher genus, which require one to consider gluing at multiple points (the operadic structure only allows operations with one output, which corresponds to gluing at one point). We expect that such an extension would require a more significant use of methods of homotopical algebra than the present paper.
\end{rem}

\begin{rem}
There is a natural analogy between the chain structures we are constructing, and those which appear in Lagrangian Floer theory, leading to the question of why one cannot construct the operations in Equation \eqref{eq:operad_action_framed_moduli} by a procedure which follows the existing steps in that context. To explain the essential difficulty, recall that, notwithstanding the technicalities in resolving questions of anomaly and obstruction which are required to define the Floer cohomology groups of a Lagrangian $L$, it is by now well-established  \cite{FukayaOhOhtaOno2009,Seidel2008a} that one obtains an $A_\infty$ structure on the Lagrangian Floer chain complex, which can be written as a consistent collection of operations
\begin{equation} \label{eq:operad_action_real_moduli}
 \Floer^*(L) \otimes \cdots \otimes \Floer^*(L) \otimes  C_*( \Rbar_{0,k+1}) \to \Floer^*(L),  
\end{equation}
where $\Rbar_{0,k+1} $ is the moduli space of stable discs with $k+1$ marked points.

The fundamental difference between Equations \eqref{eq:operad_action_framed_moduli} and \eqref{eq:operad_action_real_moduli} is that the moduli spaces $\Rbar_{0,k+1} $ have a particularly simple topology: they can be realised as polytopes (the \emph{Stasheff associahedra}), and the operadic structure maps which control the consistency of the operations are given by inclusions of products of these polytopes as boundary faces. This leads to the algebraic structure being controlled by relatively simple combinatorics: there is a collection of operation indexed by the natural numbers, one for each moduli space $\Rbar_{0,k+1}$, satisfying the familiar $A_\infty$ relation, which is the way that Equation \eqref{eq:operad_action_real_moduli} is implemented in the literature, with the model for $C_*( \Rbar_{0,k+1})$ given by the cellular chains of the polytope structure.

Because the geometry of the moduli spaces $f \Mbar^{\bR}_{0,k+1} $ is much more complicated such an approach is not adequate for Equation \eqref{eq:operad_action_real_moduli}. This is already apparent for the case of operations with only one input, which we recall are given by a copy of $S^1$, with composition map $S^1 \times S^1 \to S^1$ given by the usual multiplication. Evidently, this map is not given by a cellular inclusion. In fact, in the standard model for symplectic cochains, the chain level structure corresponding to the circle action is given \cite{Seidel2008b} by a sequence of operations $\Delta^i$, indexed by the natural numbers, satisfying $d \Delta^n = \sum \Delta^{i} \Delta^{n-i}$. 
\end{rem}

\subsection{Support conditions for symplectic cohomology}
\label{sec:relat-sympl-cohom}

A standard construction associates to each compact subset $K$ of a space $M$ the indicator (characteristic) function $H_K$ which is $0$ on $K$ and is infinite away from it. This construction is functorial with respect to inclusions in the sense that, whenever $K$ is a subset of $K'$, we have a pointwise inequality
\begin{equation}
  H_{K'} \leq H_{K}.
\end{equation}

When $M$ is a symplectic manifold (which we now assume to be closed for simplicity), symplectic cohomology with support $K$ can be thought of as a lift of the above construction to cohomology groups: since Floer cohomology is not defined for discontinuous functions (nor those which take infinite value), one considers instead a sequence of (non-degenerate) Hamiltonians $H^i$ converging to $H_K$, to which one associates the Floer cochain groups $\Floer^*(H^i,J^i)$ for an auxilliary sequence of almost complex complex structures $J^i$. It is crucial at this stage to be careful with the choice of coefficients: a modern interpretation of Floer's invariance result \cite{Floer1989b} is that the isomorphism type of the Floer cohomology groups does not depend on the choice of Hamiltonian when the coefficient ring is the Novikov field\footnote{Because we allow the ground ring $\Bbbk$ to be an arbitrary ring, rather than a field, using the term Novikov field in this context is an abuse of terminology. Similarly, the Novikov ring is not strictly speaking a valuation ring, even though we shall use the term. }, whose elements are series $\sum_{i=0}^{\infty} a_i T^{\lambda_i}$ with $a_i$ lying in a chosen ground ring $\Bbbk$, and $\lambda_i$ real numbers with the property that $\lim_i \lambda_i = +\infty$. In order to retain dynamical information about the functions $H^i$, one works instead with the smaller Novikov ring whose elements consists of series for which the exponents $\lambda_i$ are non-negative. The category of modules inherits a natural completion functor associated to the $T$-adic filtration, which is defined as the inverse limit of the tensor product with quotients of the Novikov ring by powers of $T$, which is essential to the following:
\begin{defin} \label{def:relative_SH}
  The \emph{symplectic cohomology} $SH^*_M(K)$ with support a compact subset $K$ of a closed symplectic manifold is the homology of the completion of the (homotopy) direct limit of the chain Floer complexes of a monotone sequence of Hamiltonians converging to the indicator functor $H_K$:
  \begin{equation}
  SC^*_M(K) \equiv \widehat{\hocolim_{i} \Floer^*(H^i,J^i)}.  
\end{equation}
\end{defin}
In the above definition, the fact that we assume the sequence $\{H^i\}$ of Hamiltonians to be monotone is essential in realizing the maps in the direct limit as maps of Floer complexes over the Novikov ring, which is what allows us to define the symplectic cochains supported on $K$ by completion. Geometrically, this is a consequence of the fact that monotone continuation maps always have non-negative (topological) energy, which is a property that fails for general continuation maps.

As we shall discuss in Section \ref{sec:strictly-funct-coch} below, the specific model for the homotopy colimit used in \cite{Varolgunes2018} is the mapping telescope, which is a complete chain complex receiving a map from each Floer group $\Floer^*(H^i,J^i)$ in the chosen sequence, together with a homotopy in each triangle
    \begin{equation} \label{eq:homotopy_commutative_small-model}
      \begin{tikzcd}
        \Floer^*(H^i,J^i)  \ar[r] \ar[dr] &    \Floer^*(H^{i+1},J^{i+1}) \ar[d] \\
        &  SC^*_M(K),
      \end{tikzcd}
    \end{equation}
    where the horizontal map is the continuation map.  In fact the underlying homotopy type of the mapping telescope can be characterised by a universal property associated to this data.    This more abstract point of view will be useful to understand our construction.

Symplectic cohomology supported on $K$ is independent of the choice of approximating Hamiltonians, recovers ordinary homology when $M=K$, and is functorial under inclusions in the sense that there is a restriction map
\begin{equation}
  SH^*_M(K') \to SH^*_M(K)  
\end{equation}
whenever $K$ is a subset of $K'$, which is strictly compatible for triple inclusions. In addition, it satisfies a remarkable Mayer-Vietoris property, for a class of coverings which include those that are arise from a covering of the base of an coisotropic fibration. This property is crucial for recent applications both to symplectic topology \cite{Dicksteinetal2021} and to mirror symmetry \cite{GromanVarolgunes2021}. The last reference includes an extension of the definition of symplectic cohomology with support given by compact subsets to the case where the ambient symplectic manifold $M$ is \emph{geometrically bounded,} incorporating all the classes of open symplectic manifolds for which Floer theory is expected to be defined.

\subsection{Statement of results}
\label{sec:stat-main-result}

We now state the main result of this paper, which is proved in Section \ref{sec:floer-algebra}: 
\begin{thm}
  \label{thm:main_thm}
  Associated to any compact subset $K$ of a closed symplectic manifold is a complete  torsion free chain complex $SC^*_{M,f\Mbar^{\bR}_0}(K)$ over the Novikov ring, called the \emph{operadic symplectic cochains with support $K$}, which is equipped with the following structures:
  \begin{enumerate}
  \item An action of the operad whose $k$th chain complex, for $2 \leq k$, is given by the (symmetric normalised) cubical chains on $f \Mbar^{\bR}_{0,k+1}$, and whose first chain complex is the cubical chains of the circle.
  \item A restriction map for each inclusion $K \subset K'$ of compact subsets
    \begin{equation}
          SC^*_{M,f\Mbar^{\bR}_0}(K') \to  SC^*_{M,f\Mbar^{\bR}_0}(K),
        \end{equation}
        which is compatible with the operadic action, so that the composition of the restriction maps associated to a pair of inclusions $K \subset K' \subset K''$ strictly agrees with the restriction map for $K \subset K''$.
         \item An action of the symplectomophism group of $M$, i.e. an isomorphism
    \begin{equation} \label{eq:action_symplectomorphism}
          SC^*_{M,f\Mbar^{\bR}_0}(K) \cong   SC^*_{M,f\Mbar^{\bR}_0}(\psi(K))  
        \end{equation}
        for each symplectomorphism $\psi$ of $M$, which is compatible with composition, preserves the operadic action, and commutes with restriction maps.
      \end{enumerate}
    \end{thm}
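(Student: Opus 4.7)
The plan is to construct $SC^*_{M,f\Mbar^{\bR}_0}(K)$ as the completion of a suitably enhanced (operadically enriched) mapping telescope, and to arrange the entire construction so that restrictions and symplectomorphism actions are realized as strict maps of underlying diagrams rather than as mere homotopy classes. The first ingredient I would introduce is a category $\cA(K)$ of admissible acceleration data: objects are monotone sequences of non-degenerate Hamiltonians $\{H^i\}$ converging to $H_K$ together with compatible almost complex structures and all auxiliary perturbation data needed to cut out transversely the parametrized Floer moduli problems below; morphisms are the obvious refinements. Monotonicity is the key property that forces every continuation map to have non-negative topological energy, so that the mapping telescope is naturally filtered and its completion along $T$-adic powers is well-behaved, as in the definition of $SC^*_M(K)$ recalled in Definition~\ref{def:relative_SH}.

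Second, for each acceleration datum I would construct, for every $k \geq 1$ and every admissible assignment of Hamiltonians to the inputs and output, a chain map
\begin{equation}
  \Floer^*(H^{i_1},J^{i_1}) \otimes \cdots \otimes \Floer^*(H^{i_k},J^{i_k}) \otimes C_*(f\Mbar^{\bR}_{0,k+1}) \to \Floer^*(H^{i},J^{i})
\end{equation}
defined by choosing, for each cube mapping into $f\Mbar^{\bR}_{0,k+1}$, a coherent family of parametrized Floer equations on the corresponding framed Riemann surfaces with cylindrical ends, and counting the zero-dimensional components of finite-energy solutions, weighted by the Novikov variable according to topological energy. Coherence with the concatenation maps~\eqref{eq:concatenation_moduli_all_marked_points} and with restriction of cubes to boundary faces is imposed inductively, in the usual style of parametrized Floer theory; transversality is secured either by virtual perturbations over $\bQ$ or, in the settings considered later, by classical Floer techniques over an arbitrary ground ring $\bfk$.

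Third, to obtain the actual complex $SC^*_{M,f\Mbar^{\bR}_0}(K)$, I would assemble these data into an algebra over the desired operad in the category of diagrams indexed by $\cA(K)$, take a bar-resolution-style homotopy colimit (so that the universal property in diagram~\eqref{eq:homotopy_commutative_small-model} lifts to an operadic statement), and then complete with respect to the $T$-adic filtration. Independence of the choice of acceleration datum follows from the cofinality of any two such choices in $\cA(K)$ together with the universal property of the enhanced telescope. For strict functoriality in $K \subset K'$, I would observe that since $H_{K'} \leq H_K$ every acceleration datum for $K$ is automatically admissible for $K'$, so restriction is literally the map induced on homotopy colimits by the inclusion of indexing subcategories; for a triple $K \subset K' \subset K''$ the composition law is then tautological. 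The $\Aut(M,\omega)$-action is the pullback: $\psi$ sends any datum for $K$ to one for $\psi(K)$, yielding~\eqref{eq:action_symplectomorphism} and its commutation with restriction.

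The main obstacle I expect is the compatibility clause: restrictions must intertwine the operadic operations strictly, not merely up to homotopy, and the operadic structure must be a strict action of a chosen chain operad rather than, say, an $A_\infty$-variant. This forces the parametrized moduli problems defining the operations to be chosen globally over $\cA(K)$ (and in fact over all of $\bigcup_K \cA(K)$, compatibly with the forgetful maps $\cA(K') \to \cA(K)$), rather than one acceleration datum at a time. Engineering such a global coherent choice, together with the corresponding coherent transversality input, is the real technical content of Section~\ref{sec:floer-algebra}; once it is achieved, the statements of the theorem follow by inspecting the resulting homotopy colimit and its completion.
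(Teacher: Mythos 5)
Your overall strategy — completing a bar/homotopy-colimit construction over an indexing category that encodes the Floer-theoretic choices, and deriving strict functoriality from inclusions of indexing categories — is the right one and is close in spirit to the paper's. But there are two concrete gaps.

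First, the direction of the inclusion of indexing categories is wrong. You define $\cA(K)$ via monotone sequences converging to $H_K$, and claim that for $K\subset K'$ (so $H_{K'} \leq H_K$) "every acceleration datum for $K$ is automatically admissible for $K'$." That is false: a sequence converging to $H_K$ does not converge to the strictly smaller function $H_{K'}$, so it is not an acceleration datum for $K'$. And even if it were, this would yield $\cA(K) \subset \cA(K')$ and hence a map $SC^*_{M,f\Mbar^{\bR}_0}(K) \to SC^*_{M,f\Mbar^{\bR}_0}(K')$, which is the wrong direction — restriction maps go from the cochains of the larger set to those of the smaller one. The fix is to drop the convergence condition entirely and parameterize by \emph{individual} pairs $(H,J)$ with $H$ strictly negative on $K$, as the paper does with $\cF_K$: being negative on $K'$ implies being negative on $K$, so $\cF_{K'}\subset\cF_K$, giving the restriction map in the correct direction, with triple compatibility holding tautologically.

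Second, and this is the crux, you correctly identify the need for "global coherent choices of parametrized moduli problems" but describe it as unresolved "technical content." The paper's resolution is exactly to avoid making any single global choice: $\cF_K$ is a \emph{multicategory} (not just a category of diagrams) whose multimorphism spaces are entire cubically enriched spaces of all admissible split-monotone Floer data on pre-stable framed curves, rather than one chosen datum. The operadic Kan extension is then built from the chains on these spaces, so the operations never depend on a coherent universal selection of perturbation data. This is also precisely what makes the symplectomorphism action strict: $\psi$ acts on the whole space of data by pullback, whereas any single generically chosen perturbation datum will typically fail to be $\psi$-invariant. Finally, you invoke monotonicity but not the split form $\mathfrak{H}=H\otimes\alpha$ with $\alpha$ closed (Equation \eqref{eq:monotonicity_H_alpha}). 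That restriction is essential: the non-split monotonicity condition \eqref{eqMonotonicityPoisson} does not appear to yield multimorphism spaces whose forgetful map to $f\Mbar^{\bR}_0$ is a homotopy equivalence (Proposition \ref{lmFrgtHtpyEq}), and without that control the comparison with the ordinary model cannot be carried out.
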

    \begin{rem}
      In Appendix \ref{sec:app-diss}, we extend the above result (as well as the other main results of the paper), to the setting of geometrically bounded manifolds, which includes in particular the class of Liouville manifolds on which most of the literature on symplectic cohomology is formulated. We opt to segregate the discussion of geometrically bounded manifolds because it adds an additional level of technical complexity to a paper that is already quite technical. 
    \end{rem}

\begin{rem}
Theorem \ref{thm:main_thm} may appear suspiciously strong to experts, who might expect to see the compatibility property in its statement to involve higher homotopies, rather than being strict as asserted. These homotopies arise from the need to interpolate between various choices of data for defining the Floer complexes, as well as operations on them. An essential point of our approach is that all possible choices of Floer-theoretic data are incorporated in the definition of the operadic symplectic cochains with support a compact set $K$; in this way, the restriction maps associated to inclusions are straightforward to define, because they correspond to the fact that the space of data that arise in the definition of the operadic symplectic cochains for a compact set in $M$ is a strict subset of the data that arise in the definition for a subset thereof. Similarly, the action of the symplectomorphism group arises directly from its action on the space of data defining the operadic symplectic cochains. We shall presently give a more technically precise description of these ideas in Sections \ref{sec:strictly-funct-coch} and \ref{sec:strat-proof-theor} below.
\end{rem}

In the statement of the theorem, we are implicitly working with a Novikov ring whose coefficients are a field of characteristic $0$, e.g. the rational numbers $\Bbbk=\mathbb{Q}$, and referring to $\bZ/2$-graded chain complexes. The proof of the above result thus relies on techniques of virtual counts, specifically on the results of \cite{Abouzaid2022}. The present paper separately contains a complete proof that is independent of any theory of virtual counts, for Floer complexes with coefficient ring the Novikov ring over the integers $\bZ$, under the assumption that the ambient symplectic manifold $M$ is exact, Calabi-Yau, or monotone. Restricting to the subcomplex formed by contractible orbits one can work more generally on symplectic manifolds that are spherically Calabi-Yau or monotone, i.e. for which the first Chern class and symplectic class, evaluated on $\pi_2(M)$, are non-negatively proportional to each. Under the standard extra assumptions our results give chain complexes with finer gradings as well.

\begin{rem} \label{rem:semi-positivity}
  In the literature on Hamiltonian Floer theory \cite[Definition 6.4.1]{McDuffSalamon2012}, the ad hoc notion of semi-positivity is introduced as a condition under which the Floer complex is defined integrally, for a generic choice of almost complex structure. However, in this context, the standard methods do not associate higher homotopies to (generic) families of paths of almost complex structures. One is thus led to choose a generic almost complex structure, and to change the definition of the chain complex $SC^*_{M,f\Mbar^{\bR}_0}(K)$ so that all pseudoholomorphic curves are defined with respect to this fixed almost complex structure, and the Hamiltonian data are chosen generically to achieve transversality. In this way, the first two parts of Theorem \ref{thm:main_thm} can be lifted to the integral Novikov ring in the semi-positive case. We do not know, however, how to prove the last part, regarding invariance under the action of the symplectomophism group, using such methods.

  We expect that Fukaya and Ono's proposal \cite{FukayaOno2001} for extracting integral counts from moduli spaces of pseudo-holomorphic curves whose locus of non-trivial isotropy virtually has both  strictly positive codimension and a stable complex normal bundle, will lead to a construction of the desired integral lift in general. A version of this proposal was realised by Bai and Xu \cite{BaiXu2022} and Rezchikov \cite{Rezchikov2022} in a context which is sufficient to conclude the well-definedness of the Floer complexes, but which is not currently sufficient to define operations on it.
\end{rem}

As stated, Theorem \ref{thm:main_thm} makes no reference to Floer theory. In Section \ref{sec:univ-prop-oper} below, we will explain the way in which all the structures are determined, in a universal way, from Floer-theoretic operations, but for concreteness, it is useful to separate the following result, regarding the chain complex underlying the operadic symplectic cochains:
    \begin{thm}\label{thm:linear_comparison_from_CF}
      For each Hamiltonian $H$ which is negative on $K$, there is a map
  \begin{equation} \label{eq:map_Ham_Floer_Cochains-SC}
      \Floer^*(H,J) \to   SC^*_{M,f\Mbar^{\bR}_0}(K),  
  \end{equation}
and for each monotone continuation map, there is a homotopy in the diagram
    \begin{equation} \label{eq:homotopy_commutative_continuation}
      \begin{tikzcd}
        \Floer^*(H_0,J_0)  \ar[r] \ar[dr] &    \Floer^*(H_1,J_1) \ar[d] \\
        & SC^*_{M,f\Mbar^{\bR}_0}(K).
      \end{tikzcd}
    \end{equation}
    \end{thm}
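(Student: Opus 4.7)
The plan is to deduce both statements directly from the construction of $SC^*_{M,f\Mbar^{\bR}_0}(K)$ as a (completed) homotopy colimit indexed by a category of Floer-theoretic data. Recall from Section \ref{sec:relat-sympl-cohom} that the standard symplectic cochain complex $SC^*_M(K)$ is already realised as the mapping telescope of Floer complexes along a monotone sequence of Hamiltonians approximating the indicator $H_K$; the operadic refinement $SC^*_{M,f\Mbar^{\bR}_0}(K)$ is, following the universal-property philosophy highlighted in the remarks after Theorem \ref{thm:main_thm}, to be built as a homotopy colimit over a much larger diagram whose objects comprise \emph{all} admissible pairs $(H,J)$ with $H$ negative on $K$ and whose morphisms record all monotone continuation data together with the higher Floer-theoretic data that support the operadic action. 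Once this construction is in hand, both claims of the theorem are essentially formal consequences of the universal property of the homotopy colimit.

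For the first claim, any $(H,J)$ with $H$ negative on $K$ is by assumption an object of the indexing diagram, so the structure map of the mapping-telescope-type homotopy colimit furnishes a canonical chain map $\Floer^*(H,J) \to \hocolim \Floer^*(H^i,J^i)$. Since each $\Floer^*(H,J)$ is a finitely generated complex over the integral Novikov ring and hence already $T$-adically complete, post-composition with the completion map yields the asserted map \eqref{eq:map_Ham_Floer_Cochains-SC} to $SC^*_{M,f\Mbar^{\bR}_0}(K)$. For the second claim, a monotone continuation datum from $(H_0,J_0)$ to $(H_1,J_1)$ constitutes a morphism in the indexing diagram; in any mapping-telescope model of a homotopy colimit, every such morphism is witnessed by an explicit cylinder/prism cell that supplies precisely the homotopy filling diagram \eqref{eq:homotopy_commutative_continuation}, and this cell survives passage to completion.

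The genuine content thus lies in verifying that the diagram underlying $SC^*_{M,f\Mbar^{\bR}_0}(K)$ indeed contains every admissible $(H,J)$ as an object and every monotone continuation as a $1$-morphism. The main obstacle I anticipate is bookkeeping: the category of data used to build $SC^*_{M,f\Mbar^{\bR}_0}(K)$ must be rich enough to absorb arbitrary monotone continuations without breaking compatibility with the operadic action, while still admitting an explicit mapping-telescope-style presentation so that the structure maps and homotopies are literal chain-level constructions over the Novikov ring rather than unspecified higher coherences. I expect this to be dispatched by the enriched homotopy-colimit construction flagged in Section \ref{sec:strictly-funct-coch} and carried out in Section \ref{sec:floer-algebra}, in which the homotopies appearing in \eqref{eq:homotopy_commutative_continuation} are literally the same $1$-cells that implement the continuation morphisms in the telescope. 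Once that framework is set up, proving Theorem \ref{thm:linear_comparison_from_CF} reduces to identifying the relevant $0$- and $1$-cells of the telescope and observing that they are preserved by completion.
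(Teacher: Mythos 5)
Your high-level idea is the right one, and it does match the paper's strategy in spirit: the map of Equation~\eqref{eq:map_Ham_Floer_Cochains-SC} arises because $(H,J)$ is an object of the indexing (multi)category $\cF_K$, and the homotopy in \eqref{eq:homotopy_commutative_continuation} arises because a monotone continuation $\kappa$ is a morphism in $\cF_K$. However, your argument rests on a mischaracterization of the model that, if left in, would leave the proof unanchored. The complex $SC^*_{M,f\Mbar^{\bR}_0}(K)$ is \emph{not} a mapping telescope, and it is not the ordinary homotopy colimit of the Floer functor over $\cF_K$. It is the completion of the \emph{operadic} left Kan extension $\mathbb{L}\pi_*(\Floer^*|_{\cF_K})$ along the projection $\pi\colon C_*\cF_K \to C_* f\Mbar^{\bR}_0$, realized by the bar construction from the iterated free--forgetful adjunction (Section~\ref{sec-ext}, Lemma~\ref{lem:operadic_symmplectic_cochain_extension}). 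The paper explicitly trades the small telescope for this bar-construction model precisely in order to allow the larger indexing category, so appealing to ``cylinder cells in the telescope'' does not identify the correct structure maps.

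Concretely, the first map does not come from a telescope structure map. The paper includes the one-object category $\ast_{(H,J)}$ into $\cF_K$, gets an induced map of left Kan extensions, and then unwinds the formula \eqref{eq:left_Kan_extension_Formula} to see that the $0$-simplices of $\mathbb{L}\pi_*(\Floer^*|_{\ast_{(H,J)}})$ form the \emph{free} $C_* f\Mbar^{\bR}_0$-algebra on $\Floer^*(H,J)$, whose generators give the desired map. Your draft skips this factorization through the free algebra entirely, but it is what actually produces a chain map (rather than merely a map of colored modules). For the homotopy, the paper passes to the two-object category $\cF_\kappa$ with a single continuation morphism $\kappa$, projects to the point, and explicitly exhibits the needed homotopy as a class of $1$-simplices in the resulting bar complex; the $0$-simplices are $\Floer^*(H_0,J_0)\oplus \Floer^*(H_1,J_1)$, and the summand of the $1$-simplices indexed by $\kappa$ cones off the difference of the two composites. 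That is the correct replacement for your ``cylinder/prism cell'' heuristic. Finally, the remark about $\Floer^*(H,J)$ being finitely generated and hence $T$-adically complete is harmless but immaterial: the map is first constructed into the uncompleted Kan extension and then post-composed with the completion map; no completeness of the source is used.
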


Note that the conclusions of Theorems \ref{thm:main_thm} and \ref{thm:linear_comparison_from_CF} are satisfied by many examples of rather trivial nature which do not record any deep information about symplectic topology. For example, by the constant assignment of a fixed algebra over the chains of the moduli spaces $f \Mbar^{\bR}_{0,k+1}$ to each compact subset of $K$, with a trivial morphism from each Hamiltonian Floer group. A more interesting example arises from the fact that one can equip the ordinary cochains of each topological space with the structure of an algebra over the $E_\infty$ operad, functorially with respect to all maps. Via the homotopically unique map from the operad $f \Mbar^{\bR}_{0,k+1}$ to the $E_\infty$ operad (induced by the fact that the latter is terminal), this gives rise to another example satisfying the conclusions of  Theorems \ref{thm:main_thm} and \ref{thm:linear_comparison_from_CF}. 

The following result, proved in Section \ref{sec:comp-two-models-1}, provides a comparison with the known constructions (and computations) of symplectic cohomology with support:

\begin{thm} \label{thm:comparison}
The map from Hamiltonian Floer cochains to the operadic symplectic cochains with support $K$ induces a quasi-isomorphism
    \begin{equation} \label{eq:quasi-isomorphism_big-small}
          SC^*_{M}(K) \cong   SC^*_{M,f\Mbar^{\bR}_0}(K)
        \end{equation}
        from the homotopy colimit model of the symplectic cochains with support $K$. This map is a homotopy equivalence whenever  the base ring $\Bbbk$ is a field, and it is compatible with restrictions maps, in the sense that the following diagram commutes up to prescribed homotopy:
        \begin{equation}
          \begin{tikzcd}
 SC^*_M(K')\ar[r] \ar[d]  & SC^*_M(K) \ar[d] \\
             SC^*_{M,f\Mbar^{\bR}_0}(K') \ar[r] &  SC^*_{M,f\Mbar^{\bR}_0}(K).
           \end{tikzcd}
        \end{equation}
      \end{thm}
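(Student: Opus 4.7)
The plan is to use Theorem \ref{thm:linear_comparison_from_CF} as the bridge and recover the comparison map from the universal property of the mapping telescope. Fix a monotone sequence $\{(H^i,J^i)\}$ of regular pairs approximating $H_K$ that is used to define $SC^*_M(K)$. Theorem \ref{thm:linear_comparison_from_CF} produces Floer-to-operadic maps $\Floer^*(H^i,J^i) \to SC^*_{M,f\Mbar^{\bR}_0}(K)$ together with a chain homotopy in each continuation triangle \eqref{eq:homotopy_commutative_continuation}. This is exactly the data classified by the mapping telescope, so assembling the homotopies in the standard way (alternating cones on the continuation maps) yields a chain map from the uncompleted telescope into $SC^*_{M,f\Mbar^{\bR}_0}(K)$; since the target is complete, this factors uniquely through the completion and defines the map \eqref{eq:quasi-isomorphism_big-small}.

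To prove the map is a quasi-isomorphism, I would express $SC^*_{M,f\Mbar^{\bR}_0}(K)$ as a homotopy colimit over the contractible poset of choices of Floer data used in its construction (this is the precise content of the remark about ``all possible choices of data being incorporated''). Under this presentation, the sequence $\{(H^i,J^i)\}$ sits as a cofinal subdiagram, so the induced map of homotopy colimits is a quasi-isomorphism before completion. To pass to the completion, filter both sides by powers of $T$ in the Novikov ring: modulo $T^n$, each side is a homotopy colimit of finite-rank complexes built from the same finite-energy Floer moduli spaces, and the map is an isomorphism on associated graded pieces. A Mittag-Leffler / completeness argument (the five-lemma applied termwise in the $T$-adic tower, or equivalently the fact that a map of complete filtered complexes inducing a quasi-isomorphism on each quotient is itself a quasi-isomorphism) then upgrades this to a quasi-isomorphism of the completed complexes. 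When $\Bbbk$ is a field, the Novikov ring is a discrete valuation ring and both sides are complete and torsion-free, so a quasi-isomorphism is a chain homotopy equivalence by the standard argument that such complexes are cofibrant (explicitly: build a deformation retract level by level in the $T$-adic filtration).

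For compatibility with restriction along $K \subset K'$, observe that any monotone sequence $\{(H^i,J^i)\}$ approximating $H_{K'}$ is also admissible (and monotone) for computing $SC^*_M(K)$, since $H_K \geq H_{K'}$ pointwise. Using such a common sequence, the restriction $SC^*_M(K') \to SC^*_M(K)$ is realised on the telescope by continuation maps that are incorporated into both operadic targets. The compatibility of the Floer-to-operadic maps in Theorem \ref{thm:linear_comparison_from_CF} with these continuations then forces the outer square to commute up to a natural homotopy built from the universal property of the telescope applied to the square of triangles; this is the homotopy asserted in the statement.

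The main obstacle is the second step: controlling the interaction of homotopy colimits with the $T$-adic completion. Homotopy colimits do not commute with completion in general, so the argument hinges on showing that both models arise as completions of homotopy colimits over indexing diagrams whose cofinality can be established at every finite filtration level. Once this ``completed cofinality'' is in place, the remaining arguments are formal; the substantial work is in presenting $SC^*_{M,f\Mbar^{\bR}_0}(K)$ explicitly as such a completed homotopy colimit, matching the filtration by action/energy on the operadic side with the $T$-adic filtration on the Novikov side.
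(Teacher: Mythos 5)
Your proposal has the right shape at the beginning and end (constructing the comparison map from Theorem \ref{thm:linear_comparison_from_CF}, handling the completion via torsion-freeness, the field case via projectivity), but it glosses over the central difficulty, which is where the real content of the paper's proof lives.

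The gap is in the second step, where you write that one can ``express $SC^*_{M,f\Mbar^{\bR}_0}(K)$ as a homotopy colimit over the contractible poset of choices of Floer data.'' This is not the structure of the operadic model. By Lemma \ref{lem:operadic_symmplectic_cochain_extension}, $SC^*_{M,f\Mbar^{\bR}_0}(K)$ is the completion of an \emph{operadic} left Kan extension $\mathbb{L}\pi_*\Floer^*$ along the multifunctor $\pi:C_*\cF_K\to C_*f\Mbar^{\bR}_0$, not a homotopy colimit over a category. The explicit formula (Equation \eqref{eqRootSC} and its neighbors) involves tensoring with $C_*f\Mbar^{\bR}_0(k)$ and, crucially, taking \emph{coinvariants} under the symmetric group actions on the multimorphism spaces. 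By contrast, the telescope / homotopy colimit model $SC^*_M(K)$ is a categorical construction over $\cF_{K,\star}$ with no symmetric group quotient and no reference to the operad. Your cofinality argument would compare two homotopy colimits over categories, but only one of the two objects in the theorem is of that form; the other is a coinvariant construction over a multicategory. Framing both sides as ``homotopy colimits of finite-rank complexes'' and running a $T$-adic five-lemma does not address this, because the coinvariants do not obviously preserve quasi-isomorphisms without some freeness input.

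What the paper's proof actually spends most of its effort on (Sections \ref{sec-prop} through \ref{sec:proof-that-map}) is precisely this operadic-to-categorical passage: one introduces the PROP category $P\cM$, rewrites the operadic bar construction via the untangling trick (Proposition \ref{prop-untangle}) so that the $S_n$-coinvariants appear as tensor products over group rings $R[\Aut(\vec{x})]$, and then shows that replacing those relative tensor products by two-sided bar constructions over $R[\Aut(\vec{x})]$ is a quasi-isomorphism. That last step is where the freeness hypothesis enters — one needs the multimorphism complexes and $\cO(n)=C_*f\Mbar^{\bR}_0(n)$ to be bounded below and free over the relevant group rings so that dg-flatness (Lemma \ref{lem-dg-flat-free}) applies; in the geometric setting this uses that $S_k$ acts freely on $f\Mbar^{\bR}_{0,k}$. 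Only after this reduction to a categorical bar construction (Proposition \ref{prop-cat-op-comp}) does the Hollender–Vogt cofinality argument (Lemma \ref{lem:hollender-vogt-criterion}, Corollary \ref{cor:comparison_operadic_Kan_htpy_colim}) — which is the step your proposal treats as the whole argument — become applicable, and there the input from Floer theory is Proposition \ref{lmFrgtHtpyEq} (contractibility of multimorphism spaces over the operad when the output Hamiltonian dominates the inputs), not contractibility of a ``poset of choices.''

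You correctly identify that homotopy colimits do not commute with completion and that the passage must be controlled; the paper's mechanism is that completion preserves quasi-isomorphisms between complexes with torsion-free underlying modules (cited from \cite{Varolgunes2018}), which is cleaner than a level-by-level $T$-adic Mittag–Leffler argument but amounts to the same thing. Your treatment of the field case and the restriction-compatibility square are essentially correct and match the paper. But without confronting the symmetric-group coinvariants inherent in the operadic bar construction, the proposal does not reach the theorem.
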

      \begin{rem}
        We expect that the stronger conclusion that Equation \eqref{eq:quasi-isomorphism_big-small} is a homotopy equivalence holds in general, but the proof that we provide relies on abstract properties of the category of chain complexes over rings of finite global dimension, which do not seem to apply to the integral Novikov ring. 
      \end{rem}

\subsection{Framed $E_2$ structures}
\label{sec:fram-e_2-struct}

The space of operations $f \Mbar^{\bR}_{0,k+1}$ that arises in Theorem \ref{thm:main_thm} is known to be homotopy equivalent to the space of (disjoint) embeddings of $k$ discs in the unit disc $D^2$: the key point is that, for each genus $0$ curve with one marked point (the output) which is equipped with a choice of tangent ray, there is a contractible choice of identifications of the domain with $\bC \bP^1$, mapping the marked point to $\infty$, the tangent ray to the positive real axis, and all other marked points to the complement of the unit disc. In order to extract the desired homotopy equivalence from this construction, one uses the further contractible choice of a sufficiently small positive real number for each other marked point (input), which extends the remaining choices of tangent rays to disjoint embedded discs. 

The fact that this construction is compatible with the stable compactification, as well as with the operadic structure maps is encoded by the following result, due to Kimura, Stasheff, and Voronov \cite[Section 3.4 and 3.7]{KimuraStasheffVoronov1995}, who use the notation $\scrN$ for $f\Mbar^{\bR}_0$ and $\scrF$ for the framed $E_2$ operad: 
\begin{prop}\label{prop:framed_E_2_zig_zag}
  There is an operad $\scrP$, which is equipped with operad maps
  \begin{equation}
    E_2^{fr} \leftarrow    \scrP \to f\Mbar^{\bR}_0
  \end{equation}
  that are level-wise homotopy equivalences. \qed
\end{prop}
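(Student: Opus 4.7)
The plan is to construct $\scrP$ as an operad of pairs consisting of a framed stable genus-$0$ curve together with compatible ``scaling data'' that realizes it as a nested configuration of framed disks in $D^2$, so that the two natural forgetful maps witness the desired zigzag. The geometric picture, already sketched in the passage preceding the proposition, is that after normalizing each irreducible component using its output-directed tangent ray, the remaining marked points and nodes admit a contractible space of radii that assembles them into a valid $E_2^{fr}$ configuration.

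Concretely, for $k \geq 2$, I define $\scrP(k)$ to be the space of triples $(C, \phi, r)$ where $C \in f\Mbar^{\bR}_{0,k+1}$, $\phi$ is the canonical component-wise identification of each irreducible component with $\bC\bP^1$ using the tangent ray pointing toward the output (sending the distinguished node or marked point to $\infty$ with framing along the positive real axis), and $r$ assigns a positive radius $r_e$ to every non-output marked point or node on every component, subject to: (i) pairwise disjointness of the corresponding closed disks on each component, (ii) containment in the open unit disk, and (iii) a recursive compatibility at each node, namely that the ``outer radius'' of the sub-configuration attached at the node is at most the radius $r_e$ assigned there. Set $\scrP(1) = S^1$. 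Operadic composition is defined by grafting the underlying curves and rescaling radii so that (i)--(iii) persist; this construction makes the forgetful maps $\pi_1 \co \scrP \to f\Mbar^{\bR}_0$ (dropping $r$) and $\pi_2 \co \scrP \to E_2^{fr}$ (flattening the bubble tree by multiplying nested radii and composing framings along each path to the root) into operad morphisms.

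The map $\pi_1$ is the easy direction. For any fixed $C$, the fiber is an open subset of a positive orthant $\bR_{>0}^N$ cut out by the linear inequalities from (i) and (ii) and the piecewise-linear inequalities from (iii), and it is convex because each constraint defines a convex region; it is also nonempty, since shrinking all radii uniformly always produces admissible data. Hence the fiber is contractible, and a stratum-by-stratum argument along the dual-tree stratification of $f\Mbar^{\bR}_{0,k+1}$ upgrades this to a weak equivalence.

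The main obstacle is to show $\pi_2$ is a weak equivalence, since its fiber involves a combinatorial bubble-tree choice rather than a convex condition. My plan is to exhibit a deformation retraction of the fiber over any configuration of $k$ framed disks onto the ``trivial'' point, namely the smooth curve with all $k$ inputs placed on a single $\bC\bP^1$ at the prescribed centers with the prescribed framings. The retraction continuously expands each bubble's data back into its enclosing disk (scaling the outer radius up toward the containing disk's radius, while smoothing nodes into smooth marked points), and one must verify that disjointness and the recursive admissibility (i)--(iii) persist along the homotopy, which can be arranged by a simultaneous linear interpolation of parameters together with a partition-of-unity patching across the dual-tree strata. This is the most delicate step, but the construction is standard in the Fulton--MacPherson style treatment of configuration moduli. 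Once both $\pi_1$ and $\pi_2$ are shown to be levelwise weak equivalences and operad maps, the operadic zigzag $E_2^{fr} \leftarrow \scrP \to f\Mbar^{\bR}_0$ is established.
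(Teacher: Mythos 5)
The paper does not actually prove this proposition: the $\qed$ at the end of the statement signals that it is being quoted, and the sentence immediately preceding it attributes the result to Kimura, Stasheff, and Voronov \cite[\S 3.4, 3.7]{KimuraStasheffVoronov1995} (with the translation of notation spelled out). So the honest comparison is between your argument and KSV's, and at the level of the overall strategy you have rediscovered the right one: build an intermediate operad of stable curves equipped with compatible disk-nesting data, with the two forgetful/flattening maps providing the zigzag. That said, two steps in your sketch are not yet proofs.

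First, a smaller issue: the ``canonical'' identification $\phi$ of a component with $\bC\bP^1$ does not exist. After sending the output special point to $\infty$ and fixing the tangent ray there to point along $\bR_{>0}$, the residual automorphism group is $\{z\mapsto az+b : a\in\bR_{>0},\ b\in\bC\}$; the radii $r$ are only defined up to this three-parameter ambiguity, and the conditions (i)–(ii) are not invariant under it. You either have to include $\phi$ as genuine extra data (then the fiber of $\pi_1$ over $C$ is the product of a contractible torsor of coordinate choices with your convex radius cone, still contractible) or impose an explicit normalization. This is fixable, but as written the definition of $\scrP(k)$ is not well-posed.

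The real gap is in the claim that $\pi_2$ is a weak equivalence. Pointwise contractibility of the fiber of $\pi_2$ over a configuration of framed disks does not imply $\pi_2$ is a weak equivalence unless you know $\pi_2$ is a fibration or at least a quasi-fibration, and there is no reason to expect that here: the combinatorial type of the fiber jumps as the base configuration moves (whether a bubble tree of a given shape can flatten to a given configuration depends on metric inequalities among the disks), so the fiber is not locally constant. The cleaner route — and I think what you are implicitly reaching for with the phrase ``partition-of-unity patching'' — is to construct the tautological section $s\co E_2^{fr}\to\scrP$ (the smooth-curve realization) and then a \emph{global} deformation retraction of $\scrP$ onto the image of $s$ by uniformly inflating every bubble. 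But that retraction necessarily crosses the boundary strata of $f\Mbar^{\bR}_0(k)$, and making it continuous there is precisely the content of the collar (real-oriented blowup) structure on the KSV compactification, i.e.\ the content of \cite[\S 3.4]{KimuraStasheffVoronov1995}. Your sketch does not supply that; it gestures at it with ``partition-of-unity patching across the dual-tree strata'' but gives no construction and no verification that the glued homotopies agree on overlaps or preserve the admissibility constraints (i)–(iii) during inflation. Until that is done, the proof of the equivalence $\pi_2$ is incomplete. The $\pi_1$ half is in better shape, though there too the convexity of the fiber is only half the argument: you still need to promote ``contractible fibers'' to ``weak equivalence,'' which for a map of manifolds with corners stratified by the dual-tree combinatorics again requires an argument (e.g.\ that $\pi_1$ is a fiber bundle on each stratum and one can glue).
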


This result immediately implies that the homology of the framed $E_2$ operad acts on symplectic cohomology with support any compact set, and that this action is compatible both with restriction maps and with the action of symplectomorphism groups. The homology of this operad was shown by Getzler \cite{Getzler1994} to be generated by two operations, an associative and commutative product and a degree $1$-operator squaring to $0$, subject only to the relation encoded by Equation \eqref{eq:BV-relation}. We conclude:
\begin{cor}
  The symplectic cohomology group $ SH^*_M(K)$ is equipped a natural $BV$-algebra structure, which is preserved by restriction maps, and by the isomorphism $ SH^*_M(K) \cong SH^*_{M}(\psi K)$ associated to a symplectomorphism $\psi$ of $M$. \qed
\end{cor}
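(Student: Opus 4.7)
The plan is to combine Theorem \ref{thm:main_thm}(1), Theorem \ref{thm:comparison}, Proposition \ref{prop:framed_E_2_zig_zag}, and Getzler's identification of $H_*(E_2^{fr})$ with the $BV$ operad, transferring the chain-level operadic action to cohomology and propagating it through the zig-zag.

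First I would apply the singular (or cubical) chains functor to the operad zig-zag $E_2^{fr} \leftarrow \scrP \to f\Mbar^{\bR}_0$ of Proposition \ref{prop:framed_E_2_zig_zag}. Since the maps are level-wise homotopy equivalences and chains preserve such equivalences, passing to homology yields isomorphisms of operads in graded $\Bbbk$-modules
\[
H_*(E_2^{fr}) \xleftarrow{\cong} H_*(\scrP) \xrightarrow{\cong} H_*(f\Mbar^{\bR}_0).
\]
By Theorem \ref{thm:main_thm}(1), the complex $SC^*_{M,f\Mbar^{\bR}_0}(K)$ carries a chain-level action of the cubical chains operad of $f\Mbar^{\bR}_0$ (with $k=1$ handled by the circle); passing to homology and invoking Theorem \ref{thm:comparison} to identify $H^*(SC^*_{M,f\Mbar^{\bR}_0}(K))$ with $SH^*_M(K)$, we obtain an action of $H_*(f\Mbar^{\bR}_0)$ on $SH^*_M(K)$. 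Transporting this across the (iso)morphisms above then endows $SH^*_M(K)$ with a natural action of $H_*(E_2^{fr})$, which by Getzler's theorem is exactly the operad controlling $BV$-algebras; this is the desired $BV$-algebra structure.

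For naturality, I would appeal directly to the compatibility clauses already built into Theorem \ref{thm:main_thm}. The restriction chain map $SC^*_{M,f\Mbar^{\bR}_0}(K') \to SC^*_{M,f\Mbar^{\bR}_0}(K)$ of part (2) commutes strictly with the operadic action, and under the comparison isomorphism of Theorem \ref{thm:comparison} it corresponds to the usual restriction on symplectic cohomology (the square there commutes up to a prescribed homotopy, hence strictly on cohomology). Consequently the induced map on $SH^*$ intertwines the actions of $H_*(f\Mbar^{\bR}_0)$, and hence of $H_*(E_2^{fr})$, so it is a map of $BV$-algebras. Exactly the same argument, applied to the symplectomorphism-equivariance of Theorem \ref{thm:main_thm}(3), shows that the isomorphism $SH^*_M(K) \cong SH^*_M(\psi K)$ is one of $BV$-algebras.

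The step requiring the most care is the first: verifying that the zig-zag of operad equivalences in Proposition \ref{prop:framed_E_2_zig_zag} really does give a well-defined isomorphism of operads on homology that can be used to transport the action. In practice this is routine once one works in the category of operads in graded $\Bbbk$-modules and uses that $H_*$ of a levelwise weak equivalence of operads (in spaces) is an isomorphism of operads, but since the statement combines several auxiliary structures (the exceptional $k=1$ case handled via $S^1$, the compatibility with restriction and with $\Aut(M)$), one must check that the chain-level isomorphism of Theorem \ref{thm:comparison} is equivariant for all these structures. All of this is packaged in the statements already invoked, so no further input is needed.
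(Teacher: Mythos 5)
Your proposal is correct and follows the same route the paper takes: applying Proposition \ref{prop:framed_E_2_zig_zag} to transport the $C_*(f\Mbar^{\bR}_0)$-action from Theorem \ref{thm:main_thm}(1) to an $H_*(E_2^{fr})$-action on $SH^*_M(K)$ (via the identification of Theorem \ref{thm:comparison}), invoking Getzler's description of $H_*(E_2^{fr})$ as the $BV$ operad, and deducing naturality from parts (2)–(3) of Theorem \ref{thm:main_thm}. You are merely a bit more explicit than the paper about the homology-level transport across the zig-zag and about why the comparison square commutes on cohomology.
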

Restricting to the product, this construction recovers the product  constructed in \cite{TonkonogVarolgunes2020}.

In order to formulate an explicit chain level structure on the telescope model of the symplectic cochains, we restrict to characteristic $0$, in which case there is a replacement of the framed $E_2$ operad, called the $BV_\infty$ operad, consisting of explicit operations \cite[Theorem 20]{Galvez-Carrilloetal2012} for which a homotopy transfer result is known \cite[Theorem 33]{Galvez-Carrilloetal2012} (the cited result is formulated for a characteristic $0$ field but an inspection of the proof shows that it suffices to work over a commutative $\bQ$-algebra). This allows us to conclude:

\begin{cor}
 Assuming that the ground ring $\Bbbk$ contains the rational numbers, the telescope model $SC^*_M(K)$ of symplectic cohomology with support $K$ can be equipped with the structure of a $BV_\infty$ algebra, lifting the $BV$ structure on homology. \qed
\end{cor}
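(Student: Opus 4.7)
The plan is to combine Theorem \ref{thm:main_thm}, Theorem \ref{thm:comparison}, and the formal properties of $BV_\infty$ established in \cite{Galvez-Carrilloetal2012} to transfer the operadic structure from $SC^*_{M,f\Mbar^{\bR}_0}(K)$ onto the telescope model $SC^*_M(K)$. The starting data is the action on $SC^*_{M,f\Mbar^{\bR}_0}(K)$ of the operad formed by the cubical chains of $f\Mbar^{\bR}_{0,k+1}$, which together with Proposition \ref{prop:framed_E_2_zig_zag} produces, via a standard zig-zag, an action of a chain-level model of the framed $E_2$ operad. Over a $\bQ$-algebra, the $BV_\infty$ operad of \cite[Theorem 20]{Galvez-Carrilloetal2012} is weakly equivalent to the chains on $E_2^{fr}$, so pulling back along the resulting zig-zag of operad maps yields a $BV_\infty$ action on $SC^*_{M,f\Mbar^{\bR}_0}(K)$ that induces the $BV$ structure on cohomology.

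Next, Theorem \ref{thm:comparison} provides a quasi-isomorphism
\begin{equation}
  SC^*_M(K) \xrightarrow{\sim} SC^*_{M,f\Mbar^{\bR}_0}(K)
\end{equation}
of complete torsion-free chain complexes over the Novikov ring. To transfer the $BV_\infty$ structure across this map, I would invoke the homotopy transfer theorem \cite[Theorem 33]{Galvez-Carrilloetal2012}. As noted in the excerpt, although stated for a field of characteristic $0$, the argument goes through over any commutative $\bQ$-algebra, which is exactly the hypothesis on $\Bbbk$; in particular it applies to the Novikov ring built from such a $\Bbbk$. To feed into the transfer machinery one needs to complete the quasi-isomorphism to homotopy retraction data: a chain map in the opposite direction and a chain homotopy on $SC^*_{M,f\Mbar^{\bR}_0}(K)$ between the composite and the identity. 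Such data exist because both complexes are complete, torsion-free, and degreewise flat over the Novikov ring, so standard obstruction arguments produce the required homotopy inverse and homotopy, completing the input of the transfer theorem and yielding a $BV_\infty$ action on $SC^*_M(K)$.

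The remaining step is to identify the induced $BV$ structure on $SH^*_M(K)$ with the one obtained from the corollary above. This is automatic from the naturality of the transfer: the transferred $BV_\infty$ structure has underlying product and $\Delta$ operator that agree up to homotopy with those on $SC^*_{M,f\Mbar^{\bR}_0}(K)$ through the quasi-isomorphism of Theorem \ref{thm:comparison}, and hence descends to the same $BV$ algebra on homology.

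The main obstacle I anticipate is the construction of the homotopy retraction over the Novikov ring, rather than just a field, because the Novikov ring has infinite global dimension and the standard splitting arguments do not apply directly. This is the same point flagged in the remark following Theorem \ref{thm:comparison}. I would address it by using the $T$-adic filtration on both sides: the associated graded pieces are chain complexes over $\Bbbk$, where homotopy retracts can be built by choosing splittings term by term, and then assembling these splittings into a map of complete complexes by passing to the inverse limit. Completeness together with the filtered homotopy equivalence from Theorem \ref{thm:comparison} ensures convergence, giving the needed retraction data in the category where the $BV_\infty$ transfer theorem applies.
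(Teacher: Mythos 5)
Your plan matches the route the paper intends: use Proposition \ref{prop:framed_E_2_zig_zag} and the formality of the framed $E_2$ operad (via \cite{Galvez-Carrilloetal2012}) to obtain a $BV_\infty$ structure on $SC^*_{M,f\Mbar^{\bR}_0}(K)$, then transfer along the comparison map of Theorem \ref{thm:comparison} to the telescope model. The fact that you explicitly isolate the need to upgrade the quasi-isomorphism of Theorem \ref{thm:comparison} to retraction data is the right place to be worried: this is the same point the paper flags in the remark following that theorem.

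However, your proposed repair for the non-field case does not go through as written. You claim that "the associated graded pieces are chain complexes over $\Bbbk$, where homotopy retracts can be built by choosing splittings term by term." This is false for a general commutative $\bQ$-algebra $\Bbbk$: quasi-isomorphisms of $\Bbbk$-chain complexes need not be homotopy equivalences, and the relevant complexes (degreewise free, possibly unbounded) need not be cofibrant. For a concrete obstruction, take $\Bbbk = \bQ[\epsilon]/(\epsilon^2)$ and the acyclic complex $\cdots \to \Bbbk \xrightarrow{\epsilon} \Bbbk \xrightarrow{\epsilon} \Bbbk \to \cdots$, which is acyclic but not contractible. So the term-by-term splittings you invoke do not exist, and neither the filtration argument nor the earlier appeal to "standard obstruction arguments" over the Novikov ring produces the homotopy inverse and homotopy needed for the transfer theorem.

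The way to close the gap is to use the paper's own hypotheses: the standing assumption (stated at the beginning of Section \ref{sec:stat-main-result}) is that $\Bbbk$ is a field of characteristic $0$, and the corollary's phrase "contains the rational numbers" should be read in that light. Under this reading, Proposition \ref{prop-app} applies (using finiteness of the global dimension of the Novikov ring over a field, together with degreewise freeness of both complexes), so the quasi-isomorphism of Theorem \ref{thm:comparison} is in fact a homotopy equivalence of chain complexes, which can then be improved to a special deformation retract by the usual side-condition manipulations. With that retraction data in hand, the homotopy transfer theorem of \cite[Theorem 33]{Galvez-Carrilloetal2012} applies directly and the rest of your argument is sound. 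A minor further point worth being explicit about: restricting a $C_*(f\Mbar^{\bR}_0)$-algebra structure through the zig-zag of Proposition \ref{prop:framed_E_2_zig_zag} also requires either a rectification step or an appeal to cofibrancy of $BV_\infty$ as a resolution, since the arrows in a zig-zag do not all point the right way for a naive pullback; this is where formality and the explicit cofibrant model $BV_\infty$ earn their keep.
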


The list of operations on $BV_\infty$ algebras include those of an $L_\infty$ algebra, and we expect that these operations will recover those introduced by Siegel \cite{Siegel2019},  without referring specifically to symplectic cohomology with support, and which he showed define new symplectic capacities.

\begin{rem}
The explicit description of $BV_\infty$ as a cofibrant replacement of the   framed $E_2$ operad in characteristic $0$ strongly relies on the formality of the latter, i.e. on the existence of a homotopy equivalence between the rational homology of the   framed $E_2$ operad (which is the $BV$ operad by Getzler's result) and its rational chains. This was established in \cite{GiansiracusaSalvatore2010,Severa2010}, building on Tamarkin's result establishing the formality of the ordinary $E_2$ operad \cite{Tamarkin2003}. Such a result is known to fail integrally as proved by Salvatore for the non-symmetric part of the operations in characteristic $2$ \cite{Salvatory2019}, and by Cirici and Horel for the symmetric part in general \cite[Remark 6.9]{CiriciHorel2018}.
\end{rem}

\subsection{A strictly functorial cochain model for $SH^*_M(K)$ }
\label{sec:strictly-funct-coch}

As a final preparatory step to explaining the proof of our main results, we consider a toy problem, namely the construction of a model $SC^*_{M, \star}(K) $ for the symplectic cochains with support $K$, which is strictly functorial under inclusions, i.e. so that the restriction maps for a triple of inclusions $K \subset K' \subset K''$ give rise to a commutative diagram
\begin{equation} \label{eq:commutative_triangule_middle_model}
  \begin{tikzcd}
    SC^*_{M, \star}(K'')  \ar[r] \ar[dr] &  SC^*_{M, \star}(K')  \ar[d] \\
    & SC^*_{M, \star}(K). 
  \end{tikzcd}
\end{equation}

We start by noting that the choices made in the definition of the existing model for $ SH^*_{M}(K)$ are (i) a sequence $H^i$ of Hamiltonians converging to the indicator function of $K$, (ii) almost complex structures $J^i$ used to define the Floer complexes $\Floer^*(H^i,J^i)$, and (iii) continuation equations defining chain maps
\begin{equation}
 \Floer^*(H^i,J^i) \to    \Floer^*(H^{i+1},J^{i+1}).
\end{equation}
From these data, the complex $ SC^*_{M}(K)$ is then defined in \cite{Varolgunes2018} as the completion of the (total complex) given by the mapping telescope
\begin{equation}
  \begin{tikzcd}
    \Floer^*(H^0,J^0) & \Floer^*(H^1,J^1) & \cdots \\
    \Floer^*(H^{0},J^{0}) \ar[u,"="] \ar[ur] &  \Floer^*(H^{1},J^{1}) \ar[u,"="] \ar[ur] & \cdots
  \end{tikzcd}
\end{equation}
following the method originating in \cite{AbouzaidSeidel2010}.

The advantage of the mapping telescope definition is that it is essentially as small as possible, so that the construction of explicit algebraic operations on it requires the least effort. But the construction of restriction maps require making an interpolation between the choices made for $K$ and $K'$, and there is no reason to expect that the interpolations for a triple of inclusions agree.

We resolve this as follows: first, we interpret the construction of the sequence of Floer complexes $ \Floer^*(H^i,J^i) $ and of the continuation maps between them as a functor
\begin{equation} \label{eq:Floer_functor_sequence}
CF \co  \bN \to \Ch  
\end{equation}
from the natural numbers (thought of as a category with an arrow from $i$ to $j$ if and only if $i \leq j$) to the category of chain compexes over the Novikov ring. Next, we sacrifice the small size of the mapping telescope for the larger model of the homotopy colimit given by the bar construction:
\begin{equation}
    \hocolim \Floer^*(H^i, J^i) \cong B(\bN, CF).
  \end{equation}
  Finally, we replace the domain category $\bN$ by the category $\cF_{K,\star}(1)$ of all pairs $(H,J)$ for which Floer theory is defined, so that the Hamiltonian $H$ is negative on $K$, and morphisms given by monotone continuation maps between them. It is important in this last stage to remember that there is a natural notion of a family of continuation maps, and we thus have to consider $\cF_{K,\star}(1)$ as an enriched category, which turns out to have the particularly simple feature that the space of morphisms between objects are either empty or contractible. We make the technical choice of considering only families of continuation maps parametrised by cubes, thus modelling homotopy theory using (symmetric) cubical sets as discussed in Appendix \ref{sec:cubical-sets}. There are alternative methods, such as grappling with the definition of a topology on the (infinite) dimensional space of continuation maps (allowing for breaking), or using simplicial sets, or even going all the way to formulate our construction using quasi-categories.

  The essential point at this stage is that the functor of Floer cochains extends to an enriched functor
  \begin{equation}
    \Floer^* \co  C_* \cF_{K,\star} \to  \Ch.
  \end{equation}
  This means that we assign to each pair $(H,J)$ its associated Floer complex, and to each $n$-cube of continuation maps a degree $-n$ map of Floer cochains, which is compatible with restriction to boundary strata (and vanishes for degenerate cubes). The homotopy colimit of the corresponding functor is the dashed arrow in the diagram of differential graded categories
    \begin{equation} \label{eq:left-Kan-linear-part}
    \begin{tikzcd}
      C_* \cF_{K,\star} \ar[r,"\Floer^*"] \ar[d, "\pi"]  & \Ch. \\
      \star \ar[ur,dashed]
    \end{tikzcd}
  \end{equation}
  where the vertical map is the projection map to a point, and we have a prescribed (homotopy) natural transformation between $\Floer^*$ and the composition of this arrow with the projection. This natural transformation is the structure map appearing in the formulation of the universal property of the (homotopy) colimit, namely the existence of a map
  \begin{equation}
    \Floer^*(H,J) \to \hocolim_{C_* \cF_{K,\star}}    \Floer^*
  \end{equation}
for each object $(H,J)$ of $\cF_{K,\star}$ that commutes up to prescribed homotopy with the action of morphisms in the category $ C_* \cF_{K,\star} $.

We find it convenient to use a specific model of the homotopy colimit given by the bar construction (see \cite{HollenderVogt1997}), and we denote its completion by:
  \begin{equation}
     SC^*_{M, \star}(K) \equiv \widehat{B( \Floer^*,  \cF_{K,\star}(1)) }  .
  \end{equation}

  With this definition at hand, establishing the existence of a commutative Diagram \eqref{eq:commutative_triangule_middle_model} is straightforward: an inclusion $K \subset K'$ induces an inclusion of categories
  \begin{equation}
        \cF_{K', \star}(1)  \to \cF_{K,\star}(1) ,
      \end{equation}
      because the only condition, that is not a global condition independent of $K$, which is imposed on the objects and morphisms of $\cF_{K,\star}(1)$ is the requirement that the Hamiltonian $H$ be non-negative on $K$, and this condition is inherited under inclusions. Given a nested triple $K \subset K' \subset K''$, the construction yields a nested inclusion of categories $\cF_{K'',\star}(1) \subset \cF_{K',\star}(1) \subset \cF_{K,\star}(1)$, so that the functoriality of the bar construction yields the desired commutative triangle.

      The last thing to check is that this construction defines a complex which is homotopy equivalent to the usual symplectic cochains with support $K$. The essential point in this case is that the elements of the sequence $\{H^i\}_{i =0}^{\infty}$ eventually dominate any Hamiltonian $H$ on $M$ which is strictly negative on $K$, hence that the space of morphisms from any object of $\cF_{K,\star}(1)$ to the chosen sequence eventually become contractible (this is Proposition \ref{lmFrgtHtpyEq}). A standard comparison result for homotopy colimits (c.f. Section \ref{sec-hv}) then implies:
      \begin{prop} \label{prop:compare_old_new_hocolim_models}
        The map
      \begin{equation}
    SC^*_{M}(K) \to  
   SC^*_{M, \star}(K)
              \end{equation}
              induced by the inclusion of the sequence $ \{(H^i, J^i)\}_{i =0}^{\infty}$ as a subcategory of $  \cF_{K,\star}(1) $ is a quasi-isomorphism. Assuming that $\Bbbk$ is a field, it is also a homotopy equivalence. \qed
      \end{prop}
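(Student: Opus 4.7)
The plan is to realize both sides as (completions of) bar constructions of the enriched Floer functor, and then to derive the comparison from a cofinality argument. First I would identify the mapping telescope that defines $SC^*_M(K)$ with a bar construction of the form $B(\Floer^*|_{\bN},\bN)$, where $\bN \subset \cF_{K,\star}(1)$ denotes the cubically enriched subcategory whose objects are the chosen pairs $(H^i,J^i)$ and whose non-empty morphism cubical sets are each freely generated, in the allowed direction, by a single $0$-cube recording the prescribed continuation datum. Under this identification, the map in the proposition is exactly the one obtained, after completing with respect to the $T$-adic Novikov filtration, by applying functoriality of the bar construction to the inclusion of enriched categories $\bN \hookrightarrow \cF_{K,\star}(1)$.

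Second, I would verify that this inclusion is homotopy cofinal in the cubically enriched sense. Given any object $(H,J)$ of $\cF_{K,\star}(1)$, the strict negativity of $H$ on $K$ together with the convergence $H^i \to H_K$ yields an index $N$ beyond which $H^i \geq H$ holds pointwise on $M$, so monotone continuation data from $(H,J)$ to $(H^i,J^i)$ exist for $i \geq N$. Proposition \ref{lmFrgtHtpyEq} sharpens this to the assertion that the cubical sets of such monotone homotopies are contractible, which is precisely the cofinality input required for a Hollender--Vogt style comparison: the enriched coslice $(H,J) \downarrow \bN$ has contractible cubical nerve.

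Third, I would invoke the comparison theorem referenced in Section \ref{sec-hv}: for an enriched functor to chain complexes, a homotopy cofinal inclusion of indexing categories induces a quasi-isomorphism on bar constructions. Applied here, the uncompleted map $B(\Floer^*|_\bN,\bN) \to B(\Floer^*,\cF_{K,\star}(1))$ is a quasi-isomorphism. Both complexes are direct sums of shifted copies of Floer complexes, hence torsion free, and the map strictly respects the $T$-adic filtration; a Mittag--Leffler argument on the associated graded then transfers the quasi-isomorphism to the completions, yielding the first claim. When $\Bbbk$ is a field, one further uses that torsion-free complete complexes of free Novikov modules admit splittings which promote a quasi-isomorphism to a homotopy equivalence, giving the stronger conclusion.

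The main obstacle I expect is calibrating the cubical enrichment carefully enough to run the cofinality argument. One has to verify that the contractibility statement of Proposition \ref{lmFrgtHtpyEq} is formulated in the precise cubical sense used by the bar construction, and that the bar construction over the small subcategory $\bN$ really recovers the telescope model of $SC^*_M(K)$ rather than some strictly larger complex. These are bookkeeping points of enriched category theory rather than symplectic geometry, but they are where the work lies; all the analytic input is packaged in the already-cited proposition together with the standard compactness and gluing theorems for Floer continuation solutions.
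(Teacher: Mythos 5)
Your proposal is correct and follows essentially the same route the paper intends: identify the telescope with a bar construction over the discrete chain $\bN$, establish cofinality of $\bN\hookrightarrow\cF_{K,\star}(1)$ using the eventual domination of Hamiltonians together with Proposition~\ref{lmFrgtHtpyEq}, invoke the Hollender--Vogt comparison from Section~\ref{sec-hv}, and then pass to completions using torsion-freeness (the paper's citation of \cite[Corollary~2.3.6~(3)]{Varolgunes2018} is the Mittag--Leffler step you describe) and Proposition~\ref{prop-app} for the homotopy-equivalence upgrade over a field. The paper leaves this proposition with an implicit proof precisely because it is the informal outline you have spelled out, later made rigorous in Section~\ref{sec:comp-two-models-1} for the stronger Theorem~\ref{thm:comparison}.
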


              \subsection{Strategy for the proof of Theorem \ref{thm:main_thm}}
\label{sec:strat-proof-theor}

We now explain how to adapt the strategy outlined in the previous section in order to obtain a model which is equipped with the structure of an algebra over the operad of chains on $f \Mbar^{\bR}_{0,k+1}$. As indicated earlier, this means that our definition should include all possible choices not just for defining the Floer complexes, but also for constructing multiplicative operations on it.

A convenient formalism for recording all this data is that of a multicategory, which simultaneously extends the notions of category and operad: as in a category one is given a collection of objects, but in addition to morphisms between such objects, multimorphisms which have multiple inputs and a single output are included. Such multimorphisms correspond to the higher spaces of an operad, and indeed an operad is exactly a multicategory with one object.

Multicategories arise naturally in Floer theory because the product map on Floer cochains
\begin{equation} \label{eq:Floer-product}
    \Floer^*(H^1, J^1) \otimes     \Floer^*(H^2, J^2) \to     \Floer^*(H^0, J^0) 
  \end{equation}
  is defined by considering equations of Floer type on a pair of pants with $3$ marked points, whose restriction to neighbourhoods of the three marked points respectively agree, under a local biholomorphism of a half-cylinder with the pair of pants, with the Floer equations associated to the pair $(H^i, J^i) $. There is no canonical choice for such a map, and our goal therefore is to identify a space  $\cF((H^1, J^1),(H^2, J^2);(H^0, J^0) )  $ (associated to $K=\emptyset$, which is the universal case for our construction) of Floer data with two inputs and one output, so that we have a map
  \begin{multline}
    \Floer^*(H^1, J^1) \otimes     \Floer^*(H^2, J^2)   \otimes  C_*   \cF((H^1, J^1),(H^2, J^2);(H^0, J^0) )  \\ \to     \Floer^*(H^0, J^0),
  \end{multline}
  as well as composition maps associated to changing the inputs and outputs. More generally, we need to define spaces $\cF((H^1, J^1), \cdots, (H^k, J^k);(H^0, J^0) )$ for each input sequence of objects of $\cF$, and (multi)-composition maps realising the structure of a multicategory, which we formally recall in Appendix \ref{sec:multicategories}.

  The essential difficulty in this task is to ensure that the notion of Floer data that we use  (i) has  a well-defined notion of composition, (ii) induces maps of Floer complexes over the Novikov ring, and (iii) satisfies the property that, for each input sequence $\left( (H^1, J^1), \cdots, (H^k, J^k) \right) $ and output data $(H^0, J^0) $, the projection map from the space of multimorphisms to the moduli space $f\Mbar^{\bR}_{0,k+1}$  is a homotopy equivalence whenever the function $H^0$ is sufficiently close to the indicator function of $K$.
  \begin{rem}
    The importance of the third condition above may not be immediately apparent to the reader, but it is essential in proving that our construction yields a model for the symplectic cochains. The following analogy may be helpful: consider fibre bundles $X_1 \to B$ and $X_2 \to B$ with fibres $F_1$ and $F_2$, and assume that we have a map $X_1 \to X_2$ which also a fibre bundle, through which the first fibre bundle projection factors. From these data, we obtain a map on $B$ between the local systems with fibres $F_1$ and $F_2$, and we would like a natural condition that implies that this map is an isomorphism of local systems. Such a condition is provided by the assumption that the map $X_1 \to X_2$ induces a homotopy equivalence $F_1 \to F_2$.

  In our setting, the data of $X_1 \to B$ and $X_2 \to B$ are ultimately used to construct the model of symplectic cochains defined in this paper, and the standard one. The space $X_1$ consists of the data of multimorphisms, and the map to $X_2$ only remembers their domain and their output. By requiring that the forgetful map be a homotopy equivalence, we shall be able to conclude that the map between the two models is an isomorphism.
  \end{rem}
  The most general way to achieving (i) and (ii) is to define Hamiltonian data on a framed Riemann surface 
  to consist of a $1$-form $\mathfrak{H}$ valued in the space of Hamiltonians on the target symplectic manifold, with prescribed restriction to a neighbourhood of the punctures. In this context, the positive energy condition that is required in order for operations to be defined over the Novikov ring is the non-linear equation 
\begin{equation}\label{eqMonotonicityPoisson}
d\mathfrak{H}+\{\mathfrak{H},\mathfrak{H}\}\geq 0.
\end{equation}
Unfortunately, we have been unable to prove that this choice satisfies the third condition above.
  
Instead, we  introduce a notion of \emph{split-monotone Floer data} on a framed Riemann surface $\Sigma$; this consists of a closed $1$-form on $\Sigma$, and a function $H$ on $\Sigma \times M$, subject to several conditions, of which the most important is the requirement that, for each point $x \in M$, the wedge product of the differential of $H(x)$ with $\alpha$ is a non-negative $2$-form on $\Sigma$
  \begin{equation} \label{eq:monotonicity_H_alpha}
    dH(x) \wedge \alpha \geq 0;
  \end{equation}
this condition is the specialisation of Inequality \eqref{eqMonotonicityPoisson} to the case $\mathfrak{H} = H \otimes \alpha $.  When $\Sigma$ is a cylinder $\bR \times S^1$ with coordinates $(s,t)$, and $\alpha = dt$, it is clear that we recover the condition of monotonicity $\frac{\partial H}{\partial s} \geq 0$ for continuation maps, which underlies Definition \ref{def:relative_SH}. 

An additional feature worth mentioning is that our $1$-forms $\alpha$ are required to be of the form $\alpha=w_pdt$ near each puncture $p$ for some positive real weight $w_p$  (subject to the constraint in Equation \eqref{eq:weight_bound}). The need for real weights is clarified in Figure \ref{fig:broken_continuation}. Namely, for contractibility to hold we need to interpolate between different assignments of weights for given asymptotic data (there are other approaches where contractibility is achieved entirely using continuation maps, c.f. \cite{AbouzaidSeidel2010}).

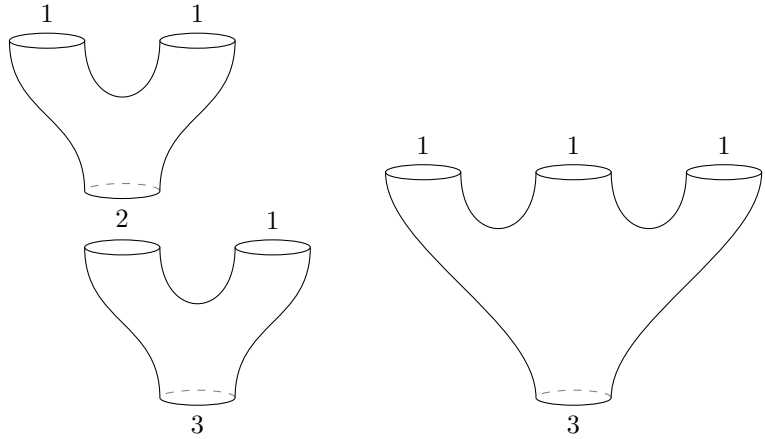
\begin{figure}[h]\label{fig:broken_continuation}.
  \centering
  \begin{tikzpicture}
    \node[label=above:{$1$}] at (1,2) {};
    \node[label=above:{$1$}] at (-1,2) {};
    \node[label=below:{$2$}] at (0,0) {}; 
    \pairofpants
    \begin{scope}[shift={(1,-2.75)}]
      \pairofpants
      \node[label=below:{$3$}] at (0,0) {};
       \node[label=above:{$1$}] at (1,2) {};
     \end{scope}

     \begin{scope}[shift={(6,-2.75)}]
        \threeinputs
        \node[label=below:{$3$}] at (0,0) {};
        \node[label=above:{$1$}] at (0,3) {};
        \node[label=above:{$1$}] at (-2,3) {};
        \node[label=above:{$1$}] at (2,3) {};
     \end{scope}
  \end{tikzpicture}
  \caption{Breaking of curves with integral weights}
  \label{fig:integral_weights}
\end{figure}

  A split Floer datum on $\Sigma$ defines an element of the space of multimorphisms with inputs $\left( (H^1, J^1), \cdots, (H^k, J^k) \right) $ and output $(H^0, J^0) $, when there is a choice of cylindrical ends at the $i$th puncture so that $ H \otimes \alpha $ and $J$ pull back to $H_i \otimes dt$ and to $J_i$. The topology on this space is straightforward to derive from the topology on the moduli space of framed Riemann surfaces and the $C^\infty$ topologies on function spaces.

  Unfortunately, this construction is not closed under compositions. For example, we would need to have a (continuous) composition map
  \begin{multline}
    \cF((H^1, J^1),(H^2, J^2);(H^0, J^0)) \otimes \cF((H^{0}, J^{0}) ;({H'}^{0}, {J'}^{0})) \\
    \to      \cF((H^1, J^1),(H^2, J^2);({H'}^{0}, {J'}^{0})),
  \end{multline}
which is compatible with the actions on Floer cochains, and more generally we need composition maps of multimorphisms with other multimorphisms. If we restrict attention to Floer data on (smooth) Riemann surfaces, this is impossible, because the composition of these operations is geometrically associated to the pre-stable Riemann surface obtained by attaching a cylinder to a pair of pants at one end. The definition of the multicategory $\cF$ thus involves split Floer data on pre-stable Riemann surfaces. This makes a straightfoward definition of a topology more tricky, and we choose to bypass this, as discussed earlier, by working with cubical sets as our model for homotopy types.

Continuing along the outline of the simpler problem discussed in Section \ref{sec:strictly-funct-coch}, we have by now explained the construction of an algebraic object that encodes all possible choices for constructing genus $0$ operations, with one output, in Hamiltonian Floer theory. The next step is to apply the usual Floer theoretic procedure to associate to each $n$-cube in $\cF((H^1, J^1),(H^2, J^2);(H^0, J^0) ) $ an operation of degree $n$ in Equation \eqref{eq:Floer-product}, and more generally a chain map
  \begin{multline}
   \Floer^*(H^1, J^1) \otimes  \cdots \otimes   \Floer^*(H^k, J^k) \\     \otimes C_*  \cF((H^1, J^1), \ldots, (H^k, J^k);(H^0, J^0) ) \to     \Floer^*(H^0, J^0).
  \end{multline}
  The compatibility of these maps with (multi)-compositions in $\cF$ amount to asserting that they assemble to a differential graded multi-functor
  \begin{equation} \label{eq:Floer-multifunctor}
    C_* \cF \to \Ch,
  \end{equation}
  whose target is the category of chain complexes equipped with its monoidal structure given by the tensor product, which we consider as multicategory whose objects are $\bZ_2$-graded chain complexes and whose multimorphisms with source a sequence $(C_1, \ldots, C_k)$ of chain complexes and target a chain complex $C_0$, given by the chain complex of maps
  \begin{equation}
    C_1 \otimes \cdots \otimes C_k \to C_0.    
  \end{equation}
At this stage, we recall that each compact subset $K$ of $M$ determines the subset of objects of $\cF$ consisting of those pairs $(H,J)$ for which the Hamiltonian $H$ is strictly negative on $K$. We write $\cF_K \subset \cF$ for the full multicategory on these objects, and abuse notation by writing $\Floer^*$ for the restriction of the Floer functor to this multicategory.

  It now remains to extract, from Equation \eqref{eq:Floer-multifunctor}, and the projection map from $\cF_K$ to $f\Mbar^{\bR}_0$, a total complex which carries an action of the chain operad associated to $f\Mbar^{\bR}_0 $. In order to do this, we need one final piece of abstraction. In the diagram of multicategories
  \begin{equation}
    \begin{tikzcd}
      C_* \cF_K \ar[r,"\Floer^*"] \ar[d, "\pi"]  & \Ch, \\
      C_* f\Mbar^{\bR}_0 \ar[ur,dashed]
    \end{tikzcd}
  \end{equation}
  we shall consider a diagonal arrow that we refer to as the \emph{operadic (homotopy) left Kan extension of $\Floer^*$ along $\pi$.} This is a natural generalisation, in two different ways, of the notion of a homotopy colimit which arose in Diagram \eqref{eq:left-Kan-linear-part}: (i) we pass from categories to multicategories, and (ii) we work over the operad $C_* f\Mbar^{\bR}_0$ rather than over the point. We note at this stage that, from the point of view of operads as multicategories with one object, the diagonal arrow distinguishes a chain complex (the image of this unique object), together with an action of the operad of chains on $ f\Mbar^{\bR}_0$.

  Instead of characterising the Kan extension by its universal property discussed in Section \ref{sec:univ-prop-oper} below, we choose to work with a specific model, analogous to the bar construction, which we describe explicitly in Section \ref{sec:floer-algebra} and denote by $L\pi_* \Floer^*$  and whose completion defines the operadic symplectic cochains with support $K$ that we refer to in Theorem \ref{thm:main_thm}.

The structural results in Theorem \ref{thm:main_thm}, regarding restriction and the action of the symplectomorphism group follow rather directly from the good functorial properties of the model which we have chosen for the left Kan extension. On the other hand, our construction of the comparison map in Equation \eqref{eq:quasi-isomorphism_big-small} requires some explicit computations, because it is a comparison between a categorical and an operadic Kan extension.

\subsection{The universal property of the operadic model}
\label{sec:univ-prop-oper}

While we use an explicit construction of the Floer algebra $SC^*_{M,f\Mbar^{\bR}_0}(K)$ in this paper, we complete this introduction by briefly indicating the universal property that it enjoys. We have already used the fact that an operad such as $ C_* f\Mbar^{\bR}_0$ is the same thing as a multicategory with a single object while an $ C_* f\Mbar^{\bR}_0$-algebra is the same thing as a multifunctor from $ C_* f\Mbar^{\bR}_0$ to $ \Ch$. Thus we take $SC^*_{M,f\Mbar^{\bR}_0}(K)$ to denote not merely a chain complex with operations but a multifunctor. We then denote by   $SC^*_{M,f\Mbar^{\bR}_0}(K)\circ{\fF}$ the composition with the forgetful map.  

The essential point at this stage is that there is a homotopy natural transformation of multifunctors
\begin{equation}\label{eq-hom-nat-trans}
  \alpha: CF^* \Rightarrow SC^*_{M,f\Mbar^{\bR}_0}(K)\circ{\fF} \end{equation}
whose component at the object $(H,J)\in\cF$ is the inclusion map $\Floer^*(H,J)\hookrightarrow SC^*_{M,f\Mbar^{\bR}_0}(K)$. It would take us too far afield to formally define such a notion, but one way to formulate it as the data of a collection of homotopies for each operadic structure map, together with higher homotopies associated to compositions thereof. Alternatively, the reader can find a discussion in the quasi-categorical setting in  \cite[Lemma 2.16]{AyalaFrancisTanaka2017},

The homotopy universal property is a follows: \emph{given any $f\Mbar^{\bR}_0$ algebra $D$ together with a homotopy natural transformation $\alpha': \Floer\Rightarrow D\circ{\fF}$ there exists, uniquely up to contractible choice, a homotopy natural trasnformation $\beta:SC^*_{M,f\Mbar^{\bR}_0}(K)\to D$ such that $\alpha'=\beta\circ\alpha$. }

Theorem \ref{thm:linear_comparison_from_CF} can be interpreted as the first non-trivial datum extracted from the natural transformation in Equation \eqref{eq-hom-nat-trans}, at the level of morphisms. In order to formulate the analogous data for multimorphisms, fix a sequence $\{H_i\}_{i=0}^{k}$ of non-degenerate Hamiltonians which are strictly negative on $K$, and a sequence $\{J_0\}$ of $S^1$-families of almost complex structures.  Consider a family of framed genus-$0$ Riemann surfaces parametrised by an manifold (more generally, a cycle in $f\Mbar^{\bR}_{0,k+1}$), equipped with split Floer data as described above. 
Such data determine a map
\begin{equation}
 \Floer^*(H_1, J_1) \otimes \cdots \otimes \Floer^*(H_k, J_k)  \to   \Floer^*(H_k, J_k)
\end{equation}
obtained from the moduli spaces of virtual dimension $0$ solutions of the family of the associated Cauchy-Riemann equation. We omit the proof of the following result.

\begin{prop}

  There is a prescribed homotopy in the diagram
  \begin{equation}
    \begin{tikzcd}
  \Floer^*(H_1, J_1) \otimes \cdots \otimes \Floer^*(H_k, J_k)   \ar[r] \ar[d] &  \Floer^*(H_k, J_k) \ar[d] \\
  SC^*_{M,f\Mbar^{\bR}_0}(K)  \otimes \cdots \otimes SC^*_{M,f\Mbar^{\bR}_0}(K) \ar[r] & SC^*_{M,f\Mbar^{\bR}_0}(K),   
    \end{tikzcd}
   \end{equation}
in which the bottom horizontal map is defined applying the operadic structure map to the cycle in $f\Mbar^{\bR}_{0,k+1} $ associated to the chosen family of Floer data.
\end{prop}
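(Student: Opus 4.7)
The plan is to regard the given family of framed genus-$0$ Riemann surfaces with split Floer data, parametrised by a cycle $\sigma$ in $f\Mbar^{\bR}_{0,k+1}$, as defining a chain $c$ in the cubical multimorphism complex $C_* \cF_K((H_1,J_1),\ldots,(H_k,J_k);(H_0,J_0))$ whose image under $\pi$ is $\sigma$. With this observation in hand, the top horizontal map is, by the standard Floer-theoretic construction that counts rigid solutions of the parametric Cauchy--Riemann equation, the value of the multifunctor $\Floer^* \co C_* \cF_K \to \Ch$ on $c$; and the bottom horizontal map is the value of the multifunctor $SC^*_{M,f\Mbar^{\bR}_0}(K) \circ \pi$ on $c$. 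The vertical maps coincide with the components at the $(H_i,J_i)$ of the homotopy natural transformation $\alpha$ of \eqref{eq-hom-nat-trans}. Hence the statement is precisely the assertion that $\alpha$ induces a homotopy commutative square at the multimorphism $c$.

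To produce the homotopy concretely, I would work inside the bar-like model $L\pi_* \Floer^*$ used in Section \ref{sec:floer-algebra}. In that model $SC^*_{M,f\Mbar^{\bR}_0}(K)$ is a completed direct sum of bar-type tensor products indexed by composable strings of multimorphisms; the inclusion $\Floer^*(H,J) \hookrightarrow SC^*_{M,f\Mbar^{\bR}_0}(K)$ is the inclusion of the length-$0$ summand, while the operadic structure map associated to $\sigma$ is implemented by a bar multiplication that absorbs a tautological lift of $\sigma$ into $C_*\cF_K$. The two compositions around the square then differ by the bar differential of an explicit length-$1$ bar element with entries $c$ and $x_1 \otimes \cdots \otimes x_k$, whose differential (using that the $x_i$ are cycles) is precisely their difference up to sign. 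For a cycle $\sigma$ which does not lift tautologically, condition (iii) of Section \ref{sec:strat-proof-theor} provides a chain-level lift via the contractibility of the fibres of the forgetful map, and any two such lifts give rise to homotopic answers by a further bar argument.

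The main obstacle is entirely combinatorial rather than conceptual: one must manage the symmetric cubical signs and the interaction of the bar differential of $L\pi_*\Floer^*$ with the operadic composition maps, and one must verify that the chosen lift of $\sigma$ is compatible with multi-composition up to explicit homotopies. No additional geometric input beyond what has already been developed to define the multifunctor $\Floer^*$ and the operadic Kan extension is needed; the result is a formal manifestation, at the level of a single multimorphism, of the fact that $L\pi_* \Floer^*$ has been constructed so that the structure map $\Floer^* \Rightarrow (L\pi_*\Floer^*)\circ\pi$ is a homotopy natural transformation of multifunctors.
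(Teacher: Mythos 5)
The paper explicitly omits the proof of this proposition, so there is no proof to compare against; I will instead assess whether your argument fills the gap correctly. It does, and it is exactly the expected generalisation of the proof of Theorem~\ref{thm:linear_comparison_from_CF} given in Section~\ref{sec:proof-theorem-main-2}: there, the $k=1$ homotopy is produced by exhibiting it as the differential of a one-level element of the bar model, and your length-one bar element with entries $c$ and $x_1\otimes\cdots\otimes x_k$ is the multi-input analogue. Explicitly, in the simplicial model of Section~\ref{sec:chain-complex}, the one-level decorated tree with $v$ at level one (label $c\in C_*\cF_K(\vec{(H,J)};(H_0,J_0))$), leaves labelled by the $(H_i,J_i)$, and root decorated by the unit of $C_*f\Mbar^{\bR}_0(1)$, produces a $1$-simplex $\tau(x_1,\dots,x_k)$ in $\Floer^*_{M,f\Mbar^{\bR}_0,1}(K)$; the face $d_0$ returns $x_1\otimes\cdots\otimes x_k\otimes\pi(c)$ (the bottom composite), the face $d_1$ returns $\Floer^*(c)(x_1,\dots,x_k)$ (the top composite), and $d_{int}\tau$ accounts for the differentials of the $x_i$.

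Two small cleanups. First, you need not restrict to cycles $x_i$: the assignment $H\colon x_1\otimes\cdots\otimes x_k\mapsto\tau(x_1,\dots,x_k)$ is already a prescribed chain homotopy, because in the relation $dH\pm Hd=d_0-d_1$ the internal-differential terms in $d\tau$ are precisely matched by $Hd$; phrasing it as ``the differential of $\tau$ is the difference of the compositions, since the $x_i$ are cycles'' gives only the homology-level statement. Second, the discussion of non-tautological lifts of $\sigma$ is unnecessary: the proposition's hypothesis is that the family of split Floer data is \emph{given}, so $c$ is already a cycle in $C_*\cF_K$ with $\pi(c)=\sigma$; contractibility of the fibres of the forgetful map (Proposition~\ref{lmFrgtHtpyEq}) would only be needed if one started from a cycle in $f\Mbar^{\bR}_{0,k+1}$ alone and wanted to compare different choices of lift, which is a different (though related) statement.
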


We propose to the reader that the above result should be interpreted as follows: given a sequence of elements of $SH^*_M(K) $ arising as the image of cycles in the Floer cohomology of Hamiltonians $H_i$, the image of these elements under any operation parametrised by a cycle in the operad $f\Mbar^{\bR}_{0}$ can be computed by first lifting this cycle to the multicategory $\cF_K$, then applying the associated Hamiltonian Floer-theoretic operation, and finally composing with the map from Floer cohomology to symplectic cohomology with support $K$. The main subtlety with this viewpoint is that, because of the chain-level completion, an arbitrary element of the symplectic cohomology group with support $K$ may not arise as the image of an element of Floer cohomology for any choice of Hamiltonian. This is one reason for formulating the above result at the chain-level.

\subsection{Outline}
\label{sec:outline}
In Section \ref{sec:hamilt-index-categ}, we construct the canonical \emph{Hamiltonian indexing multicategory} $\mathcal{H}$ of a symplectic manifold. The objects of $\mathcal{H}$ are all non-degenerate Hamiltonians and for a compact $K\subset M$ we  consider the full sub-multicategory $\mathcal{H}_K$ with the additional condition on objects that they be negative on $K$. The multimorphisms are very roughly given by families of Hamiltonians parametrised by a genus-$0$ Riemann surface, satisfying a \emph{monotonicity condition}, i.e. that they are non-decreasing along some oriented singular one dimensional foliation on the surface (which is not a priori fixed and is part of the data), see Definition \ref{def:pre-multimorphism-H}. In particular, recording the biholomorphism type of the surface defines a forgetful multifunctor \begin{equation} \mathcal{H}\to f\Mbar^{\bR}_0(n). \end{equation} In order to be able to encode the homotopy type of multimorphism sets, we construct $\mathcal{H}$ as a multicategory enhanced over symmetric cubical sets, see Definition \ref{dfCuMultMor}. The main result in this section (referred to as \emph{contractibility}) is Proposition \ref{lmFrgtHtpyEq} which says that if $H_0$ is sufficiently larger than $H_1,\ldots, H_n$, the symmetric cubical set $\mathcal{H}(H_1,\ldots, H_n; H_0)$ is homotopy equivalent to $f\Mbar^{\bR}_0(n)$; without such an assumption, we have little control over the homotopy type of the multimorphism spaces, which could for example be empty.

The multicategory $\mathcal{H}$ (or $\mathcal{H}_K$) is used as an intermediate step to introduce, in Section \ref{sec:floer-functor}, the \emph{Floer data indexing multicategory} $\mathcal{F},$ whose objects are pairs consisting of a Hamiltonian and an almost complex structure, and whose multimorphisms also include the data of families of almost complex structures. There is a canonical forgetful multifunctor $\mathcal{F}\to \mathcal{H}.$ Since the monotonicity condition does not involve the almost complex structures, the construction of $\mathcal{F}$ does not present any new significant difficulties, neither does the analogous contractibility result. We appeal to the results of \cite{Abouzaid2022} to obtain a dg-functor called the \emph{Floer functor}: \begin{equation*}
    \Floer^* \co  C_* \cF \to  \Ch,
  \end{equation*} which extends the assignment of the Hamiltonian Floer complex to each object $(H,J)$ of $\cF$, by assigning a degree $-n$ map to each $n$ dimensional cubical chain in a multimorphism space. While the construction the Floer functor in full generality uses virtual techniques, we also explain how it is constructed in the non-negatively monotone case using genericity arguments. The Floer functor might be of some independent interest as it is in some sense the universal diagram of Hamiltonian Floer complexes defined using split-monotone Floer data.

In Section \ref{sec:floer-algebra}, we construct a canonical dashed arrow   \begin{equation} 
    \begin{tikzcd}
      C_* \cF_{K} \ar[r,"\Floer^*"] \ar[d, "\pi"]  & \Ch, \\
      C_* f\Mbar^{\bR}_0 \ar[ur,dashed]
    \end{tikzcd}
  \end{equation} using an \emph{operadic bar construction}. This gives the proof of Theorem \ref{thm:main_thm}. The diagram does not commute, but there is also a canonical homotopy natural transformation from the Floer functor to the composition of the other two maps. We explicitly describe only a very small part of this structure which constitutes our proof of Theorem \ref{thm:linear_comparison_from_CF}.
  
  Section \ref{sec:comp-two-models-1} is devoted to a proof of Theorem \ref{thm:comparison}. We first prove that our operadic bar construction is chain homotopy equivalent to a categorical bar construction. To pass to categories we use the \emph{PROP functor} from multicategories to categories. Up to a difference in how the symmetric group actions enter in the construction, this passage is almost formal. On the other hand, it does take a considerable amount of work for us to settle this difference. This part concludes with Section \ref{sec:proof-that-map}.  Then we show that we can pass to much smaller models (such as the telescope model) using standard cofinality arguments and contractibility.\\

\begin{tikzpicture}[>=stealth, thick]

\node (A) at (0,0) [draw, process, text width=1.5cm, align=flush center] 
{big operadic model};

\node (B) at (4.5,0) [draw, text width=2cm, process, minimum height=0.5cm, align=flush center] 
{big categorical model};

\node (C) at (9.5,0) [draw, process,text width=3cm, minimum height=0.5cm, align=flush center] 
{model involving a cofinal sequence, e.g. telescope};

\draw[->] (A) to [above,"Sec 5.2-5.5"] (B);
\draw[->] (B) to [above,"Sec 5.6-5.7"] (C);

\end{tikzpicture}

\subsection*{Acknowledgements} The first author would like to thank Andrew Blumberg for helpful discussions about several aspects of this project. We would also like to thank Benoit Fresse, Alexander Polishchuk, Leonid Positelski, Tomer Schlank, and Bruno Valette for helpful comments and pointers to the literature about various questions on commutative algebra and on the homotopy theory of operads. Finally, we thank the referees for their extensive comments. They have helped to improve the paper in many ways. 

M.A. was supported by an NSF Standard Grant (DMS-2103805), the Simons Collaboration on Homological Mirror Symmetry, a Simons Fellowship award, and the Poincaré visiting professorship at Stanford University.
Y.G. was supported by the ISF (grant no. 2445/20) and the BSF (grant no. 2020310).
U.V. was supported by the T\"{U}B\.{I}TAK 2236 (CoCirc2) programme with a grant numbered 121C034.

\section{The Hamiltonian Indexing Multicategory}
\label{sec:hamilt-index-categ}

We fix a closed symplectic manifold $M$. The purpose of this section is to define a multicategory $\cH$, whose objects are Hamiltonians $H \in C^\infty(\bR/\bZ\times M)$ with non-degenerate $1$-periodic orbits, and whose multimorphisms are spaces (more precisely, symmetric cubical sets as described in Appendix \ref{sec:cubical-sets}) of Riemann surfaces with additional data which we shall presently specify. This multicategory will carry a forgetful map to the singular cubical chains (see Example \ref{example-symmetric-singular}) of  the framed KSV operad $f\Mbar^{\bR}_0$. The reader who finds this section overwhelming on first reading may benefit from first consulting Section \ref{sec:strictly-funct-coch} which discusses a simpler setting, as well as Section \ref{sec:strat-proof-theor}, from which the role of the constructions of this section in the overall strategy should become clearer.

\subsection{Multimorphisms of dimension $0$}

The essential point of our approach is the construction of a set of multimorphisms associated to each sequence $(H^1,\dots,H^n) $ of Hamiltonian inputs, and each choice $H^0$ of output. This will correspond to the $0$-cubes of the symmetric cubical set of multimorphisms associated to these data. All higher cubes will later turn out to be expressible in term of maps from the cube to this set.

Since the data that determine a multimorphism will involve a choice of Riemann surface, and we would like to identify the data supported on biholomorphic Riemann surfaces, it is convenient, as is familiar from many similar Floer-theoretic constructions, to define the desired space as a space of equivalence classes of a larger set which we now introduce (our conventions for trees and Riemann surfaces are prescribed in Appendix \ref{sec:trees-riem-surf}):

\begin{defin} \label{def:pre-multimorphism-H}
A \emph{pre-multimorphism}  with input a sequence of Hamiltonians $(H^1,\dots,H^n)$ and output a Hamiltonian $H^0$ consists of the following data:
\begin{enumerate}
\item A pre-stable rational curve
    $\Sigma$
    with $n$ inputs and one output (in particular, equipped with a cylindrical end  $\epsilon_p^{\pm}$ on every puncture $p$ of each component). 
    \item A labeling of each edge $e$ of the tree underlying $\Sigma$ by an element 
    $H^e\in\cH$.
    The $i$-th input edge, for $i>0$, is labeled by $H^i$ and the output by $H^0$. We denote by $T$ the given tree together with the labeling of the edges by Hamiltonians. We refer to $T$ as a Hamiltonian labeled tree.

  \item For each vertex $v\in T$ a choice of a pair $(H^v,\alpha_v)$ where $H^v:\Sigma_v\times M\to\bR$ is a smooth function, and $\alpha_v$ is a closed $1$-form on $\Sigma_v$.

\end{enumerate}

These are required to satisfy the following conditions:
\begin{enumerate}
\item On a bivalent vertex $v$, we have $H^{e_{out}}\neq H^{e_{in}}$.     
\item For all $x\in M$ we have the monotonicity inequality
    \begin{equation} \label{eq:monotonicity_multimorphism}
    dH^v_x\wedge \alpha_v\geq 0.
  \end{equation}
\item For any vertex $v$ and any puncture $p$ of $\Sigma_v$, there is a positive real number $w_p$ called the weight, such that we  have 
  \begin{align}
\label{eqTransInvDat0}
     \epsilon^{\pm,*}_{p}\alpha_v& =w_pdt \\
\label{eqTransInvDat}
        \epsilon_{p}^{\pm,*}(H^v\alpha_v) & =H^{e_p}dt.        
  \end{align}
  \item For each internal edge of $T$ with endpoints $v_-$ and $v_+$, the weights of $\alpha_{v_-}$ and $\alpha_{v_+}$ at the punctures associated to $e$ agree.

  \item For any vertex $v$, if $p_{out}$ is the positive puncture of $\Sigma_v$ and is $p$ a negative puncture of $\Sigma_v$: \begin{equation}\label{eq:weight_bound}\frac{w_{p}}{w_{p_{out}}}\geq \frac{1}{2^{|E_{in}(v)|}-1}.\end{equation}
  \end{enumerate}
\end{defin}
In Equation \eqref{eq:monotonicity_multimorphism}, we use the notation
\begin{equation}
 H^v_x(\cdot)=H^v(x,\cdot):\Sigma_v\to\bR.   
\end{equation}

Observe that by closedness of the $1$-forms $\alpha_v$, the output weight is the sum of the input weights at every vertex. The essential point in this definition is that, while one can define operations in Floer theory using Hamiltonian data of more general type than the split data that we choose (for example, one may consider a $1$-form valued in the space of Hamiltonians), imposing the analogue of the monotonicity constraints in Equation \eqref{eq:monotonicity_multimorphism} in such a general context results in a space whose homotopy type seems difficult to describe (even allowing for varying the choice of output Hamiltonian).

\begin{rem}
  The condition $H^{e_{out}}\neq H^{e_{in}}$ can be omitted at the cost of allowing arbitrarily long compositions of the constant continuation map with itself. Imposing it thus corresponds, in a certain way, to working with a reduced version of the theory. 

  Similarly, Inequality \eqref{eq:weight_bound} is imposed for convenience to obtain compactness of the space of allowable weights for fixed inputs and outputs. It could be omitted at the cost of changing the definitions so that the homotopy type of the spaces $\cH(H^1,\dots,H^n;H^0)$ is accurately reflected by the homotopy type of a sequence of exhausting subsets.

  The remaining conditions are unavoidable consequences of needing to ensure that, when we choose almost complex structure in Section \ref{sec:almost-compl-struct}, the resulting moduli spaces give rise to operations from the Floer complexes of the input Hamiltonian to that of the output Hamiltonian.  
\end{rem}

 \begin{defin} Two pre-multimorphisms are \emph{equivalent} if there is an isomorphism of the underlying pre-stable rational curves which intertwines the choices of cylindrical ends, the data of $H^e$, and finally the data of $\{(H^v,\alpha_v)\}$ up to the equivalence relation
 \begin{equation}
 \{(e^{r}H^v,e^{-r}\alpha_v)\}\sim \{(H^v,\alpha_v)\}\quad\forall r\in\bR.
\end{equation}\end{defin}

This leads to the following notion:
\begin{defin} 
  A \emph{multimorphism of dimension $0$} is an equivalence class of pre-multimorphisms. We denote by $\cH_0(H^1,\dots,H^n;H^0)$ the set of all multimorphisms of dimension $0$. For  a Hamiltonian labeled tree $T$ with $n$ inputs and one output we denote by $\cH_{0,T}(H^1,\dots,H^n;H^0)$ the set of multimorphisms defined on pre-stable rational curves modeled on $T$.
\end{defin}

\begin{rem}\label{remSmoohStructure}
For each fixed $T$, the set $\cH_{0,T}(H^1,\dots,H^k;H^0)$ has a natural smooth structure as an infinite dimensional Fr\'echet manifold. Namely, when  when $T$ consists of a single vertex, it a fiber bundle over the manifold of smooth framed curves with $k+1$ marked points; the fiber is analyzed in detail in Lemma \ref{lmForSmHoEq}. For general $T$ we may write a description as a fibre products of variants of these bundles in which the Hamiltonian is not fixed at some of the punctures. We shall avoid introducing a topology on the space  $\cH_{0}(H^1,\dots,H^k;H^0)$ of \textit{all} multimorphisms of dimension $0$, and instead will construct a cubical set of which these are the $0$-cubes.
\end{rem}

Part of the structure of a multicategory is an action, by relabelling, on multimorphisms. Concretely, the means that, given a permutation $\rho$ of the sequence $(1, \ldots, k)$, we need an isomorphism
\begin{equation}\label{eqHSymAc}
\cH_{0}(H^1,\dots,H^k;H^0)\to \cH_{0}(H^{\rho(1)},\dots,H^{\rho(k)};H^0),
\end{equation}
satisfying the property that the composition of the maps associated to permutations $\rho_1$ and $\rho_2$ to agree with the map associated to $\rho_1 \circ \rho_2$. This is in fact given by an elementary relabelling procedure on the set of pre-multimorphisms: we assign to $\Sigma$ the pre-stable rational framed curve $\rho_* \Sigma$ with the same underlying curve and framing, but with the input labels permuted by $\rho$. Since the sequence of inputs is also permuted by $\rho$, the rest of the data in the right hand side of Equation \eqref{eqHSymAc} (of labelling of trees, and choices of $1$-form, Hamiltonians, weights, and cylindrical ends), are canonically determined by this choice, and the imposed properties are preserved.

For the next definition, we consider sequences $\vec{H}^1\in\cH^{k_1}$ and $\vec{H}^2\in\cH^{k_2}$, where the notation indicates the Cartesian product on the object sets, and write
\begin{equation} \label{eq:multi-composition-Ham}
  \vec{H}_1 \circ_i \vec{H}_2  
\end{equation}
for the sequence with $k_1 + k_2 - 1$ elements obtained by replacing the $i$th component of $H_2$ with the vector $\vec{H}^1$. 

Given two  Hamiltonian labelled trees $T_1$ and $T_2$ such that $i$th input label of $T_2$ is the same as the output label of $T_1$, we can construct a new Hamiltonian labelled tree $T_1 \circ_i T_2$  by taking the disjoint union of $T_1$ and $T_2$ and identifying the output edge of $T_1$ with the $i$th input edge of $T_2.$ 

This operation extends to a multicomposition operation on multimorphisms. 
\begin{defin} \label{def:mult-dimens-0} 
  The multicomposition
\begin{equation}
\circ_i:\cH_0(\vec{H}_1;H_2^i)\times \cH_0(\vec{H}_2;H)\to\cH_0(\vec{H}_1 \circ_i \vec{H}_2 ;H)
\end{equation} takes $(\fd_1,\fd_2)$ to the multimorphism defined by attaching the output of $\fd_1$ to the $i$th input of $\fd_2$.
\end{defin}
The main point to check is the compatibility condition  in Definition \ref{def:pre-multimorphism-H} between the two data at the edge of $T_1 \circ_i T_2$ along which the trees are attached. This is ensured by choosing representatives of $\fd_1$ and $\fd_2$ such that the output weight of $\fd_1$ agrees with the $i$th input weight of $\fd_2$. To avoid any confusion, we reiterate that this definition does not involve any gluing of Riemann surfaces (or Hamiltonian data), but uses only attaching pre-stable Riemann surfaces as in Definition \ref{def-pre-stable-rational-curve} (carrying, in addition, Hamiltonian data). 

\subsection{Higher cubes of multimorphisms}
\label{sec:high-cubes-mult}

Informally speaking, we shall define positive dimensional cubes of multimorphisms as cube families of multimorphisms of dimension $0$ which are obtained by gluing near the boundary strata. We think of this as a replacement for the more naive strategy of defining a smooth structure on the set of multimorphisms of dimension $0$ and considering the smooth singular cubes of this target. This alternative strategy appears to be technically much more complicated because the space of choices is infinite dimensional, and there seems to be no natural way to equip it with a manifold structure that is consistent with the breaking of Riemann surfaces.

We now introduce some notation for the gluing operation that will be used in the definition of higher dimensional cubes. We consider an element $\fd\in\cH_0(H^1,\dots,H^n;H^0)$ with underlying Hamiltonian labeled tree $T$, and refer the reader to Definition \ref{def-gluing-general} for the precise way in which we formulate gluing of Riemann surfaces:

\begin{defin} The gluing of $\fd$ along parameters $\vec{r}\in [0,1]^{E_{int}(T)}$ is the multimorphism obtained by gluing the underlying framed pre-stable curves, equipped with the restriction of the data $(H^{v_-},\alpha^{v_-})$ and $(H^{v_+},\alpha_{v_+})$ for each $e\in E_{int}(T)$ connecting vertices $v_-$ and $v_+$ with $r_e>0$. \end{defin}

Note that the compatibility condition of weights along the two sides of the node, which we imposed in Definition \ref{def:pre-multimorphism-H},  ensures that the Hamiltonian data on the glued Riemann surface are well-defined after gluing (along the boundary identification in Equation \eqref{eq:gluing_along_boundary}). In addition, the equality $2^{n-1}\cdot 2^{m-1}=2^{(n+m-1)-1}$ shows that Condition \eqref{eq:weight_bound} is satisfied.

Given $\fd\in \cH_0(\vec{H},H)$, which we stress can be broken, we call the triple $(T,\fd',\vec{r})$ of a Hamiltonian labeled tree $T$, $\fd'\in \cH_{0,T}(\vec{H},H)$ and  $\vec{r}:{|E_{int}(T)|}\to [0,1]$ such that \begin{equation}
\fd=\Gamma_{\vec{r}}(\fd') 
\end{equation}\emph{a gluing decomposition} of $\fd$. A given element $\fd$ of $ \cH_0(\vec{H},H)$ may have more than one gluing decomposition, but the choice is fixed by choosing the tree labelling the decomposition. We shall use the following weaker result, in which one fixes the source of the gluing map:

\begin{lemma} Any two gluing decompositions of $\fd$ with the same underlying tree $T$ and 
Hamiltonian datum $\fd'\in \cH_{0,T}(\vec{H},H) $ have equal gluing parameters. 
\end{lemma}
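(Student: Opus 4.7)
The strategy is to recover the gluing parameter vector $\vec r$ directly from $\fd$, once the source $(T,\fd')$ has been fixed; injectivity of the gluing map $\Gamma_{(\cdot)}(\fd')$ then gives the claim. The argument splits naturally into a combinatorial step (determining the zero set of $\vec r$) and a conformal step (determining the positive entries).

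First I would identify the subset $Z\subset E_{int}(T)$ of edges with $r_e=0$. By the convention for $\Gamma$ recalled just before the lemma, such edges are precisely those along which no smoothing takes place, so the underlying pre-stable curve of $\fd$ is obtained from that of $\fd'$ by smoothing exactly the nodes corresponding to $E_{int}(T)\setminus Z$. Since the decomposition records a map from $T$ onto the tree underlying $\fd$, the set $Z$ can be read off from $\fd$ and the decomposition data alone, and must therefore coincide for any two decompositions with the same $(T,\fd')$.

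Next, for each $e\in E_{int}(T)\setminus Z$, I would argue that $r_e$ is uniquely determined by the conformal structure of the corresponding neck in the Riemann surface underlying $\fd$. The gluing construction of Definition \ref{def-gluing-general} removes an initial segment $[0,\ell(r_e))\times S^1$ (respectively $(-\ell(r_e),0]\times S^1$) of each cylindrical end $\epsilon_{p_e}^{\pm}$ on $\Sigma_{v_\pm}$ and identifies the resulting boundary circles, where $\ell$ is a strictly monotonic function of the gluing parameter. Since $\fd'$ (and hence the cylindrical ends $\epsilon_{p_e}^{\pm}$) is fixed, the image in $\fd$ of a level circle $\epsilon_{p_e}^{+}(\{s_0\}\times S^1)$ sits at a well-defined position in the neck; equivalently, the conformal modulus of the resulting neck region determines $r_e$. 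Comparing two decompositions yields identical neck moduli, hence identical positive gluing parameters.

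The main point to treat with care is that multimorphisms are equivalence classes under the rescaling $(e^r H^v,e^{-r}\alpha_v)\sim(H^v,\alpha_v)$, so the equality $\Gamma_{\vec r_1}(\fd')=\Gamma_{\vec r_2}(\fd')$ holds only up to this equivalence on each remaining component. However, the equivalence acts only on the Hamiltonian/one-form pair on a fixed component, and affects neither the underlying Riemann surface nor the cylindrical ends used to define the gluing, so the conformal identification of $r_e$ above is insensitive to it. I expect this last verification, together with a careful bookkeeping of how the map $T\to T_{\fd}$ is used to read off $Z$, to be the only delicate part of the argument; everything else is formal manipulation of the gluing construction.
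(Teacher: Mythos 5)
Your basic strategy—reconstructing $\vec r$ from $\fd$ together with the fixed data $(T,\fd')$—matches the paper, and your first step (reading $Z=\{e:r_e=0\}$ from the tree underlying $\fd$) is fine. The gap is in the second step. You propose to determine $r_e$ for $e\notin Z$ from ``the conformal modulus of the resulting neck region,'' but this invariant is not available: since $\fd$ is an equivalence class of pre-multimorphisms, the equality $\fd=\Gamma_{\vec r}(\fd')$ only pins down the gluing inclusions $\Sigma_v\hookrightarrow\Sigma$ up to a biholomorphism of the target, so ``the image in $\fd$ of a level circle $\epsilon^{\pm}_{p_e}(\{s_0\}\times\bRZ)$'' is not intrinsically determined. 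This is not a formality. Take $T$ to be a chain of bivalent vertices (cylinders), glued end to end with parameters $r_1,\dots,r_k$; this is precisely the situation of iterated continuation maps. The resulting pre-stable curve is again a cylinder, and one checks directly that its conformal type relative to its two surviving cylindrical ends depends only on the product $r_1\cdots r_k$. So the conformal/holomorphic data, even keeping track of the cylindrical ends inherited from $\fd'$, cannot recover the individual $r_e$. The issue you do flag—the rescaling $(e^rH^v,e^{-r}\alpha_v)$—is harmless, but the serious issue is the biholomorphism ambiguity, which your argument passes over.

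What makes the paper's argument work is that the Hamiltonian data $(H^v,\alpha_v)$ is used positively, not merely checked to be innocuous. The paper defines $p_e(\vec r)$ via the maximal collections of disjoint holomorphic annuli, in the free homotopy class determined by $e$, subject to the two pullback constraints $E^*\alpha_v=c\,dt$ and $E^*(H^v_x\alpha_v)=H^e\,dt$. The second constraint, combined with the monotonicity inequality \eqref{eq:monotonicity_multimorphism} and the requirement $H^{e_{\mathrm{in}}}\neq H^{e_{\mathrm{out}}}$ at bivalent vertices, is exactly what singles out the neck belonging to the edge $e$: it forces the maximal annuli for distinct edges to be disjoint and locates each one inside the gluing region at the corresponding node, yielding strict monotonicity of $p_e$ in $r_e$ and hence the claim. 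In short, the ``neck'' must be detected using the Hamiltonian function and one-form, not the conformal geometry alone; this Hamiltonian-data ingredient is what your proposal is missing.
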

\begin{proof}
  We must show that the gluing parameters can be reconstructed from $\fd$. We shall not do so directly, but rather, we show that, for each edge $e$ of $T$, a strictly monotonic function $p_e$ of the gluing parameter can be thus reconstructed. For the discussion below, we shall use the fact that such an edge determines a (free) homotopy class of circles in $\Sigma$.

We define $p_e(\vec{r})$ as the infimum 
of the product of quantities $(p_1, \ldots, p_k)$ in $(0,1)$ such that there are disjoint holomorphic embeddings $E:(\log(p_j),0)\times \bRZ\to \Sigma_v$, for some component $\Sigma_v$ of the target, in the specified homotopy class, satisfying the following properties
\begin{align}
  E^* \alpha_v & = c dt \textrm{ for some }    c>0 \\
  E^*\left(  H^{v}_{x} \alpha_v \right) & =  H^e dt,
\end{align}
assuming that such an embeddings exists. Note that, since the gluing annulus associated to $e$ satisfies the above property, the only way for such an embedding to fail to exist is if the gluing parameter equals $1$ (and there is no way to extend it holomorphically so that the stated properties hold), in which case  we set $p_e(\vec{r})$ to be $1$. The other extreme case, with $p_e(\vec{r})=0$,  corresponds to the case in which $e$ is not collapsed.
Let us call the collection of maximal embeddings $E_e$.

Note that, since $\alpha$ does not vanish on $E_e$, its restriction to it is a foliation, which by the requirement that $\alpha$ pulls back to $dt$, has closed leaves. The condition that $H^{e} \leq H^{e'}$ for any edge $e'$ that succeeds $e'$ along the arc to the output, with strict inequality at some point, implies that the sets $E_e$ and $E_{e'}$ are disjoint whenever $e \neq e'$.

The monotonicity condition further implies that $E_e$ is contained in the image, under gluing, of the Riemann surfaces $\Sigma_{v_-}$ and $\Sigma_{v_+}$ which are associated to the endpoints of $e$. This implies that  $p_e(\vec{r})$  depends only on the gluing parameter $r_e$. Since $E_e$ contains the gluing annulus associated to $e$, 
we conclude that  $ p_e(\vec{r})$  is a monotonic function of $r_e$ because the gluing region, which shrinks when the gluing parameter increases, while the remaining regions do not change.

\end{proof}

The next definition describes the local models for the cubes of multimorphisms that we will consider. \begin{defin}\label{dfLocModMult}
  A \emph{local model for a codimension $k$ corner of a family of multimorphisms} with input $\vec{H}$ and output $H$ consists of the following data:

\begin{enumerate}
\item (Domain of maximal breaking) a smooth manifold $C$, 
\item  (Collar neighbourhood) an open neighbourhood  $U$ of the origin in $[0,1]^k$ (in the case $k=0$ we write $U=\{0\}$).
\item (Choice of broken curves) A Hamiltonian labeled tree $T$, and a smooth map
  \begin{equation} \label{eq:broken_curve_family}
        b \co C\times U \to\cH_{0,T}(\vec{H},H),
  \end{equation}
where the target is smooth as in Remark \ref{remSmoohStructure}.
\item (Gluing data) A smooth map
  \begin{equation} \label{eq:gluing_data}
   g:C\times U\to[0,1)^{|E_{int}(T)|}     
  \end{equation}
whose components vanish identically on $C\times\{0\}$.  Moreover, for each  face $\sigma$ of $U$, any component $g_i$ of $g$ either vanishes identically or is nowhere $0$ on the interior of $C\times\sigma$.
\end{enumerate}
\end{defin}

We are now ready to define the cubes of multimorphisms in $\cH$.  For the definition we fix once and for all a positive number $\epsilon_0 < 1/2$. For each $n$ denote by $F_n$ the set of faces of the $n$-cube $[0,1]^n$ (we include the top stratum among them, so that  $F_n$ has $3^n$ elements). For each $f\in F_n$, denote by $f^o$, the interior of $f$.  For each $f\in F_n$, let $W_f$ be the image in the $n$-cube of the canonical affine embedding of 
\begin{equation} \label{eq:decomposition_W_f}
f^o\times[0,\epsilon_0)^{\mathrm{codim}(f)}.
\end{equation}
Denote by  $\cV_n$ the open cover of $[0,1]^n$  given by $\cV_n=\{W_f\}_{f\in F_n}$. Note that there are natural identifications $F_n=(F_1)^n$ and $\cV_n=(\cV_1)^n$ where an $n$-tuple $(W_{f_1},\dots,W_{f_n})\in(\cV_1)^n$ is identified with the Cartesian product $(W_{f_1}\times \dots\times W_{f_n})=W_{f_1\times \dots\times f_n}$. Thus the cover $\cV_n$ is compatible with intersection with faces of the $n$-cube and is equivariant with respect to the action by transposition of the coordinates on $F_n$. 

\begin{defin}\label{dfCuMultMor}
  An $n$-cube $\fd$ in $\cH(\vec{H},H)$ consists of a family of Hamiltonian data given by a choice $(T_f, b_f,g_f)_{f\in F_n}$ of a local model for each face $f$ of the cube, with domain the set $W_f$, equipped with the decomposition from Equation \eqref{eq:decomposition_W_f} so that the following property holds: given an inclusion $f_0\subset f_1$, let $E$ be the set of edges of $T_{f_0}$ that are collapsed under the map $T_{f_0}\to  T_{f_1}$. Let $g^E_{f_0}:V\times U\to[0,1)^{E}$ be the composition of $g_{f_0}$ with the projection $[0,1)^{|E_{int}(T)|}\to [0,1)^{E}$. Then we have
\begin{equation}\label{eqGluingCompatibility}
b_{f_1}|_{W_{f_0}}=\Gamma_{g^E_{f_0}}(b_{f_0}).
\end{equation}
\end{defin}
To gain some intuition for this definition, the reader may want to  note that from this data we get, for each inclusion $f_1\subset f_2$ of faces, a surjective map $\sigma_{12}:V(T_{f_1})\to V(T_{f_2})$. If $v_1$ and $v_2$ are adjacent vertices of $T_{f_1}$, then whenever the corresponding components are glued, we have $\sigma_{12}(v_1)=\sigma_{12}(v_2)$. Alternatively,  if the gluing parameter of the edge connecting them vanishes, these vertices remain adjacent in $T_{f_2}$.

An $n$-cube determines a map of sets
  \begin{equation}\label{eqUnderlyingMap}
        b:[0,1]^n\to\cH_0(\vec{H},H),
      \end{equation}
      which is given by the formula
\begin{equation}\label{eqLocModMult}
b(p)=\Gamma_{g_f(x,y)}(b_f(x,y)). 
\end{equation}
whenever $p=(x,y)\in V_f\times U_f=W_f$. The compatibility condition in Equation \eqref{eqGluingCompatibility} implies that this expression is well-defined. The collection of local models $(T_f, b_f,g_f)_{f\in F_n}$ is called a \emph{gluing atlas}. We may have some distinct $n$-cubes whose underlying map $b$ is the same if $b$ contains broken rational curves.

\begin{rem}
  It is tempting to try to simplify Definition \ref{dfLocModMult} by making the family of Riemann surfaces depend only on the space $C$ (which in the case of interest corresponds to the interior of a face), and the gluing parameters only on the factor $U$ (which corresponds to its normal direction). This would require us to restrict the class of breakings that are allowed to take place at a corner. For example, Figure \ref{fig:breaking_requiring_general_gluig} shows a situation where the Riemann surfaces break twice in the corner of a square, once along one of the adjacent edges, and do not break along the other one. There is no gluing parameter associated to the normal direction of the edge along which no breaking take place, but the family of Riemann surfaces in a neighbourhood of the corner must depend on two parameters. Using the notation from the definition, this forces us to allow $b$ to depend on a tubular neighbourhood of the edge. An analogous argument, involving an edge of a square, labelled by a tree with two internal edges, so that the adjacent edges are labelled by non-isomorphic trees, shows that the parameter $g$ also must in general depend on the entire tubular neighbourhood. 
  
\begin{figure}[h]
  \centering
  \begin{tikzpicture}
     
     \filldraw (0,0) circle (2pt);
     \draw[thick] (0,4) -- (0,0) -- (4,0);
     \begin{scope}[shift={(-1.25,-1.25)},scale=.5]
            \draw (-2,2) -- (0,0) -- (2,2);
            \filldraw (0,0) circle (3pt);
            \draw (0,0) -- (0,-1);
       \filldraw (1.25,1.25) circle (3pt);
       \draw (1.25,1.25) -- (.75,2);
        \filldraw (-1.25,1.25) circle (3pt);
       \draw (-1.25,1.25) -- (-.75,2);
     \end{scope}
      \begin{scope}[shift={(-1.25,2)},scale=.5]

       \draw (-2,2) -- (0,0) -- (2,2);
 
       \filldraw (0,0) circle (3pt);

       \draw (0,0) -- (0,-1);
       \draw (0,0) -- (.75,2);
               \draw (0,0) -- (-.75,2);
     \end{scope}
      \begin{scope}[shift={(2,-1.25)},scale=.5]
    \draw (-2,2) -- (0,0) -- (2,2);

       \filldraw (0,0) circle (3pt);
 
       \draw (0,0) -- (0,-1);
       \draw (0,0) -- (.75,2);
            \filldraw (-1.25,1.25) circle (3pt);
            \draw (-1.25,1.25) -- (-.75,2);
     \end{scope}
      \begin{scope}[shift={(2,2)},scale=.5]

       \draw (-2,2) -- (0,0) -- (2,2);

       \filldraw (0,0) circle (3pt);

       \draw (0,0) -- (0,-1);
       \draw (0,0) -- (.75,2);
               \draw (0,0) -- (-.75,2);
     \end{scope}
  \end{tikzpicture}
  \caption{An assignment of trees to a strata near a corner which requires the gluing atlas to depend on neighbourhood of the strata. }
  \label{fig:breaking_requiring_general_gluig}
\end{figure}
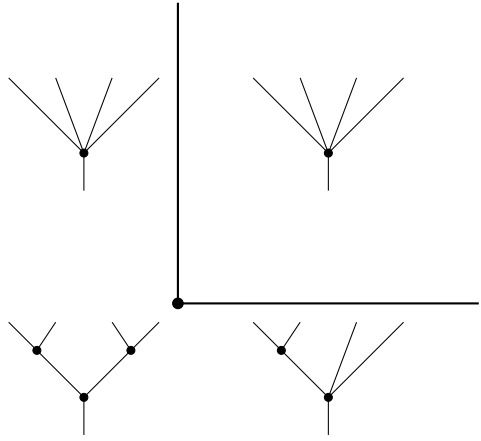

One could imagine putting a stronger constraint on the set of allowed cubes in order to avoid this, but this will result in the resulting cubical set failing to satisfy good homotopical properties (in particular, the Kan property). Concretely, as soon as one formulates a definition of $1$-cubes in which the edges appearing in Figure \ref{fig:breaking_requiring_general_gluig} are allowed, the Kan property would require that these edges can appear as a corner of a $2$-cube, and the only possible combinatorial data underlying such a $2$-cube is the one shown. While we do not explicitly use the Kan property, our proof of Lemma \ref{lmsmincHoEq} uses a construction analogous to what would be required to establish it, and would fail for this reason if we restricted the class of allowed cubes to avoid this issue. 

\end{rem}

Next, we note some basic consequences of the definition which will later be used:

\begin{lemma}
  The following properties hold for each $n$-cube $(T_f, b_f,g_f)_{f\in F_n}$ in $\cH(\vec{H},H)$:
\begin{enumerate}
\item If for $x\in f^o$, $b(x)$ is smooth, then $b_f(x)$ is also smooth and equal to $b(x).$
\item If $x\in f^o$, then $b(x)\in \cH_{0,T_f}(\vec{H},H).$
\end{enumerate}
Given another $n$-cube $(b', (T_f', b_f',g_f')_{f\in F_n})$, we have:
\begin{enumerate}
\item If $b=b'$, then $T_f=T_f'$ for all $f\in F_n$.
\item If $b=b'$ and $b_f=b_f'$ for all $f\in F_n$, then $g_f=g_f'$ for all $f\in F_n$.
\end{enumerate} \qed
\end{lemma}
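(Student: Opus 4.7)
The plan is to exploit the compatibility built into Definition \ref{dfCuMultMor}: for each face $f \in F_n$, the gluing data $g_f$ is required to vanish identically on $C \times \{0\}$, while the canonical affine embedding that realises $W_f = f^o \times [0,\epsilon_0)^{\mathrm{codim}(f)}$ identifies each $x \in f^o$ with the point $(x,0)$ in this decomposition. Substituting into the defining formula \eqref{eqLocModMult}, this yields
\begin{equation}
b(x) = \Gamma_{g_f(x,0)}(b_f(x,0)) = \Gamma_0(b_f(x)) = b_f(x)
\end{equation}
for every $x \in f^o$. This single identity will drive the entire proof.

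From it, parts (1) and (2) of the first block follow immediately. For part (2), $b_f$ is by construction valued in $\cH_{0,T_f}(\vec{H},H)$, and the displayed equality therefore places $b(x)$ in the same space. For part (1), if $b(x)$ happens to be smooth (i.e.\ represented by an unbroken curve, so that the underlying labeled tree has a single vertex), then the equality $b_f(x) = b(x)$ forces $b_f(x)$ to represent the same smooth multimorphism; in particular $T_f$ must be the trivial tree, and $b_f(x)$ is smooth.

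For the comparison statements, suppose first that $b = b'$. Applying the identity above to both cubes gives $b_f(x) = b(x) = b'(x) = b_f'(x)$ for every $x \in f^o$, and this common value lies in both $\cH_{0,T_f}(\vec{H},H)$ and $\cH_{0,T_f'}(\vec{H},H)$. Since the underlying Hamiltonian labeled tree is an invariant of an equivalence class of pre-multimorphisms, we conclude $T_f = T_f'$, proving part (1) of the second block. For part (2), assume further that $b_f = b_f'$; then for each $(x,y) \in W_f$ the two cubes furnish gluing decompositions
\begin{equation}
\Gamma_{g_f(x,y)}(b_f(x,y)) = b(x,y) = b'(x,y) = \Gamma_{g_f'(x,y)}(b_f(x,y))
\end{equation}
of the same multimorphism with common source $b_f(x,y)$ and common tree $T_f = T_f'$. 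The uniqueness lemma for gluing decompositions with fixed source and tree established earlier in the section then forces $g_f(x,y) = g_f'(x,y)$ pointwise.

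No significant obstacle is expected: the entire argument is an unwinding of the defining axioms, relying only on the boundary-vanishing of the $g_f$ and on the uniqueness lemma already in hand. The one point demanding care is the correct bookkeeping of the affine embedding $f^o \times [0,\epsilon_0)^{\mathrm{codim}(f)} \hookrightarrow [0,1]^n$, so that a point of $f^o$ is indeed identified with a point at which every normal collar coordinate vanishes, ensuring that $g_f$ vanishes there.
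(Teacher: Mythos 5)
Your argument is correct and is the natural unwinding of Definitions \ref{dfLocModMult} and \ref{dfCuMultMor} together with the uniqueness lemma for gluing decompositions; the paper states this lemma without proof (it ends with \qed), and what you write is precisely the routine verification the authors leave implicit.
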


The next result justifies the terminology of $n$-cube that we have been using.

\begin{lem}
  The collections of sets $\{\cH_n(\vec{H},H)\}$ are the underlying sets of $n$-cubes of a symmetric cubical set.
\end{lem}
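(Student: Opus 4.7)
The plan is to define the face, degeneracy, and symmetry operations on the sets $\cH_n(\vec{H},H)$ explicitly at the level of gluing atlases, then verify the symmetric cubical identities combinatorially. The key structural fact that makes everything work is already isolated in the excerpt: the cover $\cV_n$ decomposes as the product $(\cV_1)^n$ in a way that is compatible both with restriction to faces and with coordinate permutation. I therefore expect each structure map to be built by applying, in parallel, the natural operation on the cube, on the face lattice, and on the atlas data.

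For each inclusion $\iota\co [0,1]^{n-1}\hookrightarrow[0,1]^n$ as the face $\{x_i=\epsilon\}$, I would produce the face map $\partial_i^\epsilon\co \cH_n\to \cH_{n-1}$ by restricting the atlas along $\iota$: given $\fd=(T_f,b_f,g_f)_{f\in F_n}$, the face of $[0,1]^{n-1}$ indexed by $f'\in F_{n-1}$ corresponds canonically to a face $\iota(f')\in F_n$ contained in $\{x_i=\epsilon\}$, and I set the associated triple to be $(T_{\iota(f')},b_{\iota(f')}|_{W_{f'}},g_{\iota(f')}|_{W_{f'}})$. For the degeneracy map $s_i\co \cH_{n-1}\to \cH_n$ associated to the projection $\pi_i\co[0,1]^n\to[0,1]^{n-1}$ forgetting the $i$th coordinate, I pull back the atlas: each face $f\in F_n$ factors as a pair $(f_1,f_2)$ with $f_2\in F_1$ the $i$th factor, and I define $T_f\coloneqq T_{\pi_i(f)}$, and $(b_f,g_f)\coloneqq (b_{\pi_i(f)}\circ\pi_i,g_{\pi_i(f)}\circ\pi_i)$. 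In both constructions the compatibility condition \eqref{eqGluingCompatibility} is inherited by its stability under restriction and pullback.

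The symmetric group action is produced similarly: a permutation $\rho\in S_n$ acts on $[0,1]^n$ and thus on $F_n$, and I define $\rho\cdot\fd$ by setting its atlas triple at $f$ to be $(T_{\rho^{-1}(f)},b_{\rho^{-1}(f)}\circ\rho^{-1},g_{\rho^{-1}(f)}\circ\rho^{-1})$. The equivariance of $\cV_n=(\cV_1)^n$ under $S_n$ ensures that this is well-defined, and the standard equivariance relations relating symmetries to faces and degeneracies follow from the corresponding equivariances at the level of the cube. One also needs to incorporate the relabelling isomorphism \eqref{eqHSymAc} on the input side when the permutation crosses puncture labels; at the level of $0$-cubes this is already included, and for higher cubes it is transported through $b_f$.

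The most delicate step, and the place where something could go wrong, is verifying that these constructions land in the class of valid cubes of Definition \ref{dfCuMultMor}. Concretely, one must check that the collar neighborhoods and the smoothness of $b_f$ and $g_f$ are preserved, and crucially that the condition on the components of $g_f$ (each either identically zero or nowhere zero on the interior of a face) survives restriction and pullback. Each of these is straightforward once the product decomposition is used, and they propagate automatically under the symmetric action. With these checks in place, the symmetric cubical identities ($\partial_i^\epsilon\partial_j^{\epsilon'}$ relations, degeneracy relations, mixed $\partial s$ relations, and $S_n$-equivariance) reduce to routine identities between restriction, projection, and permutation operations on $[0,1]^n$, giving the desired symmetric cubical set structure.
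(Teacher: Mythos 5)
Your approach is essentially the same as the paper's: define the face maps by restriction to faces, degeneracies by pullback along the coordinate projections, and transposition maps by permuting cube coordinates, carrying the gluing atlas along, and then verify the symmetric cubical identities. The paper is in fact terser — it only says that the underlying map $b$ transforms by composing with the standard inclusions $\iota^\pm_{n,i}$, projections $\pi_i$, and transpositions $\tau_{n,i}$, and explicitly declines to spell out the induced operations on the gluing atlas. Your extra paragraph on checking that the conditions on the $g_f$ components survive restriction and pullback is a reasonable expansion of what the paper calls ``readily verified.''

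There is, however, one confused remark that you should drop. You write: ``One also needs to incorporate the relabelling isomorphism \eqref{eqHSymAc} on the input side when the permutation crosses puncture labels; at the level of $0$-cubes this is already included, and for higher cubes it is transported through $b_f$.'' This conflates two distinct symmetric group actions. The transposition maps $p_{n,i}$ of the cubical structure on $\cH_n(\vec{H},H)$ permute the $n$ coordinates of the parameter cube for a \emph{fixed} input sequence $\vec{H}$; the number of cube directions $n$ has nothing to do with the number of inputs $k$, and the transpositions never touch the puncture labels. The relabelling isomorphism \eqref{eqHSymAc} is the action of $S_k$ on the inputs, which is a map between \emph{different} multimorphism spaces, $\cH_\bullet(\vec{H};H^0)\to\cH_\bullet(\vec{H}^\rho;H^0)$, and enters as part of the symmetric multicategory structure on $\cH$, not as part of the symmetric cubical set structure on each individual $\cH_\bullet(\vec{H},H)$. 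Your actual construction of the transposition action (permute the cube coordinates and transport the atlas accordingly) is correct as stated and needs no additional ingredient from \eqref{eqHSymAc}.
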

\begin{proof}
We mention what happens to the part of the data consisting of the map $b:[0,1]^n\to\cH_0(\vec{H},H)$  and omit writing the natural operations on the gluing atlas.   In order to shorten the notation, denote by $\cH_n$ the set of $n$-cubes in $\cH(\vec{H},H)$. 

In the following $\iota^{\pm}_{n,i}(t):[0,1]^n\to [0,1]^{n+1}$
is the standard embedding as the face $x_i=1/2\pm1/2$,  $\pi_{n+1,i}:[0,1]^{n+1}\to[0,1]^n$ is the projection forgetting the $i$th coordinate, and $\tau_{n,i}:[0,1]^n\to [0,1]^n$ is the map which transposes the $i$th and $(i+1)$th coordinates.

Define the face maps
\begin{equation}
d^\pm_{n,i}:\cH_n(\vec{H},H)\to\cH_{n-1}(\vec{H},H)
\end{equation}
by $b\mapsto b\circ\iota^\pm_{n,i}$. Similarly, degeneracy maps 
\begin{equation}
\sigma_i:\cH_n(\vec{H},H)\to\cH_{n+1}(\vec{H},H)
\end{equation}
are defined by composition $b\mapsto b\circ \pi_i$. Finally, transposition maps 
\begin{equation}
p_{n,i}:\cH_{n}(\vec{H},H)\to\cH_n(\vec{H},H)
\end{equation}
are defined by $b\mapsto b\circ \tau_{n,i}$. 
It is readily verified that the face, degeneracy, and transposition maps are well defined and satisfy Equations \eqref{eqCubSet1}--\eqref{eqCubSet6}, yielding a symmetric cubical set.  
\end{proof}

The next definition is the generalisation of Definition \ref{def:mult-dimens-0}:
\begin{defin}
For each integer $i$ between $1$ and the length of a sequence $\vec{H}^2$ of Hamiltonians, define the multicomposition map 
\begin{equation}\label{eqHMultComp}
\circ_i:\cH_{n_1}(\vec{H}^1;H^{2,i})\times \cH_{n_2}(\vec{H}^2;H)\to\cH_{n_1+n_2}(  \vec{H}_1 \circ_i \vec{H}_2  ;H)
\end{equation}
to be the map determined by assigning to a pair $(\fd_1,\fd_2)$ the product $n_1 + n_2$-cube given by mapping a point $(x,y)$ to the composition $\fd_2(y)\circ_i\fd_1(x)$, and by taking the product gluing atlas.

\end{defin}

It is straightforward to check that Equation \eqref{eqHMultComp} is equivariant with respect to the product action by $ \Sigma_{n_1} \times \Sigma_{n_2}$ on the left, and the restriction of the action by $\Sigma_n$ on the right, since both of them simply act by permuting the coordinates of the corresponding cubes. Similarly, boundaries and degeneracies are defined in the same way on the two sides in terms of inclusions and projections of cubes. We conclude:
\begin{lem} 
  The map given in Equation \eqref{eqHMultComp} determines a map of symmetric cubical sets:
  \begin{equation} \label{eq:H_multicomposition_cubical_sets}
       \circ_i:\cH_{\bullet}(\vec{H}^1;H^{2,i})\otimes \cH_{\bullet}(\vec{H}^2;H)\to\cH_{\bullet}(  \vec{H}_1 \circ_i \vec{H}_2  ;H) .
  \end{equation} \qed
\end{lem}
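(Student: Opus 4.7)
The plan is to verify two things: first, that the output of the composition formula is a well-defined $(n_1+n_2)$-cube in the sense of Definition \ref{dfCuMultMor}, and second, that the resulting assignment strictly intertwines the face, degeneracy, and symmetric-group operations on both sides. Both steps reduce to bookkeeping once we unpack the product structure on faces and on gluing atlases.

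First I would fix cubes $\fd_1 \in \cH_{n_1}(\vec{H}^1;H^{2,i})$ and $\fd_2 \in \cH_{n_2}(\vec{H}^2;H)$ with gluing atlases $(T_{f_1}, b_{f_1}, g_{f_1})_{f_1 \in F_{n_1}}$ and $(T_{f_2}, b_{f_2}, g_{f_2})_{f_2 \in F_{n_2}}$ respectively. The identification $F_{n_1+n_2} = F_{n_1} \times F_{n_2}$ and $\cV_{n_1+n_2} = \cV_{n_1} \times \cV_{n_2}$ recorded just before Definition \ref{dfCuMultMor} lets me define the gluing atlas of the composed cube face by face: for each $f = f_1 \times f_2$ I take the Hamiltonian labeled tree $T_f := T_{f_2} \circ_i T_{f_1}$, the family $b_f := b_{f_2} \circ_i b_{f_1}$ (where $\circ_i$ is the $0$-cube composition of Definition \ref{def:mult-dimens-0}, applied pointwise over $W_{f_1}\times W_{f_2}$), and the product gluing data $g_f := g_{f_1} \times g_{f_2}$ valued in $[0,1)^{|E_{int}(T_{f_2})|+|E_{int}(T_{f_1})|}$, where we use that $E_{int}(T_f) = E_{int}(T_{f_1}) \sqcup E_{int}(T_{f_2})$ since the edge along which the attachment happens is external in both $T_{f_1}$ and $T_{f_2}$. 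Each of the three defining properties of a local model (smoothness, vanishing on $C\times\{0\}$, and the interior-face condition on the components of $g$) holds because it is inherited separately from the two factors.

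Next I would check the compatibility condition \eqref{eqGluingCompatibility} for an inclusion $f_1 \times f_2 \subset f_1' \times f_2'$. Since attaching trees commutes with collapsing edges, the set of edges of $T_{f_2}\circ_i T_{f_1}$ collapsed under the map to $T_{f_2'}\circ_i T_{f_1'}$ is the disjoint union of the sets of collapsing edges for $T_{f_1} \to T_{f_1'}$ and $T_{f_2} \to T_{f_2'}$. The gluing operation $\Gamma_{\vec r}$ is local along edges, so it commutes with the operadic attaching $\circ_i$; this reduces the identity \eqref{eqGluingCompatibility} for the product cube to the same identity for each factor, which holds by hypothesis. The matching-weights hypothesis that underlies Definition \ref{def:mult-dimens-0} holds automatically because $\fd_1$ and $\fd_2$ have representatives with matched weights at the $i$th input of $\fd_2$, and this property is preserved along the cube families.

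The main obstacle, such as it is, lies in this last compatibility verification, and specifically in ensuring that the pointwise $\circ_i$ of the broken-curve families is smooth as a map into $\cH_{0,T_f}$ in the sense of Remark \ref{remSmoohStructure}; this is straightforward because the smooth structure on $\cH_{0,T_f}$ is built from fibre products of the smooth structures associated to $T_{f_1}$ and $T_{f_2}$ together with the constraint identifying the output weight of the one factor with the $i$th input weight of the other, and this constraint varies smoothly with the parameters.

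Finally, the cubical-set compatibility is immediate from the fact that all structure maps on the ambient cube $[0,1]^{n_1+n_2} = [0,1]^{n_1} \times [0,1]^{n_2}$ respect the product decomposition. Face maps $d^{\pm}_{n_1+n_2,j}$ for $j \leq n_1$ act only on the first factor, yielding $\fd_2 \circ_i d^{\pm}_{n_1,j}\fd_1$, while for $j > n_1$ they act only on the second, yielding $d^{\pm}_{n_2,j-n_1}\fd_2 \circ_i \fd_1$; the same dichotomy applies to degeneracies $\sigma_j$. The symmetric group actions on the two sides of \eqref{eq:H_multicomposition_cubical_sets} act by permuting cube coordinates independently on the two factors, so they also commute with the composition. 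These verifications together establish that \eqref{eqHMultComp} assembles into the desired map of symmetric cubical sets.
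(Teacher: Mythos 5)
Your verification that the pointwise composition of gluing atlases yields a well-defined $(n_1+n_2)$-cube is more explicit than the paper's, which simply declares the product gluing atlas without checking the local-model axioms or compatibility condition \eqref{eqGluingCompatibility}; that added detail is welcome. However, your last paragraph mishandles the symmetric structure. You write that ``the symmetric group actions on the two sides \dots act by permuting cube coordinates independently on the two factors.'' On the target side this is fine, but on the domain side the symmetric structure is that of the tensor product $\cH_{\bullet}(\vec{H}^1;H^{2,i})\otimes \cH_{\bullet}(\vec{H}^2;H)$, whose $n$-cubes (per Equation \eqref{dfSymMonStr}) are not merely pairs $(\fd_1,\fd_2)$ but equivalence classes $[\sigma,(\fd_1,\fd_2)]$ with $\sigma\in S_n$ modulo $S_{n_1}\times S_{n_2}$, and the full $S_n$-action on these mixes coordinates between the two factors rather than acting independently. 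The paper devotes an entire remark, immediately after the lemma, to warning about exactly this point.

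What your construction genuinely establishes is a family of maps $\cH_{n_1}\times\cH_{n_2}\to\cH_{n_1+n_2}$ which is $S_{n_1}\times S_{n_2}$-equivariant and compatible with the face maps $d^{\pm}_{j,1}\leftrightarrow d^{\pm}_j$ for $j\le n_1$, $d^{\pm}_{j,2}\leftrightarrow d^{\pm}_{n_1+j}$, and likewise for degeneracies. By Lemma \ref{lem:universal_property_cubical}, this data is equivalent to a map of symmetric cubical sets out of the tensor product — so your verifications do carry the desired conclusion, but only once you invoke that universal property (or equivalently construct the $S_n$-equivariant extension directly as in the paper's remark: send $[\sigma,(\fd_1,\fd_2)]$ to the cube $\tau_\sigma(x,y)\mapsto\fd_2(y)\circ_i\fd_1(x)$). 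As written, your argument leaps from the narrower $S_{n_1}\times S_{n_2}$-equivariance to the full claim without flagging that an extra step is needed.
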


\begin{rem}
  We warn the reader that the set of $n$-cubes of the tensor product $\cH_{\bullet}(\vec{H}^1;H^{2,i})\otimes \cH_{\bullet}(\vec{H}^2;H) $, which is described in detail in Appendix \ref{sec:monoidal-structure}, is not given by the union over $n_1 + n_2 = n$ of the left hand sides of Equation \eqref{eqHMultComp}, i.e. by pairs $(\fd_1,\fd_2)$ of $n_1$ and $n_2$ cubes.  The problem is that only the product $\Sigma_{n_1} \times \Sigma_{n_2}$ acts on the set of such pairs, whereas an action of $\Sigma_n$ is required as part of a symmetric cube structure. 

  Nonetheless, Equation \eqref{eqHMultComp} does determine, as asserted, a unique $\Sigma_n$-equivariant map from the $n$-cubes of $\cH_{\bullet}(\vec{H}^1;H^{2,i})\otimes \cH_{\bullet}(\vec{H}^2;H) $, to the $n$-cubes of the target. The reason is that such cubes are in fact given by a pair $(\fd_1,\fd_2)$ as above, together with the additional datum of an element $\sigma$ of $\Sigma_n$, modulo the action of $\Sigma_{n_1} \times \Sigma_{n_2} $  (see Equation \eqref{dfSymMonStr}). Denoting by
\begin{equation}
    \tau_{\sigma}:[0,1]^n\to[0,1]^n
  \end{equation}
  the permutation of the $n$-cube corresponding to $\sigma$, the assignment of Equation \eqref{eqHMultComp} canonically extends to the map of symmetric cubical sets which takes  $[\sigma,(\fd_1,\fd_2)]$ to the $(n_1+n_2)$-cube mapping the point $\tau_{\sigma}(x,y)$ to the composition $\fd_2(y)\circ_i\fd_1(x)$.
\end{rem}

At this stage, one can easily check that the compositions we have just defined satisfy the  associativity relations from Equations \eqref{eqMultComp1}--\eqref{eqMultComp3}, and that the permutation action given as in Equation \eqref{eqHSymAc} by relabelling the underlying trees and Riemann surfaces, satisfies Equations \eqref{eqSymAc1} and \eqref{eqSymAc2}.

To complete the construction of the multicategory of Hamiltonians, recall that we imposed, in Definition \ref{def:pre-multimorphism-H}, the condition that the input and output of each multimorphism with domain a cylinder be different. This implies that our construction so far has the property that morphisms from a Hamiltonian to itself are empty.

\begin{defin}
The multicategory $\cH$ of Hamiltonian data is the following symmetric multicategory enriched in symmetric cubical sets.  
\begin{itemize}
    \item The objects are the elements of $\cH$. \item The multimorphisms are obtained by considering the symmetric cubical sets $\cH_\bullet(\vec{H},H)$, and  formally adding units.   
    \item The multicompositions given by Equation \eqref{eq:H_multicomposition_cubical_sets}, with symmetric group action given by Equation \eqref{eqHSymAc}.
    \end{itemize}
\end{defin}

\subsection{Multimorphisms and the forgetful map}
Denote by $f\Mbar^{\bR}_{0}(k)_{\bullet}$ the symmetric cubical set of maps from cubes to the $k$th space $f\Mbar^{\bR}_{0}(k)$ of the Kimura-Stasheff-Voronov operad, as defined in Section \ref{sec:stab-ksv-moduli}. Given any sequence of Hamiltonians $\vec{H}\in\cH^k$, there is a natural map
\begin{equation}
\fF:\cH_\bullet(\vec{H},H)\to f\Mbar^{\bR}_{0}(k)_{\bullet}
\end{equation}
which forgets the Hamiltonian data and collapses the unstable components of the underlying framed curve. We refer the reader to Appendix \ref{sec:stab-ksv-moduli} for a detailed discussion of this stabilization process. It is easy to see that the collection of maps $\pi$ as $k$ varies assembles into a forgetful functor from the multicategory $\cH$ to the KSV operad. We declare the unit of $\cH(H,H)$ to map to the unit in $S^1=f\Mbar^{\bR}_{0}(1)_{0}$.

An essential difficulty in our construction is the fact that the map $\pi$ is not in general a homotopy equivalence. The following definition identifies a condition which will ensure this:
\begin{defin}\label{defHamOrdering}
Let $\vec{H}=(H^1,\dots,H^k)\in\cH^k$ and let $H^0\in\cH$. We say that $H^0>\vec{H}$ if for every $x \in M$, and every $ 1\leq i\leq k$, we have 
\begin{equation}
\min_{t\in \bR/\bZ}H^0_t(x)>2^{k-1} \max_{t\in\bR/\bZ} H^i_t(x). 
\end{equation}  
\end{defin}
For the next result, we consider a smooth punctured Riemann surface $\Sigma$ with $k$ inputs and $1$ output:
\begin{lemma}\label{lmExMonH}
If $H^0>\vec{H}$, then for any closed $1$-form $\alpha$ on $\Sigma$ which agrees with $w_idt$ near the $i$th input and with $dt$ near the output and such that $w_i\geq 2^{1-k}$, there exists a function
\begin{equation}
 H_\Sigma:\Sigma\times M\to\bR 
\end{equation}
so  that $H_{\Sigma}\alpha$ pulls back to $H^idt$ with respect to the cylindrical end for the $i$th input, and $H_{\Sigma}\alpha$ pulls back to $H^0dt$ with respect to the cylindrical end near the output, and so that the following condition holds for each $x\in M$:
\begin{equation} \label{eq:monotonicity_HSigma}
dH_{\Sigma}(x)\wedge\alpha\geq 0.
\end{equation}
\end{lemma}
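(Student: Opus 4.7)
The plan is to let $H_\Sigma$ depend only on $x$ (and not on $z \in \Sigma$) on the ``interior'' of $\Sigma$, and to interpolate monotonically on each cylindrical end up to the prescribed boundary data. The key observation is that on a cylindrical end where $\alpha = w\,dt$, the monotonicity condition $dH_\Sigma(x) \wedge \alpha \geq 0$ reduces to the scalar inequality $\partial_s H_\Sigma \geq 0$, while on any region where $H_\Sigma$ is independent of $z$ the inequality holds trivially since $d_z H_\Sigma = 0$.

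First I would use the weight hypothesis $w_i \geq 2^{1-k}$ (so $1/w_i \leq 2^{k-1}$) together with $H^0 > \vec{H}$ to deduce that
\begin{equation*}
 \max_t H^i(t,x)/w_i < \min_t H^0(t,x) \qquad \text{for every } x\in M \text{ and every } i,
\end{equation*}
and then, using compactness of $M$ and a standard smoothing argument, choose a smooth function $K : M \to \bR$ lying strictly between $\max_{i,t} H^i(t,x)/w_i$ and $\min_t H^0(t,x)$ pointwise on $M$.

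Next, I would fix a smooth monotone cutoff $\chi : \bR \to [0,1]$ with $\chi \equiv 0$ on $(-\infty,0]$ and $\chi \equiv 1$ on $[1,\infty)$, and define $H_\Sigma$ piecewise. On the image of the $i$th input cylindrical end, with coordinates $(s_i,t_i)$ so that $s_i \to -\infty$ at the puncture, set
\begin{equation*}
  H_\Sigma(s_i,t_i,x) = (1-\chi(s_i+2))\,H^i(t_i,x)/w_i + \chi(s_i+2)\,K(x),
\end{equation*}
which equals $H^i/w_i$ for $s_i \leq -2$ and $K(x)$ for $s_i \geq -1$; do the analogous interpolation on the output end, from $K(x)$ up to $H^0(t_0,x)$; and take $H_\Sigma(z,x) = K(x)$ on the rest of $\Sigma$. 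The asymptotic identity that $H_\Sigma\alpha$ pulls back to $H^i\,dt$ (respectively $H^0\,dt$) on each end is immediate, since $\alpha = w_i\,dt_i$ (respectively $dt_0$) there.

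The only remaining step is to verify monotonicity. On each cylindrical end, $dH_\Sigma(x) \wedge \alpha = w\,(\partial_s H_\Sigma)\,ds \wedge dt$; in the interpolation strip for input $i$, $\partial_{s_i} H_\Sigma = \chi'(s_i+2)\bigl(K(x) - H^i(t_i,x)/w_i\bigr) \geq 0$ by the choice of $K$ and the monotonicity of $\chi$, and the output end is analogous; in the constant regions $\partial_s H_\Sigma = 0$. On the interior $H_\Sigma$ depends only on $x$, so $d_z H_\Sigma = 0$ and the inequality is trivial. I expect no real obstacle in this construction: the only points needing care are choosing disjoint cylindrical end images so the piecewise definition is unambiguous and ensuring the interior region is non-empty, both automatic from the standing setup.
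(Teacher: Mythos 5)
Your proof is correct and takes essentially the same approach as the paper: set $H_\Sigma$ equal to a fixed $z$-independent function $K$ on the interior of $\Sigma$, chosen to lie pointwise between $\max_{i,t}H^i/w_i$ and $\min_t H^0$, and interpolate monotonically in the $s$-direction on each cylindrical end, so that monotonicity reduces to $\partial_s H_\Sigma\geq 0$ on the ends and is trivial in the interior where $d_z H_\Sigma=0$. One small remark: the paper's displayed text says $H_\Sigma$ should agree with $w_iH^i$ on the ends, which appears to be a typo — your $H^i/w_i$ is the value actually forced by the pullback condition $\epsilon^{*}(H_\Sigma\alpha)=H^i\,dt$ together with $\epsilon^*\alpha=w_i\,dt$, and is the one consistent with the inequality $H>\max_t H^i_t/w_i$ invoked in the paper's own proof.
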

\begin{proof}
  Extend each negative cylindrical end to an embedding of $ (-\infty,\delta] \times S^1$ for a small constant $\delta$, and the positive cylindrical end to an embedding of $[-\delta, \infty) \times S^1$, so that the images remain disjoint, and so that to pullback of $\alpha$ agrees with $w_i dt$. We call the images of these larger domains, the \emph{extended cylindrical ends}.

  Let $H: M\to\bR$ be a smooth function so that for any $x\in M$, and for all $ 1\leq i\leq k$,  we have 
\begin{equation}
\min_{t\in \bR/\bZ}H^0_t(x)> H(x)>2^{k-1} \max_{t\in\bR/\bZ} H^i_t(x)\geq\max_{t\in\bR/\bZ}\frac1{w_i}H^i_t(x).
\end{equation}  
We define the function $H_{\Sigma}$ piecewise as follows:
\begin{enumerate}
\item In the image of the cylindrical ends, $H_{\Sigma} $ agrees with $w_i H^i$.
\item Away from the images of the extended cylindrical ends, $H_{\Sigma}\equiv H$.
\item In the annuli $[0,\delta] \times S^1$ (or $[-\delta, 0] \times S^1$), we define $H_{\Sigma} $ by linearly interpolating, along the radial direction, between $w_iH^i$ and $H$, taking $w_0=1$.
\end{enumerate}
 The monotonicity condition in Equation \eqref{eq:monotonicity_HSigma} then follows because the resulting function increases along the radial coordinates near each end, and is domain independent away from them. 

\end{proof}
The following is the main result of this section. 
\begin{prop}\label{lmFrgtHtpyEq}
If $H^0>\vec{H}=(H_1,\dots, H_k)$, then the  map
\begin{equation}
\fF:\cH_{\bullet}(\vec{H},H^0)\to f\Mbar^{\bR}_{0}(k)_{\bullet}
\end{equation}
is a homotopy equivalence of symmetric cubical sets.
\end{prop}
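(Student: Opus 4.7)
The plan is to show that $\fF$ satisfies the right lifting property against every boundary inclusion $\partial[0,1]^n \hookrightarrow [0,1]^n$ of cubes; a map of symmetric cubical sets with this property is a trivial fibration, and in particular a homotopy equivalence. Concretely, given an $n$-cube $\sigma$ of $f\Mbar^{\bR}_0(k)_\bullet$ together with a lift $\tilde{\partial\sigma}$ of its boundary to $\cH_\bullet(\vec H, H^0)$ (with its gluing-atlas data), I will produce a filler $n$-cube $\tilde\sigma$ of $\cH_\bullet(\vec H, H^0)$ restricting to $\tilde{\partial\sigma}$ and projecting to $\sigma$. The case $n=0$ is the non-emptiness of fibers, essentially the content of Lemma \ref{lmExMonH} combined with an induction over the vertices of broken curves; the higher-$n$ cases encode contractibility of fibers together with compatibility with the gluing structure.

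The input to both the base and inductive steps is a fiberwise contractibility statement. Over a fixed framed pre-stable curve with underlying tree $T_0$, a preimage in $\cH_\bullet(\vec H, H^0)$ is determined by a Hamiltonian labelling of the internal edges of $T_0$, positive weights $(w_p)$ at the punctures summing at each vertex and satisfying Equation \eqref{eq:weight_bound}, closed $1$-forms $\alpha_v$ with the prescribed cylindrical-end behavior, and functions $H^v : \Sigma_v \times M \to \bR$ with the asymptotics of Equation \eqref{eqTransInvDat} and the pointwise monotonicity $dH^v_x \wedge \alpha_v \geq 0$. The weight polytope is non-empty --- normalise $w_{p_{\mathrm{out}}}=1$ at the output vertex and distribute to its input punctures, then proceed vertex-by-vertex toward the inputs --- and convex. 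The space of closed $1$-forms with fixed asymptotics is affine. The set of admissible $H^v$ for given $(\alpha_v)$ is non-empty by Lemma \ref{lmExMonH} and convex, since the monotonicity inequality is linear in $H^v$. The admissible internal-edge Hamiltonian labels also form a convex set under the assumption $H^0 > \vec H$. Thus each fiber is a non-empty convex subset of an infinite-dimensional Fr\'echet affine space, hence contractible.

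The inductive lift proceeds by extending the data from $\partial[0,1]^n$ inward, using fiberwise convexity together with a partition of unity on the cube. The main obstacle is respecting the combinatorial rigidity of the gluing atlas, in particular matching the identity \eqref{eqGluingCompatibility} on overlaps $W_{f_0} \cap W_{f_1}$ of the cover preceding Definition \ref{dfCuMultMor}. This is handled in two stages. First, using standard bump functions on the cube, I extend the gluing-parameter data $g_f$ so that it vanishes precisely on the prescribed faces and is consistent on overlaps; the underlying trees $T_f$ are then read off from the supports of the components of $g_f$. Second, the maximally-broken Hamiltonian datum $b_f$ on $W_f$ is chosen by convexly interpolating, via a partition of unity, among the data already selected on adjacent lower-dimensional strata, applied vertex-by-vertex in $T_f$; the defining equality $b_{f_1}|_{W_{f_0}} = \Gamma_{g^E_{f_0}}(b_{f_0})$ is then built into the construction. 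This produces the required filler $\tilde\sigma$, verifying the right lifting property and hence the proposition.
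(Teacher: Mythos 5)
Your plan hinges on showing that $\fF$ satisfies the right lifting property against the boundary inclusions $\partial[0,1]^n \hookrightarrow [0,1]^n$, i.e., that $\fF$ is a trivial fibration of symmetric cubical sets. This is false, and the failure is built into the definition of the $n$-cubes of $\cH_\bullet(\vec H,H^0)$. Recall that a cube of $\cH_\bullet$ is a gluing atlas $(T_f,b_f,g_f)_{f\in F_n}$, and Definition \ref{dfLocModMult} requires each component of the gluing function $g_f$ to either vanish identically or be nowhere zero on the interior of each face. Consequently, along the top face of the cube the combinatorial type of the underlying curve is locally constant: the curve is allowed to degenerate only as the parameter approaches a proper face of the cube, never at an isolated interior locus. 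By contrast, $f\Mbar^{\bR}_{0}(k)_\bullet$ is the full singular cubical set of a space, so a $1$-cube downstairs can pass through the boundary divisor at an interior parameter value, say $t=1/2$. Such a $1$-cube, together with arbitrary lifts of its two endpoint $0$-cubes, is a lifting problem with no solution: any filler would have to have the combinatorial type of $\fF(\tilde\sigma(t))$ constant on $(0,1)$, contradicting the assumed degeneration at $t=1/2$. So $\fF$ is not even surjective onto $1$-cubes whose image touches the boundary in the interior, and in particular it is not an acyclic fibration. Your fiberwise analysis cannot rescue this, because the obstruction is combinatorial (which trees are allowed where) rather than analytic.

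Your fiber analysis itself is essentially correct and overlaps with Lemma \ref{lmForSmHoEq} in the paper (non-emptiness via Lemma \ref{lmExMonH}, convexity of the sets of admissible $1$-forms and Hamiltonians, the weight polytope), but that lemma is stated for the smooth locus $\cH^{sm}_0$ mapping to the open stratum $f\cM_{0,k+1}$, precisely to sidestep the boundary mismatch. The actual proof then uses two further ingredients which have no analogue in your plan: (i) Lemma \ref{lmsmincHoEq}, a deformation retraction of $\cH_\bullet(\vec H,H^0)$ onto the smooth singular cubes of $\cH^{sm}_0$, built by gluing along a carefully chosen partition of unity (Equations \eqref{eqTildG}--\eqref{eqGlRetF}); and (ii) the observation that the inclusion of the interior of a manifold with corners into the whole space induces a homotopy equivalence on singular cubical sets. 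These steps are what convert the ``wrong'' direction of the map (from the constrained gluing-atlas model into the unconstrained singular model) into a homotopy equivalence without ever claiming it is a fibration. If you want to salvage your strategy, you would have to compare against an intermediate cubical model of $f\Mbar^{\bR}_0(k)$ in which degenerations are likewise only permitted at faces (cf. Remark \ref{rem-non-haus}); but this is morally the same as the paper's argument, and there is no shortcut through an RLP against boundary inclusions for $\fF$ as stated.
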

The proof will given at the end of this section, after a few preparatory lemmas. Denote by $f\cM_{0,k+1}$ the interior of $f\overline{\cM}_{0,k+1}^{\bR}$, which can be alternately described as the set of framed smooth curves, which is an $(S^1)^{k+1}$ bundle over the interior $\cM_{0,k+1}$ of Deligne-Mumford space. Denote by $\cH^{sm}_{0}(\vec{H},H^0)$ the topological space of $0$-dimensional multimorphisms whose underlying rational curve is smooth. 
\begin{lemma}\label{lmForSmHoEq}
The map
\begin{equation}
\fF:\cH^{sm}_{0}(\vec{H},H^0)\to f\cM_{0,k+1}
\end{equation}
is a homotopy equivalence of topological spaces.
\end{lemma}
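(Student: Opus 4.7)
The strategy is to exhibit $\fF$ as a fibration with contractible fibers. Fixing a smooth framed representative $\Sigma$ of a point in $f\cM_{0,k+1}$, the fiber over $[\Sigma]$ consists of equivalence classes of tuples $(\{\epsilon_p\}, \{w_p\}, \alpha, H_\Sigma)$ of cylindrical ends compatible with the framing, positive weights satisfying \eqref{eq:weight_bound}, a closed $1$-form on $\Sigma$ with prescribed asymptotic form $w_p\,dt$ near each puncture, and a smooth function on $\Sigma\times M$ with prescribed asymptotics and satisfying the monotonicity inequality \eqref{eq:monotonicity_multimorphism}, modulo the scaling relation $(H,\alpha)\sim(e^r H, e^{-r}\alpha)$. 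I would use this scaling to normalize $w_{p_{out}}=1$, and contract the remaining data in four steps corresponding to these four choices.

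First, for fixed framing, the space of compatible cylindrical ends at a puncture is contractible (it is a space of germs of half-cylinder embeddings with prescribed tangent ray and may be contracted by rescaling). Second, after normalization, the space of admissible weights is the convex subset of $(\bR_{>0})^k$ cut out by \eqref{eq:weight_bound}, and hence contractible. Third, for fixed cylindrical ends and weights, the space of closed $1$-forms on $\Sigma$ with the prescribed asymptotic behavior is a non-empty affine subspace of the space of closed $1$-forms on $\Sigma$ (non-emptiness is standard since the weights add up correctly on the punctured sphere), hence contractible. Fourth, with $\alpha$ fixed, the monotonicity inequality $dH_\Sigma(x)\wedge\alpha\geq 0$ is linear in $H_\Sigma$ and the asymptotic conditions are affine, so the space of valid $H_\Sigma$ is convex and contractible; non-emptiness is Lemma \ref{lmExMonH}, applied to any choice of weights in the admissible range (after verifying that the hypothesis $H^0>\vec{H}$, together with a small modification if needed to stay inside the weight range of Lemma \ref{lmExMonH}, suffices).

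To promote fiberwise contractibility to a homotopy equivalence, I would verify that $\fF$ is locally trivial, or at least admits enough local sections. Given a smooth local chart $U\subset f\cM_{0,k+1}$ over which the universal family of framed smooth curves trivializes, I would construct a $U$-parametrized multimorphism by choosing continuously: a smooth family of cylindrical ends (available since the space of such is contractible and framing-compatible ends deform smoothly), a fixed weight vector in the interior of the admissible region, a smooth family of $1$-forms produced by a partition-of-unity interpolation between local models on each cylindrical end and a harmonic reference, and a smooth family of Hamiltonian functions produced by the construction in the proof of Lemma \ref{lmExMonH} applied fiberwise. Fiberwise convexity then lets one build, over $U$, a strong deformation retraction of $\fF^{-1}(U)$ onto the image of this section, showing that $\fF$ restricted to $U$ is a homotopy equivalence. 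Gluing these local equivalences via a Mayer--Vietoris or general nerve argument (valid because $f\cM_{0,k+1}$ is paracompact) yields the global statement.

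The main obstacle I anticipate is the bookkeeping required to match the weight bound \eqref{eq:weight_bound} used in the definition of multimorphisms with the slightly stronger bound $w_i\geq 2^{1-k}$ required by the hypothesis of Lemma \ref{lmExMonH}: one must show that the admissible weight region in Definition \ref{def:pre-multimorphism-H} deformation retracts onto (or is otherwise homotopy equivalent to) a region in which Lemma \ref{lmExMonH} applies, possibly by exploiting that the strict inequality $H^0>\vec{H}$ gives room to adjust the reference function $H$. Apart from this, the argument is a routine assembly of contractibility of affine/convex subspaces of function spaces and local triviality over the moduli space of smooth framed curves.
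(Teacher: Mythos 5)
Your proof takes essentially the same route as the paper's: factor $\fF$ through a chain of forgetful maps (Hamiltonian, then $1$-form together with its weights, then cylindrical ends), observe each step is a fibration with convex or otherwise contractible fibres, and invoke Lemma~\ref{lmExMonH} for non-emptiness of the Hamiltonian fibre. The paper phrases this as a sequence of fibre bundles with contractible fibres rather than a build-up-and-section argument, but the content is identical.

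Regarding the obstacle you flag: your bookkeeping worry is a symptom of a typo in Equation~\eqref{eq:weight_bound}. As written, the lower bound $\frac{1}{2^{|E_{in}(v)|}-1}$ is inconsistent with the gluing compatibility asserted right after Definition~\ref{def:mult-dimens-0}, where the verification ``$2^{n-1}\cdot 2^{m-1}=2^{(n+m-1)-1}$'' is only a proof that the constraint survives gluing if the bound is $w_p/w_{p_{out}}\geq 2^{1-|E_{in}(v)|}=\frac{1}{2^{|E_{in}(v)|-1}}$ (note the exponent shift); the bound $\frac{1}{2^n-1}$ is \emph{not} multiplicative in that way (try $n=m=2$). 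Moreover, the paper's own proof of this lemma explicitly uses ``$w_i\geq 2^{1-n}$ is a convex condition'' and checks $w_i=1/n$ satisfies it. With the corrected bound $w_i\geq 2^{1-k}$, the hypothesis of Lemma~\ref{lmExMonH} is met on the nose and no interpolation between weight regions is needed, so your step four goes through without the extra deformation-retraction argument you propose as a fallback. The only step worth making more explicit than you do is the contractibility of the space of cylindrical ends with prescribed tangent ray: the paper does this via a scaling flow on holomorphic embeddings of the disc whose fixed locus is the positive real dilations, which is cleaner than a generic ``contract by rescaling'' gesture.
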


\begin{proof}  Denote by $\mathcal{D}$ the space of Riemann surfaces with $k$ inputs and $1$ output. We consider the space  $\mathcal{A}$ consisting of an element of $\mathcal{D}$ and a closed one form on the underlying Riemann surface satisfying the condition in Equation \eqref{eqTransInvDat0} for some choice of weights satisfying Condition \eqref{eq:weight_bound} with the output weight equal to $1$, up to biholomorphisms of Riemann surfaces which intertwine the rest of the data.

Note that we can find a unique element of the form $({\Sigma},e^rH_\Sigma,e^{-r}\alpha,\{\epsilon^{\pm}_p\})$ in any equivalence class $[({\Sigma},H_\Sigma,\alpha,\{\epsilon^{\pm}_p\})]\in \cH^{sm}_{0}(\vec{H},H^0)$ such that the output weight is $1$. Let us assume that the representatives that we use below satisfy this property.

The map $\fF$ factors through the projection map from $\mathcal{A}$ to $\mathcal{D} $ as follows:
\begin{equation} \label{eq:factor-forgetful-map}
[({\Sigma},H_\Sigma,\alpha,\{\epsilon^{\pm}_p\})]\mapsto [({\Sigma},\alpha,\{\epsilon^{\pm}_p\})]\mapsto [({\Sigma},\{\epsilon^{\pm}_p\})]\mapsto [\Sigma] ,
\end{equation}
which first forgets the choice of Hamiltonians $H_{\sigma}$, then the $1$-form $\alpha_{\vec{w}}$, and finally the choices of cylindrical ends. It is easy to see that all of these maps are fiber bundles.

We shall show that each of these maps have contractible fibres. The fibres of the first two maps are convex (with respect to the linear structures on the set of Hamiltonians and $1$-forms), so it suffices to show that they are non-empty: for the first map, this is the content of Lemma \ref{lmExMonH}, while for the middle map, this is a consequence of the de-Rham isomorphism which implies we can find a $1$-form $\overline{\alpha}$ with residue $w_i$ on the $i$th input and residue $1$ on the output as long as $w_1+\ldots+w_n=1$. The $1$-form $\overline{\alpha}$ may not be cylindrical with respect to our chosen ends, but this is easily fixed by adding an exact $1$-form. We also used the fact that $w_i\geq 2^{1-n}$ is a convex condition and that there are solutions of $w_1+\ldots+w_n=1$ satisfying this property for all $1\leq i\leq n$, e.g $w_i=1/n$.

It remains to show that the forgetful map from $\cD$ to the moduli space of framed curves has contractible fibres. Note that  given a cylindrical end $\epsilon:(-\infty,0]\times\bRZ  \to\Sigma$ and a non-negative real number $r$, we can obtain another cylindrical end by composing the translation map $(-\infty,0]\times\bRZ\to  (-\infty,-r]\times\bRZ$ with the restriction of $\epsilon$ to $(-\infty,-r]\times\bRZ$.  

Applying this restriction and translation operation, we find that the space of cylindrical ends deformation retracts onto the subset of those whose images lie in the interior of the image of a fixed cylindrical end. We now work on one puncture at a time. The space of cylindrical ends with prescribed tangent ray (which map into the image of a fixed cylindrical end) can be identified with the space of holomorphic embeddings $f$ from the closed unit disk  into the open unit disk preserving the origin and such that $f'(0)\in\bR_+$. By a similar argument we can replace the target fixed disk with $\mathbb{C}$.

By scaling down (multiply with $r$ where $0<r<1$), restricting and scaling up (multiply with $r^{-1}$) we define a contracting flow on the space of such embeddings. The fixed points of this flow are linear embeddings. Using the asymptotic condition these are just the ones given by real scalar multiplication, which is evidently forms a contractible set, proving the desired result.

\end{proof}

For the next result, we denote by $\Box^{sm}_{\bullet} M$ the smooth singular cubes of a differentiable manifold $M$.

\begin{lemma}\label{lmsmincHoEq}
 
  The inclusion in  \begin{equation} \label{eq:include_smooth_gluable_cubes}
   \iota_*: \Box^{sm}_{\bullet}(\cH^{sm}_{0}(\vec{H},H^0) )  \hookrightarrow \cH_{\bullet}(\vec{H},H^0).
 \end{equation}
  is a homotopy equivalence of symmetric cubical sets.
\end{lemma}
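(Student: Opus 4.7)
The plan is to construct an explicit homotopy inverse to $\iota_*$ by a uniform smoothing of gluing parameters. Given an $n$-cube $\fd \in \cH_n(\vec{H},H^0)$ with atlas $(T_f, b_f, g_f)_{f \in F_n}$, define $r_n(\fd)$ by shifting every component of every $g_f$ upward by a fixed small constant $\delta \in (0, \epsilon_0/2)$. Under this perturbation, all gluing parameters become strictly positive throughout $[0,1]^n$, so every internal edge of every $T_f$ is glued, and the underlying map $b$ takes values in $\cH_0^{sm}(\vec{H},H^0)$. Smoothness of $b$ as a map $[0,1]^n \to \cH_0^{sm}$ follows from the smoothness of the local models $b_f$ together with smoothness of the gluing operation $\Gamma_{\vec r}$ in the region where all $r$-components are positive. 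Thus $r_n$ factors through $\Box^{sm}_n(\cH_0^{sm}(\vec{H},H^0))$.

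Next, I would construct an explicit $(n+1)$-cube homotopy from the identity to $\iota_\bullet \circ r_\bullet$. For each $n$-cube $\fd$, the homotopy cube has atlas obtained by taking the product of the original atlas with $[0,1]$ and replacing the perturbation by $t\delta$ at level $t$. The compatibility condition \eqref{eqGluingCompatibility} for this augmented atlas follows from the associativity of iterated gluings, since gluing with parameter $g_f$ and then with parameter $t\delta$ can be rewritten as a single gluing with a rescaled parameter. At the $t=0$ face we recover $\fd$ and at the $t=1$ face we recover $\iota_\bullet r_\bullet \fd$. In the reverse direction, any smooth singular cube has all $T_f$ trivial, hence no gluing parameters to perturb, so $r_\bullet \circ \iota_*$ is literally the identity. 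Checking compatibility of $r_\bullet$ and the homotopy with face, degeneracy, and transposition maps is straightforward because all these operations act uniformly on the underlying atlas.

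The main obstacle is making the perturbation coherent across the entire atlas. Condition \eqref{eqGluingCompatibility} links the local data on $W_{f_0}$ to that on $W_{f_1}$ for $f_0 \subset f_1$ via the gluing $\Gamma_{g^E_{f_0}}$ along the collapsed edges; a uniform shift of $g_f$ on each local model must therefore be reconciled with the shifts on sub- and super-faces so that the shifted atlases continue to satisfy the same coherence condition. The cleanest resolution is to define the perturbation simultaneously on all $W_f$ using the associativity of $\Gamma$ (iterated gluing with parameters $(\vec r, \vec r')$ agrees with a single gluing whose parameters are obtained by a prescribed rescaling rule), which allows one to propagate a single global choice of $\delta$ through the face poset in a consistent manner. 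Once this bookkeeping is dealt with, the rest of the argument is mechanical, and one obtains the desired homotopy equivalence of symmetric cubical sets.
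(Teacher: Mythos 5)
Your proposed uniform shift of all gluing parameters by a fixed $\delta$ breaks the coherence condition \eqref{eqGluingCompatibility}, and the proposal does not contain a mechanism that repairs this. Here is the concrete failure. Take $f_0 \subset f_1$, let $E$ be the edges of $T_{f_0}$ collapsed under $T_{f_0}\to T_{f_1}$ and $E'$ the remaining internal edges, and let $p \in W_{f_0}\cap W_{f_1}$. The underlying map of the original cube is well-defined because
\begin{equation}
\Gamma_{g_{f_1}(p)}\bigl(b_{f_1}(p)\bigr) = \Gamma_{g_{f_1}(p)}\bigl(\Gamma_{g^E_{f_0}(p)}(b_{f_0}(p))\bigr) = \Gamma_{g_{f_0}(p)}\bigl(b_{f_0}(p)\bigr).
\end{equation}
After your shift, the $f_0$-side formula $\Gamma_{g_{f_0}(p)+\delta}(b_{f_0}(p))$ glues every edge of $T_{f_0}$, including those in $E$, at the shifted parameter. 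The $f_1$-side formula $\Gamma_{g_{f_1}(p)+\delta}(b_{f_1}(p))$ must first unpack $b_{f_1}(p) = \Gamma_{g^E_{f_0}(p)}(b_{f_0}(p))$, which glues the edges in $E$ at the \emph{unshifted} parameters, since $b_{f_1}$ is a fixed datum of the cube and not something you get to modify; your shift only touches $g_{f_1}$, hence only the edges $E'$. The two computations thus glue the edges in $E$ at parameters differing by $\delta$, so the resulting map $b$ is not well-defined, and $r_n(\fd)$ is not a singular cube of $\cH_0^{sm}$.

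The remedy you sketch — appealing to ``associativity of iterated gluings'' and a ``rescaling rule'' that rewrites gluing by $g_f$ and then by $t\delta$ as a single gluing — does not exist. Gluing an internal edge $e$ with a positive parameter resolves the node, so there is no further gluing to be performed along $e$ and no composition law of the form you describe. The associativity of $\Gamma$ is only the statement that gluings along \emph{distinct} edges commute; it says nothing about composing or rescaling the parameter at a single edge. This is precisely the bookkeeping the lemma is about, and it cannot be waved away. The paper's proof resolves it with a structurally different device: a partition of unity $\{\psi_{f,n}\}$ subordinate to the cover $\cV_n$ of the cube, compatible with restriction to faces and with the $S_n$-action, and the interpolation formula \eqref{eqTildG}, which treats each edge $e$ differently according to the subset $S_{e,f_0}$ of faces on which $e$ remains uncollapsed. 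Near faces where $e$ is already glued the gluing parameter is left untouched, while near faces where $e$ is still broken it is interpolated toward a positive value; the partition of unity makes the resulting $\tilde g$ and $\hat b$ smooth across the overlaps, and the compatibility of the partition of unity with faces and transpositions is exactly what makes $\rho_\bullet$ a map of symmetric cubical sets. Your proposal is missing this entire mechanism, which is the substance of the proof.
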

\begin{proof}
Abbreviate  $Y_{\bullet}=\Box^{sm}_{\bullet}(\cH^{sm}_{0}(\vec{H},H^0) )$ and $X_{\bullet}=\cH_{\bullet}(\vec{H},H^0)$. We show that there exists a deformation retraction of $X_{\bullet}$ onto $Y_{\bullet}$. That is, we construct a map of symmetric cubical sets $\rho_{\bullet}: X_{\bullet}\to PX_{\bullet}$ satisfying
\begin{equation}\label{eqCubDefRet}
d^-\circ \rho_{\bullet}=\id,\quad d^+\circ \rho_{\bullet}(X^r_{\bullet})\subset Y_{\bullet},\quad \rho_{\bullet}\circ\iota_{\bullet}=P(\iota_{\bullet})\circ s_{*,1}.
\end{equation}
It is easy to see that a deformation retraction of symmetric cubical sets is a homotopy equivalence.

Given a cube $\fd$ we construct $\rho_{\bullet}(\fd)$. For a zero cube $\fd$ we consider the $1$-cube $\rho_{\bullet}(\fd)=(b_f,g_f)_{f\in F_1} \in X_1$ defined by 
\begin{equation}
b:=\{   t\mapsto
\Gamma_{t/2}(\fd) \}\in X_1^r
\end{equation}
And the gluing atlas $(b_f,g_f)_{f\in F_1}$ is the obvious one. Note that, whenever $\fd$ lies in $\iota(Y_{\bullet})$, the glued curve $\Gamma_{t/2}(\fd)$ does not depend on $t$, so we have
\begin{equation}
  \rho_{\bullet}(\fd) = s_{0,1}(\fd) \quad \textrm{ if } \fd\in \iota(Y_{\bullet}).  
\end{equation}

To define $\rho$ for higher dimensional cubes recall the cover $\cV_n$ introduced right after  Definition \ref{dfLocModMult}. Fix once and for all a partition of unity $\{\psi_{f,1}\}$ subordinate to the cover $\cV_1$ of the unit interval. This induces a corresponding  partition of unity $\{\psi_{f,n}\}$ subordinate to the cover $\cV_n$ by taking products.

It is straightforward to check that the partition of unity $\{\psi_{f,n}\}$  is compatible with restriction to faces and equivariant with respect to transposition. That is, whenever $f$ is contained in a codimension $1$ face $f'$ we have 
\begin{equation}\label{eqPOUComp}
\psi_{f,n}|_{f'}=\psi_{f,n-1},
\end{equation}
and for any $\sigma\in\Sigma_n$ we have
\begin{equation}
\psi_{\sigma(f),n}=\psi_{f,n}\circ p_{\sigma}.
\end{equation}

We now proceed to define the retraction on an $n$-cube $\fd$.  We  first define  the underlying map $\rho(b)(t,x)$ of $\rho(\fd)$ pointwise for $(t,x)\in[0,1]\times [0,1]^n$. Let $f_0$ be the smallest face  for which $x\in W_{f_0}$. 
Then, by definition,
 \begin{equation}
b(x)=\Gamma_{g_{f_0}(x)}(b_{f_0}(x)).
 \end{equation}
 
 Define a function $\tilde{g}:[0,1]\times W_{f_0}\to [0,1)^{E(T_{f_0})}$ as follows. For each $e\in T_{f_0}$ let $S_{e,f_0}$ be the set of faces $f'\supseteq f_0$ for which $e$ is not collapsed under the map $T_{f_0}\to T_{f'}$. Let 
 \begin{multline}\label{eqTildG}
 \tilde{g}_{f_0,e}(t,x):=\left(\sum_{f'\in S_{e,f_0}}\psi_{f'}(x)\right)(t/2(1-g_{f_0,e}(x))+(1-t)g_{f_0,e}(x))\\ +\left(\sum_{f'\not\in S_{e,f_0}}\psi_{f'}(x)\right)g_{f_0,e}(x).
 \end{multline}

For $x$ in the $n$-cube, let  $f(x)$ be the smallest face $f_0$ for which $x\in W_{f_0}$. 
Define 
\begin{equation}\label{eqGlRetF}
\rho(b)(t,x)=\Gamma_{\tilde{g}_{f(x)}(t,x)}(b_f(x)).
\end{equation}

It remains to construct a gluing atlas. To prepare the ground for this, define for each face $f$ of the $n$-cube a function $\hat{g}_f:[0,1]\times W_f\to [0,1)^{E(T_{f})}$ by
\begin{equation}
\hat{g}_{f,e}(t,x):=\tilde{g}_{f(x),e}(t,x)
\end{equation}

We turn to define a function $\hat{b}_f(t,x):[0,1]\times W_f\to  \cH_{0,T_f}(\vec{H},H)$. For any $f_0\subset f$ let $E^{f_0,f}$ be the set of edges of $T_{f_0}$ that are collapsed under the map $T_{f_0}\to  T_{f}$. 
Let $\hat{g}^{f_0,f}:[0,1]\times W_{f_0}\to[0,1)^{E^{f_0,f}}$ be the composition of $\hat{g}_{f_0}$ with the projection $[0,1)^{E_{int}(T_{f_0})}\to [0,1)^{E^{f_0,f}}$. Define
\begin{equation}
\hat{b}_{f}(t,x)=\Gamma_{\hat{g}^{f(x),f}(t,x)}(b_{f(x)}(x)).
\end{equation}

Before proceeding, we prove that $\hat{g}_f$ and $\hat{b}_f$ are smooth functions. This needs to be verified for points where $f(x)$ changes. That is, points on the boundaries of $W_{f_0}$ for $f_0\subset f$ (see Figure \ref{fig:gluing_nearby_face}). The change in $\hat{g}_f$ upon crossing the boundary of $W_{f_0}$ at a point $x$ in the boundary  amounts to replacing the expression $\psi_{f_1}(x)g_{f,e}(x)$ in Equation \eqref{eqTildG} by the expression $\psi_{f_1}(x)(t/2(1-g_{f_0,e}(x))+(1-t)g_{f_0,e}(x))$ for each face $f_1$ that contains the face $f_0$ but not the the face $f(x)$. Since $\psi_{f_1}$ vanishes identically near the boundary of $W_{f_0}$ the smoothness of $\hat{g}_f$ follows. The smoothness of $\hat{b}_f$ is similarly implied by Equation \eqref{eqGluingCompatibility} by the same vanishing.

We now return to the task of constructing the gluing atlas. For the faces $\{0\}\times f$ with $f\in F_n$,  we take the local model consisting of the data of the underlying set $\hat{W}_f=V_f\times [0,1)\times U_f$, the underlying tree $\hat{T}_f=T_f$, the broken curve map $\hat{b}_f$, and  gluing function $\hat{g}_f$ just defined.  It is clear by construction and by the smoothness of $\hat{g}_f$ that this satisfies the requirements of a local model for a corner and that $b(x)|_{W_f}=\Gamma_{\hat{g}_f(x)}(\hat{b}_f(x))$. 

For all the other faces $f$ we have that $T_f$ consists of a single vertex. Thus  the local model is necessarily the restriction of $\rho^*(b)$ to the neighborhood of the face. For this to be a local model for each such face, it suffices to verify  that the map on the right hand side of Equation \eqref{eqGlRetF}, restricted to the complement of the face $t=0$, is smooth as a map to $\cH^{sm}$, the space of smooth Hamiltonian data on the punctured sphere.  This follows from Equation \eqref{eqGlRetF} together with the fact that the functions $\hat{g}$ are smooth.

\begin{figure}[h]
  \centering
  \begin{tikzpicture}
     
    \filldraw[opacity=.25,red] (0,0) -- (0,4) -- (2,4) -- (2,0) -- cycle;
    \filldraw[opacity=.25,blue] (0,0) -- (4,0) -- (4,2) -- (0,2) -- cycle;
     \filldraw (0,0) circle (2pt);
     \draw[thick] (0,4) -- (0,0) -- (4,0);
     \node[label=below left:{$f_0$}] at (0,0) {};
     \node at (1,3) {$W_{f}$};
     \node at (1,1) {$W_{f_0}$};
       \node[label=above:{$x$},circle,fill,inner sep=1.5pt] at (1,2) {};
     \begin{scope}[shift={(-2,-1)}]
        \node[label=above:{$T_{f_0}$}] at (0,0) {};
       \draw (-1,1) -- (0,0) -- (1,1);
       \filldraw (-.5,.5) circle (3pt);
       \filldraw (0,0) circle (3pt);
       \filldraw (0,-.5) circle (3pt);
       \draw (0,0) -- (0,-1);
     \end{scope}
      \begin{scope}[shift={(-2,2)}]
        \node[label=above:{$T_{f}$}] at (0,0) {};
       \draw (-1,1) -- (0,0) -- (1,1);
    
       \filldraw (0,0) circle (3pt);
     
       \draw (0,0) -- (0,-1);
     \end{scope}
  \end{tikzpicture}
  \caption{Trees associated to the boundary of a cube}
  \label{fig:gluing_nearby_face}
\end{figure}
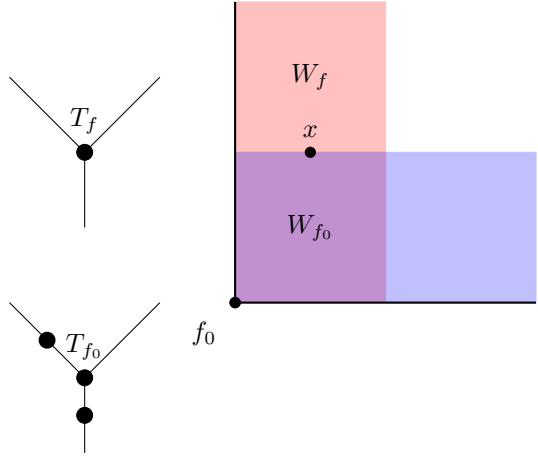

The gluing axiom and compatibility axioms are automatic by construction. We have thus constructed for each $n$-cube $\fd$ in $X_n$ an $n+1$-cube $\rho_n(\fd)\in PX_{n}=X_{n+1}$.

We now show that this map $\rho_{\bullet}:X_{\bullet}\to PX_{\bullet}$  is a  map of cubical sets. That is, it satisfies Equations \eqref{eqDeg1Mor} and \eqref{eqDeg1Mor2}: the compatibility of Equation \eqref{eqGlRetF} with taking face maps is a consequence of Equation \eqref{eqTildG}, together with the compatibility of the maps $\psi_{f,n}$ with restriction to faces which is provided by Equation \eqref{eqPOUComp}.  For compatibility with degeneracy maps, one verifies that whenever the data on $\fd$ is independent of a particular coordinate then so is the data defined by Equations \eqref{eqTildG} and \eqref{eqGlRetF}. Finally, note that Equations \eqref{eqTildG} and \eqref{eqGlRetF} are equivariant with respect to transpositions involving coordinates other than the one corresponding to the direction of the homotopy, proving the compatibility of the homotopy with symetries of the cube. 

Finally, we verify that $\rho_{\bullet}$ indeed satisfies Condition \eqref{eqCubDefRet}. Examining Equation \eqref{eqTildG} we have $\rho_{\bullet}(\fd)(0,x)=\fd(x)$, $\rho_{\bullet}(\fd)(1,x)\in Y_{\bullet}$ and, for any $\fd\in Y_{\bullet}$ we have $\rho_{\bullet}(\fd)(t,x)=\fd(x)$ for all $t\in[0,1]$. This proves the claim.

\end{proof}

We now complete this section with the proof of its main result:
\begin{proof}[Proof of Proposition \ref{lmFrgtHtpyEq}]
Consider the diagram
\begin{equation}
  \begin{tikzcd}
    \Box^{}_{\bullet}(\cH^{sm}_{0}(\vec{H},H^0))\ar[d] &\Box^{sm}_{\bullet}(\cH^{sm}_{0}(\vec{H},H^0))\ar[d]\ar[r] \ar[l] &\cH_{\bullet}(\vec{H},H^0)\ar[d,"\fF"]\\
        \Box^{}_{\bullet}(f\cM_{0,k+1}) &\Box^{sm}_{\bullet}(f\cM_{0,k+1})\ar[l] \ar[r]&f\Mbar^{\bR}_{0}(k)_{\bullet}.
  \end{tikzcd}
\end{equation}
That $\fF$ is a homotopy equivalence will follow from the fact that all other arrows in the diagram are homotopy equivalences. Indeed, the upper right horizontal arrow is a homotopy equivalence by Lemma \ref{lmsmincHoEq}. The left vertical one is a homotopy equivalence by Lemma \ref{lmForSmHoEq}, and the fact that a homotopy equivalence of topological spaces induces a homotopy of the associated symmetric cubical sets. The smooth approximation argument for families of functions and $1$-forms parametrised by a cube then implies that the two horizontal maps on the left are homotopy equivalences, hence so is the middle vertical map (the use of such approximation arguments to show that the inclusion of smooth chains into continuous chains is a homotopy equivalence goes all the way back to Eilenberg \cite{Eilenberg1947}).

The bottom right horizontal map factors as 
\begin{equation}
\Box^{sm}_{\bullet}(f\cM_{0,k+1})\to\Box_{\bullet}(f\cM_{0,k+1})\to \Box_{\bullet}(f\Mbar^{\bR}_{0}(k)) \equiv  f\Mbar^{\bR}_{0}(k)_{\bullet}.
\end{equation}
The map on the right of the last equation is induced from the inclusion of the interior of $f\Mbar^{\bR}_0(k)$. Since $f\Mbar^{\bR}_0(k)$ is a manifold with corners, this inclusion is a homotopy equivalence. The map on the left is a homotopy equivalence, again by the cubical analogue of \cite{Eilenberg1947}. 

\end{proof}

\section{The Floer functor}
\label{sec:floer-functor}

The purpose of this section is twofold. For arbitrary symplectic manifolds, we construct a multicategory $\cF$, lying over $\cH$, consisting of Floer data, i.e. Hamiltonian data with chosen almost complex structures, and prove:
\begin{prop}
  \label{prop:regular_multimorphisms_correct}
The projection map of the non-unital multicategories associated to $\cF $ and $\cH$ is a homotopy equivalence.
\end{prop}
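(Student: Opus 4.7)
The plan is to establish this level-wise, showing that for each sequence $(\vec{H}; H^{0})$ of objects the projection
\begin{equation}
\cF_{\bullet}((\vec{H}, \vec{J}); (H^{0}, J^{0})) \longrightarrow \cH_{\bullet}(\vec{H}; H^{0})
\end{equation}
is a homotopy equivalence of symmetric cubical sets, compatibly with multicomposition and symmetric group actions. The essential input is the contractibility of the space of $\omega$-compatible almost complex structures on $M$, and more generally of the spaces of $\Sigma$-families of such structures with prescribed cylindrical behavior at the punctures. Since the non-unital multicategory structure is built entirely from the multimorphism cubical sets, a level-wise equivalence automatically upgrades to an equivalence of non-unital multicategories.

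First, I would parallel the structural results of Section \ref{sec:hamilt-index-categ} by proving the analogue of Lemma \ref{lmForSmHoEq}, namely that the projection
\begin{equation}
\cF^{sm}_{0}((\vec{H},\vec{J});(H^{0},J^{0})) \longrightarrow \cH^{sm}_{0}(\vec{H}; H^{0})
\end{equation}
on $0$-multimorphisms supported on smooth domains is a fiber bundle whose fiber is the space of smooth $\Sigma$-families of $\omega$-compatible almost complex structures on $M$ whose pullback along each cylindrical end $\epsilon_{p}^{\pm}$ is the prescribed $S^{1}$-family $J^{e_{p}}$. This fiber is convex in the sense that, for any two such families, the path of $\omega$-compatible structures induced by the standard contraction of the space of $\omega$-compatible almost complex structures (for instance via the Cartan decomposition) stays within the given constraints, and therefore the fiber is contractible.

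Second, I would lift this to the cubical setting by mimicking Definition \ref{dfCuMultMor} and Lemma \ref{lmsmincHoEq}: assuming the construction of $\cF$ equips each local model of a corner in $\cH$ with an additional choice of smooth family of almost complex structures on the underlying bundle $C \times U \times \Sigma \times M \to C \times U$, respecting the gluing identifications, the inclusion
\begin{equation}
\Box^{sm}_{\bullet}(\cF^{sm}_{0}((\vec{H},\vec{J});(H^{0},J^{0}))) \hookrightarrow \cF_{\bullet}((\vec{H},\vec{J});(H^{0},J^{0}))
\end{equation}
is again a homotopy equivalence. The proof is the verbatim adaptation of Lemma \ref{lmsmincHoEq}, the only new point being that the contracting homotopy $\rho_{\bullet}$ must act simultaneously on almost complex structure data. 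This is arranged using the same partition of unity on the cube, combined with the convex-combination contraction of $\omega$-compatible almost complex structures relative to their cylindrical asymptotics. A diagram chase identical to the one concluding the proof of Proposition \ref{lmFrgtHtpyEq}, combined with the smooth approximation argument for cube-parametrized families, then gives the desired equivalence at each level.

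The main obstacle I anticipate is bookkeeping rather than conceptual: making the contraction of almost complex structure data compatible with the gluing atlases of Definition \ref{dfCuMultMor} requires that one perform the contraction inside a space that already incorporates the asymptotic matching conditions on both sides of every internal node. Concretely, the linear interpolation of $\omega$-compatible almost complex structures must be carried out only after restricting to the subspace where the cylindrical asymptotics agree with the fixed $J^{e}$ on each edge, and this subspace must vary smoothly with the gluing parameters. Once this is handled, exactly as in the parallel construction for Hamiltonian data, compatibility with the operadic composition \eqref{eqHMultComp} and the symmetric group actions is automatic because both structures are induced by Cartesian products and by permutation of cube coordinates, which commute with the fiberwise contraction of almost complex structures.
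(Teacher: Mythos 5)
Your proof is correct but takes a genuinely different route from the paper. The paper's argument is in two steps: first a lemma (stated without proof) that the map $\cF_{0,T}\to\cH_{0,T}$ on $0$-cubes of fixed tree type is a fibration with contractible fibres — essentially your Step 1 — and then Lemma \ref{lem:forget_J_acyclic-Kan}, which asserts that the map of cubical sets $\cF_{\bullet}\to\cH_{\bullet}$ is an \emph{acyclic Kan fibration}, proved directly by a lifting argument: given a cube in $\cH_{\bullet}$ with a lift of its boundary to $\cF_{\bullet}$, extend the lift over a collar by the gluing atlas, then over the interior (where the topological type is constant) by contractibility of tame almost complex structures. You instead factor through the smooth subcomplex on both sides, show each of the inclusions $\Box^{sm}_{\bullet}(\cF^{sm}_0)\hookrightarrow\cF_{\bullet}$ and $\Box^{sm}_{\bullet}(\cH^{sm}_0)\hookrightarrow\cH_{\bullet}$ is a deformation retract (the latter is Lemma \ref{lmsmincHoEq}, the former its verbatim analogue), and conclude by two-out-of-three from the homotopy equivalence $\cF^{sm}_0\to\cH^{sm}_0$. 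The paper's route produces the stronger statement of an acyclic Kan fibration with a single construction, while yours reuses the already-built machinery of Lemma \ref{lmsmincHoEq} at the cost of an additional comparison diagram — a reasonable trade.

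One small but worth-fixing confusion: in your Step 2 you say the contracting homotopy $\rho_{\bullet}$ needs to be ``combined with the convex-combination contraction of $\omega$-compatible almost complex structures relative to their cylindrical asymptotics.'' This is not needed, and the phrasing suggests a misplaced appeal to convexity. The retraction of Lemma \ref{lmsmincHoEq} is a \emph{gluing} retraction: it is given by the formula $\rho(b)(t,x)=\Gamma_{\tilde{g}_{f(x)}(t,x)}(b_f(x))$, which only interpolates gluing parameters via the partition of unity; the Floer (i.e.\ almost complex) data on each component simply rides along under $\Gamma$, exactly as the Hamiltonian data does. No convex combination of almost complex structures occurs here, and none is necessary, since the gluing of Floer local models from Definition \ref{dfCuMultMor-J-strict} already carries the $J$-data. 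The convexity/contractibility of the space of compatible almost complex structures is used only in your Step 1 (the fibre contractibility statement) — that is where the paper also invokes it. Keeping that distinction straight makes it clear that Step 2 really is a verbatim adaptation with no new analytic input.
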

The above result implies that the analogue of Proposition \ref{lmFrgtHtpyEq} holds for the category $\cF$, which we will use essentially in our later arguments.
\begin{rem}
  The reason for which the above statement is formulated for the underlying non-unital categories is because our construction will not incorporate any morphism between  $(J,H)$ and $(J',H')$, whenever $H = H'$, except for identities which we formally include.   In \cite[Part 3]{Abouzaid2022}, a larger category $\cF$ is constructed, which includes such morphisms, but the proof that the projection map remains a homotopy equivalence in this case is slightly more complicated.
\end{rem}

As in the introduction, let $\Bbbk$ be a commutative ring and denote by $\Lambda_{\geq 0}$ the Novikov ring
\[
\Lambda_{\geq 0}:=\left\{\sum_{i=0}^{\infty} a_iT^{\lambda_i}|a_i\in\Bbbk,\lambda_i\in\mathbb{R}_{\geq 0}\text{ strictly increasing and }\lim_{i\to\infty}\lambda_i=\infty\right\}.
\]
As in the strategy discussed around Equation \eqref{eq:Floer-multifunctor}, passing to the associated differential graded multicategory, we can now directly appeal to the first author's work on virtual fundamental chains:
\begin{prop}[Propositions 9.10 and 12.9 of \cite{Abouzaid2022}]
  If $\Bbbk$ is a characteristic $0$ field, there is a multi-functor
  \begin{equation}
        \Floer : C_* \cF \to \Ch_{\Lambda_{\geq 0}}
      \end{equation}
      with target the category of $\bZ/2$-graded chain complexes, considered as a multicategory by the tensor product of chain complexes, which assigns to every pair $(H,J)$ a chain complex whose underlying $\Lambda_{\geq 0}$ module is freely generated by rank-$1$ free modules associated to the time-$1$ Hamiltonian orbits of $H$, and whose differential is defined by a virtual count of solutions to Floer's equation. \qed
\end{prop}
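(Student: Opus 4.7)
The plan is to extract the multi-functor from the general framework of \cite{Abouzaid2022} by verifying that the multicategory $\cF$ constructed in the previous section fits the hypotheses of the cited Propositions 9.10 and 12.9. First, for each object $(H,J)$ of $\cF$, I would assign the Hamiltonian Floer complex $CF^*(H,J)$ whose underlying $\Lambda_{\geq 0}$-module is freely generated, with a rank-$1$ free summand for each time-$1$ Hamiltonian orbit of $H$, and whose differential is defined by virtually counting isolated Floer cylinders, weighted by $T^{E}$ where $E$ is the topological energy of the cylinder. The fact that $H$ is non-degenerate and $M$ is closed ensures that all the required auxiliary compactness and index-theoretic bookkeeping holds, and because the topological energy of a Floer cylinder is non-negative, the differential is defined over $\Lambda_{\geq 0}$ (not just over the Novikov field).

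Next, I would define the map on multimorphism cubes. To an $n$-cube $\fd \in \cF_n((H^1,J^1),\dots,(H^k,J^k);(H^0,J^0))$, associate the moduli space of maps $u \co \Sigma \to M$ from the underlying (possibly broken) rational curve to $M$, parametrised by $[0,1]^n$, solving the Cauchy--Riemann--Floer equation determined by the cube of Floer data, and converging along each cylindrical end to the prescribed asymptotic orbits. A virtual count of the zero-dimensional component of this moduli space, weighted by $T^{E(u)}$, defines a degree $-n$ operation
\begin{equation}
CF^*(H^1,J^1) \otimes \cdots \otimes CF^*(H^k,J^k) \to CF^*(H^0,J^0).
\end{equation}
The crucial input here is the monotonicity condition $dH^v_x \wedge \alpha_v \geq 0$ from Definition \ref{def:pre-multimorphism-H}: the standard computation shows that the geometric energy of a solution dominates its topological energy plus a non-negative contribution controlled by this wedge product, so $E(u) \geq 0$ and the operation converges over $\Lambda_{\geq 0}$. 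Compatibility with the face maps $d^{\pm}_{n,i}$, the degeneracies $\sigma_i$, and the symmetric action $p_{n,i}$ amounts to the fact that restriction, projection, and coordinate transposition of the cube of Floer data induce the obvious operations on the associated parametrised moduli spaces; compatibility with the multicomposition $\circ_i$ is the usual Floer-theoretic gluing/breaking principle, applied to the attachment of a Riemann surface at a cylindrical end.

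The main obstacle is the transversality-coherence problem, which is where the virtual technology of \cite{Abouzaid2022} is essential: one cannot in general choose a single set of abstract perturbations that is simultaneously regular on every cube, symmetric-equivariant, and compatible with face maps and with the multicompositional gluing formulas, as needed for the axioms of a multi-functor. Proposition 9.10 of \cite{Abouzaid2022} provides precisely such a coherent system of virtual perturbations on the full diagram of moduli spaces indexed by $C_*\cF$, viewed as an enriched multicategory indexing Kuranishi-type flow categories, and Proposition 12.9 extracts the resulting multi-functor to $\Ch_{\Lambda_{\geq 0}}$. Therefore the strategy reduces to packaging our moduli spaces as an object in the input category of those propositions: specifically, I would check that (i) the moduli spaces attached to an $n$-cube are compact Hausdorff with the correct stratified boundary matching the face maps; (ii) the gluing maps of Definition \ref{dfCuMultMor} provide the charts of the required corner-smooth structure; (iii) Gromov compactness holds, using the non-negative weight bound \eqref{eq:weight_bound} and the monotonicity inequality to bound geometric energy from below and curvature from above, and using the fact that on a fixed cube the Floer data vary in a compact family. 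Once these three items are in place, the theorem follows by direct application of the cited propositions, and characteristic zero enters only through the averaging construction used to symmetrise the virtual perturbations.
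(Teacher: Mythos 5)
The paper treats this Proposition as a black-box citation: the \qed is attached to the statement itself, no proof is given, and the paragraph immediately following says only that the reader may find the relevant geometric constructions of the moduli spaces in Sections \ref{sec:almost-compl-struct} and \ref{sec:moduli_spaces}. Your proposal is a reasonable unpacking of what that citation covers, and it agrees with the paper's setup: the moduli spaces $\Mbar(\fd)$ parametrised by cubes of Floer data, compatibility of face and degeneracy maps with restriction and projection of the parametrising cube, gluing at cylindrical ends for multicomposition, and the role of the virtual machinery in producing coherent perturbations (with characteristic $0$ used to symmetrise them). The one slip worth flagging is the direction of the energy inequality: you write that the geometric energy ``dominates the topological energy plus a non-negative contribution controlled by this wedge product,'' but the paper's formula runs the other way, namely
\begin{equation*}
E(u) = \int |du - X_H \otimes \alpha|^2 \, d\mathrm{vol}_{\Sigma} + \int dH_{u(z)} \wedge \alpha,
\end{equation*}
so the topological energy $E(u)$ decomposes as the (manifestly non-negative) geometric energy plus the curvature term $\int dH \wedge \alpha$, and it is the monotonicity hypothesis $dH \wedge \alpha \geq 0$ that makes the latter non-negative; the conclusion $E(u) \geq 0$ you draw is correct, but the inequality you invoke to reach it is reversed.
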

We refer to the cited reference for the proof of the above result, though the reader will find, in Section \ref{sec:almost-compl-struct} and \ref{sec:moduli_spaces} below the basic geometric constructions of the moduli spaces which are involved.

Our second goal is to give a construction that is independent of the theory of virtual counts in the situation where virtual techniques can be avoided without an assumption on the base ring $\Bbbk$ (see Remark \ref{rem:semi-positivity} for a discussion of why we do not consider the semi-positive case):
\begin{prop} \label{prop:regular_moduli_for_monotone}
  If  $c_1(M)$ and $[\omega] $ are proportional on $\pi_2(M)$, with non-negative proportionality constant, then there is a multicategory $ \cF^{reg} \subset \cF$, so that (i) every object of $\cH$ admits a lift to $  \cF^{reg}$, (ii) for each input sequence of objects $\overrightarrow{(J,H)}$ and output $(J_0, H_0)$, the associated inclusion of multimorphism spaces
  \begin{equation}
\cF^{reg} (  \overrightarrow{(J,H)}, (J_0, H_0)) \to \cF (  \overrightarrow{(J,H)}, (J_0, H_0))   
  \end{equation}
  is a homotopy equivalence, and (iii) Floer theory defines a cubically enriched multifunctor
   \begin{equation}
        \Floer : C_* \cF^{reg} \to \Ch_{\Lambda_{\geq 0}}
      \end{equation}
      with target the category of $\bZ/2$-graded chain complexes over the Novikov ring. The differential is defined by a geometric count of solutions to Floer's equation.\end{prop}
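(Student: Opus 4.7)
The plan is to define $\cF^{reg}$ as the sub-multicategory of $\cF$ with the same underlying objects, whose $n$-cubes of multimorphisms are those $\fd$ for which all parametric moduli spaces of Floer-equation solutions associated to $\fd$ are transversely cut out in parametric dimension $\leq 1$ (equivalently, at each parameter, Floer index $\leq 1-n$). Objects of $\cF^{reg}$ are required to be regular in the usual sense, so that the Floer differential is well defined. The geometric crux of the argument is that under the spherically non-negatively monotone hypothesis $c_1 = \lambda [\omega]$ on $\pi_2(M)$ with $\lambda \geq 0$, a generic choice of almost complex structure makes every simple pseudoholomorphic sphere regular of dimension $2n + 2c_1(A) \geq 2n$, so in the parametric Floer moduli spaces of dimension $\leq 1$ that we use, sphere bubbling has codimension $\geq 2$ and does not contribute to the compactification.

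First I would dispatch parts (i) and (iii), which are essentially classical. For (i), the Sard--Smale theorem shows that for each Hamiltonian $H\in\cH$ the set of $S^1$-families $J$ making $(H,J)$ regular is comeager, so every object of $\cH$ lifts to $\cF^{reg}$. For (iii), the Floer multifunctor is defined on a regular $n$-cube $\fd$ by signed counts of rigid solutions in the parametric moduli space, using coherent orientations. The multifunctor identities follow from the cobordism provided by the $1$-dimensional parametric moduli space, whose boundary decomposes into (a) restriction to the boundary faces of the cube (cubical differential), (b) Floer breaking at the asymptotic cylindrical ends (action by the Floer differentials), and (c) breakings at nodes of the pre-stable domain consistent with the gluing atlas (operadic composition). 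Sphere bubbling is excluded by the codimension argument above. Closure of $\cF^{reg}$ under multicomposition $\circ_i$ is then automatic, since the parametric moduli space associated to the composition is, near the new node, the transverse fibre product of the factor moduli spaces over evaluation at the matching orbit.

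The main obstacle is (ii), which requires constructing a level-wise homotopy equivalence $\cF^{reg}(\vec H; H^0) \hookrightarrow \cF(\vec H; H^0)$ of symmetric cubical sets compatibly with the multicategory structure. I would produce a deformation retraction $\rho$ in the sense of Equation \eqref{eqCubDefRet}, modeled on the proof of Lemma \ref{lmsmincHoEq}: given an $n$-cube $\fd$, I would build an $(n+1)$-cube $\rho(\fd)$ that interpolates along an auxiliary $[0,1]$-direction from $\fd$ at $t=0$ to a regular representative at $t=1$, using the same partition-of-unity machinery on the cover $\cV_n$ to handle the stratification by broken curves, and with $\rho(\fd) = s_{0,1}(\fd)$ when $\fd$ is already regular. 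The parametric family at each slice $t>0$ is obtained by perturbing the almost complex structure of $\fd$ while fixing the Hamiltonian data; because regularity of moduli spaces of dimension $\leq 1$ is an open dense condition on such perturbations (another application of Sard--Smale), the terminal slice can be arranged to land in $\cF^{reg}$.

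The hardest part is ensuring that these perturbations extend inductively over the strata of the cube, compatibly with the gluing atlas of Definition \ref{dfLocModMult}, with face and degeneracy maps, with the symmetric group action, and with multicomposition. I would proceed by induction on the codimension of the stratum: at a stratum where the domain is a broken curve $T$, a perturbation chosen on each vertex component extends by the collar structure of the gluing atlas to a neighbourhood in the cube, and the choices made on faces of lower codimension restrict the allowable perturbations on the stratum to an affine subspace in which regularity remains generic. This is the standard coherent-perturbation argument familiar from the construction of $A_\infty$-type operations, adapted to the cubical rather than simplicial or associahedral setting. Once the perturbations are constructed coherently for all sequences $(\vec H; H^0)$ simultaneously, using the same cofiltration of $\cF$ by multimorphism complexity, the resulting $\rho$ is a retraction of symmetric cubical sets and also respects $\circ_i$, completing the proof of (ii).
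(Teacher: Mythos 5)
Your overall architecture matches the paper's: (i) by Sard--Smale, (iii) by coherent orientations and the cobordism interpretation of the one-dimensional parametric moduli spaces, and (ii) by a cube-by-cube deformation retraction onto regular data, built inductively with partitions of unity on the cover $\cV_n$ and using the freedom the gluing atlas gives you to perturb data on a full neighbourhood of each corner rather than only along the gluing direction. But there are two substantive gaps.

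First, your working definition of $\cF^{\mathrm{reg}}$ — ``all parametric moduli spaces transversely cut out in parametric dimension $\leq 1$'' — is precisely the characterisation the paper argues against adopting as a definition. The paper's definition of a regular cube imposes separate, structured conditions: surjectivity of the extended linearisation for simple pseudo-holomorphic spheres of Chern number $0$, surjectivity for solutions with no Floer cylinder or bubble components and small index, and \emph{transversality of the two evaluation maps} to $M$ (from parametrised spheres and from parametrised Floer solutions). The reason this matters for your part (ii): in order to construct the retraction you have to know not only that the regular locus is large but also \emph{why} bubbling is excluded, so that this exclusion persists under the perturbations you are constructing. A bare transversality statement about the parametric moduli space records the disjunction of cylinders from Chern-$0$ sphere cycles as a fact without a cause, and is not stable under varying the data. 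The paper's disjointness-via-transversality formulation is designed exactly to make the argument carry through in families, and your retraction step silently relies on this without having set it up.

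Second, the dimension count you use to exclude bubbling is off, and in a way that is material in the spherically Calabi--Yau case. The space of unparametrised simple $J$-holomorphic spheres in class $A$ has expected dimension $2n + 2c_1(A) - 6$, not $2n + 2c_1(A)$. When $c_1 = \lambda[\omega]$ with $\lambda > 0$ every nonconstant sphere has $c_1(A)\geq 1$ and the usual codimension-$2$ argument applies; but when $\lambda = 0$ every sphere has $c_1(A) = 0$ and the simple sphere moduli space has dimension $2n - 6$, which for $n\geq 3$ is nonnegative. These bubbles are not ruled out by index positivity. What rules them out is the transversality of the two evaluation maps: a Floer cylinder of index $\leq 1$ sweeps a cycle in $S^1\times M$ of dimension $\leq 2$, and the simple Chern-$0$ spheres sweep a cycle of dimension $\leq 2n-4$, so generic transversality forces disjointness (since $2 + (2n-4) < 2n+1$). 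You cannot reach this conclusion from the index count alone; it is the third clause of the paper's definition of regularity that does the work. Related to this, ``open dense'' is too strong: because energy is unbounded on the full moduli space, the regular locus is only a countable intersection of open dense sets, and your retraction should be phrased using the residual property together with the flexibility of the gluing atlas, as the paper does.
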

\begin{rem}
In the special case where $c_1(M)$ vanishes, a trivialisation of the canonical bundle of $M$ determines a lift of the Floer multi-functor to the category of $\bZ$-graded complexes. As discussed in \cite{Abouzaid2013}, the condition that $c_1(M)$ be $2$-torsion is sufficient to define a lift of the Floer complexes to $\bZ$-graded chain complexes, but this lift is not in general compatible with operations.
\end{rem}

\subsection{Almost complex structures}
\label{sec:almost-compl-struct}

Let $\cJ$ denote the space of $S^1$-families of $\omega$-compatible almost complex structures on $M$. The set of objects of the multicategory $\cF$ is the product of $\cH$ with $\cJ$, i.e. it consists of a non-degenerate Hamiltonian together with a compatible family of almost complex structures.

Let $\overrightarrow{(H,J)}$ be a sequence $(H^1,J^1),(H^2, J^2), \ldots, (H^k,J^k)$ of objects of $\cF$. 
\begin{defin}\label{def:pre-multimorphism-F}
  A \emph{pre-multimorphism} in $\cF $ from $\overrightarrow{(H,J)}$  to $(H^0,J^0)$ consists of a pre-multimorphism from $\vec{H}$ to $H^0$ in the sense of Definition \ref{def:pre-multimorphism-H}, together with
  \begin{enumerate}
  \item a lift of the label $H^e$ of each edge of the underlying tree $T$ to a pair $(H^e, J^e)$,
    \item a map from $\Sigma$ to the space of tame almost complex structures on $M$, whose pullback under the strip-like end associated to every edge $e$ adjacent to a vertex $v$ agrees with $J^e$.
  \end{enumerate}
\end{defin}
As before, we define two pre-multimorphisms to be equivalent if there is a biholomorphism between the underlying pre-stable curves which intertwines both the Hamitonian and almost complex data. We write $\cF_{0}(\overrightarrow{(H,J)};(H^0,J^0))$ for the set of equivalence classes of such data and $\cF_{0,T}(\overrightarrow{(H,J)};(H^0,J^0)) $ for those with underlying labelled tree $T$. The $C^\infty$ topology on the space of almost complex structures equips this space with a natural topology for which the following result holds:
\begin{lem}
  The projection map
  \begin{equation}
   \cF_{0,T}(\overrightarrow{(H,J)};(H^0,J^0))  \to  \cH_{0,T}(H^1,\dots,H^k;H^0)
  \end{equation}
   is a fibration with contractible fibres. \qed
\end{lem}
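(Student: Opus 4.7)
The plan is to identify the fibre explicitly as a product of section spaces with values in contractible targets, then deduce both contractibility and the fibration property from convexity of the space of tame almost complex structures.

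First I would fix a Hamiltonian multimorphism $\fd \in \cH_{0,T}(H^1,\dots,H^k;H^0)$ and describe its preimage. Since $T$ is already determined on the $\cH$-side, the additional data of a lift to $\cF_{0,T}(\overrightarrow{(H,J)};(H^0,J^0))$ are (i) for each internal edge $e$ of $T$, a choice of $S^1$-family $J^e \in \cJ$ of $\omega$-compatible almost complex structures, and (ii) for each vertex $v$, a smooth map $\Sigma_v \to \cJ_{tame}(M)$ which, under the cylindrical end $\epsilon_p^{\pm}$ associated to any puncture $p$ corresponding to an edge $e$ adjacent to $v$, pulls back to the chosen family $J^e$ (where $J^e$ is determined by the source/target sequence at external edges). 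The passage from pre-multimorphisms to equivalence classes is harmless here since the biholomorphism group of $\fd$ acts freely on these choices, and choosing a representative of $\fd$ rigidifies the picture.

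Next I would recall that $\cJ$ and the space $\cJ_{tame}(M)$ of tame almost complex structures are contractible: each is a convex open subset of an appropriate Fr\'echet space of sections, a classical fact from symplectic topology. Consequently the space of internal edge labels $\prod_{e \in E_{int}(T)} \cJ$ is contractible. For each vertex $v$, the space of smooth maps $\Sigma_v \to \cJ_{tame}(M)$ with prescribed cylindrical asymptotics is the space of sections of a bundle over $\Sigma_v$ whose fibres are convex, with boundary condition pinned at the cylindrical ends; by a standard cutoff-and-interpolate argument (using a partition of unity on $\Sigma_v$ subordinate to a neighborhood of the cylindrical ends together with its complement), this section space is non-empty and convex, hence contractible. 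Taking the product over all internal edges and all vertices shows that the fibre of the projection is contractible.

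For the fibration property, I would establish a local triviality statement: given any continuous family $\fd_s \in \cH_{0,T}$ parametrised by a compact base $B$, and a lift over a closed sub-parameter space $B_0 \subset B$, one can extend the lift to all of $B$. The extension is produced edge by edge and vertex by vertex using the convexity of $\cJ_{tame}(M)$ (and $\cJ$) together with a Tietze-type extension argument; convexity ensures that partition-of-unity interpolations stay in the target space. The main obstacle I anticipate is the bookkeeping near the cylindrical ends to ensure that the extended almost complex structure families genuinely match the prescribed asymptotics after varying the gluing/end data, which one handles by first extending the edge labels $J^e$ and then extending the vertex data with boundary conditions determined by the already-extended edge labels. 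Once this extension property is established, the map is a Hurewicz fibration with contractible fibres, completing the proof.
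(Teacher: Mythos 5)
Your proposal supplies an argument the paper omits — the lemma is stated with \qed, and later text (the proof of Lemma~\ref{lem:forget_J_acyclic-Kan}) reveals that the intended justification is ``the contractibility of the space of tame almost complex structures'' — and the structure you give (fibre as a product of per-internal-edge choices and per-vertex section spaces with pinned cylindrical asymptotics, contractibility of each factor, parametric extension to get the lifting property) is the right one and matches the paper's implicit reasoning.

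One justification needs correcting, though it does not invalidate the plan: the space of $\omega$-tame (or $\omega$-compatible) almost complex structures is \emph{not} a convex open subset of a Fréchet space of sections. A pointwise affine combination $tJ_0 + (1-t)J_1$ of two almost complex structures generally fails to square to $-\mathrm{id}$, so the literal convexity you invoke for $\cJ_{tame}(M)$ and $\cJ$ does not hold. The fact you actually want is contractibility, which is classically established by exhibiting a \emph{nonlinear} convex parametrization — e.g.\ polar decomposition with respect to a fixed reference metric, or the Siegel upper half-space model — under which the space does acquire a canonical interpolation operation. Once you replace ``convex subset of a Fréchet space'' with ``contractible, with a canonical interpolation operation coming from such a parametrization,'' your cutoff-and-interpolate argument for the per-vertex section space, and your edge-first-then-vertex Tietze-type extension for the lifting property, both go through unchanged. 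Your observation that one must extend the internal-edge labels $J^e$ before the vertex data so that boundary conditions are consistent is exactly the bookkeeping point worth flagging, and you handle it in the right order.
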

As discussed in the proof of Proposition \ref{lmFrgtHtpyEq}, this implies that the resulting map of smooth singular cubes is a homotopy equivalence. This will only take us partly towards the desired result, because, as in Section \ref{sec:high-cubes-mult}, we shall not topologise the full space $\cF_{0}(\overrightarrow{(H,J)};(H^0,J^0)) $, but rather define higher $n$-cubes to be those which are smooth on each stratum, and are equipped with prescribed gluing data in neighbourhoods of all strata.

To proceed, we thus extend the gluing construction from Section \ref{sec:high-cubes-mult}: given  an element $\cF_{0,T}(\overrightarrow{(H,J)};(H^0,J^0)) $ and a choice of gluing parameters $\vec{r}\in [0,1)^{|E_{int}(T)|}$ indexed by the interior edges of $T$, we obtain a glued datum
\begin{equation}
 \Gamma_{\vec{r}}(\fd) \in \cF_{0}(\overrightarrow{(H,J)};(H^0,J^0))
\end{equation}
which lies in the stratum labelled by the tree obtained by collapsing every interior edge of $T$ with non-zero gluing parameter. In this way, we can define the notion of a \emph{Floer local model} by adding the complex structure data to Definition \ref{dfLocModMult}, i.e. this consists of an open manifold $V$, an open neighbourhood $U$ of the origin in the $k$-cube, and a tree $T$ labelled by Hamiltonians and almost complex structures, together with smooth maps
\begin{align}
    b \co V\times U & \to \cF_{0,T}(\overrightarrow{(H,J)},(H^0, J^0)) \\
   g \co V\times U & \to [0,1)^{|E_{int}(T)|},
\end{align}
where we require the gluing data to vanish on $V \times \{0\}$, and for each component to either identically vanish, or to be non-zero in the interior. The point is that this data determines a map
\begin{equation}
  \Gamma_{g}(b) \co  V \times U \to  \cF_{0}(\overrightarrow{(H,J)},(H^0, J^0)) 
\end{equation}
which is the Floer data associated to this local model.

\begin{defin}\label{dfCuMultMor-J-strict} 
  For each natural number $n$, the set $\cF_{n}(\overrightarrow{(H,J)},(H^0,J^0))$ consists of collections $(b_f,g_f)_{f\in F_n}$, indexed by the faces of the cube, of Floer local models for a corner.  We require that the Floer data obtained by gluing agree whenever they are defined, in the sense of Definition \ref{dfCuMultMor}.
\end{defin}

We omit the proof of the next result, which is a straightforward generalisation of the construction of the multicategory $\cH$:
\begin{prop}
  The collection of sets $\{\cF_{n}(\overrightarrow{(H,J)},(H^0,J^0))\}_{n=0}^{\infty}  $ admits face, degeneracy, and symmetry operations making them into the $n$-cubes of a symmetric cubical set $ \cF_{\bullet}(\overrightarrow{(H,J)},(H^0,J^0))$, which is equipped with a forgetful map to $ \cH_{\bullet}(\vec{H},H^0) $. 
\end{prop}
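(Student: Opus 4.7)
The plan is to mirror, step for step, the construction of the cubical structure on $\cH_{\bullet}(\vec{H},H^0)$, carrying along the extra almost complex structure datum throughout. The essential observation is that, unlike the Hamiltonian $H$ and the $1$-form $\alpha$, the almost complex structure $J$ is not subject to any analogue of the monotonicity inequality \eqref{eq:monotonicity_multimorphism} nor of the weight constraint \eqref{eq:weight_bound}; it is simply a smooth map from $\Sigma$ to the space of tame almost complex structures whose pullback under each cylindrical end is specified. Thus the gluing operation $\Gamma_{\vec{r}}$ on $\cF_0$ is already well defined, since the restrictions of $J^{v_-}$ and $J^{v_+}$ agree along the cylindrical ends associated to any internal edge (both pull back to the same $J^e$ by construction), and the same verification carried out for $\cH$ shows that the glued map is again a Floer datum.

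With $\Gamma_{\vec{r}}$ in hand, I would define the face, degeneracy, and transposition maps on $\cF_{\bullet}(\overrightarrow{(H,J)},(H^0,J^0))$ exactly as in the proof of the cubical structure on $\cH_\bullet$. Namely, for an $n$-cube $(b_f, g_f)_{f\in F_n}$ represented on the cover $\cV_n$, pre-composition with $\iota^\pm_{n,i}$, $\pi_{n+1,i}$, and $\tau_{n,i}$ respectively produces a new family of Floer local models; because the cover $\cV_n$ and the partition-of-unity framework are compatible with restrictions to faces, with products, and with the transposition action on coordinates, these pre-compositions take Floer local models to Floer local models and respect the compatibility condition \eqref{eqGluingCompatibility}. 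The verification of the cubical set identities \eqref{eqCubSet1}--\eqref{eqCubSet6} is then purely formal, since all of these operations act on the $[0,1]^n$-factor of the local models while leaving the gluing atlas data unchanged in an obvious way.

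For the forgetful map $\cF_{\bullet}(\overrightarrow{(H,J)},(H^0,J^0))\to \cH_{\bullet}(\vec{H},H^0)$, I would send a Floer local model $(V,U,T,b,g)$ to the Hamiltonian local model obtained by (i) keeping $V$, $U$, and $g$ the same, (ii) forgetting the almost complex structure labels on the edges of $T$, and (iii) composing $b$ with the projection $\cF_{0,T}(\overrightarrow{(H,J)},(H^0,J^0))\to \cH_{0,T}(\vec{H},H^0)$ described in the preceding lemma. Since this projection commutes with the gluing maps $\Gamma_{\vec{r}}$ (the almost complex structure data just glue componentwise, independently of the Hamiltonian data), the compatibility condition \eqref{eqGluingCompatibility} is preserved, and hence the collection of maps on $n$-cubes assembles into a map of symmetric cubical sets. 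The fact that this map commutes with the face, degeneracy, and transposition operations is tautological from the definitions, since both sides are defined by pre-composition on the $[0,1]^n$-factor.

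The only real content here is making sure that the gluing operation on Floer data is well-defined and smooth in families, which has already been incorporated into Definition \ref{dfCuMultMor-J-strict}; once this is granted, every subsequent step is a direct copy of the corresponding argument in Section \ref{sec:hamilt-index-categ}. I do not foresee any obstacle beyond checking that the asymptotic compatibility of $J^{e}$ along internal edges is preserved under the various operations, which follows from the fact that these operations act entirely on the cube coordinates and not on the Riemann surfaces themselves.
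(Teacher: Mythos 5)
Your proof is correct and follows exactly the route the paper indicates, namely the paper explicitly omits this proof as ``a straightforward generalisation of the construction of the multicategory $\cH$,'' which is what you carry out: defining the face, degeneracy, and transposition maps by pre-composition with the corresponding cube maps, checking that the almost complex structure datum is inert with respect to the monotonicity and weight constraints so that gluing, local models, and the compatibility condition \eqref{eqGluingCompatibility} all lift verbatim, and defining the forgetful map by post-composition with the componentwise projection on Floer local models. The one small slip is the reference to ``the partition-of-unity framework'' at the stage of verifying the cubical identities — the partition of unity only enters later in the retraction argument of Lemma \ref{lmsmincHoEq}, not in establishing the cubical structure — but this does not affect the substance of the argument.
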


The next result is a consequence of the contractibility of the space of almost complex structures which are compatible relative to a given symplectic form:
\begin{lem} \label{lem:forget_J_acyclic-Kan}
  The maps from $ \cF_{\bullet}(\overrightarrow{(H,J)},(H^0,J^0))$ to $ \cH_{\bullet}(\vec{H},H^0) $ is an acyclic Kan fibration. 
\end{lem}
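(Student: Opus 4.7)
The plan is to verify the right lifting property of $\pi \colon \cF_{\bullet}(\overrightarrow{(H,J)},(H^0,J^0)) \to \cH_{\bullet}(\vec{H},H^0)$ against every boundary inclusion $\partial \Box^n \hookrightarrow \Box^n$. So I would fix an $n$-cube $\fd^{\cH}$ in $\cH_{\bullet}$ together with a compatible lift of $\partial \Box^n$ to $\cF_{\bullet}$, and produce an extension to all of $\Box^n$.

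First, I would observe that the combinatorial part of the gluing atlas --- the trees $T_f$, their Hamiltonian labels, and the gluing maps $g_f \colon V_f \times U_f \to [0,1)^{|E_{int}(T_f)|}$ --- is shared between $\cH$ and $\cF$ and can be transferred from $\fd^{\cH}$ unchanged. The labels of internal edges of each $T_f$ by objects of $\cF$ are either prescribed by the boundary lift or, in the absence of a boundary constraint (which occurs only when $n=0$), may be chosen freely, using that the space $\cJ$ of $S^1$-families of tame almost complex structures is non-empty. Thus the extension problem reduces entirely to supplying, for each face $f \in F_n$, a smooth family of $\omega$-tame almost complex structures on the fibers of the Riemann surface family over $V_f \times U_f$, with the prescribed asymptotics on the cylindrical ends and compatible, under the gluing maps, with the analogous data on adjacent faces.

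Organizing by face dimension, the boundary lift already provides the $J$-data over all proper faces $f \subsetneq [0,1]^n$. By the gluing compatibility of Equation \eqref{eqGluingCompatibility}, this data in turn uniquely determines the $J$-part of $b_{top}$ on the collar $\bigcup_{f \subsetneq [0,1]^n} W_f$ of $\partial V_{top}$ inside $V_{top}$. The task therefore reduces to extending this prescribed collar data to a smooth assignment of almost-complex-structure families on all of $V_{top}$.

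This extension is then solved by standard contractibility arguments. Pointwise on $M$ the space of $\omega$-compatible tame almost complex structures is contractible (it is the homogeneous space $Sp(2n)/U(n)$); the subspace of families on a given Riemann surface with the prescribed asymptotics on the cylindrical ends is a non-empty convex subset of an affine space of sections; and so the space of such families parametrised smoothly by $V_{top}$ is non-empty and contractible, even after fixing an arbitrary smooth boundary value. A partition-of-unity argument interpolating between the boundary-prescribed data and an arbitrary reference global section delivers the desired lift. The only delicate point is to verify the smoothness of the final family across the transitions between collars of faces of different codimensions, but this follows from the smoothness assumptions built into Definition \ref{dfLocModMult} together with the compatibility of the boundary data assumed in the lifting diagram; this bookkeeping is where I expect the only real (though still routine) technical work in the proof to lie.
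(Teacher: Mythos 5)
Your proposal matches the paper's proof in strategy: verify the right lifting property against $\partial\Box^n\hookrightarrow\Box^n$ by transferring the combinatorial gluing atlas from $\cH$, letting the boundary lift determine the almost-complex data on a collar via the gluing compatibility, and then extending inward using contractibility of the space of tame almost complex structures. One caveat: the space of $\omega$-tame (or $\omega$-compatible) almost complex structures is \emph{not} a convex subset of an affine space of sections, so a literal partition-of-unity interpolation of $J$'s does not make sense as written; contractibility holds by other standard arguments (e.g.\ it is the space of sections of a bundle with contractible fiber, or one interpolates at the level of compatible metrics), and the paper invokes it at the same level of generality, so this is an imprecision rather than a gap.
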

\begin{proof}[Sketch of proof]
  It suffices to show that, given a cube in $ \cH_{\bullet}(\vec{H},H^0)$ whose boundary is equipped with a lift to $ \cF_{\bullet}(\overrightarrow{(H,J)},(H^0,J^0))$, we may extend this lift to the entire cube. The gluing atlas of Hamiltonians determines a lift to a gluing atlas incorporating almost complex structures, which provides a lift to a neighbourhood of the boundary. Since the underlying topological type of the (pre-stable) Riemann surface in the interior of the cube is fixed, we may now directly use the contractibility of the space of tame almost complex structures.
\end{proof}

Combining Lemma \ref{lem:forget_J_acyclic-Kan} with Proposition \ref{lmFrgtHtpyEq} we conclude:
\begin{cor}
The forgetful map
\begin{equation}
\fF: \cF_{\bullet}(\overrightarrow{(H,J)},(H^0,J^0) ) \to f\Mbar^{\bR}_{0}(k)_{\bullet}
\end{equation}
is a homotopy equivalence. 
\end{cor}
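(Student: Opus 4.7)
The plan is to factor the forgetful map through the Hamiltonian multimorphism cubical set, and conclude by composing two homotopy equivalences already proved. Concretely, under the hypothesis $H^0>\vec{H}$ inherited from Proposition~\ref{lmFrgtHtpyEq}, one writes
\begin{equation*}
\cF_{\bullet}\bigl(\overrightarrow{(H,J)},(H^0,J^0)\bigr) \xrightarrow{\pi_J} \cH_{\bullet}(\vec{H},H^0) \xrightarrow{\pi_{\cH}} f\Mbar^{\bR}_{0}(k)_{\bullet},
\end{equation*}
where $\pi_J$ forgets the choice of almost complex structure and $\pi_{\cH}$ is the forgetful map studied in the previous section. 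By construction, the composite is $\fF$.

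The second map $\pi_{\cH}$ is a homotopy equivalence of symmetric cubical sets by Proposition~\ref{lmFrgtHtpyEq}. For the first map $\pi_J$, Lemma~\ref{lem:forget_J_acyclic-Kan} shows that it is an acyclic Kan fibration. One then invokes the standard fact that any acyclic Kan fibration of (symmetric) cubical sets between fibrant objects is a homotopy equivalence: in the model structure on symmetric cubical sets discussed in the appendix, this follows as in the simplicial case by constructing a section using the lifting property against boundary inclusions, together with a cylinder-level extension to produce a fibrewise homotopy between the composite and the identity. Composing the two homotopy equivalences yields the corollary.

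The only non-routine step is the passage from \emph{acyclic Kan fibration} to \emph{homotopy equivalence}. In practice this is immediate from the appendix's discussion of the model structure on symmetric cubical sets, but since the corollary is used heavily later, I would pause to verify fibrancy of both $\cF_{\bullet}$ and $\cH_{\bullet}$ (either via the usual horn-filling criterion, or by noting that they are built from gluing atlases which are themselves closed under the relevant extensions). Once this is confirmed, the proof amounts to two lines, and I would present it as such: factor $\fF$ through $\cH_\bullet$, cite Lemma~\ref{lem:forget_J_acyclic-Kan} and Proposition~\ref{lmFrgtHtpyEq}, and note that homotopy equivalences compose.
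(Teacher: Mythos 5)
Your proof is correct and follows exactly the paper's route: factor $\fF$ through $\cH_\bullet(\vec{H},H^0)$, invoke Lemma~\ref{lem:forget_J_acyclic-Kan} for the first leg and Proposition~\ref{lmFrgtHtpyEq} (which supplies the implicit hypothesis $H^0>\vec{H}$) for the second, and compose. One small inaccuracy: Appendix~\ref{sec:cubical-sets} does not in fact set up a model structure on symmetric cubical sets, so the passage from ``acyclic Kan fibration'' to ``homotopy equivalence'' should be justified by the direct section-plus-fibrewise-homotopy argument you sketch rather than by appeal to the appendix.
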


Next, we extend the notation from Equation \eqref{eq:multi-composition-Ham}, and given  sequences $\overrightarrow{(H,J)}^1\in\cF^{k_1}$ and $\overrightarrow{(H,J)}^2\in \cF^{k_2}$, we write 
\begin{equation}
  \overrightarrow{(H,J)}^1 \circ_i \overrightarrow{(H,J)}^2  
\end{equation}
for the sequence with $k_1 + k_2 - 1$ pairs obtained by replacing the $i$th component of $\vec{H}^2$ with the vector $\vec{H}^1$. It is straightforward to lift the multicomposition maps from Equation \eqref{eqHMultComp} to maps
\begin{multline}\label{eqHMultComp2}
\circ_i:\cF_{n_1}(\overrightarrow{(H,J)}^1;(H,J)^{2,i})\otimes \cF_{n_2}(\overrightarrow{(H,J)}^2;(H,J)) \\ \to\cF_{n_1+n_2}(  \overrightarrow{(H,J)}_1 \circ_i \overrightarrow{(H,J)}_2  ;(H,J))
\end{multline}
which equips $\cF$ with the structure of a multicategory, which admits a forgetful multifunctor to $\cH$. 

\subsection{Moduli spaces}
\label{sec:moduli_spaces}

Associated to each cube $\fd$ in $ \cF_{n}(\overrightarrow{(H,J)},(H,J))$ is a topological space
\begin{equation}
  \label{eq:moduli_space}
  \Mbar(\fd)
\end{equation}
whose elements are equivalence classes of stable solutions to the Cauchy-Riemann equation $b_{\fd}(x)$ associated by $\fd$ to a point $x \in [0,1]^n$. Concretely, if $x$ lies in a neighbourhood of a stratum $f$, then $b_{\fd}(x)$, which is valued in $\cF_{0}(\overrightarrow{(H,J)},(H, J)) $, is obtained by gluing the data $b_f$ according the to the gluing parameters associated to this point by $x$. Part of the datum of an element of this set thus consists of a pre-stable Riemann surface $\Sigma_x$ with $k+1$-punctures. The solutions that we consider have domain pre-stable curves $\Sigma$ with $k+1$-punctures equipped with an embedding
\begin{equation}
\Sigma_x \to \Sigma
\end{equation}
whose complement is unstable, and which is compatible with the labelling of the punctures in the sense that the marked points labelled by $i$ in $\Sigma_x$ and $\Sigma$ are connected by a chains of components in the complement of $\Sigma_x$.  We partition the components of $\Sigma \setminus \Sigma_x$ into \emph{Floer cylinders}, which separate the marked point labelled by $i$ in $\Sigma_x$ from $\Sigma$, or the two sides of a node in $\Sigma_x$, and \emph{sphere bubbles}. We thus consider maps
\begin{equation}
  u \co \Sigma \to M
\end{equation}
satisfying the Cauchy-Riemann equation
\begin{equation}
  \left(du - X_{H_\Sigma} \otimes \alpha_{\Sigma}\right)^{0,1} = 0
\end{equation}
where the function $H_{\Sigma}$, the $1$-form $\alpha_\Sigma$, and the almost complex structure are determined as follows:
\begin{itemize}
\item On a component of $\Sigma_x$, we use the data determined by $\fd$
\item On a Floer cylinder, we use the data determined by the associated node of $\Sigma_x$ (i.e. with respect to the identification with $\bR \times S^1$ which is canonical up to translation, we set $H_{\Sigma} = H_e$ for the edge $e$ labelling this node, the $1$-form $\alpha_\Sigma$ to be given by $dt$, and the almost complex structure by $J_e$).
  \item On a sphere bubble, the function $H_{\Sigma}$ identically vanishes (as does $\alpha_\Sigma$), and the almost complex structure is given by the almost complex structure on $M$ associated by $\fd$ to the point in $\Sigma_x$ to which this bubble is attached (or to the corresponding points in $S^1$ if the bubble is attached to a Floer component).
  \end{itemize}
We associate to each solution of this equation the topological energy (c.f. \cite[Lemma 8.1.6]{McDuffSalamon2012})
\begin{align}
  E(u) & = \int u^* \omega + \tilde{u}^* dH \wedge \alpha \\
  & = \int |du - X_H \otimes \alpha|^2 d \mathrm{vol}_{\Sigma} + d H_{u(z)} \wedge \alpha.
\end{align}
  
The moduli space $  \Mbar(\fd) $ is then defined to be the space of such finite energy maps, with the property that the asymptotic conditions on the two sides of each node agree, modulo the equivalence relation which identifies two maps that are intertwined by a biholomorphism over $M$ preserving the Floer data. We have the following consequence of Gromov compactness:
\begin{lem}
 The energy functional is proper on $  \Mbar(\fd) $.
\end{lem}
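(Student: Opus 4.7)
The plan is to reduce the properness assertion to a standard Gromov compactness argument, with the key input being that the monotonicity condition built into the definition of a multimorphism forces both terms in the topological energy to be non-negative. Concretely, I would first verify that $dH_{u(z)}\wedge\alpha \geq 0$ pointwise on $\Sigma$: on each component of $\Sigma_x$ this is exactly Inequality \eqref{eq:monotonicity_multimorphism}; on a Floer cylinder, the $1$-form $\alpha=dt$ is closed and $H_e$ is $s$-independent, so the expression vanishes; on a sphere bubble, $\alpha\equiv 0$ so the expression vanishes as well. Consequently the identity
\begin{equation}
 E(u) = \int_\Sigma |du - X_H\otimes\alpha|^2\, d\mathrm{vol}_\Sigma + \int_\Sigma dH_{u(z)}\wedge\alpha
\end{equation}
exhibits both terms as non-negative, so a bound $E(u)\leq E_0$ yields a uniform bound on the geometric energy.

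Having this bound in place, I would then apply the standard Gromov--Floer compactness machinery to a sequence $(x_n,u_n)\in\Mbar(\fd)$ with $E(u_n)\leq E_0$. Since $\fd$ is a family parametrised by the compact cube $[0,1]^n$, one can pass to a subsequence such that $x_n\to x_\infty$ in $[0,1]^n$. The gluing atlas defining $\fd$ shows that the Floer data $b_\fd(x_n)$ converge smoothly on compact subsets of the thick part of the domain associated to $x_\infty$; the weights $w_p$ at each puncture remain bounded above and bounded below by the constraint \eqref{eq:weight_bound}, so the asymptotic behaviour near each puncture is uniformly controlled. Standard arguments (elliptic regularity, removal of singularities, energy quantisation for sphere bubbles, the monotonicity formula for the neck) then produce a limiting pre-stable map $u_\infty$ defined on a nodal extension of $\Sigma_{x_\infty}$ in which the extra components are either Floer cylinders (arising from the loss of compactness along the neck regions corresponding either to the original pre-stable structure, to the gluing parameters of $\fd$ at $x_\infty$, or to degenerations of cylinders in $\Sigma_{x_n}$ themselves) or holomorphic sphere bubbles.

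It remains to identify this limit as an element of $\Mbar(\fd)$ as defined above. This is a matter of unravelling definitions: the combinatorial data of $u_\infty$ records a pre-stable curve $\Sigma$ together with an embedding of $\Sigma_{x_\infty}$ whose complement is a disjoint union of Floer cylinders and sphere bubbles, the Cauchy--Riemann equation on each component is exactly the one prescribed by $\fd$ at $x_\infty$ (by the piecewise rules enumerated just before the statement), and the asymptotic matching at nodes is automatic from the smooth convergence on compact sets. The quotient by biholomorphisms intertwining the Floer data is exactly the equivalence relation used to define $\Mbar(\fd)$, so $(x_\infty,u_\infty)\in\Mbar(\fd)$ is the desired limit.

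The only genuinely delicate point, and the one I expect to be the real obstacle, is bookkeeping the various sources of domain degeneration in the presence of the cubical gluing atlas: a sequence $x_n\to x_\infty$ may approach a lower-dimensional stratum along which some edges of $T_{f(x_n)}$ are being collapsed at gluing parameters tending to $0$, and one must check that the standard neck-analysis produces precisely the Floer cylinder inserted at the corresponding node of the limit, with consistent weights and asymptotics. This is a compatibility statement between the gluing construction used to define cubes (Definition \ref{dfCuMultMor}) and the classical neck-stretching analysis; provided one sets up the cylindrical ends and weights as prescribed in Definition \ref{def:pre-multimorphism-H}, it amounts to tracking the pointwise behaviour of $H_\Sigma\alpha$ near each newly formed node, which converges to $H_e\, dt$ by construction. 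Once this identification is made, properness of $E$ follows.
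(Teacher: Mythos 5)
Your proposal is correct and follows essentially the same route as the paper's (sketch of) proof: extract a convergent subsequence in the base cube, use the gluing construction to control domain degenerations, and invoke Gromov compactness for families. What you add — the explicit verification that both terms in the topological-energy identity are non-negative (so a bound on $E$ bounds the geometric energy), and the careful identification of the Gromov limit as an element of $\Mbar(\fd)$ — makes explicit steps that the paper treats as standard and leaves implicit.
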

\begin{proof}[Sketch of proof:]
  We must show that a sequence of solutions of bounded energy has a subsequence which admits limit. The projection to the base $[0,1]^n$ of the family admits such a subsequence. By construction, the domains of these subsequences converge in the Gromov sense (namely, they admit subdomains, whose complements are thin annuli, on which the conformal structure converges), as does the Cauchy-Riemann equation that we consider (by the gluing construction). The result is now a standard application of Gromov compactness for families of Riemann surfaces.
\end{proof}

\subsection{Regular Floer data}
\label{sec:regular_data}
We begin by formulating a notion of regularity for objects of $\cF$, in terms of the union  $\cM(J)$ of the (uncompactified) moduli spaces $\cM(J_t)$ of $J_t$-holomorphic spheres (for $t \in S^1$) as well as the (uncompactified) moduli space $\cM(J,H)$ of solutions to Floer's equation
\begin{equation}
 j \circ \left( du - X_{H_t} \otimes dt \right) =   \left( du - X_{H_t} \otimes dt \right) \circ J_t
\end{equation}
on the cylinder. We write $\cM_1(J)$ and $ \cM_1(J,H) $ for the corresponding moduli spaces with one marked point.
\begin{defin}
  A pair $(J,H)$ is \emph{regular} if the following conditions hold:
  \begin{enumerate}
  \item All elements of $\cM(J)$ which are represented by simple pseudo-holomorphic spheres of vanishing Chern number are regular.
  \item All element of $\cM(J,H)$ of virtual dimension strictly smaller than $2$ are regular.
  \item The restriction of the evaluation maps
    \begin{equation}
           \cM_1(J) \to S^1 \times  M \leftarrow \cM_1(J,H)
         \end{equation}
to the loci of simple spheres of vanishing Chern number and Floer cylinders of   virtual dimension strictly smaller than $2$ are transverse (and in particular are disjoint).
  \end{enumerate}
\end{defin}

In order to use the above notion, it is important to consider what it implies for the space $\Mbar(J,H)$ of \emph{stable} solutions to Floer's equation, in the context where the symplectic form and the first Chern class are non-negatively proportional:
\begin{lem} \label{lem:regular_implies_regular_stable}
If a pair $(J,H)$ is \emph{regular}, then all stable solutions to Floer's equations of virtual dimension strictly smaller than $2$ are regular (i.e. the associated linearised operator is surjective).
\end{lem}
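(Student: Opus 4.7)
The plan is to decompose an arbitrary stable solution $\mathbf{u}$ into its principal Floer cylinder $u_0$ and a forest of sphere bubbles, to verify that each component is regular under the hypotheses defining a regular pair, and to check that the evaluation maps at the nodes are transverse; these together give surjectivity of the total linearised operator, which is the content of regularity for a stable map.

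First I would exploit the standard fact that each sphere bubble factors as a multiple cover of an underlying simple pseudoholomorphic sphere $\tilde u_i$ whose image in $M$ coincides with the image of the bubble. By non-negative proportionality of $c_1(M)$ and $[\omega]$ on $\pi_2(M)$, each non-constant simple sphere component has Chern number $c_i \geq 0$, with $c_i \geq 1$ in the strictly monotone case and $c_i = 0$ in the Calabi-Yau case. A standard dimension count for the stratum of stable maps of the given combinatorial type yields
\begin{equation*}
\vdim(\mathbf{u}) \;=\; \bigl(\mu_{\mathrm{CZ}}(x) - \mu_{\mathrm{CZ}}(y) + 2 c_1(A)\bigr) - 1 - 2m,
\end{equation*}
where $A$ is the total homology class and $m$ the number of simple sphere bubbles. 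Solving for the Fredholm index $k_0$ of $u_0$ in its own class $A_0 = A - \sum B_i$ gives $k_0 = \vdim(\mathbf{u}) + 1 + 2m - 2\sum c_i$, and substituting $\vdim(\mathbf{u}) < 2$ together with the lower bounds on $c_i$ yields $k_0 \leq 1$ in the strictly monotone case, and an analogous bound in the Calabi-Yau case. Thus hypothesis~(2) of the definition of regular applies to $u_0$, which is therefore regular as an element of $\mathcal{M}(J,H)$.

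The key remaining step is to rule out the presence of sphere bubbles entirely. Any such bubble attaches to $u_0$ (or, inductively, to another bubble) at a node at which the evaluation of $u_0$ (or of the preceding bubble) and of some underlying simple sphere $\tilde u_i$ must coincide in the common target $S^1 \times M$. In the Calabi-Yau case every $\tilde u_i$ has vanishing Chern number; hypothesis~(3) asserts that the evaluation maps from the corresponding loci of simple Chern-zero spheres and Floer cylinders of virtual dimension $<2$ are transverse, and a direct dimension count shows the transverse intersection has negative virtual dimension, hence the relevant evaluations are disjoint; this excludes all bubbles, and iterating the argument on subtrees handles higher-order bubbling. In the strictly monotone case hypothesis~(1) is vacuous (no non-constant simple sphere has Chern 0), and the dimensional bound $k_0 \leq 1$ combined with $\vdim(\mathbf{u}) < 2$ forces each $c_i \leq 0$, contradicting $c_i \geq 1$, so again no bubbles arise. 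With $\mathbf{u}$ reduced to its cylinder $u_0$, regularity follows from the earlier application of hypothesis~(2).

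The main obstacle is the transversality analysis in the last paragraph, specifically in the strictly monotone case, where the three conditions of regularity explicitly address only simple spheres of vanishing Chern number. The delicate point is to verify that the numerical constraints from the dimension count, combined with the absence of non-constant Chern-zero simple spheres in the strictly monotone setting, are sufficient to exclude the positive-Chern bubble configurations that are a priori dimensionally possible; this reduces the verification of regularity of a general stable solution of virtual dimension $<2$ to the regularity of a bare Floer cylinder, which is exactly what hypothesis~(2) supplies.
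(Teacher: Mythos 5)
Your overall strategy — reduce to the cylinder components, bound the sphere bubbles by a dimension count, and kill Chern-zero bubbles via transversality of evaluation maps — matches the paper's. But the implementation contains two substantive errors and one omission that together leave the argument incomplete.

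\textbf{The dimension convention.} You write $\vdim(\mathbf{u}) = (\mu_{\mathrm{CZ}}(x) - \mu_{\mathrm{CZ}}(y) + 2c_1(A)) - 1 - 2m$, subtracting $2$ for each node. This is the expected dimension of the \emph{boundary stratum} of nodal maps, not the virtual dimension appearing in the statement of the lemma. The lemma's ``virtual dimension'' is the virtual dimension of the ambient Floer moduli space (Fredholm index of the glued operator minus one), which carries no such $-2m$ correction; this is exactly why the paper reformulates in terms of the Fredholm index, which is \emph{additive} over components with each sphere bubble contributing $2c_1(B_i)$ and no codimension term. Using your formula, the relation between $k_0$ and $\vdim(\mathbf{u})$ has the extra $+2m$ that defeats the count; with the correct convention one gets $k_0 = \vdim(\mathbf{u}) + 1 - 2\sum c_1(B_i)$. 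Relatedly, you replace $\sum c_1(B_i)$ by $\sum c_i$, the Chern numbers of the \emph{underlying simple} spheres: multiple covers make $c_1(B_i) = d_i c_i$, so this substitution is not valid.

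\textbf{The monotone case.} Granting your formula, the claimed contradiction does not follow. From $k_0 \geq 1$ (hypothesis~(2): negative-virtual-dimension cylinders are empty) and $\vdim(\mathbf{u}) \leq 1$, your formula gives $1 \leq k_0 = \vdim(\mathbf{u}) + 1 + 2m - 2\sum c_i \leq 2 + 2m - 2\sum c_i$, hence $\sum c_i \leq m + 1/2$, i.e.\ $\sum c_i \leq m$. Combined with $c_i \geq 1$ this forces $c_i = 1$ for every $i$, which is \emph{not} a contradiction — a configuration with a degree-$1$ sphere bubble of Chern number $1$ survives your dimension count. The paper's version, $k_0 = \vdim(\mathbf{u}) + 1 - 2\sum c_1(B_i) \geq 1$ with $\vdim(\mathbf{u}) \leq 1$, gives $\sum c_1(B_i) \leq 1/2$, hence $\sum c_1(B_i) = 0$, and then monotonicity forbids the bubble outright. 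This is precisely where the $-2m$ loses you the argument.

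\textbf{Broken trajectories.} You decompose $\mathbf{u}$ into ``its principal Floer cylinder $u_0$ and a forest of sphere bubbles,'' silently assuming a single cylinder component. A stable solution of virtual dimension $1$ may instead be a broken configuration of two cylinders, each of virtual dimension $0$; this case must be shown regular too. The paper's classification (``those of virtual dimension $1$ have domain either a cylinder or a pair of virtual dimension $0$ cylinders'') handles it by additivity of the Fredholm index: each cylinder contributes $\geq 1$, so at most two cylinders appear and each has $\vdim \in \{0,1\}$, which is exactly where hypothesis~(2) applies. Your write-up needs to include this branch before concluding.

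In short: the paper's passage to the Fredholm index (additive, no $-2m$) is not cosmetic — it is what makes the positivity-of-contributions argument work, and it is the piece your proposal is missing.
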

\begin{proof}
  It suffices to show that these conditions imply that the \emph{stable} solutions to Floer's equation of virtual dimension strictly smaller than $2$ are given as follows: those of virtual dimension $0$ have domain a cylinder, and those of virtual dimension $1$ have domain either a cylinder or a pair of virtual dimension $0$ cylinders.

  To see this, it is convenient to temporarily formulate our discussion in terms of the \emph{Fredholm index} of a stable solution to Floer's equation, which is one more than the virtual dimension, and has the advantage that it is additive over the component, with each  sphere bubble contributing twice its Chern number. Since our assumptions include the requirement that the moduli spaces of solutions to Floer's equation which have strictly negative virtual dimension are empty, and there are no spheres of negative Chern number, the contribution of each component to the Fredholm index is non-negative, and only those spheres of trivial Chern class contribute trivially. This means that a stable solution of virtual dimension strictly less than $2$ cannot carry any sphere bubble of strictly positive Chern number, and that all its Floer cylinder components have virtual dimension either $0$ or $1$. Finally, we use our assumption that the cycles swept by cylinders and simple Chern $0$ spheres in $S^1 \times X$ are transverse (hence disjoint) to conclude that the remaining possibility (of a cylinder carrying a sphere bubble) is excluded, since every Chern $0$ sphere is a multiple cover of a simple one.

\end{proof}
\begin{rem}
  It may appear more natural to use the conclusion of the above Lemma as a definition, but that would make later constructions more complicated. The essential point is that one can prove that regularity for stable moduli spaces of virtual dimension strictly less than $2$ is equivalent to the regularity of the corresponding moduli spaces of smooth cylinders, with the additional condition that the cycle they sweep be disjoint from all pseudo-holomorphic spheres of Chern number $0$. The issue is that, in this characterisation, no reason for the disjunction is given, so it becomes difficult to work with it in families. 
\end{rem}

We now extend this notion to multimorphisms in $\cF$: the linearisation of the Cauchy-Riemann operator satisfied by each point in $ \Mbar(\fd) $ is a Fredholm map
\begin{equation} \label{eq:linearisation_CR}
C^{\infty}(\Sigma; u^* TM)/ \mathfrak{aut} \, \Sigma \to \Omega^{0,1}(\Sigma; u^* TM),
\end{equation}
where $ \mathfrak{aut}\, \Sigma$ is the tangent space of the space of biholomorphisms of $\Sigma$ which preserve the Floer data on $\Sigma$ (i.e. preserve the equivalence class of this point in $ \Mbar(\fd) $). Assuming that the corresponding point in $[0,1]^n$ lies in the interior of a face $[0,1]^k$, the deformation of the operator associated to moving the point within this stratum defines a map
\begin{equation} \label{eq:normal_deformation_CR}
 T [0,1]^k \to \Omega^{0,1}(\Sigma; u^* TM).
\end{equation}
We call the sum of the operators in Equation \ref{eq:linearisation_CR} and \ref{eq:normal_deformation_CR} the \emph{extended linearisation} operator.

In analogy with the situation for Floer cylinders, we associate to each cube $\fd$ a pair of evaluation maps
\begin{equation}
\Mbar_{1}(\fd) \to   X   \leftarrow \Mbar_{1}(J_{\fd})
\end{equation}
where the left hand side is the moduli space of solutions to Floer's equation, with $1$-marked point, and the right hand side is the moduli space of pseudo-holomorphic spheres, parametrised by the curves underlying $\fd$, also with one marked point.  
\begin{defin}
  The family of Floer data $\fd$ is \emph{regular} if the following properties hold:
  \begin{enumerate}
  \item  the extended linearisation operation is surjective for all elements of $\Mbar(J_{\fd})$ which are simple pseudo-holomorphic spheres of Chern class $0$,
  \item  the extended linearisation operator is surjective for all elements of $\Mbar(\fd) $, whose underlying curve does not contain any Floer cylinder or sphere bubble, and for which this index is strictly smaller than $2$, and
    \item the evaluation maps from these two parametrised spaces to the product of $X$ with the universal curve are transverse (and hence disjoint). 
  \end{enumerate}
\end{defin}
Repeating the argument of Lemma \ref{lem:regular_implies_regular_stable} for families, we have:
\begin{lem}
  If the family of Floer data $\fd$ are regular, then the extended linearisation operator is surjective for each element of $ \Mbar(\fd) $ whose virtual dimension is strictly less than $2$. \qed
\end{lem}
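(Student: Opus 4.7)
The plan is to mirror the proof of Lemma \ref{lem:regular_implies_regular_stable}, with the ordinary linearised Cauchy-Riemann operator replaced by the extended linearisation, and with the virtual dimension replaced by the Fredholm index of the extended operator. The crucial structural feature that makes the translation routine is that both the index and the surjectivity question decompose additively over the components of the underlying pre-stable domain, with the contribution from the normal deformation in Equation \eqref{eq:normal_deformation_CR} supported entirely on the principal component $\Sigma_x$, not on any attached sphere bubble or Floer cylinder.

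First I would carry out the parametrised analogue of the additive index bookkeeping from the proof of Lemma \ref{lem:regular_implies_regular_stable}. Under the assumption that $c_1(M)$ and $[\omega]$ are non-negatively proportional on $\pi_2(M)$, every sphere bubble contributes a non-negative amount $2 c_1$ to the Fredholm index, strictly positive unless the bubble has Chern number zero; and every Floer cylinder component contributes a non-negative amount as well, by the regularity of the labelling pair $(H_e,J_e)$ at the edge $e$ where the cylinder sits. When the total virtual dimension is strictly less than $2$, the only way to distribute these contributions is to have no sphere bubbles of strictly positive Chern number and at most one Floer cylinder of positive index (with the rest of index zero).

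Next I would rule out Chern $0$ sphere bubbles using the third regularity condition. Since any Chern $0$ sphere factors through a simple Chern $0$ sphere, the presence of such a bubble forces an intersection between the image of the evaluation map from $\Mbar_1(\fd)$ and the image of the evaluation map from $\Mbar_1(J_{\fd})$ restricted to the simple Chern $0$ locus; but these images are required to be disjoint by the transversality hypothesis, so no such bubble may occur. The only surviving configurations are thus the principal component, possibly accompanied by Floer cylinders of virtual dimension $0$ or $1$ attached at punctures or nodes. For these, surjectivity of the extended linearisation follows from the first two clauses of regularity applied to the principal piece, together with the usual fibre-product/gluing argument along the asymptotic orbits at the attaching nodes, using regularity of $(H_e,J_e)$ to surject on each cylinder.

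The main technical point to be careful about is the index additivity across the principal component and the attached Floer cylinders in the parametrised setting; this is not a genuine obstacle because the normal deformation space is attached only to $\Sigma_x$, so the index contribution of Equation \eqref{eq:normal_deformation_CR} enters only in the principal factor of the additive decomposition. Once this bookkeeping is written out, the proof is a direct transcription of the argument of Lemma \ref{lem:regular_implies_regular_stable} into the family setting.
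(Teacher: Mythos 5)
Your proposal is correct and is essentially the paper's own proof, which the authors compress to the single sentence ``Repeating the argument of Lemma \ref{lem:regular_implies_regular_stable} for families, we have:'' followed by a \qed. You have spelled out what this compression means: the additive index bookkeeping, the exclusion of positive-Chern bubbles by the index bound, the exclusion of Chern $0$ bubbles by the transversality clause, and surjectivity of the extended operator on the surviving configurations by the first two regularity clauses on the principal piece together with regularity of the edge labels on the Floer cylinders. Two minor points of imprecision, neither affecting the argument: your phrase ``at most one Floer cylinder of positive index (with the rest of index zero)'' conflates Fredholm index with virtual dimension (in the paper's bookkeeping the two-cylinder configuration of virtual dimension $1$ has both cylinders of Fredholm index $1$, i.e.\ both positive in that sense); and the normal deformation is not literally supported only on $\Sigma_x$, since the almost complex structure on an attached bubble varies with the attachment point and hence with the cube parameter --- what is true, and what you actually use, is that the parameter dimension $k$ enters the additive index formula exactly once, so the per-component bookkeeping for bubbles and cylinders is unchanged from the unparametrised case.
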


We note that regularity is closed under taking boundaries, degeneracies, and symmetries, as well as multicompositions (products). This leads to the following:
\begin{defin}
  The multicategory of regular Floer data $\cF^{\mathrm{reg}}$ is the sub-multicategory of $\cF$ with objects regular pairs $(J,H)$ and morphisms regular Floer data.
\end{defin}

\subsection{Contractibility of the space of regular data}

In the treatments of Floer theory using perturbations, one assumes that the Floer data at a corner of the parameter space, together with a choice of gluing parameter, determine the data in some neighbourhood. This is sufficient to achieve transversality in the monotone or exact setting because these assumptions ensure compactness of the moduli spaces of solutions to pseudo-holomorphic curve equations of prescribed dimension, so that a gluing construction, relying on the implicit function theorem, allows one to deduce regularity of all solutions in a neighbourhood from the regularity of solutions on the boundary sratum. In general, however, the moduli spaces of pseudo-holomorphic curves are not compact, as compactness only holds after bounding the energy. Without a refinement of the existing methods, we thus cannot expect the existence of a gluing parameter for which the glued solutions are regular.

However, the gluing atlas setup that we are using is more flexible than this naive approach since it allows us, as in  \cite{Abouzaid2010}, to perturb the data in a neighbourhood of every corner. Indeed, we allow the function $b_f$ to vary with respect to the normal direction, so that we have a dense open set of choices for which the subset of $ \Mbar(\fd) $ consisting of elements of bounded energy is regular. Exhausting $\bR$ as a countable union of bounded above subsets, we conclude that the set of choices for which regularity holds is a countable intersection of dense open sets. Since we only need to perturb the almost complex structure in order to achieve transversality, we conclude that every object of $\cH$ lifts to $\cF^{\mathrm{reg}}$. Going further, we have:

\begin{lem}
  The inclusion of the multicategory of regular Floer data $\cF^{\mathrm{reg}}$ in $\cF$ induces an equivalence of multimorphism spaces:
\end{lem}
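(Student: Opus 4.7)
The plan is to show that the forgetful map $\cF^{\mathrm{reg}}_\bullet(\overrightarrow{(H,J)}, (H^0,J^0)) \hookrightarrow \cF_\bullet(\overrightarrow{(H,J)}, (H^0,J^0))$ is an acyclic Kan fibration of symmetric cubical sets, which will imply it is a homotopy equivalence. Concretely, given any $n$-cube $\fd$ in $\cF$ whose restriction to $\partial[0,1]^n$ already lies in $\cF^{\mathrm{reg}}$, we will produce an $(n+1)$-cube whose $t=0$ face is $\fd$, whose $t=1$ face is a regular $n$-cube, and which is constant in the direction normal to $\partial[0,1]^n$. Together with the discussion preceding the lemma (exhibiting a lift of every object of $\cH$ to $\cF^{\mathrm{reg}}$), this suffices to conclude the proposition.

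The key step is a parametric transversality argument adapted to the gluing atlas structure of Definition \ref{dfCuMultMor-J-strict}. First, I would fix the gluing atlas $(T_f, b_f, g_f)_{f \in F_n}$ of $\fd$ and observe that the only data I need to perturb, in order to achieve regularity, is the family of almost complex structures on the strata in the interior of $[0,1]^n$. Using a partition of unity subordinate to the cover $\cV_n$ that is supported away from the boundary strata of codimension $\geq 1$, I can perturb the almost complex structure component of each $b_f$ while leaving the Hamiltonian data, the $1$-form data, the tree data, and the boundary restriction of $\fd$ unchanged, and while respecting the gluing compatibility in Equation \eqref{eqGluingCompatibility}. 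Because the perturbation only acts in the interior, the compatibility with the regular data already chosen on $\partial[0,1]^n$ is automatic.

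Next, I would run the standard Sard--Smale argument to show that the subset of such perturbations for which the extended linearisation operator is surjective on (i) simple Chern $0$ spheres in $\Mbar(J_\fd)$, (ii) smooth Floer cylinders and higher genus $0$ Cauchy--Riemann solutions in $\Mbar(\fd)$ of virtual dimension $< 2$, and (iii) for which the relevant evaluation maps are mutually transverse, forms a dense open set within the space of solutions of bounded energy. The noncompactness of the moduli spaces is handled, as in the paragraph preceding the lemma, by exhausting $\bR$ as a countable union of bounded-above energy windows: on each window the set of regular perturbations is open and dense, and a Baire category argument yields a dense $G_\delta$ set of perturbations for which all energy levels are simultaneously regular. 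Pick any perturbation $\fd'$ from this set.

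Finally, since the space of admissible almost complex structure perturbations with prescribed boundary behaviour is convex (or at least contractible via the standard linear interpolation in the space of $\omega$-tame almost complex structures, combined with a cutoff near the strata boundary to preserve the gluing atlas), the straight-line homotopy between $\fd$ and $\fd'$ defines a candidate $(n+1)$-cube; applying the same generic perturbation argument one dimension up ensures the entire interior of this homotopy is regular. This produces the required $(n+1)$-cube and completes the Kan lifting. The main obstacle is bookkeeping: one must verify that the perturbation can be carried out while simultaneously respecting all the gluing relations \eqref{eqGluingCompatibility} between the various $b_f$, the monotonicity condition \eqref{eq:monotonicity_multimorphism} (which is unaffected, since only $J$ is perturbed), and the acyclic Kan filling for all sub-faces. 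Once the perturbations are confined to neighbourhoods in the interior via the partition of unity, this becomes a formal exercise analogous to the proof of Lemma \ref{lem:forget_J_acyclic-Kan}.
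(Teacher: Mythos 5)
Your geometric and analytic input is correct and matches the paper's: you rightly observe that only the almost complex structure needs to be perturbed (so the Hamiltonian data, the monotonicity constraint \eqref{eq:monotonicity_multimorphism}, and the gluing atlas are untouched), you handle noncompactness of the moduli spaces by exhausting the energy range and intersecting open dense sets of perturbations via Baire category, and you invoke contractibility of the space of admissible $J$-perturbations with prescribed boundary behaviour to obtain a canonical path to the regular perturbation. This is precisely the content the paper relies on as well.

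The categorical framing, however, has a genuine gap. Since $\cF^{\mathrm{reg}}_\bullet(\cdot)\hookrightarrow\cF_\bullet(\cdot)$ is a monomorphism of symmetric cubical sets, it cannot be an acyclic Kan fibration unless it is an isomorphism: lifting against $\emptyset\to\square^0$ forces surjectivity on $0$-cubes, and lifting against the inclusion itself forces it to be a split epimorphism, hence bijective. What your lifting statement --- deform an $n$-cube with boundary already in $\cF^{\mathrm{reg}}$ into a regular one, rel boundary --- actually establishes is vanishing of the relative homotopy groups of the pair, i.e.\ the condition for the inclusion to be an acyclic \emph{cofibration}; but passing from that to the asserted homotopy equivalence would in addition require both multimorphism cubical sets to be Kan, which the paper explicitly refrains from proving (see the remark after Definition \ref{dfCuMultMor}). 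The paper's own proof sidesteps both issues by directly constructing a strong deformation retraction $\rho:\cF_\bullet(\cdot)\to P\cF_\bullet(\cdot)$, inductively over the cube dimension: for \emph{every} $n$-cube $\fd$ (not only those whose boundary is already regular) one chooses a regular cube $\fd'$ with the same Hamiltonian data and a canonical homotopy, compatibly with the face and degeneracy maps. The inductive boundary condition you should therefore impose on the $(n+1)$-cube is agreement with $\rho$ already built on every face of $\fd$, rather than assuming the boundary of $\fd$ lies in $\cF^{\mathrm{reg}}$; your Sard--Smale step then supplies exactly the needed data at each stage, with the construction yielding a homotopy equivalence without any fibrancy hypothesis.
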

\begin{proof}
We construct a deformation retraction: for each cube $\fd$ in $ \cF  $, we choose a regular cube $\fd'$ with the same underlying Hamiltonian data, so that $\fd = \fd'$ if $\fd$ is regular, and a homotopy between them, compatibly with face and degeneracy maps. The construction is inductive on the dimension of the cube, and relies essentially on genericity of regular data, so that the data $b'_f$ underluing $\fd'$ can be chosen to be an arbitrarily small perturbation of the data $b_f$ underlying $\fd$, yielding a canonical path between them up to contractible choice, and provides the desired deformation retraction.
\end{proof}


We complete this section with a proof of its main result:

\begin{proof}[Proof of Proposition \ref{prop:regular_moduli_for_monotone}]
The construction of the Floer functor now follows from the existence of coherent orientations in Floer theory. More precisely, Floer and Hofer \cite{FloerHofer1993} treated the case of the differential on the Floer complex. In our setting, we count each solution $u$ to Floer's equation by its (signed) count, weighted by a factor $T^{E(u)}$, where $E(u)$ is the topological energy of $u$, which is defined as the integral
\begin{align}
  E(u) & = \int u^* \omega + \tilde{u}^*dH\wedge dt \\
  & = \int |du - X_H \otimes dt|^2 ds\wedge dt
\end{align}
where $\tilde{u}$ is the graph of $u$. Explicitly, the differential then takes the form

\begin{equation}
  [\mathrm{o}_p] \mapsto \sum_{q} \sum_{u \in \Mbar^{0}(p,q)} (-1)^{\mathrm{sign}(u)} T^{E(u)} [\mathrm{o}_q],  
\end{equation}
where $[\mathrm{o}_p]$ and $[\mathrm{o}_q]$ are generators of the free modules associated by index theory to orbits $p$ and $q$,  and  $\Mbar^{0}(p,q)$ is the subset of $\Mbar(H,J) $ consisting of those (equivalence classes of) solution with asymptotic conditions given by the orbits $p$ and $q$, which in addition have the property that the virtual dimension vanishes.

The chain map associated to each $0$-cube and $1$-cube of the multimorphism spaces is defined in Symplectic Field theory \cite{Bourgeois-Mohnke2004}, and adapted to the Hamiltonian setting in \cite{Ritter2013}.  the signed contribution of each solution is weighted by $T^{E(u)}$, which the second formula together with Equation \eqref{eq:monotonicity_multimorphism} show lies in the Novikov ring (the first formula shows that this is a topological invariant, which ensures that we indeed obtain a chain map).

The operations associated to $1$-cubes appear in the above references as chain homotopies establishing that the resulting homology-level operations are independent of choice, and rely on the fact that the corresponding parameterised moduli spaces interval acquire coherent orientations, \emph{relative an orientation of the parameter space}. The same argument thus applies to associate to each $n$-cube $\delta$ a map of degree $-n$, obtained from the components of $\Mbar(\delta)$ of virtual dimension $0$ (which are regular by our assumptions). The fact that this map depends on a choice of orientation on the cube implies its compatibility with symmetries, and its compatibility with compositions follows by the same argument as in \cite{FloerHofer1993}. Finally, considering those moduli spaces $\Mbar(\delta)$ of virtual dimension $1$, we conclude the compatibility with face maps by observing that the boundary of these moduli spaces are either associated to the faces of $\delta$ or to breaking of Floer trajectories. 

\end{proof}

\section{The Floer algebra of a compact subset}
\label{sec:floer-algebra}

The purpose of this section is to prove Theorems \ref{thm:main_thm} and \ref{thm:linear_comparison_from_CF}: we thus associate to each compact subset $K$ of $M$ a chain complex $SC^*_{M,f\Mbar^{\bR}_0}(K)$, equip it with the structure of an algebra over the operad $C_*(f\Mbar^{\bR}_0)$ of symmetric normalised cubical chains (see Appendix \ref{sec:symm-norm-cubic}) on the moduli spaces of framed genus-$0$ stable Riemann surfaces, and prove the properties listed in the introduction. We particularly refer to Section \ref{sec:relat-sympl-cohom} for a review of symplectic cohomology with support constraints.

To formulate the construction, we write $  \cF_K $ for the full sub-multicategory of $\cF$ whose objects are pairs $(H, J)$ with $H$ negative on $K$.

\begin{rem}
  The reader who prefers to avoid virtual methods may substitute the multicategory of regular Floer data $\cF^{reg}$ for $\cF$ in the above paragraph under the assumption that $M$ satisfies the hypotheses of Proposition \ref{prop:regular_moduli_for_monotone}, and obtain the corresponding multicategory  $\cF^{reg}_{K}$ associated to a compact subset $K$ of $M$. None of the arguments that we will give in the remainder of the paper will depend on whether one uses $  \cF_K $  or  $\cF^{reg}_{K}$.
\end{rem}

By restricting the forgetful functor $\pi:\cF\to f\Mbar^{\bR}_0$, as well as the Floer multi-functor $\Floer^*:\cF\to \Ch_{\Lambda_{\geq 0}}$,  we obtain a diagram of cubically enriched multifunctors

\begin{equation} 
  \begin{tikzcd}
    \cF_K \ar[r,"\Floer^*"] \ar[d,"\pi"] & \Ch_{\Lambda_{\geq 0}} \\
    f\Mbar^{\bR}_0 . & 
  \end{tikzcd}
\end{equation}
\begin{rem}
  In complete generality, the ring $\Lambda_{\geq 0}$ will be the Novikov ring over a ground field of characteristic $0$, and the multicategory $\Ch_{\Lambda_{\geq 0}}  $ denotes the multicategory of $\bZ/2$-graded complexes. As discussed earlier, in the Calabi-Yau case, one can work instead with $\bZ$-graded complexes, and, under the more general hypotheses of Proposition \ref{prop:regular_moduli_for_monotone}, one can assume that the ground ring is given by the integers. None of our arguments are sensitive to this difference, so, in this regard, we shall keep the notation ambiguous in what follows.
\end{rem}

Applying the functor of symmetric normalised cubical chains from Appendix \ref{sec:symm-norm-cubic}, we wish to fill in an arrow $C_*(f\Mbar^{\bR}_0)\to \Ch_{\Lambda_{\geq 0}}$ in a `universal' way. The general framework for doing this is a homotopical version of the \emph{operadic Kan extension}, and $SC^*_{M,f\Mbar^{\bR}_0}(K)$ will be defined as its completion.

We shall give an explicit construction of the operadic Kan extension which is essentially an unwinding of the standard bar construction involving the free-forgetful adjunction, see for example Section 13.3 of \cite{Fresse2009book} in the operad case. It generalizes the left homotopy Kan extension of modules over dg-categories, e.g. Section 5 of \cite{HollenderVogt1997}. We prove Theorem \ref{thm:main_thm} in Section \ref{sec:proof-theorem-main} using only the explicit construction, but then provide the more abstract formulations in Sections \ref{sec:diff-grad-mult}--\ref{sec-ext} in preparation for the proof of Theorem \ref{thm:linear_comparison_from_CF} in Section \ref{sec:proof-theorem-main-2}.

\subsection{The chain complex}
\label{sec:chain-complex}
We start by providing an explicit description of the chain complex $SC_{M,f\Mbar^{\bR}_0}^*(K)$. For this we introduce some definitions. A \emph{leveled tree of height $n$} is a collection $V_0,\dots,V_{n+1}$ of finite sets together with surjections $f_i:V_{i}\to V_{i-1}$. We take $V_0$ to consist of one element, which we call the root. We call $V_{n+1}$ the set of leaves. The associated abstract tree has vertices the union of the sets $V_i$ for $0 \leq i \leq n+1$, and has edges which connect each element $v_i \in V_i$ to its image under $f_i$. An \emph{isomorphism} of leveled trees consists of bijections between the $i$th level vertices commuting with the defining surjections. 

\begin{defin}
A  \emph{decorated leveled tree} is a leveled tree together with the assignment of an object of $\cF_K$ for each edge. 
We write $\cT_n(K)$ for the set of trees with $n$-levels.
\end{defin}

\begin{figure}[h]
  \centering
   \begin{tikzpicture}[node distance=1.5cm]
    \tikzstyle{box} = [circle,text centered, draw=black];
    \node (O) [box] {$\cO(3)$};
    
    \node (M11) [box, above of=O, xshift=-3cm] {$\cM(2)$};
    \draw (O) -- (M11);
    \node (M12) [box, above of=O] {$\cM(1)$};
    \draw (O) -- (M12);
    \node (M13) [box, above of=O, xshift=3cm] {$\cM(3)$};
    \draw (O) -- (M13);

    \node (M21) [box, above of=M11, xshift=-1.5cm] {$\cM(2)$};
    \draw (M11) -- (M21);
    \node (M22) [box, above of=M11] {$\cM(1)$};
    \draw (M11) -- (M22);
    \node (M23) [box, above of=M12] {$\cM(2)$};
    \draw (M12) -- (M23);
    \node (M24) [box, above of=M13, xshift=-1.5cm] {$\cM(1)$};
    \draw (M13) -- (M24);
    \node (M25) [box, above of=M13] {$\cM(2)$};
    \draw (M13) -- (M25);
    \node (M26) [box, above of=M13, xshift=3cm] {$\cM(1)$};
    \draw (M13) -- (M26);

     \node (A1) [box, above of=M21, xshift=-1.5cm] {$A$};
    \draw (A1) -- (M21);
    \node (A2) [box, above of=M21] {$A$};
    \draw (A2) -- (M21);
    \node (A3) [box, above of=M22] {$A$};
    \draw (A3) -- (M22);
    \node (A4) [box, above of=M23, xshift=-1.5cm] {$A$};
    \draw (A4) -- (M23);
    \node (A5) [box, above of=M23] {$A$};
    \draw (A5) -- (M23);
    \node (A6) [box, above of=M24] {$A$};
    \draw (A6) -- (M24);
\node (A7) [box, above of=M25] {$A$};
\draw (A7) -- (M25);
\node (A8) [box, above of=M25, xshift=1.5cm] {$A$};
\draw (A8) -- (M25);
\node (A9) [box, above of=M26] {$A$};
    \draw (A9) -- (M26);
\end{tikzpicture}
  \caption{A tree of height $2$, labelled by an element of the operadic bar construction, where $A$ stands for the algebra $\Floer$, $\cM$ for the multicategory $C_* \cF_K$, and $\cO$ stands for the operad $C_* f\Mbar^{\bR}_0$.}
\end{figure}


Let $T=V_{n+1}\to \dots\to V_0$ be a decorated leveled tree. We associate to each vertex $v$ of $T$ above the root a chain complex  $C_v$ 
defined by downward induction along the levels in the following way. To each leaf labelled by an object $(H,J)$, we associate the chain complex $\Floer^*(H,J)$.  Suppose inductively that we have associated to each vertex $v$ at the level $k+1$ a chain complex $C_v$, with the base case as above.   Given a vertex $w$ at the level $k$ and an ordering $o$ on the incoming edges, let $(v_1,\dots,v_m)$ be the corresponding set of vertices and $\overrightarrow{(H,J)}_o=((H_1,J_1),\ldots,  (H_m, J_m))$ be the corresponding tuple of Hamiltonians. Let $O_w$ be the set of orderings of the incoming edges, 
and define
\begin{equation} 
C_w:= \left(\bigoplus_{o\in O_w} C_{v_1}\otimes\cdots \otimes C_{v_m}  \otimes C_*(\cF_K(\overrightarrow{(H,J)}_o; (H_w,J_w))) \right)_{S_m}.
\end{equation}

Here the subscript $S_m$ means that we are taking coinvariants (the quotient under the subcomplex generated by elements of the form $x- \sigma\cdot x$ for $\sigma \in S_m$),  under the action 
defined by the maps 
\begin{multline}
C_{v_1}\otimes\ldots \otimes C_{v_n} \otimes C_*(\cF_K(\overrightarrow{(H,J)}_o; (H_w,J_w)))  \to  \\
  C_*(\cF_K ( \sigma \cdot \overrightarrow{(H,J)}_o; (H_w,J_w))) \otimes C_{\sigma(v_1)}\otimes\ldots \otimes C_{\sigma(v_n)}.
\end{multline}
We now associated to the tree $T$ the complex
\begin{equation}\label{eqRootSC}
\Floer^*_{T} :=\left(\bigoplus_{o\in O_{root}} C_{v_1}\otimes\ldots \otimes C_{v_k}\otimes  C_* f\Mbar^{\bR}_0(k) \right)_{S_k},
\end{equation}
where we assume that the root has $k$ incoming edges. Taking the direct sum of these complexes over all trees of height $n$, we obtain the direct sum
\begin{equation} \label{eq:n-simplices-simplicial-model-SC}
\Floer^*_{M,f\Mbar^{\bR}_0,n}(K) \equiv     \bigoplus_{T\in\cT_n(K)} \Floer^*_{T},
\end{equation}
which is equipped with the differential $d_{int}$, which is the sum of the differentials $d_T$ for each complex $\Floer^*_T$.

For each integer $i$ between $0$ and $n$, we construct a chain map \begin{equation} \label{eq:face-maps-simplicial-SC}
  d_i:  \Floer^*_{M,f\Mbar^{\bR}_0,n}(K)  \to  \Floer^*_{M,f\Mbar^{\bR}_0,n-1}(K)
\end{equation}
as follows: the map $d_0$ collapses the edges beween the root and the first level  by projecting from the multicategory $\cF$ to the operad $f\Mbar^{\bR}_0 $ and applying operadic composition. For   $0<i<n $, $d_i$ is given by collapsing the edges between the level $i$ and $i+1$, and  applying composition in the indexing category $\cF$. Finally, $d_n$ is given by collapsing the edges between the $n$th level and the leaves, and applying the Floer functor.  
\begin{defin} \label{def:operadic_relative_chains}
  The \emph{operadic symplectic cochains} with support $K$ is the degreewise completion, with respect to the valuation of the Novikov ring, of the direct sum of the shift by $n$ of the complexes associated to trees of level $n$,
  \begin{equation}
SC^*_{M,f\Mbar^{\bR}_0}(K):=\widehat{\bigoplus_n} \Floer^*_{M,f\Mbar^{\bR}_0,n}(K)[-n],
\end{equation}
equipped with the differential \begin{equation} \label{eq:total_differential}
d=d_{int}+\sum_{i=0}^{n}(-1)^id_i.
\end{equation} 
\end{defin}

Having given an explicit definition of our chain complex, we now give a slightly more abstract description, using simplicial methods, which will be useful when describing the algebraic structure.
We start by recalling that a \emph{simplicial chain complex} $A^*_\bullet$ is a contravariant functor from the simplex category to the category of chain complexes. Explicitly, this amounts to the assignment of a chain complex $A^*_n$ to each natural number $n$, together with chain maps corresponding to the face and degeneracy maps, that satisfy the simplicial  identities. The \emph{geometric realization}\footnote{Here we are slightly abusing terminology, as it would be more appropriate to call this the geometric realization of the associated semisimplicial chain complex. We do this is only for convenience. We could have used the normalized chain complex as in \cite{Smirnov2001}[Section 3.1] as well. The equivalence of the constructions can be shown using the discussion in \cite{goerss}[Section III.2]. We will keep abusing terminology the same way in the rest of the paper.} functor takes each simplicial chain complex to an ordinary chain complex, by taking the direct sum of the shift by $n$ of $A^*_n $, equipped with the differential from Equation \eqref{eq:total_differential}.

Now, the operations in Equation \eqref{eq:face-maps-simplicial-SC} are exactly the face maps of a simplicial  complex $\Floer^*_{M,f\Mbar^{\bR}_0,\bullet}(K)$. The degeneracy  maps $s_i$ correspond to replacing each vertex at the $i$th leaf by a pair of vertices labeled by the same Hamiltonian connected by an edge labeled by the identity. From Definition \ref{def:operadic_relative_chains}, we have:
\begin{lem}
  The complex of operadic symplectic cochains with support $K$ is isomorphic to the completion of the  geometric realization of $\Floer^*_{M,f\Mbar^{\bR}_0,\bullet}(K)$. \qed
\end{lem}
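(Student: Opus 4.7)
The statement is essentially a matter of unwinding definitions, so the plan is to identify the data of Definition \ref{def:operadic_relative_chains} with the output of applying the geometric realization construction to the simplicial chain complex $\Floer^*_{M,f\Mbar^{\bR}_0,\bullet}(K)$, where the latter is given in degree $n$ by Equation \eqref{eq:n-simplices-simplicial-model-SC}. I would proceed in three steps.

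First, I would verify that the levels $n \mapsto \Floer^*_{M,f\Mbar^{\bR}_0,n}(K)$ together with the face maps $d_i$ from Equation \eqref{eq:face-maps-simplicial-SC} and the degeneracy maps $s_i$ described immediately after the definition do assemble into a simplicial chain complex. The nontrivial check is the simplicial identities $d_i d_j = d_{j-1} d_i$ for $i<j$, and the analogous ones involving degeneracies. Each of these identities corresponds to a commutativity statement, and breaks up into three kinds of cases according to which of $d_0$ (the application of operadic composition in $f\Mbar^{\bR}_0$), an intermediate $d_i$ (multicategorical composition in $\cF_K$), and $d_n$ (the Floer multifunctor) are being composed. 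When both maps being composed act at different levels, they commute by the naturality of the constructions in different arguments; when they are $d_0$ applied twice in a row (or both intermediate), the identity is the associativity of operadic composition in $f\Mbar^{\bR}_0$ (respectively multicategorical composition in $\cF_K$); when they involve $d_n$, the identity is the multifunctoriality of $\Floer^*$ established in the previous section. The $S_m$-equivariance built into the coinvariants in the definition of $C_w$ is what makes the face maps well-defined at the chain level.

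Second, I would recall that, as indicated in the footnote, by the geometric realization of a simplicial chain complex $A^*_\bullet$ we mean the total complex $\bigoplus_n A^*_n[-n]$ equipped with differential $d_{\mathrm{int}} + \sum_{i=0}^n (-1)^i d_i$, where $d_{\mathrm{int}}$ is the internal differential inherited from the $A^*_n$. Applying this construction verbatim to $\Floer^*_{M,f\Mbar^{\bR}_0,\bullet}(K)$ yields exactly the chain complex written in Definition \ref{def:operadic_relative_chains} before completion, the internal differential $d_{\mathrm{int}}$ on $\Floer^*_{M,f\Mbar^{\bR}_0,n}(K)$ being the sum of the differentials $d_T$ coming from the tensor product structure of each summand $\Floer^*_T$ (i.e.\ the Floer differentials at the leaves and the cubical boundary operators on each factor $C_*(\cF_K(\cdot\,;\,\cdot))$ and $C_*(f\Mbar^{\bR}_0(k))$).

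Third, I would note that the completion of a direct sum with respect to the $T$-adic filtration on the Novikov ring is performed degreewise and commutes with the identification above, so passing to the completion on both sides yields the asserted isomorphism. The main (and only) obstacle is the bookkeeping of signs and symmetric group actions in the verification of the simplicial identities, which is the standard compatibility between operadic/multicategorical composition and coinvariants and requires no new geometric input.
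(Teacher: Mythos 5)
Your proof plan is correct and matches the paper's implicit argument: the lemma is marked \qed precisely because, after the preceding paragraph identifies the face and degeneracy maps and the text defining geometric realization of a simplicial chain complex, the two objects agree by definition, with completion performed degreewise on both sides. Your additional care in spelling out the three cases of the simplicial identities (operadic composition for $d_0$, multicategorical associativity for intermediate $d_i$, and multifunctoriality of $\Floer^*$ for $d_n$) is exactly the verification the paper delegates to the reader in the proof of the following lemma on the simplicial algebra structure.
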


\subsection{The algebra structure}
As indicated above, we shall use simplicial methods in order to describe the algebra structure. The main benefit is that we thus avoid getting mired in formulae.

The key definition is that of a simplicial $C_* f\Mbar^{\bR}_0$-algebra, which is a contravariant functor from the simplex category to the category of $C_* f\Mbar^{\bR}_0$-algebras.
\begin{lem}
The simplicial complex $\Floer^*_{M,f\Mbar^{\bR}_0,\bullet}(K)$ is the underlying complex of a simplicial $C_* f\Mbar^{\bR}_0$-algebra.
\end{lem}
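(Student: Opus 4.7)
The plan is to define the $C_* f\Mbar^{\bR}_0$-algebra structure on each $\Floer^*_{M,f\Mbar^{\bR}_0,n}(K)$ by performing operadic composition on the top-level factor $C_* f\Mbar^{\bR}_0(k)$ from \eqref{eqRootSC}, and then to verify that the face and degeneracy maps of the simplicial structure are morphisms of $C_* f\Mbar^{\bR}_0$-algebras. Explicitly, given a chain $c \in C_* f\Mbar^{\bR}_0(m)$ and elements $a_j \in \Floer^*_{T_j}$ for $j = 1, \ldots, m$, where each $T_j$ is a decorated level-$n$ tree whose root has $k_j$ incoming edges and whose top operadic factor is $c_j \in C_* f\Mbar^{\bR}_0(k_j)$, I would set $\mu(c; a_1, \ldots, a_m) \in \Floer^*_T$ where $T$ is the decorated level-$n$ tree obtained by identifying the $m$ roots of the $T_j$ into a single common root (so that $V_0(T)$ is a point and $V_i(T) = \bigsqcup_j V_i(T_j)$ for $i \geq 1$), with all edge labels and internal $C_*(\cF_K)$ chains inherited from the $T_j$, and top operadic factor $c \circ (c_1, \ldots, c_m) \in C_* f\Mbar^{\bR}_0(\sum_j k_j)$ obtained by operadic composition.

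The first step is to check well-definedness on the $S_{k_j}$-coinvariants at each input and landing in the $S_{\sum k_j}$-coinvariants at the output; both follow from the equivariance of operadic composition in $C_* f\Mbar^{\bR}_0$. That $\mu$ is a chain map with respect to $d_{int}$ is immediate from the chain-map property of operadic composition together with the observation that $d_{int}$ decomposes as a sum over the tensor factors of $\Floer^*_T$. Operadic associativity and the unit axiom for this action are direct translations of the corresponding axioms in $C_* f\Mbar^{\bR}_0$.

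The substantive content is compatibility with the simplicial structure. For $0 < i < n$, the face map $d_i$ performs composition in $C_*(\cF_K)$ at internal vertices of level~$i$; $d_n$ applies the Floer multifunctor to the $C_*(\cF_K)$ chains adjacent to the leaves; and each degeneracy map $s_i$ inserts an identity edge at an internal level. In all these cases the operation affects only data strictly below the root, so it automatically commutes with $\mu$, which modifies only the top operadic factor and leaves all remaining data untouched.

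The main obstacle, and the only case requiring genuine content, is compatibility with $d_0$, which collapses the edges between the root and level~$1$ by applying $\pi \co \cF_K \to f\Mbar^{\bR}_0$ to the level-$1$ multicategory chains and then operadically composing them with the root factor. Writing $\ell_{j,\bullet}$ for the level-$1$ chains of $T_j$, the required identity $d_0 \circ \mu(c;-) = \mu(c; d_0(-),\ldots,d_0(-))$ reduces to
\[
\bigl(c \circ (c_1,\ldots,c_m)\bigr) \circ \bigl(\pi\ell_{1,\bullet},\ldots,\pi\ell_{m,\bullet}\bigr) \; = \; c \circ \bigl(c_j \circ \pi\ell_{j,\bullet}\bigr)_{j=1}^m,
\]
which is precisely the operadic associativity axiom for $C_* f\Mbar^{\bR}_0$ applied to $c$, the $c_j$, and the $\pi\ell_{j,\bullet}$, together with the multifunctoriality of $\pi$. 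This is the only place where one uses the strict (rather than merely homotopical) associativity of the chosen model of the framed genus-$0$ operad, and it is what forces us to work with the cubical chains functor of Appendix~\ref{sec:symm-norm-cubic} rather than a weaker model.
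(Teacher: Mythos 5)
Your proposal is correct and follows essentially the same route as the paper: define the action by operadic composition at the root factor, check that it descends to the symmetric-group coinvariants, and verify compatibility with the simplicial structure, with $d_0$ as the only face map interacting non-trivially with $\mu$ (resolved by operadic associativity). One small imprecision: the displayed identity is pure operadic associativity in $C_* f\Mbar^{\bR}_0$; the multifunctoriality of $\pi$ is used to make $d_0$ well-defined on coinvariants and to establish the simplicial identity $d_0 d_1 = d_0 d_0$, not in that particular equation. Also, strict operadic associativity is used throughout (e.g., already in making $\mu$ itself associative), not only in the $d_0$-compatibility, so the closing remark overstates the point slightly.
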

\begin{proof}
 
For any $n$ and any $m\geq 1$ we construct a  chain map
\begin{equation}\label{eqSCAlgStr}
\left(\Floer^*_{M,f\Mbar^{\bR}_0,n}(K)  \right)^{\otimes m} \otimes C_* f\Mbar^{\bR}_0(m)\to \Floer^*_{M,f\Mbar^{\bR}_0,n}(K) 
\end{equation}
as follows. Recall from Equation \eqref{eqRootSC} that $\Floer^*_{M,f\Mbar^{\bR}_0,n}(K) $ consists of summands $CF_{T}^*$ for trees of height $n$ obtained by taking coinvariants of the sum
\begin{equation}
\widetilde{CF}^*_T:=\bigoplus_{o\in O_{root}}   C_{v_1}\otimes\ldots \otimes C_{v_k} \otimes C_* f\Mbar^{\bR}_0(k)
\end{equation} 
Taking the direct sum of these chain complexes, we define
\begin{equation} \label{eq:lift_of_n-simplices}
  \widetilde{CF}^*_{M,f\Mbar^{\bR}_0,n}(K) =\bigoplus_{T\in\cT_n(K)}\widetilde{CF}^*_T.
\end{equation}
There is an obvious chain map 
\begin{equation}\label{eqwSCAlgStr}
\left( \widetilde{CF}^*_{M,f\Mbar^{\bR}_0,n}(K) \right)^{\otimes m} \otimes C_* f\Mbar^{\bR}_0(m) \to  \widetilde{CF}^*_{M,f\Mbar^{\bR}_0,n}(K),
\end{equation}
by applying the operadic compositions.  To obtain a map as in Equation \eqref{eqSCAlgStr} we write $\widetilde{CF}^*_{M,f\Mbar^{\bR}_0,n,m}(K) $ for the direct sum in  the right hand side of Equation \eqref{eq:lift_of_n-simplices}, associated to trees with $m$ incoming edges at the root, and obtain a direct sum decomposition
\begin{equation}
 \widetilde{CF}^*_{M,f\Mbar^{\bR}_0,n}(K) =\bigoplus_m  \widetilde{CF}^*_{M,f\Mbar^{\bR}_0,n,m}(K).
\end{equation}
Then the map of Equation \eqref{eqwSCAlgStr} splits as a direct sum of maps
\begin{multline} \label{eq:action_operad-Kan-extension}
\left(\widetilde{CF}^*_{M,f\Mbar^{\bR}_0,n,k_1}(K) \otimes \cdots\otimes \widetilde{CF}^*_{M,f\Mbar^{\bR}_0,n,k_m}(K)  \right)  \otimes  C_* f\Mbar^{\bR}_0(m)\\ \to \widetilde{CF}^*_{M,f\Mbar^{\bR}_0,n,\sum_{i=1}^m k_i }(K),
\end{multline} 
which are equivariant with respect to the action of $S_{k_1}\times\dots \times S_{k_m}$ which on the right is induced by the inclusion into $S_{\sum_{i=1}^mk_i}$. 
We thus take the map in Equation \eqref{eqSCAlgStr} to be the induced one on the quotient.

We leave the verification of the compatibility of this construction with the face structure to the reader with the following indication: for $i=0$, we use the fact that the projection map from $\cF$ to $f\Mbar^{\bR}_0 $ is a map of multicategories, for $0 < i < n$, we use the associativity of the operations on   $\cF$, while for $i =n$, we use the fact that $\Floer$ is an algebra. In all cases, the fact that operations are strictly compatible with the action of the symmetric group follows from the equivariance conditions in the definition of multicategories and algebras over them.
\end{proof}

The geometric realisation functor is (lax) symmetric monoidal, via the Eilenberg-Zilber shuffle maps \cite{EilenbergZilver1953} that combinatorially encode the standard simplicial subdivision of each prism $\Delta^p \times \Delta^q$ into a union of simplices $\Delta^{p+q}$. This map immediately gives an algebra structure on the geometric realisation of each simplicial algebra.   The details are given below in a more general context in Section \ref{sec-geo-real} below.

\subsection{Proof of Theorem \ref{thm:main_thm}}
\label{sec:proof-theorem-main}

The proof of the first main result asserted in the introduction is now a completely straightforward consequence of the construction:
\begin{proof}[Proof of Theorem \ref{thm:main_thm}]
      By construction, the multicategories $\cF_K$ are sub-multicategories of $\cF$, and thus each inclusion $K \subset K'$ gives rise to an inclusion $\cF_{K'} \subset \cF_K$, and hence an inclusion of sets of decorated levelled trees $\cT_n(K') \subset \cT_{n}(K)$, which itself gives an inclusion of complexes in Equation \eqref{eq:n-simplices-simplicial-model-SC}
  \begin{equation} \label{eq:inclusion-simplicial-cochains-K-K'}
        \Floer^*_{M,f\Mbar^{\bR}_0,n}(K') \subset \Floer^*_{M,f\Mbar^{\bR}_0,n}(K).
      \end{equation}
      Since the inclusions of sets of decorated levelled trees are compatible with the face maps associated to collapsing levels, we obtain a map of simplicial chain complexes
   \begin{equation}
        \Floer^*_{M,f\Mbar^{\bR}_0,\bullet}(K') \subset \Floer^*_{M,f\Mbar^{\bR}_0,\bullet}(K),
      \end{equation}
      which defines the desired map of operadic symplectic cochains with support $K$ after completions.

      Given that the inclusion in Equation \eqref{eq:inclusion-simplicial-cochains-K-K'} is given by inclusion of trees, it lifts to an inclusion
      \begin{equation}
\widetilde{CF}^*_{M,f\Mbar^{\bR}_0,n}(K') \to         \widetilde{CF}^*_{M,f\Mbar^{\bR}_0,n}(K)         
\end{equation}
of the complexes whose coinvariants give rise to the operadic symplectic cochains. Inspecting Equation \eqref{eq:action_operad-Kan-extension}, it follows that the restriction map is compatible with the operadic action.

Since the strict compatibility of maps associated to a triple inclusion $K \subset K' \subset K''$ is immediate from the inclusion of multicategories $  \cF_{K''} \subset \cF_{K'} \subset \cF_{K}$, it remains to identify the action of the symplectomorphism group. Every symplectomorphism $\psi$ of $M$ induces a multifunctor
\begin{equation}
  \psi_* \co   \cF \to \cF,
\end{equation}
which at the level of objects and multimorphisms is given by pre-composing the Hamiltonian data with $\psi$, and conjugating the almost complex data by it. This action restricts to a multifunctor from $\cF_K$ to $\cF_{\psi K}$, with inverse the multifunctor associated to $\psi^{-1}$. The induced map yields the isomorphism in Equation \eqref{eq:action_symplectomorphism}, which is compatible with the operadic action by the functoriality of our construction of the structure maps.

Since $(\psi \circ \phi)_* = \psi_* \circ \phi_*$ as multifunctors on $\cF$, the action is eminently compatible with composition, and it is compatible with restriction map because $\psi_*$ preserves inclusions, in the sense of inducing, for each inclusion $K \subset K'$, a commutative diagram
\begin{equation}
  \begin{tikzcd}
    \cF_{K'} \ar[r, "\subset"] \ar[d, "\psi_*"] & \cF_{K} \ar[d, "\psi_*"] \\
     \cF_{\psi K'} \ar[r, "\subset"] & \cF_{\psi K}.
  \end{tikzcd}
\end{equation}
\end{proof}

\subsection{Differential graded multicategories}
\label{sec:diff-grad-mult}

At this stage, the reader can provide an explicit proof of Theorem \ref{thm:linear_comparison_from_CF}, along the lines of the proof of Theorem \ref{thm:main_thm} presented above. However, we prefer to avoid the morass of notation that would appear in such a direct approach, and present an even more abstract construction of the left Kan extension giving rise to the symplectic cochains with support. We shall thus take a small detour to explicitly describe the theory of multicategories enriched over chain complexes, which is a special case of the notion of enriched multicategories discussed in Appendix \ref{sec:multicategories}.

Let $\Ch_R$ be the category of chain complexes over a fixed commutative ring $R$ with its standard symmetric monoidal structure given by tensor product of chain complexes. The reader should keep the case where $R$ is the Novikov ring in mind, since that is the only one that we will use.  
If we do not write anything under a tensor product sign it means that it is the tensor product of chain complexes over $R$.

Throughout this section we use the word multicategory to mean a \emph{differential graded multicategory} (a special case of Definition \ref{def:symmetric-multi-category}): writing $\cX$ for the object set, this means that the multimorphisms $\cC(\vec{x};y)$ with inputs a non-empty sequence $\vec{x}\in\cX^n$, and output an element $y \in \cX$ are given by an object of $\Ch_R$. The multicomposition operations, are given by $R$-linear chain maps
\begin{equation} \label{eq:multicategory-structure-maps-multiple-gluing}
    \circ:\bigotimes_{i=1}^n \cC(\vec{x}_i;y_i)\otimes\cC(\vec{y};z)\to\cC(\vec{x}\circ\vec{y},z)
\end{equation}
for each sequence of objects $\vec{y}$ of length $n$, and each collection of $n$-sequences of objects  $\vec{x}_1,\ldots ,\vec{x}_n$, and where $\vec{x}\circ\vec{y}$ denotes the replacement of the $i$th element in $\vec{y}$ by the sequence $\vec{x}_i$. Note that Equation \eqref{eq:multicategory-structure-maps-multiple-gluing} corresponds to applying the operations $\circ_i$ from Equation \eqref{eq:multicategory-glue-at-one-edge}, simultaneously for all elements of the sequence $\circ_i$. The fact that such an operation is well-defined (i.e. independent of the ordering of the multi-compositions) is a consequence of the axioms. The unital structure is simply an element in $\cC(x,x)$ for each object $x$, and we have the symmetry isomorphisms
\begin{equation}
\sigma^*:\cC(x_1,\dots,x_n;y)\to\cC(x_{\sigma(1)},\dots,x_{\sigma(n)};y) 
\end{equation}
given by $R$-linear chain maps.

The properties satisfied by these data are those given in Section \ref{sec:multicategories}, which the reader can find formulated in terms only of the composition operation $\circ$ at all inputs in Definition 2.2.21 of \cite{LeinsterBook}. Following the standard convention, we call a multicategory with one object an operad.

Note that there is a forgetful $2$-functor from (differential graded) multicategories to (differential graded) categories, which forgets $n-$ary multimorphism spaces with $n>1$. Let us call the category associated to a multicategory its underlying category. 
There is a functor in the other direction, which associates to a category the multicategory with the same morphism ($1$-ary multimorphism) spaces and $n-$ary multimorphism spaces with $n>1$ are set to $0$ (the chain complex with one element). By abuse of language, we will call such a multicategory a category. Therefore, our constructions for multicategories will specialize to constructions for categories. Note that a multifunctor $\cC\to \cD$ where $\cC$ is a category is nothing but a functor between the underlying categories.

The closed symmetric monoidal structure on $\Ch_R$ gives rise to a multicategory which we denote by $\Ch_R^{dg}$, with object set those of $\Ch_R$, and multimorphisms defined by
\begin{equation}
    \Ch_R^{dg}\left(\vec{C},D\right) :=\Hom_*\left(\bigotimes_{i=1}^n C_i;D\right).
\end{equation}

We omit writing down the standard definitions of multicompositions and symmetric group actions.

An algebra over a differential graded multicategory $\cC$
is simply a multifunctor \begin{equation}\cC\to \Ch_R^{dg}.\end{equation} Algebras over $\cC$ form a category that we will denote by \begin{equation}Alg_\cC.\end{equation} When $\cC$ is a category, an algebra over $\cC$ is sometimes called a module over $\cC$ or a $\cC$-space in the literature. We will not make a distinction and continue to think of categories as special cases of multicategories.

\subsection{The free algebra functor}\label{sec-free}

Given a multicategory $\cM$, let us denote by $\Color(\cM)$ the \emph{colors of $\cM$}, which is the smallest multicategory with the same objects as  $\cM$. Precisely, this means that the only multimorphisms in $\Color(\cM)$ are the endomorphism spaces which are all assumed to be the unit object of $\Ch_R$ (which is the ground ring considered as a complex concentrated in degree $0$). Note that the category of algebras over $\Color(\cM)$ has objects consisting of 
a collection of chain complexes over $R$ indexed by the objects of $\cM$, and morphisms given by collections of maps of chain complexes.

We have a forgetful functor ($U$ stands for underlying) \begin{equation}U: Alg_{\cM}\to Alg_{\Color \cM}.\end{equation} More interesting is the fact that $U$ admits a left adjoint: \emph{the free algebra functor} $\mathbb{F}_{\cM}: Alg_{\Color \cM}\to Alg_{\cM},$ which we now construct.

Let $C=\{C_x\}_{x\in Ob(\cM)}$ be an object of $Alg_{\Color \cM}.$ For every $y\in Ob(\cM)$, we define
\begin{equation} \label{eq:formula_free_functor}
  \mathbb{F}_{\cM}(C)(y):= \bigoplus_{n=1}^\infty\left( C_{x_1}\otimes\ldots \otimes C_{x_n} \otimes \bigoplus_{\vec{x}\in Ob(\cM)^n}\cM(\vec{x}; y) \right)_{S_n}.
\end{equation}
Here, the subscript $S_n$ means that we are taking coinvariants of the chain complex $(C_{x_1}\otimes\ldots \otimes C_{x_n}) \otimes \cM(\vec{x}; y) $ under the action of $S_n$ defined by the maps 
\begin{equation}\label{eq:S_n_action}
 C_{x_1}\otimes\ldots \otimes C_{x_n} \otimes \cM(\vec{x}; y)\to  C_{\sigma(x_1)}\otimes\ldots \otimes C_{\sigma(x_n)} \otimes  \cM(\sigma\cdot\vec{x}; y).
 \end{equation}

Using the multicompositions in $\cM$ one can define chain maps \begin{equation}\cM(\vec{x};y)\to Hom_{\Ch_R}(\mathbb{F}_{\cM}(C)(x_1)\otimes \ldots \otimes \mathbb{F}_{\cM}(C)(x_n); \mathbb{F}_{\cM}(C)(y)),\end{equation} which are compatible with multicompositions and the symmetric structure. There is a natural isomorphism \begin{equation}Alg_\cM(\bF_\cM(C),A)\simeq \bigoplus_{x\in Ob(\cM)} Hom_{\Ch_R}(C_x, UA_x)\end{equation}certifying that indeed the free algebra functor is left adjoint to the forgetful functor.

\subsection{Geometric realization preserves algebras}\label{sec-geo-real}

Let us denote the category of simplicial objects in $Alg_\cM$ by $Alg_\cM^\Delta$.  Note that an $\cM$-algebra $A$ can be thought of as a simplicial $\cM$-algebra whose $n$-simplices are given by $A$ for all simplicial degrees $n\geq 0$ and all face and degeneracy maps being the identity. For the next statement, we also recall that the geometric realization $C_*:\Ch_R^\Delta\to \Ch_R$
 maps every simplicial complex to the direct sum of the (shifted) underlying complexes, equipped with the alternating sum of the face maps. The fact that this is a symmetric monoidal functor is well-known, and goes back to Eilenberg and Zilber's idea \cite{EilenbergZilver1953} for comparing the geometric realisation of a product of simplicial sets with the product of the geometric realisations. The reader seeking a reference for the exact statement we use (for simplicial chain complexes and geometric realization as a semisimplicial chain complex) can find the proof in \cite[Proposition 2.16 and 2.17]{RichterSagave2020}.

\begin{lem}
  The Eilenberg-Zilber suffle map determines a lift
  \begin{equation}
       C_* : Alg_\cM^\Delta\to Alg_\cM 
     \end{equation}
     of the geometric realization functor to the category of algebras over $\cM$.

\end{lem}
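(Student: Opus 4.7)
The plan is to transfer the algebra structure from each simplicial level to the geometric realization by using the lax symmetric monoidal structure on $C_*$ given by the Eilenberg--Zilber shuffle map. First I would unpack what a simplicial $\cM$-algebra amounts to: a family $\{A_\bullet(x)\}_{x \in Ob(\cM)}$ of simplicial chain complexes, together with, for each sequence $\vec{x}$ and each output $y$, a map of simplicial chain complexes
\begin{equation*}
\cM(\vec{x};y) \otimes A_\bullet(x_1) \otimes \cdots \otimes A_\bullet(x_k) \to A_\bullet(y),
\end{equation*}
in which $\cM(\vec{x};y)$ is regarded as a constant simplicial complex, and these maps satisfy the associativity, unit, and $S_k$-equivariance axioms for a multifunctor valued in $\Ch_R^\Delta$.

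Second, denoting by $\nabla$ the Eilenberg--Zilber shuffle map, define the $\cM$-algebra structure on $C_*(A_\bullet)$ as the composite
\begin{align*}
\cM(\vec{x};y) \otimes \bigotimes_{i=1}^{k} C_*(A_\bullet(x_i))
&\xrightarrow{\mathrm{id}\,\otimes\,\nabla} \cM(\vec{x};y) \otimes C_*\!\Bigl(\bigotimes_{i=1}^{k} A_\bullet(x_i)\Bigr) \\
&\xrightarrow{\nabla} C_*\!\Bigl(\cM(\vec{x};y) \otimes \bigotimes_{i=1}^{k} A_\bullet(x_i)\Bigr) \\
&\xrightarrow{C_*(\mathrm{action})} C_*(A_\bullet(y)),
\end{align*}
where the first arrow is the iterated shuffle applied only to the $C_*(A_\bullet(x_i))$-factors, and the second arrow pairs in the constant simplicial object $\cM(\vec{x};y)$ (where the shuffle map takes a particularly simple form because one factor is constant). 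The last arrow is the geometric realization of the simplicial algebra structure map.

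The remaining task is to verify the multifunctor axioms. Associativity with respect to the operadic composition \eqref{eq:multicategory-structure-maps-multiple-gluing} reduces to a diagram chase in which the coassociativity of $\nabla$, the naturality of $\nabla$ in both arguments, and the associativity of the simplicial algebra structure are combined. Unitality follows from the unitality of $\nabla$ together with the unitality of the simplicial structure. For equivariance under $S_k$, one invokes precisely the \emph{symmetric} part of the lax symmetric monoidality: the shuffle maps commute with the braidings on $\Ch_R^\Delta$ and $\Ch_R$, which is exactly what is recorded in the references cited just before the statement. Functoriality of the assignment $A_\bullet \mapsto C_*(A_\bullet)$ in morphisms of simplicial $\cM$-algebras is automatic from the naturality of every map in the composite above, yielding the desired lift $C_*:Alg_\cM^\Delta \to Alg_\cM$.

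The main obstacle is really just bookkeeping: organizing the iterated applications of $\nabla$ and the coherent compatibilities with $S_k$-actions so that they line up cleanly against the multicategorical axioms of Appendix \ref{sec:multicategories}. Conceptually nothing beyond the lax symmetric monoidality of $C_*$ (already in the hypotheses) is required, so no genuine obstacle arises.
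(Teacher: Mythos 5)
Your proposal is correct and follows essentially the same route as the paper's sketch: both apply the Eilenberg--Zilber shuffle to the tensor factors of the form $C_*(A_\bullet(x_i))$, pair in $\cM(\vec{x};y)$ via the degree-$0$ inclusion into $C_*$ of the constant simplicial complex, and then post-compose with $C_*$ of the simplicial structure map. The paper states this more tersely (it shuffles all factors and then invokes $C_0(\cM(\vec{x};y)_\bullet)=\cM(\vec{x};y)$), while you separate the constant factor and spell out which axioms the lax symmetric monoidality of $\nabla$ is responsible for; the content is the same.
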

\begin{proof}[Sketch of proof:]
The structure of a simplicial $\cM$-algebra consists of maps of simplicial chain complexes \begin{equation} A^\Delta_\bullet(x_1)\otimes\ldots \otimes A^\Delta_\bullet(x_n) \otimes \cM(\vec{x};y)_\bullet \to A^\Delta_\bullet(y),\end{equation} where $\cM(\vec{x};y)_\bullet $ denotes the constant simplicial chain complex.

We apply $C_*$ to this map and pre-compose it with the shuffle (Eilenberg-Zilber) map to obtain \begin{equation} C_* A^\Delta_\bullet(x_1)\otimes\ldots \otimes C_* A^\Delta_\bullet(x_n) \otimes C_*(\cM(\vec{x};y)_\bullet) \to C_* A^\Delta_\bullet(y).\end{equation}

Noting that $C_0(\cM(\vec{x};y)_\bullet)=\cM(\vec{x};y)$, we get the desired $\cM$-algebra structure maps. 
\end{proof}

\subsection{The extension functor $\mathbb{L}\pi_*: Alg_\cM\to Alg_\cO$}\label{sec-ext}

Given a map $\cM \to \cO$ of multicategories, our goal in this section is to finally construct the functor $\mathbb{L}\pi_*: Alg_\cM\to Alg_\cO$ in the abstract setup; the algebra $\Floer^*_{M,f\Mbar^{\bR}_0,\bullet}(K)$ which was explicitly constructed in Definition \ref{def:operadic_relative_chains}, is the outcome of applying this functor to the Floer algebra, and to the forgetful map from $    \cF_K $ to $f\Mbar^{\bR}_0  $. This point of view will be important in Section \ref{sec:comp-two-models-1} below, where we compare our model of symplectic cohomology with the one coming from the mapping telescope. 

Let us first define a functor on algebras over multicategories of colors  $\Color(\pi)_*: Alg_{\Color \cM} \to Alg_{\Color \cO}$ which maps an algebra $x\mapsto C_x$ to the algebra 
\begin{equation} y\mapsto \bigoplus_{x\in \pi^{-1}(y)}C_x.\end{equation}

We define a simplicial object $(\Delta\pi)_*A_\bullet$ in $Alg_\cO$ by the formula
\begin{equation} \label{eq:left_Kan_extension_Formula}
  (\Delta\pi)_*A_n= (\bF_\cO\circ \Color(\pi)_*\circ U)\circ (\bF_{\cM}U)^{n}(A).\end{equation}
It is clear how all degeneracy maps and all but one of the face maps (for each $n\geq 1$) are defined by the functoriality of $\bF_\cO\circ \Color(\pi)_*$ using the unit and the counit of the free-forgetful adjunction. For those last maps we construct an algebra map \begin{equation} \bF_\cO\circ \Color(\pi)_*\circ U\circ \bF_{\cM}(D)\to  \bF_\cO\circ \Color(\pi)_*(D),\end{equation}where $D$ is an $Ob(\cM)$ indexed collection, using $\pi$ to turn multimorphisms in $\cM$ to multimorphisms in $\cO$ and then using multicompositions in $\cO$.

\begin{defin}
  The extension of $A$ along $\pi$ is the geometric realisation
  \begin{equation}
        \mathbb{L}\pi_*A : = C_*  (\Delta\pi)_*A_\bullet.
  \end{equation}
\end{defin}

Going back to our geometric context, the definition we gave in Section \ref{sec:chain-complex} can be translated as follows:
\begin{lem} \label{lem:operadic_symmplectic_cochain_extension}
  The operadic symplectic cochains with support a compact subset $K$ are equal to the completion, with respect to the valuation of the Novikov ring, of the extension of the restriction of $\Floer^*$ to $\cF_K$ along the projection $\pi$ to the chains on $f\Mbar^{\bR}_0 $:
  \begin{equation}
      SC_{M,f\Mbar^{\bR}_0}^*(K) = \widehat{ \mathbb{L}\pi_* \left(\Floer^*|\cF_K \right)}.
  \end{equation} \qed
\end{lem}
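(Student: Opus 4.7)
The plan is to unwind the abstract formula in Equation \eqref{eq:left_Kan_extension_Formula} and match it term by term with Definition \ref{def:operadic_relative_chains}. Since the completion and the shifted geometric realisation both enter on the two sides of the desired equality in the same way, it suffices to produce an isomorphism of simplicial $C_*f\Mbar^{\bR}_0$-algebras
\begin{equation}
(\Delta\pi)_*\left(\Floer^*|\cF_K\right)_\bullet \;\cong\; \Floer^*_{M,f\Mbar^{\bR}_0,\bullet}(K),
\end{equation}
compatibly with face and degeneracy maps, and thereafter apply $C_*$ and the Novikov completion.

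First I would make the combinatorial identification in each simplicial degree $n$. Unfolding Equation \eqref{eq:formula_free_functor} for $\bF_\cM$ shows that a single application, applied to a collection of chain complexes indexed by the objects of $\cF_K$, is a sum over decorated levelled trees of height $1$, in which the (unique) interior vertex is labelled by an object of $\cF_K$ and its incoming edges by a multimorphism in $C_*\cF_K$; the coinvariant in Equation \eqref{eq:S_n_action} matches the coinvariant in Equation \eqref{eqRootSC}. Iterating this observation, $n$ applications of $\bF_\cM U$, followed by a last application of $\bF_\cO\circ\Color(\pi)_*\circ U$, produce precisely a direct sum over decorated levelled trees of height $n$, where the bottom vertex (the root) is labelled by an object of $f\Mbar^{\bR}_0$ and carries a chain in $C_*f\Mbar^{\bR}_0(k)$ (this is the only place $\bF_\cO$ enters and explains the appearance of $C_*f\Mbar^{\bR}_0(k)$ at the root in Equation \eqref{eqRootSC}), while all higher vertices are labelled by objects of $\cF_K$ and carry multimorphisms in $C_*\cF_K$. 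Matching coinvariants level by level then yields an isomorphism of the underlying chain complexes in each simplicial degree.

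Next I would verify that the simplicial structure maps agree under this identification. The face map $d_0$ in the bar construction comes from the special natural transformation $\bF_\cO\circ\Color(\pi)_*\circ U\circ \bF_\cM(-)\to\bF_\cO\circ\Color(\pi)_*(-)$ constructed by applying $\pi$ and then composing in $\cO$; this is exactly the operation that collapses the edges between the root and the first level in Equation \eqref{eq:face-maps-simplicial-SC}. The face maps $d_i$ for $0<i<n$ come from the counit $\bF_\cM U\Rightarrow\id$ applied at the appropriate spot, which on decorated trees collapses the edges between the $i$th and $(i+1)$st level by using multicomposition in $\cF_K$. The top face $d_n$ uses the $\cF_K$-algebra structure on $\Floer^*$, i.e. the counit at the outermost level, to contract the edges adjacent to the leaves. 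The degeneracies come from the unit $\id\Rightarrow U\bF_\cM$ and insert bivalent vertices labelled by identity morphisms, matching the description given right before Lemma \ref{lem:operadic_symmplectic_cochain_extension}. All of these checks are routine, since both sides are defined directly by the same combinatorial operations on levelled trees.

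Finally I would match the algebra structures. On the bar construction side, the simplicial $\cO$-algebra structure is built into the definition by the outer $\bF_\cO$, which freely generates multicompositions. On the explicit side, the operad action was given in Equation \eqref{eq:action_operad-Kan-extension} by grafting trees at the root and applying composition in $C_*f\Mbar^{\bR}_0$. Under the identification from the first step, these two $\cO$-algebra structures are visibly the same. Passing to $C_*$ of the resulting simplicial $\cO$-algebras and then completing with respect to the Novikov valuation then yields the claimed identification. The only non-routine point in the whole argument is the bookkeeping of symmetric-group coinvariants at every level of the tree, but this is built into both the iterated $\bF_\cM$ and the explicit construction in the same way, so the comparison goes through unchanged.
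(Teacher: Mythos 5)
Your argument is correct, and it fills in exactly the kind of unwinding the paper elides: the paper presents this lemma with a bare $\qed$, prefaced by the remark that "the definition we gave in Section \ref{sec:chain-complex} can be translated as follows," so it is treating the claim as a definitional translation rather than something requiring an explicit argument. What you have written out — identifying the $n$-simplices of the bar construction $(\bF_\cO\circ \Color(\pi)_*\circ U)\circ (\bF_{\cM}U)^{n}$ with the sum over decorated levelled trees of height $n$ in Equation \eqref{eq:n-simplices-simplicial-model-SC}, matching the coinvariants in Equation \eqref{eq:S_n_action} with those in Equation \eqref{eqRootSC} level by level, checking that $d_0$, the intermediate $d_i$, and $d_n$ arise respectively from the $\pi$-push-and-compose transformation, the counit of the free–forgetful adjunction, and the algebra structure map of $\Floer^*$, and observing that the $\cO$-algebra structures coincide because both graft at the root via the outer $\bF_\cO$ — is the correct content of that tautology. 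The one indexing point worth making explicit is that a levelled tree of height $n$ has $n+1$ levels of edges, matching $n$ applications of $\bF_\cM U$ plus the single outer $\bF_\cO\circ\Color(\pi)_*\circ U$; your sketch gets this right implicitly, but it is the most likely place for an off-by-one error if one were to write out the full proof.
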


\subsection{Proof of Theorem \ref{thm:linear_comparison_from_CF}}
\label{sec:proof-theorem-main-2}

We are now ready to prove the second main result asserted in the introduction:
\begin{proof}[Proof of Theorem \ref{thm:linear_comparison_from_CF}]
  We proceed to check the asserted properties as they are listed. First, to construct a map from the Floer cochains of every Hamiltonian $H$ which is negative on $K$, we consider the associated inclusion of the category $\ast_{(H,J)}$ with one object in $\cF_K$. This inclusion induces a map of extensions
  \begin{equation}
       \mathbb{L}\pi_* \left( \Floer^*|_{\ast_{(H,J)} }\right) \to   \mathbb{L}\pi_* \left(\Floer^*|\cF_K \right) .
  \end{equation}
  The left hand side is, essentially by definition, the geometric realisation of the constant simplicial functor on the free $C_*f \Mbar^{\bR}_{0} $ algebra on the Floer complex $\Floer^*(H,J)$, and thus has a canonical inclusion of this free algebra, associated to the $0$-simplices of the simplicial construction. To see this, note that, in the formula corresponding to Equation \eqref{eq:left_Kan_extension_Formula}, we have that the underlying functor on the category of modules over $ \ast_{(H,J)} $ is the identity functor, and hence so is  $\bF_{ \ast_{(H,J)}}$. A map of such algebras thus canonically corresponds to a map of complexes
  \begin{equation}
      \Floer^*(H,J) \to  U \mathbb{L}\pi_*  \left(\Floer^*|\cF_K \right). 
  \end{equation}
  Composing with the completion maps yields the map whose existence is asserted in Equation \eqref{eq:map_Ham_Floer_Cochains-SC}.

  Next, we produce the homotopy in Diagram \eqref{eq:homotopy_commutative_continuation}. Considering the inclusion of a category $\cF_{\kappa}$  with objects $(H_0,J_0)$ and $(H_1,J_1)$, and a unique morphism between them given by a continuation map $\kappa$.   The morphisms from $ \Floer^*(H_0,J_0)$ and $ \Floer^*(H_1,J_1)$  to $ SC^*_{M,f\Mbar^{\bR}_0}(K)$ both factor through the underlying complex of $ \mathbb{L}\pi_*  \left(\Floer^*|\cF_{\kappa} \right)$. Next, we consider the projection map $p$ from $ \cF_{\kappa}$ to the point. The inclusion of the unit in $f\Mbar^{\bR}_0$ induces a map
    \begin{equation}
      U \mathbb{L}p_*  \left(\Floer^*|\cF_{\kappa} \right) \to    U \mathbb{L}\pi_*  \left(\Floer^*|\cF_{K} \right) ,
    \end{equation}
and the maps from both $\Floer^*(H_0,J_0)  $ and $\Floer^*(H_1,J_1)  $ factor through it.  It thus suffices to show that the following diagram is homotopy commutative:
     \begin{equation} \label{eq:homotopy_commutative_continuation}
      \begin{tikzcd}
        \Floer^*(H_0,J_0)  \ar[r] \ar[dr] &    \Floer^*(H_1,J_1) \ar[d] \\
        & U \mathbb{L}p_*  \left(\Floer^*|\cF_{\kappa} \right).
      \end{tikzcd}
    \end{equation}
    We now analyse the target of these maps. Since the target category of $p$ is the point, the map denoted by $\bF_{\cO}$ in   Equation \eqref{eq:left_Kan_extension_Formula} is the identity map. The $0$-simplices of the associated simplicial chain complex are given by
    \begin{equation} \label{eq:0-simplices-two-object-model}
       (\Color(p)_*\circ U)\circ \Floer^*|_{ \cF_{\kappa}}   \cong  \Floer^*(H_0,J_0)  \oplus \Floer^*(H_1,J_1),
     \end{equation}
     and the two maps in Diagram \eqref{eq:homotopy_commutative_continuation} are given by the inclusions of these factors.

     We claim that there is a map from $\Floer^*(H_0,J_0) $ to the $1$-simplices, whose composition with the face map for $n=0$ gives the inclusion of the first factor in the right of Equation \eqref{eq:0-simplices-two-object-model}, and whose composition with the face map for $n=1$ gives the composition of $\kappa$ with the inclusion of the second factor. To see this, we compute from Equation \eqref{eq:formula_free_functor} (which simplifies significantly in the context of categories) that the value on the object $(H_1, J_1) $ of the algebra obtained by the composing $\Floer^*$ with the forgetful functor and then the free functor is 
     \begin{equation}
               \bF_{\cF_{\kappa}}U \left( \Floer^*|_{ \cF_{\kappa}} \right)(H_1, J_1) \cong \Floer^*(H_0,J_0)  \oplus \Floer^*(H_1,J_1),
             \end{equation}
     where the first summand corresponds to the identity of the object $(H_0,J_0)$, and the second to the unique morphism from this object to $(H_1,J_1)$. Tracing through the construction, we find that the inclusion of this first summand provides the desired homotopy. 

\end{proof}

\section{Comparison of the two models}
\label{sec:comp-two-models-1}

In this section, we prove Theorem \ref{thm:comparison}, which establishes the non-triviality of the operadic symplectic cochains with support by proving that this cochain complex is homotopy equivalent to the existing model for symplectic cochains with support.

While this section is fairly long, the basic ideas underlying the comparison can be explained quite succintly: as in Section \ref{sec:strictly-funct-coch} denote by $\cF_{K,\star}$ the subcategory of the $1$-categorical part of the indexing multi-category $\cF_K$ whose morphisms project to the unit in $f\Mbar^{\bR}_0(1)$. The most significant difference between the telescope construction and the model $SC_{M,f\Mbar^{\bR}_0}^*(K)$ is that the former is a model for the homotopy colimit of the Floer functor over $\cF_{K,\star}$, while the latter involves the entire multicategory. We will use the notation $SC_M^*(K)$ to denote the completion of some unspecified model for the homotopy colimit of the Floer functor $\Floer^*$  over $\cF_{K,\star}$, which is well-defined (up to homotopy equivalence) because the homotopy colimit is itself well defined up to contractible choice.

We are thus led to consider the following diagram of differential graded multicategories
\begin{equation}\label{eq:acceleration_diagram-1}
\begin{tikzcd}
C_* \cF_{K,\star} \arrow[r] \arrow[d]& C_* \cF_K \arrow[r,"\Floer^*"] \arrow[d,"\pi"] & \Ch_{\Lambda_{\geq 0}}^{dg}\\ 
\Lambda_{\geq 0}  \arrow[r] \ar[urr,dashed,bend left=10] & C_* f\Mbar^{\bR}_0 \ar[ur,dashed] & \end{tikzcd}
\end{equation} 
in which the dashed arrows are the left Kan extensions whose completions give rise to the two models of symplectic Floer cochains that we are trying to compare.

The statement that the two models are homotopy equivalent then follows from a general argument asserting this conclusion for any such diagram where the left square is a (homotopy) pullback square. The fact that we do obtain a pullback square in our geometric situation is ultimately a consequence of the results of Section \ref{sec:hamilt-index-categ}, specifically of Proposition \ref{lmFrgtHtpyEq}. Since the statement that a square of topological spaces is a homotopy pullback square is equivalent to the statement that the induced map of fibres is a homotopy equivalence, it should not be surprising that a proposition asserting that a projection map has contractible fibres implies the desired result.

If we had formulated our operadic constructions in a fully homotopically invariant way, e.g. using a theory of operads internal to quasi-categories, the above paragraph would be an outline of the proof, and its implementation would be shorter than the argument we provide, by relying on multiple citations of results in Section 3 of the manuscript \cite{Lurie2017}. Since the statements in the introduction are about operads in the ordinary differential graded sense, such an approach would require us to then further use a comparison between these two different notions.

We prefer to give a longer, explicit argument, relying on classical results of homotopical algebra, by comparing operadic left Kan extensions to categorical Kan extensions.

\subsection{Overview of the proof of Theorem \ref{thm:comparison}}
\label{sec:overv-proof-theor}

It is clear from the phrasing of the problem in the paragraphs surrounding Diagram \eqref{eq:acceleration_diagram-1} that the missing ingredient is a general statement about multi-categories. For this reason, until the end of the proof, everything will be stated in general algebraic terms.

As in Sections \ref{sec:diff-grad-mult}-\ref{sec-ext}, we fix a commutative ring $R$ and the following data: \begin{itemize} \item a differential graded multicategory $\cM$
\item a differential graded multicategory $\cO$, which we assume to only have one object 
\item a differential graded multi-functor $\pi: \cM\to \cO$
\item an $\cM$-algebra $A: \cM\to \Ch_R^{dg} $
\end{itemize} 
We call this the \emph{abstract setup} in this section. The construction of Section \ref{sec-ext} can be formulated as the construction of such data by applying the normalized chains of symmetric cubical sets (and the resulting functor from $\Ch_R^{\square} $ to $ \Ch_R^{dg} $) to the Floer theoretic constructions from Sections \ref{sec:hamilt-index-categ} and \ref{sec:floer-functor}. To be specific $R=\Lambda_{\geq 0}$, $C_* \cF_K $ plays the role of $\cM$,  $C_* f\Mbar^{\bR}_0 $ of $\cO$, $C_*$ applied to the multifunctor $\cF_K\to f\Mbar^{\bR}_0$ plays the role of $\pi$, and $\Floer^*$ that of $A$. In Section \ref{sec-ext}, we explained the construction of an $\cO$-algebra which is a model for the homotopical operadic left Kan extension of $A$ via $\pi$. We formulated the construction as a functor \begin{equation}\mathbb{L}\pi_*:Alg_\cM\to Alg_\cO.\end{equation}

On the other hand, by taking the fibre of $\pi$ over the unit of $\cO$, we obtain a category which we denote by $\cM_\star$.  The homotopy colimit of $A$ over $\cM_\star$, is equipped with a natural map of chain complexes
\begin{equation}
  \hocolim_{\cM_\star} A \to   \mathbb{L}\pi_* A. 
\end{equation}

Our goal is to prove that this map is a homotopy equivalence. We will need a key intermediate step where we use a functor from the category of multi-categories to the category of symmetric monoidal categories, called the PROP functor (see Section 5.4.1 in \cite{loday2012algebraic} and also Section \ref{sec-prop} below), and which we denote by $P$. From a sufficiently high-level perspective, the functor $P$ encodes multicategorical data in terms of equivalent categorical data, and the main difference between the two approaches is how the notion of symmetry is encoded.

\begin{rem}
  While not completely standard, the version of the PROP functor that we will consider will rely on an auxiliary choice of ordering of the set of objects of the multicategory which we consider. We stress that this ordering is not going to be arising from geometric considerations. Rather, we choose it because it simplifies the comparison between the constructions that we will associate to the PROP and the operadic construction.
\end{rem}

We will ignore part of the structure and consider the target of $P$ as simply the category of (differential graded) categories. As a first step, we note that if $\cM$ is a multi-category (with an ordered set of objects), then the objects of $P\cM$ are ordered sequences of objects of $\cM$.  We can apply the PROP functor to our abstract setup and obtain the following data:
\begin{itemize} \item a differential graded category $P\cM$,
\item a differential graded category $P\cO$,
\item a differential graded functor $P\pi: P\cM\to P\cO$, and 
\item a differential graded module  $PA: P\cM\to \Ch_R^{dg} $ (i.e. a $P\cM$-algebra).
\end{itemize}

We are thus at the position where we have to compare three different chain complexes:

\begin{enumerate}
\item (Operadic) The left Kan extension associated to the multifunctors $\pi$ and $A$: $\mathbb{L}\pi_*A,$
\item (Categorical) The left Kan extension associated to the functors $P \pi$ and $PA$: $\mathbb{L}P\pi_*PA,$
\item The homotopy colimit of $A$ over $ \cM_\star$.
\end{enumerate}

The inclusion of categories $ \cM_\star \to  P\cM$ induces a comparison map between the second and the third complex, and we shall discuss, in Section \ref{sec-hv} a criterion for when it is quasi-isomorphism.  We shall also show that there is always a map comparing the first and the second of these chain complexes, which is analogous to the map from the Borel construction to the quotient by a group action. In order for the comparison map to be a quasi-isomorphism, we need additional assumptions. Before we state them let us make an ad hoc definition extending, to the $\mathbb{Z}/2$-graded case, the notion of a bounded below and free $\mathbb{Z}$-graded right module (i.e. one which is a free $\mathcal{A}$-module in each degree, and vanishes below some dimension).
\begin{defin} \label{def:Z2_graded_bounded_below}
A $\mathbb{Z}/2$-graded right chain complex over $\mathcal{A}$ is called \emph{bounded below and free} if it is obtained by collapsing the grading of a bounded below and free $\mathbb{Z}$-graded right $\mathcal{A}$-chain complex.
\end{defin}

\begin{defin} \label{def:freeness_assumption}
  The abstract setup is said to satisfy the \emph{freeness hypothesis} if the following conditions hold:
   \begin{itemize}
\item[] \emph{(Identity)} The endomorphism algebra of every object  $\vec{x}$ of $Ob(P\cM)$ is given by the group ring on the subgroup $\Aut(\vec{x})$ of permutations of $[n]$ that preserve the map $\vec{x}:[n]\to Ob(\cM)$  \begin{equation}P\cM(\vec{x};\vec{x})=R[\Aut(\vec{x})]. \end{equation}
\item[] \emph{(Freeness 1)} For every pair of objects $\vec{x}$ and $\vec{y}$ of $P(\cM)$, the morphism complex  $\cM(\vec{x};\vec{y})$ is a bounded below and free right $R[\Aut(\vec{x})]$-chain complex.
\item[] \emph{(Freeness 2)} For every $n\geq 1$, the complex $\cO(n)$ is a bounded below and free right $R[S_n]$-chain complex.  \end{itemize}
\end{defin}

Note that Freeness 2 property implies that $\cO(n)$ is a bounded below and free right $R[G]$-chain complex for all subgroups $G\subset S_n$ by an elementary argument

The comparison of Operadic (1) and Categorical (2) Kan extensions as above is the subject of the following result, whose proof takes up the bulk of this section:
\begin{prop}\label{prop-cat-op-comp} Assuming that the freeness hypothesis holds, the value of the module $\mathbb{L}P\pi_*PA $ on the unit object of $P \cO$ is chain homotopy equivalent to the complex underlying $\mathbb{L}\pi_*A$.
\end{prop}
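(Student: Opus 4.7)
The plan is to produce an explicit level-wise comparison between the two-sided bar constructions defining the two Kan extensions. Both $\mathbb{L}\pi_* A$ evaluated at the unique object of $\cO$ and $\mathbb{L}P\pi_* PA$ evaluated at the unit of $P\cO$ are realisations of simplicial chain complexes of two-sided bar type: in the operadic case, one has by Section \ref{sec-ext} a direct sum over decorated leveled trees of height $n$, with each vertex of valence $k+1$ contributing a tensor product of multimorphism complexes modulo an $S_k$-coinvariant; in the categorical case, the $n$-simplices of the standard bar construction at the unit object $1$ of $P\cO$ take the form
\begin{equation}
\bigoplus_{\vec{x}_0, \ldots, \vec{x}_n} P\cO(P\pi(\vec{x}_0);1) \otimes P\cM(\vec{x}_1;\vec{x}_0) \otimes \cdots \otimes P\cM(\vec{x}_n;\vec{x}_{n-1}) \otimes PA(\vec{x}_n),
\end{equation}
where the sum is over composable sequences of objects of $P\cM$.

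The comparison map would exploit the standard description of PROP morphism complexes (see Section \ref{sec-prop}): each $P\cM(\vec{x};\vec{y})$ decomposes as a direct sum indexed by set maps from the indexing set of $\vec{x}$ to that of $\vec{y}$, with each summand a tensor product of multimorphism complexes of $\cM$, one multimorphism per fibre. Inserting this decomposition into the $n$ PROP-morphism factors above produces combinatorially a sum over decorated leveled trees of height $n$, matching level-wise the operadic simplicial chain complex. Under this identification, the $S_k$-coinvariants appearing in the operadic model correspond to the fact that symmetric group actions are absorbed into the morphism spaces of $P\cM$ via the identity axiom $P\cM(\vec{x};\vec{x}) = R[\Aut(\vec{x})]$ from the freeness hypothesis.

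To establish a quasi-isomorphism at each simplicial level, I would invoke the remaining parts of the freeness hypothesis. Since $\cM(\vec{x};y)$ is bounded below and free as a right $R[\Aut(\vec{x})]$-module, and $\cO(m)$ is bounded below and free as a right $R[S_m]$-module, the ordinary coinvariants appearing in the operadic model agree up to natural quasi-isomorphism with their homotopy coinvariants, and the latter are precisely what appear in the categorical bar construction via tensor products over the endomorphism rings $R[\Aut(\vec{x})]$ and $R[S_m]$. Realisation of these level-wise quasi-isomorphisms yields a quasi-isomorphism of total complexes by the spectral sequence of a simplicial chain complex, and the bounded-below-free property upgrades this to a chain homotopy equivalence, because every quasi-isomorphism between bounded-below complexes of free $R$-modules is a homotopy equivalence.

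The main obstacle will be the bookkeeping of symmetric group actions. The $S_k$-coinvariants at the vertices of the operadic trees are encoded very differently from the internal symmetries of morphism complexes in the PROP, and the combinatorial identification between decorated leveled trees and composable chains of PROP morphisms must be checked with care for signs, orderings of tuples, and for compatibility with the face and degeneracy maps of both simplicial structures. A secondary subtlety is that $P\cM$ depends on an auxiliary ordering of the objects of $\cM$; one must verify that the resulting comparison is canonical, i.e. independent of this choice up to natural isomorphism, which is straightforward but requires tracking a further layer of symmetry data.
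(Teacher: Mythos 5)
Your high-level intuition — that under freeness the ordinary coinvariants defining the operadic extension should agree with the homotopy coinvariants implicitly present in the categorical bar construction — is correct, and it is essentially what the paper exploits. However, there is a genuine gap in the way you deploy it: you claim a quasi-isomorphism \emph{at each simplicial level} followed by passage to realizations via the spectral sequence of a simplicial chain complex, but the natural comparison map between the two simplicial chain complexes is emphatically not a level-wise quasi-isomorphism. At simplicial degree $n$, the $n$-simplices of $\mathbb{L}P\pi_* PA(\star)$ are plain tensor products over $R$, while the $n$-simplices of $\mathbb{L}\pi_* A(\star)$ (after the ``untangling'' identification of Corollary \ref{CorOperadicOrdered}) are tensor products over the group rings $R[\Aut(\vec{x}_i)]$. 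The projection from one to the other is well-defined as a simplicial map, but for fixed $n$ it is the projection from a Borel-type resolution to its target, which is nowhere close to being a quasi-isomorphism in a single simplicial degree — the entire point of the bar/Borel construction is that the contractibility only emerges after totalizing. Your phrase ``the latter are precisely what appear in the categorical bar construction via tensor products over the endomorphism rings'' is where the argument slips: those tensor products over $R[\Aut(\vec{x})]$ do \emph{not} appear in the categorical bar construction at any fixed simplicial degree; they are recovered only after summing over simplicial degrees.

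The paper's actual argument, given in Sections \ref{sec-prop}--\ref{sec:proof-that-map}, replaces your level-wise comparison with a \emph{filtration by the number of essential levels} $m$, i.e. the number of indices $i$ with $\vec{x}_i \neq \vec{x}_{i+1}$. This filtration cuts transversally across the simplicial degree, regrouping the degenerate identity factors $R[\Aut(\vec{x}_i)]$ into tensor-algebra subcomplexes $T(R[\Aut(\vec{x}_i)])$ sandwiched between the nontrivial $P\cM$ factors; each associated graded piece then becomes an iterated two-sided bar construction $B(M_r,\cA,M_l)$ whose collapse to $M_r\otimes^s_{\cA}M_l$ is a quasi-isomorphism precisely when $M_l$ is dg-flat (Corollary \ref{prop-free-quasi}), which is where the freeness hypotheses are consumed via Lemma \ref{lem-dg-flat-free}. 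The Identity hypothesis is also essential here — it is what guarantees that the endomorphism algebras are exactly $R[\Aut(\vec{x})]$ so that the tensor-algebra factors can be peeled off. You invoke this axiom only loosely, and you do not set up the algebraic machinery (dg-flatness, the comparison map $B(M_r,\cA,M_l)\to M_r\otimes^s_{\cA} M_l$) needed to make the iterated bar collapse precise. Finally, the combinatorial identification you sketch between composable chains of PROP morphisms and decorated leveled trees is genuinely needed and nontrivial; the paper packages it as the ``untangling trick'' (Proposition \ref{prop-untangle}), which produces a canonical isomorphism rather than merely a bijection of indexing sets, and without that step there is no well-defined comparison map to analyze. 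Your final remark about upgrading quasi-isomorphisms to chain homotopy equivalences between bounded-below complexes of free modules is fine as far as it goes, but it does not rescue the level-wise claim.
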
 

\begin{rem}
A similar result was obtained in \cite[Proposition 1.15]{Horel2017factorization}, using more abstract model categorical arguments, but it does not appear that his result directly implies ours. 
\end{rem}

The above result reduces the desired equivalence from Theorem \ref{thm:comparison} to a comparison between the homotopy colimit and the categorical left Kan extension, which follows whenever the diagram
\begin{equation}\label{eq:acceleration_diagram-2}
\begin{tikzcd}
\cM_{\star} \arrow[r] \ar[d] &P \cM  \arrow[d,"\pi"]\\ 
\star  \arrow[r]  & P \cO  \end{tikzcd}
\end{equation}
is a homotopy pushout square. This is exactly the point that we alluded to in the discussion surrounding Diagram \eqref{eq:acceleration_diagram-1}, but, having passed from multicategories to categories, both the formulation and the proof become more classical, as we discuss in Section \ref{sec-hv} below.

\subsection{PROP functor}\label{sec-prop}
We now recall the construction of the PROP functor $P$ which associates to a multi-category with an ordering of the set of objects a symmetric monoidal category. We will not use the symmetric monoidal structure so we only explain the underlying categorical structure below. Throughout, all constructions are enriched over $\Ch_R$. 
\begin{defin}
Given a multicategory $\mathcal{M}$, define the associated \emph{PROP category} $P(\mathcal{M})$ to have
\begin{enumerate} \item Objects $Ob(P(\cM))$given by (finite) non-decreasing sequences of objects of $\cM$. We write $Os^k(\cM)$ for those sequences of length $k$.

\item Morphism complexes between objects $\vec{a}\in Os^n(\cM) $ and $\vec{b}\in Os^m(\cM) $ of $P(\mathcal{M})$, given by the direct sum, over all maps $f: [n]\to [m]$ of the sets labelling the sequences, of the tensor product over all elements $b_j$ of the output sequence $ \vec{b}$ of the complex of multimorphisms with (i) inputs the objects of $\vec{a}$ labelled by $f^{-1}(j)$ and (ii) output $b_j$: \begin{equation}P(\mathcal{M})(\vec{a},\vec{b}):= \bigoplus_{f:[n]\to [m]}\bigotimes_{j=1}^m \mathcal{M}(\vec{a}|_{f^{-1}(j)}; b_j).\end{equation}
\item The composition structure is defined in \cite[Construction 4.1]{may1978uniqueness}, using the composition in the multicategory along with its symmetric structure.
\end{enumerate}
\end{defin}
We clarify that the choice of ordering does not enter in the description of the morphisms; in this way, it should be clear that $P \cM$ is a full subcategory of a category whose objects are all (finite) sequences of objects.

Note that for $\sigma\in S_n$ whose action on $\vec{a}$ is trivial, the symmetric structure of the operad $\cM$ induces permutation morphisms $\sigma^*\in P(\mathcal{M})(\vec{a},\vec{a})$.   These will play a key role in the comparison argument.
\begin{rem}
The construction of composition in  \cite{may1978uniqueness} 
is much more clear when the colors of all objects in question are different. One first composes only using the composition structure in $\cM$ and then uses the symmetric structure so that the domain of the composition comes out correctly. In the general case where not all colors are different, we use the same formula - even though it is less obvious to see why this should be the formula if all colors are the same for example. Associativity is proved by a relatively painful explicit check. There is a third definition of multicategories, called fat multicategories in Appendix A.2 of \cite{LeinsterBook}, which makes the check of associativity trivial. Yet now the work is shifted to proving that a multicategory gives rise to a canonical fat symmetric multicategory.
\end{rem}

\begin{rem}
When $\cM$ is an operad there is a single object, and no auxiliary choice is needed. In this case, B. Fresse pointed out to us that there is a canonical isomorphism
\begin{equation}P\cM(n;m)\simeq \bigoplus_{\substack{\sum_{j=1}^m n_j=n \\  n_j\in \mathbb{N}}}\left(  R[S_n] \otimes_{R[S_{n_1}]\otimes \dots \otimes R[S_{n_m}]} \bigotimes_{j=1}^m\mathcal{M}(n_j)\right).\end{equation}
This is sometimes used directly as the definition of the PROP functor in the operad case, e.g. \cite{markl2008operads}, Example 59. The isomorphism can be constructed using the same idea as that of Proposition \ref{prop-untangle} below.
\end{rem}

The assignment $P$ lifts to a functor from multicategories (with orderings on the choice of objects, and multifunctors preserving this ordering) to categories in a straightforward way: given a multi-functor $F:\mathcal{M}\to \mathcal{M}'$,  
$P(F)$ acts on objects by assigning to a map $[n]\to Ob(\mathcal{M})$ its post-composition with $Ob(F): Ob(\mathcal{M})\to Ob(\mathcal{M}').$ From this description the action of $P(F)$  on morphisms  is obvious.

Given an algebra $A: \cC\to \Ch_R^{dg},$ we obtain a functor $P\cC\to P\Ch_R^{dg}$. Note also that there exists a canonical (symmetric monoidal) functor  $P\Ch_R^{dg}\to \Ch_R^{dg}$, which sends a sequence of vector spaces to their tensor product. By $PA$, we thus mean the $P\cC$-algebra obtained by the composition \begin{equation}P\cC\to P\Ch_R\to \Ch_R.\end{equation} 
We conclude:
\begin{lem}
  The PROP construction induces a functor \begin{equation}P: Alg_{\cM}\to Alg_{P\cM}.\end{equation} \qed
\end{lem}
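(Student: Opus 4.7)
The plan is to split the verification into the action of $P$ on objects and on morphisms of $Alg_\cM$, and then to check functoriality. The action on objects has essentially already been given: applying the PROP functor on multicategories to a multifunctor $A : \cM \to \Ch_R^{dg}$ produces a dg-functor $P(A) : P\cM \to P(\Ch_R^{dg})$, and post-composing with the tensor product functor $P(\Ch_R^{dg}) \to \Ch_R^{dg}$ yields the $P\cM$-algebra $PA$ as defined in the preceding paragraph. So the real content is the behaviour on morphisms.

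For the action on morphisms, given a map $\eta : A \to A'$ in $Alg_\cM$, i.e.\ a natural transformation of multifunctors with components $\eta_x : A(x) \to A'(x)$, I would define $P\eta$ componentwise by
\begin{equation}
(P\eta)_{\vec{a}} := \eta_{a_1} \otimes \cdots \otimes \eta_{a_n} : PA(\vec{a}) \to PA'(\vec{a})
\end{equation}
for each $\vec{a} = (a_1, \ldots, a_n) \in Os^n(\cM)$. The task is to show this is natural with respect to morphisms in $P\cM$. By the direct sum decomposition defining $P\cM(\vec{a}, \vec{b})$, it suffices to verify naturality on a summand indexed by a map $f : [n] \to [m]$ and a pure tensor $\phi = \phi_1 \otimes \cdots \otimes \phi_m$ with $\phi_j \in \cM(\vec{a}|_{f^{-1}(j)}; b_j)$.

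The main obstacle here is unwinding the May--Thomason composition formula in $P\cM$: the action of such a $\phi$ on $PA(\vec{a})$ is, by construction, the composition of a permutation isomorphism (coming from the symmetric structure of $\cM$, used to regroup the tensor factors of $PA(\vec{a})$ into blocks indexed by $j \in [m]$) with the tensor product of the operations $A(\phi_j)$. Both operations commute with $P\eta$: the permutation commutes with $(P\eta)_{\vec{a}}$ because tensor products of chain maps are equivariant under symmetric group actions, and each $A(\phi_j)$ intertwines with $\eta$ by the naturality of $\eta$ as a morphism of multifunctors applied to $\phi_j$. Assembling these pieces shows that the naturality square for $P\eta$ on $\phi$ commutes, as required.

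Finally, functoriality of $P$ on $Alg_\cM$ is immediate from the functoriality of the tensor product: for composable $\eta, \eta'$ we have
\begin{equation}
(P(\eta' \circ \eta))_{\vec{a}} = \bigotimes_{i=1}^n (\eta'_{a_i} \circ \eta_{a_i}) = \Bigl(\bigotimes_{i=1}^n \eta'_{a_i}\Bigr) \circ \Bigl(\bigotimes_{i=1}^n \eta_{a_i}\Bigr) = (P\eta')_{\vec{a}} \circ (P\eta)_{\vec{a}},
\end{equation}
and $P(\mathrm{id}_A)_{\vec{a}}$ is the tensor product of identities, which is the identity of $PA(\vec{a})$. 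Thus $P$ descends to a functor $Alg_\cM \to Alg_{P\cM}$, as claimed.
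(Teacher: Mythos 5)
Your proposal is correct and takes the natural route that the paper leaves implicit: the paper defines the object-level action of $P$ via composition with the tensor functor and then marks the lemma $\qed$, leaving the morphism-level checks (componentwise tensor of $\eta$, naturality via the May--Thomason composition formula, functoriality) to the reader. Your write-up supplies exactly those omitted verifications, and the naturality argument correctly identifies that both the regrouping permutation and the blockwise operations $A(\phi_j)$ commute with $\bigotimes_i \eta_{a_i}$, the former by equivariance of the tensor product and the latter by naturality of $\eta$ as a morphism of multifunctors.
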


As discussed in Section \ref{sec:diff-grad-mult}, the embedding of categories into multicategories implies that the constructions of Section \ref{sec-free}--\ref{sec-ext} directly apply to algebras over $P \cM$. In practice the formulas simplify significantly. For example, there is no need to pass to coinvariants in the construction of the free algebra because the group action is trivial.  In fact, Equation \eqref{eq:formula_free_functor}, which is our formula for the free algebra functor, simplifies even further for the specific algebras over $P \cM$ that we will consider:
\begin{lem}\label{lmUPA}
  If $D=\{D_{\vec{x}}\}_{\vec{x}\in Ob(P\cM)}$ is a collection of chain complexes satisfying the condition \begin{equation}D_{\vec{x}}=D_{x_1}\otimes\ldots \otimes D_{x_n},\end{equation}
  then the free algebra on $D$ is \begin{multline}\bF_{P\cM}(D)(\vec{y})= \\
    \bigoplus_{n=1}^\infty\bigoplus_{\vec{x}\in Ob(\cM)^n}\left(\bigoplus_{f:[n]\to [m]} D_{x_1}\otimes\ldots \otimes D_{x_n}  \otimes \left(\bigotimes_{j=1}^m \mathcal{M}(x|_{f^{-1}(j)}; y_j)\right)\right).\end{multline}  \qed
\end{lem}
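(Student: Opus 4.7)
The plan is a direct unwinding of definitions. The key simplification is that $P\cM$ is being treated as a (differential graded) category, i.e., a multicategory whose $n$-ary multimorphism spaces vanish for $n > 1$. In this situation, the general formula for the free algebra functor from Equation \eqref{eq:formula_free_functor} collapses to its $n = 1$ summand, and since $S_1$ is trivial no coinvariants are taken, yielding
\begin{equation}
\bF_{P\cM}(D)(\vec{y}) = \bigoplus_{\vec{x} \in \Ob(P\cM)} D_{\vec{x}} \otimes P\cM(\vec{x}; \vec{y}).
\end{equation}

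Substituting the definition of morphisms in the PROP category,
\begin{equation}
P\cM(\vec{x}; \vec{y}) = \bigoplus_{f : [n] \to [m]} \bigotimes_{j=1}^m \cM(\vec{x}|_{f^{-1}(j)}; y_j),
\end{equation}
then distributing the outer tensor factor $D_{\vec{x}}$ across this direct sum, and finally applying the hypothesis $D_{\vec{x}} = D_{x_1} \otimes \cdots \otimes D_{x_n}$ termwise, produces the claimed expression.

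The one point that requires care is the indexing set for $\vec{x}$. The claimed formula sums over all finite sequences $\vec{x} \in \Ob(\cM)^n$, whereas $\Ob(P\cM)$ was introduced in Section \ref{sec-prop} in terms of non-decreasing representatives. I plan to handle this by invoking the observation there that $P\cM$ is in fact a full subcategory of the category whose objects are all finite sequences of objects of $\cM$; when computing the free algebra on a tensor-factorizable module $D$, the symmetric structure on $\cM$ together with the symmetry isomorphisms in $\Ch_R$ ensures that using either indexing set produces the same chain complex up to canonical isomorphism. I do not anticipate any substantive obstacle beyond this bookkeeping. The content of the lemma lies in the clean form taken by the free algebra functor on tensor-factorizable modules, which will be essential in the proof of Proposition \ref{prop-cat-op-comp} for matching the operadic and categorical bar constructions term by term.
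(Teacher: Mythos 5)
Your direct unwinding is exactly the content of the lemma, and the \qed{} marks it as immediate from the definitions: since $P\cM$ is a category, only the $n'=1$ term of Equation \eqref{eq:formula_free_functor} survives, the $S_1$-coinvariants are trivial, and one then substitutes the definition of $P\cM(\vec{x};\vec{y})$ together with the tensor-factorizability of $D$. No content beyond this is needed, and up to that point your proposal matches the paper's intent.

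Where you go astray is the final paragraph. The sum over $\Ob(P\cM)$ (non-decreasing sequences $Os^n(\cM)$) and the sum over all of $\Ob(\cM)^n$ do \emph{not} give canonically isomorphic, or even abstractly isomorphic, chain complexes. Concretely, take $\cM$ with two objects $a<b$ and only identity morphisms, and $\vec{y}=(a,b)$. Then the free algebra $\bF_{P\cM}(D)(\vec{y})=\bigoplus_{\vec{x}\in\Ob(P\cM)}D_{\vec{x}}\otimes P\cM(\vec{x};\vec{y})$ receives a contribution only from $\vec{x}=(a,b)$ and the identity $f$, yielding $D_a\otimes D_b$; whereas a sum over all $\vec{x}\in\Ob(\cM)^2$ would additionally pick up $\vec{x}=(b,a)$ with $f$ the transposition, yielding $(D_a\otimes D_b)\oplus(D_b\otimes D_a)$. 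These have different underlying dimensions, so no symmetry isomorphism can conflate them. The correct reading is that the displayed formula in the lemma should be indexed by $\vec{x}\in Os^n(\cM)$, i.e.\ by $\Ob(P\cM)$ restricted to length $n$ — exactly what your unwinding produces — and the appearance of $\Ob(\cM)^n$ is a notational slip (consistent with the $Os^{k_i}(\cM)$ indexing appearing in Equation \eqref{eq:tensor_product_ground_ring} and in the proof of Proposition \ref{prop-untangle}). Your instinct that the indexing needs attention is good; the resolution is to note the misprint, not to assert an isomorphism that fails.
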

 
We note that the hypothesis of the above result is satisfied whenever $D$ is obtained by applying the forgetful functor to the image of an $\cM$-algebra $A$ under $P$.

\subsection{From the  bar construction to the operadic extension}
We now construct a chain map 
\begin{equation} \label{eq:projection_map_ordered_bar_extension}
 \mathbb{L}P\pi_*PA(\star) \to  \mathbb{L}\pi_*A(\star).
\end{equation}
which we will show in the next two sections to be a quasi-isomorphism under the assumptions of Proposition \ref{prop-cat-op-comp}. This is where the choice of ordering becomes essential, and the starting point is the following formula for the (operadic) free algebra functor in terms of data involving the associated PROP:
 
\begin{lem}
  Denoting by  $\Aut(\vec{x})$ the permutations of $[n]$ that preserve a map $\vec{x}:[n]\to Ob(\cM)$, we have a natural isomorphism of chain complexes
  \begin{align}\label{eqFreeOrdered}
    \mathbb{F}_{\cM}(C)(y)&= \bigoplus_{n=1}^\infty\left(\bigoplus_{\vec{x}\in Ob(\cM)^n} C_{x_1}\otimes\ldots \otimes C_{x_n} \otimes \cM(\vec{x}; y)\right)_{S_n}\\&=\bigoplus_{n=1}^\infty\bigoplus_{\vec{x}\in Os^n(\cM)} (C_{x_1}\otimes\ldots \otimes C_{x_n}) \otimes_{R[\Aut(\vec{x})]}   \cM(\vec{x}; y), \end{align}
  for each object $y$. \qed
\end{lem}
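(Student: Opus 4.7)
The plan is to recognize this as a standard orbit decomposition identity: given the ordering on $Ob(\cM)$, the $S_n$-action on $Ob(\cM)^n$ has orbits in canonical bijection with the set $Os^n(\cM)$ of non-decreasing sequences of length $n$, and the stabilizer of the representative $\vec{x} \in Os^n(\cM)$ is precisely the subgroup $\Aut(\vec{x}) \subset S_n$ of permutations preserving the map $\vec{x}\co [n] \to Ob(\cM)$. The isomorphism then reduces to the fact that, for an abelian group $V$ equipped with an action of a finite group $G$, and a subgroup $H \subset G$, the coinvariants of $\bigoplus_{gH \in G/H} V_{gH}$ under $G$ (where $V_{gH}$ are copies of $V$ permuted by $G$, and $H$ fixes $V_{eH}$) agree with the $H$-coinvariants of $V$.

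First I would regroup the outer sum in the first expression according to $S_n$-orbits of $Ob(\cM)^n$. Using the chosen ordering of $Ob(\cM)$, each orbit contains exactly one non-decreasing representative $\vec{x} \in Os^n(\cM)$, so
\begin{equation}
 \bigoplus_{\vec{x}'\in Ob(\cM)^n} C_{x'_1}\otimes\cdots\otimes C_{x'_n} \otimes \cM(\vec{x}'; y)
 \;=\; \bigoplus_{\vec{x}\in Os^n(\cM)} \bigoplus_{\sigma \in S_n/\Aut(\vec{x})} C_{\sigma\cdot\vec{x}} \otimes \cM(\sigma\cdot\vec{x}; y),
\end{equation}
where $C_{\sigma\cdot\vec{x}}$ abbreviates $C_{x_{\sigma^{-1}(1)}}\otimes \cdots \otimes C_{x_{\sigma^{-1}(n)}}$. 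The $S_n$-action from Equation \eqref{eq:S_n_action} permutes these summands: on the outer index $\vec{x}' = \sigma\cdot \vec{x}$ it acts by left multiplication on the coset $\sigma \Aut(\vec{x})$, while preserving the orbit $\vec{x}$.

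Next I would apply the elementary identity that, for an orbit $G/H$ of a $G$-set over an abelian group, the $G$-coinvariants are naturally isomorphic to the $H$-coinvariants of a fixed representative. Concretely, any choice of coset representatives $\{\sigma_i\}$ for $S_n/\Aut(\vec{x})$ gives an inclusion $C_{x_1}\otimes\cdots\otimes C_{x_n}\otimes \cM(\vec{x};y) \hookrightarrow \bigoplus_\sigma C_{\sigma\cdot\vec{x}} \otimes \cM(\sigma\cdot\vec{x}; y)$ as the summand with $\sigma = e$, and the composition with the projection to $S_n$-coinvariants descends to an isomorphism on $\Aut(\vec{x})$-coinvariants. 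The only datum to verify is that the restricted action of $\Aut(\vec{x})$ on $C_{x_1}\otimes\cdots\otimes C_{x_n}\otimes \cM(\vec{x};y)$ matches the action implicit in the tensor product $\otimes_{R[\Aut(\vec{x})]}$, which is immediate from Equation \eqref{eq:S_n_action}: since $\sigma\in\Aut(\vec{x})$ satisfies $x_{\sigma(i)}=x_i$, it acts on $C_{x_1}\otimes\cdots\otimes C_{x_n}$ by permuting the tensor factors and on $\cM(\vec{x};y)$ via the symmetric structure of $\cM$. Summing over $n$ and over $\vec{x}\in Os^n(\cM)$, and recalling the definition of the free algebra functor from Equation \eqref{eq:formula_free_functor}, yields the claimed formula.

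The argument is essentially bookkeeping, so I do not anticipate a real obstacle; the only mild subtlety is keeping track of signs coming from the Koszul rule when $S_n$ permutes the (possibly odd degree) tensor factors $C_{x_i}$, but this is automatically handled since both sides use the same sign convention dictated by Equation \eqref{eq:S_n_action}, and the naturality in $y$ is manifest from the construction.
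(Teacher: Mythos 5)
Your proof is correct and is precisely the intended argument — the paper marks this lemma with \qed and offers no proof, treating it as a routine consequence of the orbit decomposition, which is exactly what you supply. The decomposition of $Ob(\cM)^n$ into $S_n$-orbits with canonical non-decreasing representatives, the identification of each orbit's contribution as an induced module $\mathrm{Ind}_{\Aut(\vec{x})}^{S_n}$, and the standard fact that $G$-coinvariants of an induced module agree with $H$-coinvariants of the inducing module (which is the relative tensor product $\otimes_{R[\Aut(\vec{x})]}$) are all correct and in the right order. Your remark about the Koszul signs is the right thing to say: the $\Aut(\vec{x})$-action implicit in $\otimes_{R[\Aut(\vec{x})]}$ is by definition the restriction of the $S_n$-action from Equation \eqref{eq:S_n_action}, so the sign conventions match tautologically.

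One small imprecision worth tightening if this were written out formally: when you say "$H$ fixes $V_{eH}$" in your key lemma, you should say $H$ \emph{stabilizes} (maps to itself) $V_{eH}$ and acts on it via the given $H$-module structure — the action is nontrivial in general (it permutes tensor factors with Koszul signs and acts on $\cM(\vec{x};y)$ through the symmetric structure of the multicategory). Your subsequent sentences make clear you understand this, but the phrase "fixes" momentarily reads as "acts trivially," which would be wrong and would collapse the relative tensor product to an ordinary one.
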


We shall be specifically interested in a formula for the composition of the PROP functor with the free and forgetful functors applied to operadic algebras.  The next result provides such an expression; Figure \ref{fig:untangling_forest-corollas} justifies why we refer to it as \emph{untangling}.
\begin{figure}[h]
  \centering
  \begin{tikzpicture}
    \node[label=below:{$y_1$}] (y1) at (-2,0) {$\bullet$};
    \node[label=below:{$y_2$}] (y2) at (0,0) {$\bullet$};
    \node[label=below:{$y_3$}] (y3) at (2,0) {$\bullet$};
    \node[label=below:{$y_4$}] (y4) at (4,0) {$\bullet$};

\node[label=above:{$x^1_1$}] (x11) at (2,2) {$\bullet$};
    \node[label=above:{$x^1_2$}] (x12) at (-2,2) {$\bullet$};
    \node[label=above:{$x^1_3$}] (x13) at (0,2) {$\bullet$};
    \node[label=above:{$x^1_4$}] (x14) at (3,2) {$\bullet$};

\node[label=above:{$x^2_1$}] (x21) at (-3,2) {$\bullet$};
\node[label=above:{$x^2_2$}] (x22) at (5,2) {$\bullet$};

\node[label=above:{$x^3_1$}] (x31) at (1,2) {$\bullet$};
    \node[label=above:{$x^3_2$}] (x32) at (-1,2) {$\bullet$};
    \node[label=above:{$x^3_3$}] (x33) at (4,2) {$\bullet$};

    \node[label=above:{$x^4_1$}] (x41) at (-4,2) {$\bullet$};
    
    \draw[thick] (y1.center)--(x11.center);
    \draw[thick] (y1.center)--(x12.center);
    \draw[thick] (y1.center)--(x13.center);
    \draw[thick] (y1.center)--(x14.center);

    \draw[thick] (y2.center)--(x21.center);
    \draw[thick] (y2.center)--(x22.center);
   
    \draw[thick] (y3.center)--(x31.center);
    \draw[thick] (y3.center)--(x32.center);
    \draw[thick] (y3.center)--(x33.center);
    
    \draw[thick] (y4.center)--(x41.center);
        
    \begin{scope}[shift = {(0,-4)}]
    \node[label=below:{$y_1$}] (y1) at (-2,0) {$\bullet$};
    \node[label=below:{$y_2$}] (y2) at (0,0) {$\bullet$};
    \node[label=below:{$y_3$}] (y3) at (2,0) {$\bullet$};
    \node[label=below:{$y_4$}] (y4) at (4,0) {$\bullet$};

\node[label=above:{$x^1_1$}] (x11) at (-4,2) {$\bullet$};
    \node[label=above:{$x^1_2$}] (x12) at (-3,2) {$\bullet$};
    \node[label=above:{$x^1_3$}] (x13) at (-2,2) {$\bullet$};
    \node[label=above:{$x^1_4$}] (x14) at (-1,2) {$\bullet$};

\node[label=above:{$x^2_1$}] (x21) at (0,2) {$\bullet$};
\node[label=above:{$x^2_2$}] (x22) at (1,2) {$\bullet$};

\node[label=above:{$x^3_1$}] (x31) at (2,2) {$\bullet$};
    \node[label=above:{$x^3_2$}] (x32) at (3,2) {$\bullet$};
    \node[label=above:{$x^3_3$}] (x33) at (4,2) {$\bullet$};

    \node[label=above:{$x^4_1$}] (x41) at (5,2) {$\bullet$};
    
    \draw[thick] (y1.center)--(x11.center);
    \draw[thick] (y1.center)--(x12.center);
    \draw[thick] (y1.center)--(x13.center);
    \draw[thick] (y1.center)--(x14.center);

    \draw[thick] (y2.center)--(x21.center);
    \draw[thick] (y2.center)--(x22.center);
   
    \draw[thick] (y3.center)--(x31.center);
    \draw[thick] (y3.center)--(x32.center);
    \draw[thick] (y3.center)--(x33.center);
    
    \draw[thick] (y4.center)--(x41.center);
      
    \end{scope}
  \end{tikzpicture}
  \caption{Untangling a forest of $4$ corollas}
  \label{fig:untangling_forest-corollas}
\end{figure}
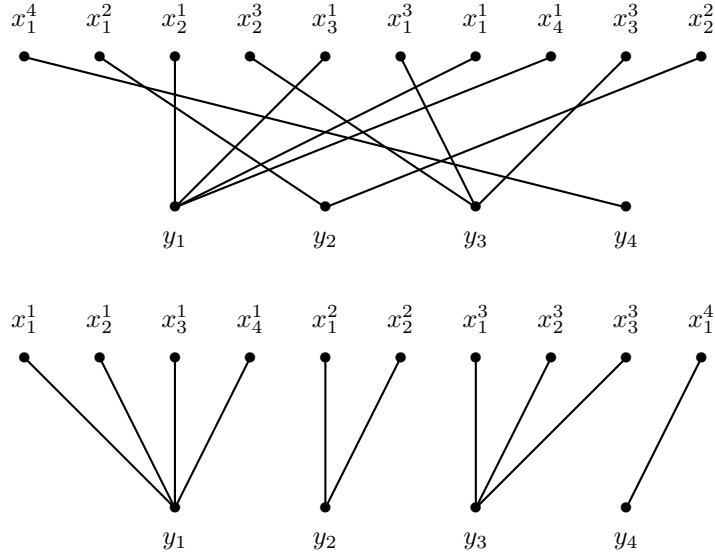

In the statement of the next result, the symbol $U$ refers to chain complex obtained by forgetting the operadic operations on an algebra, as in Section \ref{sec-free}, so that the composite $\mathbb{F}_\cM U D$ is the free algebra on the complex underlying $D$.
\begin{prop}[Untangling trick]\label{prop-untangle} For every $\cM$ algebra $D$ and an ordered sequence $\vec{y}$ 
  we have a canonical isomorphism 
\begin{equation}\label{eqUntangle} P\mathbb{F}_\cM UD(\vec{y})\cong \bigoplus_{\vec{x}\in  Os^n(\cM)}P\cM(\vec{x};\vec{y})\otimes_{R[\Aut(\vec{x})]}PD(\vec{x}).\end{equation}
\end{prop}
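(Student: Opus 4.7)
The plan is to unpack both sides by the definitions already recorded, then exhibit an explicit bijection of summands that intertwines the relevant symmetric group actions.

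First I would expand the left-hand side. Writing $\vec y=(y_1,\dots,y_m)\in Os^m(\cM)$, the definition of $PA$ on sequences and the formula~\eqref{eqFreeOrdered} applied componentwise give
\begin{multline*}
P\mathbb{F}_\cM UD(\vec y)\;=\;\bigotimes_{j=1}^m \mathbb{F}_\cM UD(y_j)\\
=\bigoplus_{(\vec x^1,\dots,\vec x^m)}\, \bigotimes_{j=1}^{m}\Big(D_{\vec x^j}\otimes_{R[\Aut(\vec x^j)]}\cM(\vec x^j;y_j)\Big),
\end{multline*}
where the outer sum runs over tuples with $\vec x^j\in Os^{n_j}(\cM)$ for some $n_j\geq 0$. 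Because all tensor products below are over $R$, one may pull the $\bigotimes_j \cM(\vec x^j;y_j)$ factor out of the coend and identify this with
\[
\bigoplus_{(\vec x^1,\dots,\vec x^m)}\Big(D_{\vec x^1}\otimes\cdots\otimes D_{\vec x^m}\Big)\otimes_{\prod_j R[\Aut(\vec x^j)]}\Big(\bigotimes_{j=1}^m \cM(\vec x^j;y_j)\Big).
\]

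Next I would expand the right-hand side. Fixing $\vec x\in Os^n(\cM)$, the definition of $P\cM(\vec x;\vec y)$ and the formula $PD(\vec x)=D_{x_1}\otimes\cdots\otimes D_{x_n}$ give
\[
P\cM(\vec x;\vec y)\otimes_{R[\Aut(\vec x)]}PD(\vec x)=\bigoplus_{f\colon[n]\to[m]}\Big(\bigotimes_{j=1}^m\cM(\vec x|_{f^{-1}(j)};y_j)\Big)\otimes_{R[\Aut(\vec x)]}D_{\vec x}.
\]

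Now I would construct the bijection. Given an ordered $\vec x\in Os^n(\cM)$ and $f\colon[n]\to[m]$, the restrictions $\vec x^j:=\vec x|_{f^{-1}(j)}$ are themselves non-decreasing, so this yields a canonical tuple in $Os^{n_1}(\cM)\times\cdots\times Os^{n_m}(\cM)$. Conversely, given a tuple $(\vec x^1,\dots,\vec x^m)$, the concatenation has a unique non-decreasing rearrangement $\vec x\in Os^n(\cM)$, and any choice of sorting permutation $\sigma$ produces a map $f$ by bookkeeping the original block of each element of $\vec x$. The crucial combinatorial identity is that at fixed $\vec x$ the set of pairs $(f,\sigma)$ realising a given $(\vec x^1,\dots,\vec x^m)$ is a torsor over $\prod_j\Aut(\vec x^j)$ on the one hand, and the $\Aut(\vec x)$-action permutes through all such $f$'s, so that the two quotient procedures produce canonically identified sets of equivalence classes — this is precisely the untangling sketched in Figure~\ref{fig:untangling_forest-corollas}.

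The main obstacle — and the only place where care is needed — is checking that this combinatorial bijection lifts to an $R$-linear isomorphism of chain complexes once all symmetry actions are taken into account. I would do this by an intermediate step: introduce the common refinement
\[
\bigoplus_{\vec x^\bullet,\sigma,f}\Big(D_{\vec x^1}\otimes\cdots\otimes D_{\vec x^m}\Big)\otimes\Big(\bigotimes_{j}\cM(\vec x^j;y_j)\Big),
\]
where $\sigma$ is a permutation sorting $(\vec x^1,\dots,\vec x^m)$ and $f$ is the associated bookkeeping map, and observe that this surjects onto both sides. The induced relations are generated, on the left-hand side, by the diagonal action of $\prod_j \Aut(\vec x^j)$ on the two factors, and on the right-hand side, by the diagonal action of $\Aut(\vec x)$ together with the transitivity action of $\Aut(\vec x)$ on pairs $(\sigma,f)$ reproducing a given $(\vec x^1,\dots,\vec x^m)$. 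Since $\Aut(\vec x)$ acts on pairs $(\sigma,f)$ freely and transitively on each fibre over a fixed $(\vec x^1,\dots,\vec x^m)$, with stabiliser $\prod_j\Aut(\vec x^j)$ of a fixed pair, the two sets of relations agree after the identification, which gives the isomorphism. All constructions are compatible with the internal differentials because only the $R$-module structure and permutations of tensor factors are used. Functoriality in $D$ and in $\vec y$ follows by inspection.
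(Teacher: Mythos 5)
Your proof follows essentially the same route as the paper's: expand both sides using the formulas for $P$, $\mathbb{F}_\cM$, and $PD$; introduce an intermediate direct sum that does not yet impose any group-ring relations; and then compare the two quotient procedures by matching the $\prod_j\Aut(\vec x^j)$-quotient on the left with the $\Aut(\vec x)$-quotient on the right via a choice of sorting permutation, checking independence of that choice. This is precisely the paper's argument (the paper likewise constructs a map out of the same intermediate object and asserts that it descends and is an isomorphism).

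One small slip in your final paragraph: the statement that $\Aut(\vec x)$ acts ``freely and transitively'' on the fibre of pairs $(\sigma,f)$ over a fixed tuple, ``with stabiliser $\prod_j\Aut(\vec x^j)$ of a fixed pair,'' is internally inconsistent, since a free action has trivial stabilisers. What you actually need (and what your surrounding text makes clear you have in mind) are the two separate facts: $\Aut(\vec x)$ acts simply transitively on the set of sorting permutations $\sigma$, hence on pairs $(\sigma,f)$; and the stabiliser in $\Aut(\vec x)$ of a fixed bookkeeping map $f$ alone is the subgroup isomorphic to $\prod_j\Aut(\vec x^j)$. Together these give $\bigoplus_f \cM_f \cong \bigoplus_{\text{orbits}} R[\Aut(\vec x)]\otimes_{R[\prod_j\Aut(\vec x^j)]}\cM_{f_0}$, which after cancelling the $R[\Aut(\vec x)]$ factor against the $\otimes_{R[\Aut(\vec x)]}$ yields exactly the desired identification. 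The idea is right; only the packaging of the group theory into a single sentence went astray.
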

\begin{proof}
By definition $P\mathbb{F}_\cM UD(\vec{y})$ is equal to \begin{equation}\bigotimes_{j=1}^m\left( \bigoplus_{\vec{x}_j\in Ob  P\cM} (D(x_{j_1})\otimes\ldots\otimes D(x_{j_{n_j}}) \otimes_{R[\Aut(\vec{x}_j)]}  \mathcal{M}(\vec{x}_j; y_j))\right).\end{equation} We can rewrite this as 
\begin{equation}\label{eq-untangle-intermediate}
\bigoplus_{n=1}^\infty\bigoplus_{\substack{\sum_{j=1}^m n_j=n \\  n_j\in \mathbb{N}}}\left( \bigoplus_{\vec{x}_j\in Os^{n_j}(\cM)}\left((D(x_{j_1})\otimes\ldots\otimes D(x_{j_{n_j}})) \otimes_{R[\Aut(\vec{x}_j)]} \bigotimes_{j=1}^m\mathcal{M}(\vec{x}_j; y_j) \right)\right).
\end{equation}

We can put $\vec{x}_1,\ldots ,\vec{x}_m$ next to each other and obtain a sequence $\vec{x}$. There are $\Aut(\vec{x})$ many permutations of $[n]$ which make $\vec{x}$ into an ordered sequence. By making a choice for each of these, we obtain a canonical map from 

\begin{equation}\bigoplus_{n=1}^\infty\bigoplus_{\substack{\sum_{j=1}^m n_j=n \\  n_j\in \mathbb{N}}}\left( \bigoplus_{\vec{x}_j\in Os^{n_j}(\cM)}\left(\bigotimes_{j=1}^m(D(x_{j_1})\otimes\ldots\otimes D(x_{j_{n_j}}))\otimes  \mathcal{M}(\vec{x}_j; y_j)\right)\right) \end{equation} to the right hand side of Equation \eqref{eqUntangle} from the statement:

\begin{equation}\bigoplus_{\vec{x}\in Ob(P\cM)}PD(\vec{x})  \otimes_{R[\Aut(\vec{x})]} P\cM(\vec{x};\vec{y}).\end{equation} Clearly the map is independent of the choices. Moreover, it also descends to a map from the desired chain complex displayed in Equation \eqref{eq-untangle-intermediate}. It is easy to see that this map is an isomorphism.

\end{proof}

Using the fact that we defined the operadic Kan extension in terms of the free forgetful adjunction, the above result yields:
\begin{cor}\label{CorOperadicOrdered}
  An ordering of the objects of the multicategory $\cM$ determines an isomorphism between the operadic Kan extension $ \mathbb{L}\pi_*A(\star)$, and the simplicial chain complex whose $m$-simplices are given by the direct sum

\begin{multline} \label{eq:tensor_product_over_group_ring}
\bigoplus_{\vec{x}_1,\dots,\vec{x}_{m}\in Ob(P\cM)} PA(\vec{x}_0) 
\otimes_{R[\Aut(\vec{x}_{{0}})]}
P\cM(\vec{x}_{0};\vec{x}_{{1}})\otimes_{R[\Aut(\vec{x}_{1})]}\dots \otimes_{R[\Aut(\vec{x}_{m})]} \cO(k).
\end{multline}
\end{cor}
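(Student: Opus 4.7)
The plan is to iteratively apply the untangling trick (Proposition \ref{prop-untangle}) to unfold the formula
\begin{equation}
(\Delta\pi)_* A_m = \bigl(\bF_\cO \circ \Color(\pi)_* \circ U\bigr) \circ (\bF_\cM U)^m(A)
\end{equation}
one layer at a time, and then identify the resulting expression, evaluated at the unique object $\star$ of $\cO$, with the formula in the statement. The simplicial structure will then transport along the isomorphism, giving the asserted identification of simplicial chain complexes.

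First I would treat the outer layer $\bF_\cO \circ \Color(\pi)_*$. Since $\cO$ has a single object, for any $Ob(\cM)$-indexed collection $D$ the pushforward $\Color(\pi)_* D$ is simply $\bigoplus_{x \in Ob(\cM)} D(x)$, and the general free-algebra formula (specialised to an operad) yields
\begin{equation}
\bF_\cO(\Color(\pi)_*D)(\star) \;=\; \bigoplus_{k\geq 1} \Bigl( \bigoplus_{\vec{x}\in Ob(\cM)^k} D(x_1)\otimes\cdots\otimes D(x_k) \otimes \cO(k) \Bigr)_{S_k}.
\end{equation}
The chosen ordering on $Ob(\cM)$ lets us use each $S_k$-orbit's unique ordered representative to rewrite this as
\begin{equation}
\bigoplus_{k\geq 1}\;\bigoplus_{\vec{x}\in Os^k(\cM)} \bigl(D(x_1)\otimes\cdots\otimes D(x_k)\bigr) \otimes_{R[\Aut(\vec{x})]} \cO(k),
\end{equation}
exactly as in the derivation of Equation \eqref{eqFreeOrdered}. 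Specialising to $D = U(\bF_\cM U)^m A$ and recognising that $D(x_1)\otimes\cdots\otimes D(x_k)$ equals $P\bigl((\bF_\cM U)^m A\bigr)(\vec{x})$ (by Lemma \ref{lmUPA}) reduces the problem to computing $P(\bF_\cM U)^m A$ on each ordered sequence.

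Next I would iterate the untangling trick $m$ times. Proposition \ref{prop-untangle}, applied to the $\cM$-algebra $D' = (\bF_\cM U)^{j-1} A$, gives a canonical isomorphism
\begin{equation}
P(\bF_\cM U)^j A(\vec{y}) \;\cong\; \bigoplus_{\vec{x}\in Os(\cM)} P\cM(\vec{x};\vec{y}) \otimes_{R[\Aut(\vec{x})]} P(\bF_\cM U)^{j-1}A(\vec{x}).
\end{equation}
Iterating from $j=1$ up to $j=m$, and reversing the order of factors to match the convention in the statement, yields
\begin{multline}
P(\bF_\cM U)^m A(\vec{x}_m) \;\cong\;\\
\bigoplus_{\vec{x}_0,\ldots,\vec{x}_{m-1}\in Ob(P\cM)} PA(\vec{x}_0) \otimes_{R[\Aut(\vec{x}_0)]} P\cM(\vec{x}_0;\vec{x}_1) \otimes \cdots \otimes_{R[\Aut(\vec{x}_{m-1})]} P\cM(\vec{x}_{m-1};\vec{x}_m).
\end{multline}
Plugging this into the outer-layer formula above (with $\vec{x}_m$ playing the role of $\vec{x}$ of length $k$) produces exactly the direct sum displayed in \eqref{eq:tensor_product_over_group_ring}.

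Finally I would verify that the simplicial structure on $(\Delta\pi)_* A_\bullet$ corresponds under this isomorphism to the natural face and degeneracy maps of the displayed complex: degeneracies insert an identity in $P\cM(\vec{x}_i;\vec{x}_i) = R[\Aut(\vec{x}_i)]$, inner face maps $d_i$ ($0<i<m$) use composition in $P\cM$ to concatenate $P\cM(\vec{x}_{i-1};\vec{x}_i)\otimes_{R[\Aut(\vec{x}_i)]}P\cM(\vec{x}_i;\vec{x}_{i+1}) \to P\cM(\vec{x}_{i-1};\vec{x}_{i+1})$, $d_0$ is the action of $P\cM$ on $PA$ coming from the $\cM$-algebra structure, and $d_m$ applies $P\pi$ followed by operadic composition in $\cO$. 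Each of these is exactly the map induced, under the untangling isomorphism, by the corresponding unit/counit of the free-forgetful adjunctions used to build $(\Delta\pi)_* A_\bullet$ in Equation \eqref{eq:left_Kan_extension_Formula}. The main bookkeeping obstacle is precisely this last step: tracking how the various $S_n$-coinvariants are replaced by tensor products over $R[\Aut(\vec{x})]$ under the ordering convention and confirming that the composition operations in $\cM$ (together with its symmetric structure) translate correctly into the composition in $P\cM$; once Proposition \ref{prop-untangle} is in hand this is a careful but mechanical diagram chase.
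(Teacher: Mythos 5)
Your proposal is correct and follows essentially the same route as the paper's own proof: both arguments combine the iterated untangling isomorphism of Proposition \ref{prop-untangle} with the ordered-sequence rewriting of $\bF_\cO\circ\Color(\pi)_*\circ U$. You process the two layers in the opposite order (outer $\bF_\cO$-layer first) and additionally spell out the compatibility with the simplicial face and degeneracy maps, which the paper leaves implicit; neither change affects the substance of the argument.
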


\begin{proof} Iteratively applying the canonical isomorphism \eqref{eqUntangle}, starting with $D=(\mathbb{F}_\cM U)^{m-1}A$, we obtain the isomorphism \begin{multline}(\mathbb{F}_\cM U)^mA(y)=P(\mathbb{F}_\cM U)^mA(y)\to \bigoplus_{\vec{x}_1,\dots,\vec{x}_{m}\in Ob(P\cM)} \\  PA(\vec{x}_0)
\otimes_{R[\Aut(\vec{x}_{{0}})]}
 P\cM(\vec{x}_{0};\vec{x}_{1})\otimes_{R[\Aut(\vec{x}_{1})]}\dots \otimes_{R[\Aut(\vec{x}_{m})]} \cM(\vec{x}_{{m}};y) .\end{multline}

For an $\cM$ algebra $D$, we also have that  \begin{align*}\bF_\cO\circ \Color(\pi)_*\circ U(D)(\star)&= \bigoplus_k\left(\left(\bigoplus_{y_1\in Ob(\cM)}D(y_1)\right)\otimes\dots\otimes\left(\bigoplus_{y_k\in Ob(\cM)}D(y_k) \right)\right) \otimes_{R[S_k]}  \cO(k)\\&=\bigoplus_k\bigoplus_{\vec{y}\in Os^k(\cM)}PD(\vec{y}) \otimes_{R[\Aut(\vec{y})]} \cO(k)\end{align*} which finishes the proof by applying it to $D=(\mathbb{F}_\cM U)^mA$.
\end{proof}

Since the bar construction $\mathbb{L}P  \pi_*PA(\star)$ is the geometric realisation of a simplicial complex whose $n$-simplices are given by
\begin{multline}\label{eq:tensor_product_ground_ring}\bigoplus_{[k_0]\to \ldots \to [k_n]}\bigoplus_{\substack{\vec{x}_i\in Os^{k_i}(\cM)\\i\in [n]}}PA(\vec{x}_0) \otimes P\cM(\vec{x}_{0};\vec{x}_{1})\otimes \ldots\otimes P\cM(\vec{x}_{n-1};\vec{x}_{n})\otimes \cO(k_n) ,\end{multline}
we conclude:
\begin{lem}
  The projection map from the tensor product over the ground ring to the tensor product over the group ring of $\Aut(\vec{x})$ 
  defines a map from Equation \eqref{eq:tensor_product_ground_ring} to Equation \eqref{eq:tensor_product_over_group_ring}, which induces a map from the simplicial chain complex giving rise to $\mathbb{L}P  \pi_*PA(\star)$ to the simplicial chain complex giving rise to $ \mathbb{L}\pi_*A(\star) $. \qed 
\end{lem}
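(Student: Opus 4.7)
The plan is to verify the lemma in two steps: constructing the map degreewise, and then checking compatibility with the simplicial structure.

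For the degreewise map, I would first re-organise Equation \eqref{eq:tensor_product_ground_ring} by absorbing the direct sum over maps $f_i \colon [k_{i-1}] \to [k_i]$ into the definition of the morphism complexes $P\cM(\vec{x}_{i-1}; \vec{x}_i)$ from Section \ref{sec-prop}. Under this re-organisation, both $n$-simplex expressions are indexed by sequences $(\vec{x}_0, \ldots, \vec{x}_n)$ of objects of $P\cM$, with identical tensor factors, differing only in that Equation \eqref{eq:tensor_product_ground_ring} uses tensor products over the ground ring $R$ while Equation \eqref{eq:tensor_product_over_group_ring} uses tensor products over the group rings $R[\Aut(\vec{x}_i)]$. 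The desired map is then the natural quotient map, which is well-defined because tensoring over the group ring is simply the imposition of additional relations.

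The key step is to check that this quotient map commutes with the face and degeneracy maps. The face maps on both sides are built from three types of operations: the $P\cM$-action on $PA$, composition in $P\cM$, and the composite of $\pi$ with operadic composition in $\cO$. Each of these is bilinear over $R$, and therefore equivariant under the action of $\Aut(\vec{x}_i)$ at the relevant middle position --- since $\Aut(\vec{x}_i) \subset \End_{P\cM}(\vec{x}_i)$ acts via pre- and post-composition in the category $P\cM$. This equivariance is precisely what is needed for the operations to descend to the coinvariants over the group rings, which is the statement that the quotient map intertwines the face maps. Degeneracy maps insert identity morphisms, which trivially commute with the quotients.

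I do not expect significant obstacles. The main care needed is in the face map applying operadic composition, where we must align the $\Aut(\vec{x})$-quotient coming from the PROP formulation with the $\Sigma_n$-coinvariant quotient implicit in the free $\cO$-algebra functor defining the operadic Kan extension in Section \ref{sec-ext}. This alignment is already built into the untangling isomorphism of Proposition \ref{prop-untangle} and the derivation of Corollary \ref{CorOperadicOrdered}, so nothing new needs to be verified beyond tracking indexing conventions and invoking the $\Sigma$-equivariance of the multifunctor $\pi$ and of operadic composition in $\cO$.
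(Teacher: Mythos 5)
The paper states this lemma with a bare \qed, treating it as a formal consequence of the constructions already in place, and your proposal supplies essentially the argument that is being taken for granted: the projection is the canonical quotient $\otimes_R \to \otimes_{R[\Aut(\vec{x}_i)]}$ at each interior tensor position, and it intertwines the simplicial structure because the face maps factor through those quotients. So the approach matches; I would only press you on one phrasing. Saying the structure maps are ``bilinear over $R$, and therefore equivariant'' is not quite the right logical chain: $R$-bilinearity alone does not make a map factor through $\otimes_{R[\Aut(\vec{x}_i)]}$. What is actually needed is that each face map is \emph{balanced} at the middle tensor slot, i.e.\ the composition $P\cM(\vec{x}_{i-1},\vec{x}_i)\otimes P\cM(\vec{x}_i,\vec{x}_{i+1})\to P\cM(\vec{x}_{i-1},\vec{x}_{i+1})$ (and likewise the $PA$-action and the $\pi$-then-$\cO$-composition) sends $m\cdot g\otimes n$ and $m\otimes g\cdot n$ to the same element for $g\in\Aut(\vec{x}_i)$. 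This follows from associativity of composition in $P\cM$ (resp.\ the module/algebra axioms), which is what your parenthetical ``$\Aut(\vec{x}_i)\subset\End_{P\cM}(\vec{x}_i)$ acts via pre- and post-composition'' is implicitly invoking---so the substance is there, but ``therefore'' should be replaced by an appeal to associativity, not bilinearity, and ``balanced'' is more accurate than ``equivariant.'' Your closing remark about the face map $d_0$ needing to be traced through the untangling isomorphism of Proposition \ref{prop-untangle} and Corollary \ref{CorOperadicOrdered} is the right thing to flag; that is indeed where the nontrivial bookkeeping sits.
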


This defines the map of Equation \eqref{eq:projection_map_ordered_bar_extension}. We now proceed to show that it is a quasi-isomorphism under the freeness hypothesis.

\subsection{Dg-flatness}

As alluded to at the beginning of this section, the heuristic for the proof that the comparison map in Equation \eqref{eq:projection_map_ordered_bar_extension} is an equivalence is the statement that the projection map from the Borel construction to the ordinary quotient is a homotopy equivalence for spaces with free actions.

We need a purely algebraic analogue of this idea, so we start by fixing  an associative algebra  $\mathcal{A}$ over $R$, where the key example we have in mind is the group ring $R[G]$ of a finite group.

\begin{defin}
A left $\mathcal{A}$-chain complex $M$ is \emph{dg-flat} if the functor $N \mapsto N\otimes_{\mathcal{A}} M$ sends quasi-isomorphisms of right $\mathcal{A}$-chain complexes to quasi-isomorphisms of $R$-chain complexes.
\end{defin}

For the next statement, recall that we provided an ad-hoc formulation for $\bZ_2$-graded chain complexes in Definition \ref{def:Z2_graded_bounded_below}, in terms of the existence of a bounded below graded lift:
\begin{lem}\label{lem-dg-flat-free}
Let $M$ be a bounded below and free left $\mathcal{A}$-chain complex. Then $M$ is dg-flat.
\end{lem}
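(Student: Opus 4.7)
The plan is to reduce to showing that tensoring with $M$ preserves acyclicity, then to establish this via a bounded-below filtration argument. Concretely, given a quasi-isomorphism $f\co N\to N'$ of right $\mathcal{A}$-chain complexes, its mapping cone $C(f)$ is acyclic, and since $-\otimes_{\mathcal{A}} M$ commutes with the cone construction, it suffices to show that $C(f)\otimes_{\mathcal{A}} M$ is acyclic. Thus we are reduced to proving the following claim: if $N$ is an acyclic right $\mathcal{A}$-chain complex, then $N\otimes_{\mathcal{A}} M$ is acyclic (this is the K-flatness of $M$).

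First I would handle the $\bZ$-graded case. Let $\widetilde{M}$ be a bounded below $\bZ$-graded free chain complex; say $\widetilde{M}_i=0$ for $i<k$. Consider the brutal truncation filtration $F_p\widetilde{M}$ defined by $(F_p\widetilde{M})_i=\widetilde{M}_i$ for $i\le p$ and zero otherwise. This induces a filtration $F_p(N\otimes_{\mathcal{A}}\widetilde{M})=N\otimes_{\mathcal{A}} F_p\widetilde{M}$, which is bounded below (it vanishes for $p<k$) and exhaustive in each total degree. The associated graded $F_p/F_{p-1}$ is isomorphic to $N\otimes_{\mathcal{A}}\widetilde{M}_p$, where $\widetilde{M}_p$ is a free $\mathcal{A}$-module placed in a single degree with no differential. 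Since $\widetilde{M}_p$ is a free (hence flat) $\mathcal{A}$-module, $N\otimes_{\mathcal{A}}\widetilde{M}_p$ is a direct sum of shifted copies of $N$, hence acyclic. The spectral sequence of a bounded-below exhaustive filtration converges strongly to $H_*(N\otimes_{\mathcal{A}}\widetilde{M})$, and its $E^1$ page vanishes, so the abutment vanishes, which is the desired claim.

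Next I would extend to the $\bZ/2$-graded case via Definition \ref{def:Z2_graded_bounded_below}. By assumption, $M$ is obtained by collapsing the grading of a bounded below $\bZ$-graded free chain complex $\widetilde{M}$ over $\mathcal{A}$. For a $\bZ/2$-graded right $\mathcal{A}$-chain complex $N$, the same brutal truncation of $\widetilde{M}$ induces a filtration on $N\otimes_{\mathcal{A}} M$ that is bounded below and exhaustive in each of the two $\bZ/2$-degrees (since $\widetilde{M}$ is bounded below, only finitely many values of the $\bZ$-degree contribute to each $\bZ/2$-degree below any given cutoff, while the union exhausts $M$). The associated graded pieces are $N\otimes_{\mathcal{A}}\widetilde{M}_p$, which are again direct sums of shifted copies of $N$; if $N$ is acyclic so is each of these. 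The spectral sequence converges strongly because the filtration is bounded below and exhaustive, and the $E^1$ page vanishes, giving the desired acyclicity of $N\otimes_{\mathcal{A}} M$.

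The main technical point to verify carefully is the strong convergence of the spectral sequence in the $\bZ/2$-graded setting, where $N$ is not itself bounded; this is why the bounded-below hypothesis on the $\bZ$-graded lift of $M$ is essential. Everything else is a direct application of the comparison theorem for spectral sequences of filtered complexes, combined with the observation that tensoring with a free module over $\mathcal{A}$ (concentrated in a single degree) preserves quasi-isomorphisms trivially.
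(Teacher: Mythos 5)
Your proof is correct, and the $\bZ$-graded portion is essentially the standard filtration argument that underlies the result the paper cites from Avramov--Foxby--Halperin (their 1.1.F--1.2.F). Where you genuinely diverge from the paper is in the $\bZ/2$-graded step. The paper's argument is a reduction to the already-established $\bZ$-graded case: given a quasi-isomorphism $N\to N'$ of $\bZ/2$-graded complexes, they \emph{unroll} $N$ and $N'$ into $2$-periodic $\bZ$-graded complexes $\hat{N}$, $\hat{N}'$ (with $\hat{N}_i := N_{i\bmod 2}$), apply the $\bZ$-graded statement to $\hat{N}\otimes_{\mathcal{A}}\widetilde{M}\to\hat{N}'\otimes_{\mathcal{A}}\widetilde{M}$, and read off degrees $0$ and $1$ of the (periodic) $\bZ$-graded tensor product, which coincide with the two $\bZ/2$-degrees of $N\otimes_{\mathcal{A}}M$. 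You instead run the bounded-below filtration spectral sequence directly on the $\bZ/2$-graded complex $N\otimes_{\mathcal{A}}M$. Both routes work. The trade-off is that the paper's unrolling black-boxes all spectral-sequence machinery inside the cited $\bZ$-graded result, whereas your direct approach silently invokes the convergence theorem for bounded-below exhaustive filtrations in the $\bZ/2$-graded (equivalently, ungraded filtered differential module) setting --- which is indeed valid, since the argument depends only on the filtration being globally bounded below and exhaustive and not on any $\bZ$-grading, but is less commonly stated in that generality and would merit an explicit remark.
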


\begin{proof}
 This is classical in the $\mathbb{Z}$-graded case, see 1.1.F - 1.2.F of \cite{avramov1991homological}.

Assume that we have a quasi-isomorphism of $\mathbb{Z}_2$ graded left $\mathcal{A}$-chain complexes $N\to N'$. Our assumption that $M$ is bounded below asserts the existence of a bounded below dg-flat $\mathbb{Z}$-graded $\mathcal{A}$-chain complex $\tilde{M}$ whose $\mathbb{Z}_2$ reduction is $M$. Then, it follows that \begin{equation}{N}\otimes _{\mathcal{A}} M\to {N'}\otimes _{\mathcal{A}} M\end{equation} is a quasi-isomorphism. This is because we can first unroll the $\mathbb{Z}_2$ complexes $N$ and $N'$ into $\mathbb{Z}$-graded ones, take tensor product as such with $\tilde{M}$ and then only consider what happens in degrees $0$ and $1$ to recover the desired map. This finishes the proof in the $\mathbb{Z}_2$ graded case.
\end{proof}

\begin{rem}
If we assume that $\mathcal{A}$ has finite global dimension, then we can drop the bounded below assumption from Lemma \ref{lem-dg-flat-free} (see the proof of Proposition \ref{prop-app} and 1.5 of \cite{avramov1991homological}). However, in our application below, we will have $\mathcal{A}=R[G]$ for a finite group $G$. Even if we assume that $R$ has finite global dimension and $|G|$ is a unit in $\mathcal{A}$, we could not determine if one can conclude that $R$ has finite global dimension. Of course, when $R$ is a field, then this is true: $\mathcal{A}$ has global dimension $0$ by Maschke's theorem.
\end{rem}

Let $M_r$ be a right $\mathcal{A}$-chain complex and $M_l$ be a left $\mathcal{A}$-chain complex. We define $B(M_r,\mathcal{A},M_l)$ to be the bar complex \begin{equation}\label{eq-bimod-bar}\bigoplus_{n=0}^\infty M_r\otimes \mathcal{A}^{\otimes n}\otimes M_l=M_r\otimes T\mathcal{A}\otimes M_l\end{equation} with its standard differential. Note that in our terminology, $B(M_r,\mathcal{A},M_l)$ is the geometric realization of the simplicial $R$-chain complex whose $n$-simplices are \begin{equation}B^n(M_r,\mathcal{A},M_l):= M_r\otimes \mathcal{A}^{\otimes n}\otimes M_l.\end{equation} When we set $M_l = \mathcal{A}$, we have a canonical a chain map of right $\mathcal{A}$-chain complexes: \begin{equation}B(M_r,\mathcal{A},\mathcal{A})\to M_r,\end{equation}  which is a homotopy equivalence of $R$-chain complexes, and hence in particular a quasi-isomorphism. This last conclusion holds more generally:
\begin{prop} If $M_l$ is dg-flat leftt $\mathcal{A}$-chain complex, then the canonical map
of $R$-chain complexes \begin{equation}p: B(M_r,\mathcal{A},M_l)= B(M_r,\mathcal{A},\mathcal{A} )\otimes_{\mathcal{A}} M_r \to M_r\otimes_{\mathcal{A}} M_l,\end{equation}  is a quasi-isomorphism.  \qed
\end{prop}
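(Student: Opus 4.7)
The plan is to reduce to the special case $M_l = \mathcal{A}$, which is already recorded in the paragraph preceding the proposition. Note that $B(M_r, \mathcal{A}, \mathcal{A})$ carries a natural right $\mathcal{A}$-chain complex structure given by multiplication on the rightmost tensor factor, and that with respect to this structure the evaluation map $\varepsilon: B(M_r, \mathcal{A}, \mathcal{A}) \to M_r$ is $\mathcal{A}$-linear. Its homotopy inverse (the extra degeneracy sending $m \in M_r$ to $m \otimes 1 \in M_r \otimes \mathcal{A}$) is only $R$-linear, but that is irrelevant for what follows. The essential content extracted from the paragraph in question is that $\varepsilon$ is a quasi-isomorphism of right $\mathcal{A}$-chain complexes.

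Next I would identify the two sides of the display in the statement. The canonical isomorphism $\mathcal{A} \otimes_{\mathcal{A}} M_l \cong M_l$ allows us to absorb the rightmost $\mathcal{A}$-factor of each summand of $B(M_r, \mathcal{A}, \mathcal{A})$ after tensoring with $M_l$ over $\mathcal{A}$, giving a natural isomorphism
\[
B(M_r, \mathcal{A}, \mathcal{A}) \otimes_{\mathcal{A}} M_l \;\cong\; \bigoplus_{n\geq 0} M_r \otimes \mathcal{A}^{\otimes n} \otimes M_l \;=\; B(M_r, \mathcal{A}, M_l)
\]
of $R$-chain complexes. The only thing to check is compatibility with the bar differential, which reduces to observing that the rightmost face in $B(M_r, \mathcal{A}, \mathcal{A}) \otimes_{\mathcal{A}} M_l$ is precisely the $\mathcal{A}$-module action on $M_l$, i.e.\ exactly the corresponding rightmost face map of $B(M_r, \mathcal{A}, M_l)$ as defined in Equation \eqref{eq-bimod-bar}; the remaining faces are patently unaffected. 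Under this identification, the map $p$ of the statement is $\varepsilon \otimes_{\mathcal{A}} \mathrm{id}_{M_l}$.

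Finally, the dg-flatness of $M_l$ finishes the argument. By the very definition of dg-flatness, the functor $(-) \otimes_{\mathcal{A}} M_l$ on right $\mathcal{A}$-chain complexes sends quasi-isomorphisms to quasi-isomorphisms of $R$-chain complexes, so applying it to $\varepsilon$ yields that $p = \varepsilon \otimes_{\mathcal{A}} \mathrm{id}_{M_l}$ is a quasi-isomorphism, which is what we wanted. There is no substantive obstacle here; the only step requiring a moment of thought is verifying that the comparison in the case $M_l = \mathcal{A}$ is genuinely a quasi-isomorphism of right $\mathcal{A}$-chain complexes (not merely of underlying $R$-chain complexes), because this is precisely the hypothesis on which dg-flatness can act.
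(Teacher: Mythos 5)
Your proof is correct and is precisely the argument the paper intends: the proposition is stated without a written-out proof, but the displayed identification $B(M_r,\mathcal{A},M_l)\cong B(M_r,\mathcal{A},\mathcal{A})\otimes_{\mathcal{A}} M_l$ together with the paragraph immediately preceding (asserting that $B(M_r,\mathcal{A},\mathcal{A})\to M_r$ is a quasi-isomorphism of right $\mathcal{A}$-complexes) lays out exactly the reduction you carried out. One small note: the $M_r$ appearing in the paper's displayed equation $B(M_r,\mathcal{A},\mathcal{A})\otimes_{\mathcal{A}} M_r$ is a typo for $M_l$, which you correctly silently corrected.
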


It will be convenient for our arguments to factor this map through a map of geometric realisations. We thus define the $R$-chain complex \begin{equation}M_r\otimes^s_{\mathcal{A}}M_l:=\bigoplus_{n=0}^{\infty}M_r\otimes_{\mathcal{A}}M_l[n]\end{equation} which is the geometric realization of the constant simplicial $R$-chain complex $M_r\otimes_{\mathcal{A}}M_l$ (the differentials alternate between vanishing and agreeing with the identity map). Note that we have a canonical quasi-isomorphism \begin{equation}
  q:M_r\otimes^s_{\mathcal{A}}M_l\to M_r\otimes_{\mathcal{A}}M_l.
\end{equation}

Let us note that there is always a simplicial map \begin{equation}B_\bullet(M_r,\mathcal{A},M_l)\to M_r\otimes_{\mathcal{A}}M_l,\end{equation} where the target is thought of as the constant simplicial $R$-chain complex. It is defined for any $n\geq 0$ by taking the map $M_r\otimes \mathcal{A}^{\otimes n}\otimes M_l\to M_r\otimes_{\mathcal{A}}M_l$ to be \begin{equation}m_r\otimes a_1\otimes\ldots \otimes a_n\otimes m_l\mapsto m_ra_1\ldots a_n\otimes m_l.\end{equation} This is in fact a bi-natural transformation.

Taking geometric realizations, we obtain a chain map 
\begin{equation}\label{eq:comparison_map_iterated_bar_tensor-chain}
  f: B(M_r,\mathcal{A},M_l)\to M_r\otimes^s_{\mathcal{A}}M_l.
\end{equation}

\begin{rem}
Assuming that $M_l$ is an $\mathcal{A}-\mathcal{A}'$ bimodule, this map is a map of $\mathcal{A}'$ modules. 
\end{rem}We conclude:
\begin{cor}\label{prop-free-quasi}We have a commutative diagram \begin{equation}
  \begin{tikzcd}
    B(M_r,\mathcal{A},M_l) \ar[r, "f"] \ar[dr, "p"] & M_r\otimes^s_{\mathcal{A}}M_l  \ar[d,"q"] \\
     & M_r\otimes_{\mathcal{A}}M_l &
  \end{tikzcd}
\end{equation}

Assuming that $M_l$ is dg-flat, the horizontal map is a quasi-isomorphism. \qed
\end{cor}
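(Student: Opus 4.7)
The plan is to break the statement into two essentially independent pieces: first, the commutativity of the triangle, which I expect to follow from a direct inspection of the definitions at the simplicial level; second, the quasi-isomorphism assertion for $f$, which will follow by combining the preceding proposition (giving that $p$ is a quasi-isomorphism under the dg-flatness hypothesis on $M_l$) with the fact that $q$ is a quasi-isomorphism unconditionally (which is asserted in the paragraph preceding the corollary), via the 2-out-of-3 property.

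For commutativity, I would observe that both legs $p$ and $q\circ f$ arise as the geometric realisation of one and the same bi-natural transformation of simplicial chain complexes
\[
B_\bullet(M_r,\mathcal{A},M_l)\longrightarrow (M_r\otimes_{\mathcal{A}}M_l)_\bullet,
\]
where the target is viewed as a constant simplicial object and the map sends $m_r\otimes a_1\otimes\cdots\otimes a_n\otimes m_l$ to $m_r a_1\cdots a_n\otimes m_l$. Indeed, $f$ is defined as the geometric realisation of this simplicial map, and $q$ is the canonical comparison from the geometric realisation of the constant simplicial object with value $M_r\otimes_{\mathcal{A}}M_l$ back to the value itself, so $q\circ f$ visibly collapses to the multiplication map on the nose. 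On the other hand, using the canonical isomorphism $B(M_r,\mathcal{A},\mathcal{A})\otimes_{\mathcal{A}}M_l\cong B(M_r,\mathcal{A},M_l)$ at each simplicial level (given in degree $n$ by $M_r\otimes\mathcal{A}^{\otimes n}\otimes\mathcal{A}\otimes_{\mathcal{A}}M_l\cong M_r\otimes\mathcal{A}^{\otimes n}\otimes M_l$), the map $p$ obtained by tensoring the canonical augmentation $B(M_r,\mathcal{A},\mathcal{A})\to M_r$ with $M_l$ over $\mathcal{A}$ also reduces to the same multiplication rule. Commutativity then follows.

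Having established commutativity and having identified both $p$ and $q$ as quasi-isomorphisms under our hypothesis, the conclusion that $f$ is a quasi-isomorphism is immediate from the 2-out-of-3 property applied to the triangle $p=q\circ f$. The main obstacle, such as it is, amounts to careful bookkeeping: verifying that the canonical identification $B(M_r,\mathcal{A},\mathcal{A})\otimes_{\mathcal{A}}M_l\cong B(M_r,\mathcal{A},M_l)$ intertwines the differentials on both sides, and that the sign conventions used for the geometric realisation functor are consistent with those used to define $p$ in the preceding proposition. Neither is a substantive difficulty, but both require a careful unwinding of the definitions to ensure that commutativity holds on the nose and not merely up to homotopy.
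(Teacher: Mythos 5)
Your proposal is correct and follows essentially the same route as the paper: the paper leaves the proof of this corollary implicit (it is marked \qed), but the intended argument is precisely the one you supply — commutativity of the triangle by inspection at the simplicial level (both $p$ and $q\circ f$ collapse an $n$-simplex to $m_r a_1\cdots a_n\otimes m_l$ in simplicial degree $0$ and to $0$ in positive degrees), together with 2-out-of-3 using the preceding proposition for $p$ and the standing observation that $q$ is a quasi-isomorphism. Your remark about checking that the identification $B(M_r,\mathcal{A},\mathcal{A})\otimes_{\mathcal{A}}M_l\cong B(M_r,\mathcal{A},M_l)$ respects differentials is the right thing to verify but is routine, as you say.
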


\subsection{Proof that the map from the  bar construction to the operadic extension is a quasi-isomorphism}
\label{sec:proof-that-map}

In this section, we show that the comparison map from Equation \eqref{eq:projection_map_ordered_bar_extension} is a quasi isomorphism by equipping to two sides with compatible filtration, and showing that the map induces an isomorphism on associated graded homology groups. To simplify the notation, we write
\begin{align}
  L:= &\mathbb{L}P\pi_*PA(\star) \\
  L':=& \mathbb{L}\pi_*A(\star).
\end{align}
We begin by recalling that 
\begin{multline*}L =\bigoplus_{n\geq 0}\bigoplus_{[k_0]\to [k_1]\to\ldots \to [k_n]\to 1}\\ \bigoplus_{\substack{\vec{x}_i\in Os^{k_i}(\cM)\\i\in [n]}}  PA(\vec{x}_0) \otimes P\cM(\vec{x}_{0};\vec{x}_{1})\otimes \cdots\otimes P\cM(\vec{x}_{n-1};\vec{x}_{n})  \otimes \cO(k_n) [n].\end{multline*}

For an integer $m\geq 0$ let $O_m$ be the set of $(m+1)$-tuples $(\vec{x}_0,\dots,\vec{x}_m)$ of ordered sequences of objects of $\cM$ with the property that $\vec{x}_i\neq\vec{x}_{i+1}$. 

Under the identity assumption, and using $T$ for the tensor algebra construction, we can rewrite $L$ as 
\begin{multline*}L= \bigoplus_{m\geq 0}\bigoplus_{\quad(\vec{x}_0,\dots,\vec{x}_{m})\in O_{m}}  PA(\vec{x}_0)
\otimes T (R[\Aut(\vec{x}_{{0}})])
\otimes P\cM(\vec{x}_{0};\vec{x}_{1})\otimes \\ T (R[\Aut(\vec{x}_{1})])\otimes
 \cdots \otimes P\cM(\vec{x}_{m-1};\vec{x}_{m})\otimes T(R[\Aut(\vec{x}_{0})])\otimes  \cO(|\vec{x}_{{m}}|).
\end{multline*}

Note that the summands here are isomorphic as a $k$-module to an iterated bar construction as in Equation \eqref{eq-bimod-bar} with a left parenthesis inserted after every occurrence of $T (R[\cdot])$ and right parentheses at the end. We remind the reader that this splitting of $L$ is only as a $k$-module and not as a chain complex. The differential has some terms which do preserve this decomposition and this agrees with the differential of the iterated bar complex from Equation \eqref{eq-bimod-bar}. On the other hand, part of the differential decreases $m$ by composing $\vec{x}_i$ and $\vec{x}_{i+1}.$

\begin{lem}
Under the Identity assumption, there is a canonical isomorphism of modules 
\begin{multline*}
L'= \bigoplus_{m\geq 0}\bigoplus_{\quad(\vec{x}_0,\dots,\vec{x}_{m})\in O_{m}}  PA(\vec{x}_0)
\otimes^s_{R[\Aut(\vec{x}_{{0}})]}
P\cM(\vec{x}_{0};\vec{x}_{1})\otimes^s_{R[\Aut(\vec{x}_{1})]}\\
\cdots \otimes^s_{R[\Aut(\vec{x}_{m-1})]} P\cM(\vec{x}_{m-1};\vec{x}_{m})\otimes^s_{R[\Aut(\vec{x}_{m})]} \cO(|\vec{x}_{{m}}|)
\end{multline*}
Under this identification, the map $L\to L'$ of Equation \eqref{eq:projection_map_ordered_bar_extension} is given by  applying Equation \eqref{eq:comparison_map_iterated_bar_tensor-chain} iteratively. The map respects the module grading by $m$.
\end{lem}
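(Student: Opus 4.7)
The plan is to unwind $L' = \mathbb{L}\pi_* A(\star)$ via the explicit simplicial description provided by Corollary \ref{CorOperadicOrdered}, and then regroup its summands according to the combinatorial ``type'' of the indexing tuple. By that corollary, $L'$ is the geometric realization of the simplicial chain complex with $n$-simplices
\begin{equation*}
C_n = \bigoplus_{(\vec{x}_0, \ldots, \vec{x}_n)} PA(\vec{x}_0) \otimes_{R[\Aut(\vec{x}_0)]} P\cM(\vec{x}_0; \vec{x}_1) \otimes_{R[\Aut(\vec{x}_1)]} \cdots \otimes_{R[\Aut(\vec{x}_n)]} \cO(|\vec{x}_n|).
\end{equation*}
Given such a tuple, I associate its \emph{reduction} $(\vec{y}_0, \ldots, \vec{y}_m) \in O_m$, obtained by collapsing consecutive duplicates, together with a \emph{multiplicity vector} $(\tilde n_0, \ldots, \tilde n_m)$ with $\tilde n_i \geq 1$ and $\sum \tilde n_i = n+1$. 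The first key observation is that, under the Identity hypothesis, whenever $\vec{x}_i = \vec{x}_{i+1}$ the factor $P\cM(\vec{x}_i; \vec{x}_{i+1})$ equals $R[\Aut(\vec{x}_i)]$ and is absorbed by the adjacent tensor products over the group ring. Thus the summand of $C_n$ indexed by a tuple with reduction $(\vec{y}_0, \ldots, \vec{y}_m)$ is canonically identified with the fixed module
\begin{equation*}
S(\vec{y}_0, \ldots, \vec{y}_m) := PA(\vec{y}_0) \otimes_{R[\Aut(\vec{y}_0)]} P\cM(\vec{y}_0; \vec{y}_1) \otimes \cdots \otimes_{R[\Aut(\vec{y}_m)]} \cO(|\vec{y}_m|),
\end{equation*}
independent of the multiplicity vector.

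Reparametrizing via $p_i := \tilde n_i - 1 \geq 0$ and summing over all tuples with a given reduction then exhibits the direct sum of copies of $S(\vec{y}_0, \ldots, \vec{y}_m)$ indexed by $(p_0, \ldots, p_m)$ with $p_i \geq 0$, which is precisely the module underlying the iterated $\otimes^s$ expression $PA(\vec{y}_0) \otimes^s_{R[\Aut(\vec{y}_0)]} \cdots \otimes^s_{R[\Aut(\vec{y}_m)]} \cO(|\vec{y}_m|)$. Assembling these identifications over all $(\vec{y}_0, \ldots, \vec{y}_m) \in O_m$ and all $m \geq 0$ yields the claimed canonical isomorphism of modules.

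For the second assertion, I would use the presentation of $L$ at the beginning of the section as a direct sum, indexed by the same $\bigsqcup_m O_m$, of iterated bar constructions involving the tensor algebra factors $T(R[\Aut(\vec{y}_i)])$ with tensor products taken over the ground ring. The projection map of Equation \eqref{eq:projection_map_ordered_bar_extension} replaces each of these ground-ring tensor products by the corresponding tensor product over $R[\Aut(\vec{y}_i)]$; by the definition of $f$ in Equation \eqref{eq:comparison_map_iterated_bar_tensor-chain}, this is exactly iterated application of $f$, one factor at each of the $m+1$ bar-to-$\otimes^s$ interfaces. The grading by $m$ is preserved because both decompositions are indexed by the same reduced tuples in $O_m$ and iterated $f$ acts summand by summand without altering the reduction. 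The principal bookkeeping challenge in the full proof will be to verify carefully that the bijection $\tilde n_i \leftrightarrow p_i = \tilde n_i - 1$ correctly reconciles the simplicial shift $[n]$ on $L'$ with the homological gradings built into the iterated $\otimes^s$ construction, paralleling the analogous shift accounting for the iterated bar description of $L$.
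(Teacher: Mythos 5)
Your proposal is correct and follows the same route that the paper intends: start from the simplicial description of $L'$ given by Corollary \ref{CorOperadicOrdered}, use the Identity hypothesis to collapse the adjacent factors $P\cM(\vec{x}_i;\vec{x}_i)=R[\Aut(\vec{x}_i)]$ across the $\otimes_{R[\Aut(\vec{x}_i)]}$ sign, and reindex $(n+1)$-tuples by their reduction in $O_m$ together with the multiplicity vector $(\tilde n_i)$, i.e.\ the exponents $(p_i)=(\tilde n_i -1)$ that parametrize the summands of the iterated $\otimes^s$. The paper's own proof is a one-liner (``follows from Corollary \ref{CorOperadicOrdered} and the rest of the computation is straightforward''), and your write-up is exactly the straightforward unwinding it alludes to; the identification of the $L\to L'$ map as an iterated application of the comparison map $f$ is likewise the intended content. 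The shift bookkeeping you flag at the end is a genuine (though minor) source of friction, since the paper's conventions for the shift in geometric realization ($[-n]$ in Definition \ref{def:operadic_relative_chains}) and in the definition of $\otimes^s$ ($[n]$) do not obviously line up, and the summand with reduction length $m$ picks up an extra shift of $m$ beyond $\sum p_i$ coming from the length of the reduced tuple. This is consistent with the paper's explicit warning, made for the analogous decomposition of $L$, that the splitting is ``only as a $k$-module and not as a chain complex''; the lemma is asserting a module identification, and your proof establishes that.
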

\begin{proof}

The isomorphism part of the claim follows from Corollary \ref{CorOperadicOrdered} and the rest of the computation is straightforward.
  
\end{proof}

Let us call the map of Equation \eqref{eq:projection_map_ordered_bar_extension} after this identification \begin{equation}\Phi: L\to L'.\end{equation}

\begin{rem}
Here is a toy version of the next argument to aid the reader. Let us use the notation of Corollary \ref{prop-free-quasi} and in addition assume that we are given an $R$-module $M$ and an $R$-linear map\begin{equation}M_r\otimes M_l\to M, \end{equation} which factors through $M_r\otimes M_l\to M_r\otimes_{\mathcal{A}} M_l.$ Then, we can construct the cones of the canonical chain maps\begin{equation}B(M_r,\mathcal{A},M_l)\to M\end{equation} and $M_r\otimes_{\mathcal{A}}^s M_l\to M$. The map $f$ induces a chain map on these cones, which is a quasi-isomorphism under the same assumption that $M_l$ is dg-flat.
\end{rem}

Let $L_m$ and $L_m'$ be the submodules with exactly $m$ essential levels. We also define $F_mL:=\bigoplus_{n=0}^m L_m$ and  $F_mL':=\bigoplus_{n=0}^m L_m'$. Noting \begin{equation}F_mL=F_{m-1}L\oplus L_m\end{equation} we can write the differential in block form. Considering $L_m$ and $L_m'$ as a chain complex with their canonical differential, we can express $F_mL$ and $F_mL'$ as the cones of the canonical maps \begin{align}f_m: L_m & \to F_{m-1}L \\
f_m': L_m'& \to F_{m-1}L'.\end{align}

\begin{prop}\label{prop-cone-L}
The diagram \begin{equation}
  \begin{tikzcd}
    L_m \ar[r, "f_m"] \ar[d, "\Phi_m"] & F_{m-1}L  \ar[d,"F_{m-1}\Phi"]  \\
    L_m' \ar[r, "f_m'"] & F_{m-1}L' &
  \end{tikzcd}
\end{equation} commutes. \qed

\end{prop}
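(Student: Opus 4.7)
The plan is to unpack the differentials on $L$ and $L'$ into level-preserving and level-decreasing pieces, match them via $\Phi$ summand-by-summand, and verify that the compositional pieces descend through the appropriate tensor product quotients. First, I would observe that the total differential on $L$ splits as $d_L = d_L^{\textrm{int}} + d_L^{\textrm{comp}}$, where $d_L^{\textrm{int}}$ is the internal bar differential on each factor $T(R[\Aut(\vec{x}_i)])$, which strictly preserves the grading by essential levels and thus coincides with the intrinsic differential of $L_m$; while $d_L^{\textrm{comp}}$ is precisely the part that decreases $m$ by collapsing an essential boundary through the PROP composition (or, at the endpoints, the algebra action and the image under $P\pi$). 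By construction, $f_m$ is the restriction of $d_L^{\textrm{comp}}$ to $L_m$, viewed as landing in $F_{m-1}L$. The analogous decomposition $d_{L'} = d_{L'}^{\textrm{simp}} + d_{L'}^{\textrm{comp}}$ holds with $f_m'$ the restriction of the compositional piece.

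Next, I would analyze $d_L^{\textrm{comp}}$ and $d_{L'}^{\textrm{comp}}$ term-by-term at each essential boundary. For an internal level $0 < i < m$, both contributions are built from the PROP composition
\[
P\cM(\vec{x}_{i-1};\vec{x}_i) \otimes P\cM(\vec{x}_i;\vec{x}_{i+1}) \to P\cM(\vec{x}_{i-1};\vec{x}_{i+1});
\]
for $i=0$ they are built from the algebra action $PA(\vec{x}_0)\otimes P\cM(\vec{x}_0;\vec{x}_1)\to PA(\vec{x}_1)$; for $i=m$ they use $P\pi$ together with the operadic action of $\cO(|\vec{x}_m|)$. In each case the map defining the boundary contribution is $R[\Aut(\vec{x}_i)]$-bilinear: for the PROP composition this is immediate from the construction of composition in $P\cM$ via the symmetric structure of $\cM$; for the algebra action at $i=0$ it follows from the fact that $PA(\vec{x}_0) = A(x_{0,1}) \otimes \cdots \otimes A(x_{0,k_0})$ is $\Aut(\vec{x}_0)$-equivariant via the symmetry isomorphisms of the tensor product; for $i=m$ it follows from the $S_{k_m}$-equivariance of $\cO(k_m)$ restricted to the subgroup $\Aut(\vec{x}_m)\subset S_{k_m}$.

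The map $\Phi$ is by definition the iterated projection of each $\otimes$ to $\otimes_{R[\Aut(\vec{x}_i)]}$ (promoted to $\otimes^s$). Because each compositional contribution is bilinear over the relevant group ring, each such contribution descends along the projection and matches the analogous contribution on the $L'$ side. Assembling across all essential boundaries yields the strict identity $\Phi\circ d_L^{\textrm{comp}} = d_{L'}^{\textrm{comp}} \circ \Phi$, which after restricting to the $m$-th filtration summand gives exactly $f_m' \circ \Phi_m = F_{m-1}\Phi \circ f_m$, proving the proposition.

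The main obstacle is not conceptual but combinatorial: one must keep careful track of the matching between the $T(R[\Aut(\vec{x}_i)])$ factors on the $L$-side and the $\otimes^s_{R[\Aut(\vec{x}_i)]}$ factors on the $L'$-side under the isomorphism of Corollary \ref{CorOperadicOrdered}, and verify that the compositional differential on each side respects this matching. Because each boundary contribution acts locally at a single essential level, the required compatibilities are independent across levels and can be checked one at a time, which reduces the verification to the three elementary bilinearity statements identified above.
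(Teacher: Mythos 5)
Your proof reaches the correct conclusion, but it takes a longer road than necessary, and one of its shortcuts glosses over a few points that would need careful handling if the verification were actually carried out.

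The cleanest way to establish the square is formal: the preceding lemma constructs $\Phi$ as (the geometric realization of) a map of \emph{simplicial chain complexes}, so $\Phi$ is automatically a chain map; moreover it preserves the module grading by essential levels $m$. Since the differentials $d_L$ and $d_{L'}$ decompose as a sum of a piece preserving $m$ (giving the canonical differentials on $L_m,L_m'$) and a piece decreasing $m$ (giving $f_m,f_m'$), the chain-map identity $\Phi\circ d_L = d_{L'}\circ \Phi$ decomposes term by term against this grading, and the $m$-decreasing part of the identity, restricted to $L_m$, is exactly $F_{m-1}\Phi\circ f_m = f_m'\circ \Phi_m$. No individual analysis of the compositional face maps is needed.

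If you do want to verify $\Phi\circ d_L^{\textrm{comp}} = d_{L'}^{\textrm{comp}}\circ\Phi$ directly, the bilinearity observations you make are the right ingredients, but your description is imprecise in three ways. First, $\Phi$ is not merely the projection to $\otimes_{R[\Aut]}$: it is the iterated application of the simplicial map $f$ of Equation \eqref{eq:comparison_map_iterated_bar_tensor-chain}, landing in $\otimes^s_{R[\Aut]}$; the argument should be tracked through $f$, not just through the quotient. Second, between adjacent $P\cM$ factors there are tensor-algebra factors $T(R[\Aut(\vec{x}_i)])$, so the level-decreasing face maps only act directly on $P\cM\otimes P\cM$ when the bar degree of the intervening $T$-factor vanishes; the general case passes first through the module action of $R[\Aut(\vec{x}_i)]$, and your verification needs to account for this. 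Third, when $\vec{x}_{i-1}=\vec{x}_{i+1}$, composing the adjacent $P\cM$ factors lands in $R[\Aut(\vec{x}_{i-1})]$ by the Identity assumption, which decreases $m$ by two and merges the $T$-factors at positions $i-1$ and $i+1$; this case is not covered by the three contributions you list. None of these are fatal, but they illustrate why invoking the chain-map property of $\Phi$ is a more robust argument than a case-by-case check.
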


\begin{thm} \label{thm:borel-iso-ordinary-quotient}
Under the freeness hypothesis, the map \begin{equation}F_m\Phi: F_mL\to F_mL'\end{equation} is a quasi-isomorphism for all $m\geq 0$.

\end{thm}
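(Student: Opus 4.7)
The plan is to induct on $m \geq 0$. For the inductive step, Proposition \ref{prop-cone-L} expresses $F_m L$ and $F_m L'$ as mapping cones of $f_m$ and $f_m'$ respectively, with $\Phi$ inducing a map of cones; using the long exact sequences in homology together with the $5$-lemma, and the inductive hypothesis that $F_{m-1}\Phi$ is a quasi-isomorphism, it therefore suffices to show that $\Phi_m : L_m \to L_m'$ is itself a quasi-isomorphism. Since $L_m$ and $L_m'$ both split as direct sums indexed by $O_m$ and $\Phi_m$ respects the splitting, one may check this summand by summand.

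For a fixed tuple $(\vec{x}_0, \ldots, \vec{x}_m) \in O_m$, writing $\mathcal{A}_i := R[\Aut(\vec{x}_i)]$, the corresponding summand of $\Phi_m$ is the iterated bar-to-tensor-product comparison
\begin{multline*}
B(PA(\vec{x}_0), \mathcal{A}_0, P\cM(\vec{x}_0; \vec{x}_1), \mathcal{A}_1, \ldots, \mathcal{A}_m, \cO(|\vec{x}_m|)) \\
\longrightarrow PA(\vec{x}_0) \otimes^s_{\mathcal{A}_0} P\cM(\vec{x}_0; \vec{x}_1) \otimes^s_{\mathcal{A}_1} \cdots \otimes^s_{\mathcal{A}_m} \cO(|\vec{x}_m|).
\end{multline*}
When $m = 0$ this is precisely the map $f$ of Equation \eqref{eq:comparison_map_iterated_bar_tensor-chain}, and Corollary \ref{prop-free-quasi} yields the desired quasi-isomorphism once we know that $\cO(|\vec{x}_0|)$ is dg-flat over $\mathcal{A}_0$. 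The Freeness 2 hypothesis together with the restriction-of-scalars observation noted after Definition \ref{def:freeness_assumption} makes $\cO(|\vec{x}_0|)$ bounded below and free over $\mathcal{A}_0$, whence dg-flat by Lemma \ref{lem-dg-flat-free}.

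For $m > 0$ I plan a secondary induction that peels off the rightmost bar at each stage and invokes Corollary \ref{prop-free-quasi}. At the initial step the right module is $\cO(|\vec{x}_m|)$ and its dg-flatness over $\mathcal{A}_m$ is handled exactly as in the base case; at subsequent steps the right module is a multimorphism complex $P\cM(\vec{x}_{i-1}; \vec{x}_i)$, for which dg-flatness over $\mathcal{A}_{i-1}$ is furnished by Freeness 1 combined with Lemma \ref{lem-dg-flat-free}.

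The main technical obstacle will be to make this step-by-step peeling rigorous, since after one reduction the complex is a mixture of bar factors on the left and $\otimes^s$ factors on the right and no longer fits the pure $B(M_r, \mathcal{A}, M_l)$ template of Corollary \ref{prop-free-quasi}. I intend to deal with this by viewing the iterated bar as the geometric realisation of a multisimplicial object in $\Ch_R$, so that each peeling is realisation along one simplicial direction and Corollary \ref{prop-free-quasi} applies pointwise in the remaining directions; equivalently, one may exploit the fact that $(-) \otimes^s_{\mathcal{A}} M$ is merely a direct sum of shifts and so unconditionally preserves quasi-isomorphisms in the first variable, so that the trailing $\otimes^s$ factors can be carried inertly through the argument. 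Iterating through all $m+1$ bar levels then completes the proof.
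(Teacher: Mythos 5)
Your overall plan coincides with the paper's: induct on $m$, use Proposition \ref{prop-cone-L} and the long exact sequence of the cone to reduce to the claim that $\Phi_m\colon L_m\to L_m'$ is a quasi-isomorphism, and then treat $\Phi_m$ by iterating the bar-to-tensor comparison of Corollary \ref{prop-free-quasi}. The paper is equally terse at that iteration step (``iterated use of Proposition \ref{prop-free-quasi}''), but neither of the two repairs you propose for making the iteration rigorous is correct as stated. The multisimplicial route fails because, with $Y$ the \emph{constant} $(m+1)$-fold simplicial object (all $\mathcal{A}_i$ collapsed at once), the restriction of $X\to Y$ to the last simplicial direction at a fixed multidegree $(n_0,\dots,n_{m-1})$ is $B_\bullet(N,\mathcal{A}_m,\cO)\to (PA\otimes_{\mathcal{A}_0}\cdots\otimes_{\mathcal{A}_m}\cO)_\bullet$, where $N=PA\otimes\mathcal{A}_0^{n_0}\otimes\cdots\otimes P\cM(\vec{x}_{m-1};\vec{x}_m)$ still has the $\mathcal{A}_i^{n_i}$ tensor factors uncollapsed; this restricted map is not a quasi-isomorphism, so the levelwise spectral-sequence argument does not start. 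The second route rests on a false assertion: since $q\colon N\otimes^s_{\mathcal{A}}M\to N\otimes_{\mathcal{A}}M$ is a quasi-isomorphism, the statement that $(-)\otimes^s_{\mathcal{A}}M$ preserves quasi-isomorphisms in the first variable is equivalent to $M$ being dg-flat over $\mathcal{A}$ --- precisely the hypothesis you were trying to avoid.

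The correct way to close your secondary induction, and presumably what the paper intends, is to verify directly that the left module at the $j$-th peeling step,
\begin{equation*}
M_j := P\cM(\vec{x}_{m-j};\vec{x}_{m-j+1})\otimes^s_{\mathcal{A}_{m-j+1}}\cdots\otimes^s_{\mathcal{A}_m}\cO(|\vec{x}_m|),
\end{equation*}
is itself dg-flat over $\mathcal{A}_{m-j}$, so that Corollary \ref{prop-free-quasi} applies verbatim with $M_l=M_j$. For the variant of $M_j$ built with $\otimes_{\mathcal{A}}$ in place of $\otimes^s_{\mathcal{A}}$, this follows from associativity of the tensor product: given a quasi-isomorphism $N\to N'$ of right $\mathcal{A}_{m-j}$-modules, one has $N\otimes_{\mathcal{A}_{m-j}}(P\otimes_{\mathcal{A}_{m-j+1}}\cdots)\cong(N\otimes_{\mathcal{A}_{m-j}}P)\otimes_{\mathcal{A}_{m-j+1}}\cdots$, and the leftmost factor $P=P\cM(\vec{x}_{m-j};\vec{x}_{m-j+1})$ is dg-flat over $\mathcal{A}_{m-j}$ by Freeness 1 and Lemma \ref{lem-dg-flat-free}; the remaining factors are then disposed of by the same reasoning (Freeness 1 inductively, Freeness 2 for $\cO$ at the end). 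One passes from $\otimes_{\mathcal{A}}$ to $\otimes^s_{\mathcal{A}}$ by the bounded-below exhaustive filtration by simplicial degree, whose associated graded is a direct sum of shifts, using that dg-flatness is closed under shifts, bounded-below extensions of degreewise free complexes, and filtered colimits. With this amendment, your secondary induction closes and the proof matches the paper's.
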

\begin{proof}
We will do this by induction on $m$. For $m=0$, this is a direct consequence of Lemma \ref{lem-dg-flat-free} and Proposition \ref{prop-free-quasi}. In fact, similarly, the map \begin{equation}\Phi_m: L_m\to L_m'\end{equation} is a quasi-isomorphism by an iterated use of Proposition \ref{prop-free-quasi}. Then, we use Proposition \ref{prop-cone-L} along with the long exact sequence of a cone to finish the induction.
\end{proof}

 Because our filtrations are exhaustive, taking cones reduces the next result to an acyclicity statement which is easy to see:
 
 \begin{cor} \label{cor:borel-iso-ordinary-quotient}
   Under the freeness hypothesis, $\Phi:L\to L'$ is a quasi-isomorphism. \qed
 \end{cor}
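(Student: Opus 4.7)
My plan is to deduce this quasi-isomorphism from Theorem \ref{thm:borel-iso-ordinary-quotient} by a standard mapping cone argument applied to the exhaustive filtrations $\{F_m L\}$ and $\{F_m L'\}$. The key observation is that by construction, every element of $L$ lies in some $F_m L$ (and similarly for $L'$), so $L = \colim_m F_m L$ and $L' = \colim_m F_m L'$, where these sequential colimits are computed in chain complexes as filtered unions. Since $\Phi$ respects the filtration by the preceding proposition, $\Phi$ is the sequential colimit of the maps $F_m \Phi$.

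The next step is to form mapping cones. Let $C(\Phi)$ denote the mapping cone of $\Phi$, and $C(F_m \Phi)$ the mapping cones of the filtered pieces. Since taking mapping cones is a functor that commutes with filtered colimits of chain complexes, we have $C(\Phi) = \colim_m C(F_m \Phi)$, with the colimit taken along the maps induced by the inclusions $F_m \subset F_{m+1}$. By Theorem \ref{thm:borel-iso-ordinary-quotient}, each $F_m \Phi$ is a quasi-isomorphism, so each $C(F_m \Phi)$ is acyclic.

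To conclude I would invoke the fact that homology commutes with sequential filtered colimits of chain complexes, which gives $H_*(C(\Phi)) \cong \colim_m H_*(C(F_m \Phi)) = 0$. This is equivalent to the assertion that $\Phi$ is a quasi-isomorphism. There is no real obstacle at this stage: the entire content resides in Theorem \ref{thm:borel-iso-ordinary-quotient}, and the passage from the filtered statement to the full statement is formal, as the authors' preparatory sentence already indicates.
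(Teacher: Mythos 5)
Your argument is correct and matches the paper's own (terse) reasoning: the preamble to the corollary says exactly that exhaustiveness of the filtrations plus a cone argument reduces the statement to Theorem \ref{thm:borel-iso-ordinary-quotient}, and you have simply spelled out the formal steps — that $\Phi = \colim_m F_m\Phi$, that mapping cones and homology both commute with filtered colimits, and that each $C(F_m\Phi)$ is acyclic.
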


\subsection{Two-sided bar constructions as left Kan extensions}
\label{sec:two-sided-bar}
It is convenient, for the next two sections, to formulate the left Kan extensions in the categorical context as a $2$-sided bar construction (c.f. \cite[Appendix A]{GugenheimMay1974}). Given a differential graded category $\cC$, we use the term \emph{right module} for a differential graded functor from $\cC$ to the category of chain complexes, while a \emph{left module} is a differential graded functor on the opposite category.

\begin{rem}
We recommend Section 3 of \cite{HollenderVogt1997} as a reference for the two-sided bar construction and a list of basic properties. Especially important for us is Section 5 of the same reference, where the two-sided bar construction is used to construct a left homotopy Kan extension along enriched functors. Even though the statements in \cite{HollenderVogt1997} are formulated for  categories enriched over topological spaces, the construction and results that we use extend in a straightforward way to chain complexes.
\end{rem}

We associate to a left module $\cL$ and a right module $\cR$ over $\cC$ a simplicial chain complex $B_{\bullet}(\cR,\cC, \cL)$ whose $n$-simplices are given by
\begin{equation}
  \bigoplus_{(x_0, \ldots, x_n) \in \Ob \cC^{n} } \cR(x_0) \otimes \cC(x_0, x_1) \otimes \cdots \otimes \cC(x_{n-1}, x_n) \otimes \cL(x_n)   
\end{equation}
and whose face maps are given for $i=0$ by the action of $\cC$ on $\cL$, for $0 < i < n$ by composition in $\cC$, and for $i=n$ by the action of $\cC$ on $\cR$. This can be formulated more abstractly in terms of a free-forgetful adjunction. We abuse notation and write $B(\cR,\cC, \cL) $ for the associated chain complex. More generally, if $\cR$ is a differential graded $\cB$-$\cC$ bimodule, and $\cL$ is a $\cC$-$\cD$ bimodule, then the bar construction is a $\cB$-$\cD$ bimodule.

The following result asserts that the left Kan extension that we have been studying, when computed for categories, is given by a $2$-sided bar construction. In its formulation, we shall use the following notation: for a differential graded functor $p:\cA\to \cC$,  denote by ${}_p \cC$ the $\cA-\cC$-bimodule which assigns to a pair of objects $(a,c)$ the morphism space $\cC(f(a),c)$.

\begin{lem} \label{lem:bar=leftkan}
Let $\cC$ be a category and $\cR$ be a right module.  If $p: \cA\to \cC$ is a functor, we have an isomorphism of right $\cA$-modules
\begin{equation}\label{eq:case_of_categories}
  \mathbb{L}p_* \cR \cong 
  B(\cR ,\cA, {}_p \cC)\end{equation}  
\end{lem}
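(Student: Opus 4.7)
The plan is to verify the isomorphism by a direct simplicial-level comparison: both sides are geometric realisations of simplicial chain complexes of right $\cC$-modules, and I shall produce a level-wise isomorphism of these simplicial objects.

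First I would unwind the construction of $\mathbb{L}p_* \cR$ from Section \ref{sec-ext}. Its $n$-simplices are given by the iterated composite
\begin{equation*}
   (\bF_\cC \circ \Color(p)_* \circ U) \circ (\bF_\cA U)^n (\cR).
\end{equation*}
Because $\cA$ and $\cC$ are ordinary categories, every multimorphism space with more than one input vanishes and the symmetric-group coinvariants in the defining formula of Equation \eqref{eq:formula_free_functor} collapse. For any right $\cA$-module $M$ this yields
\begin{equation*}
  (\bF_\cA U M)(a') = \bigoplus_{a \in \Ob \cA} M(a) \otimes \cA(a, a'),
\end{equation*}
and similarly
\begin{equation*}
  (\bF_\cC \circ \Color(p)_* \circ U)(M)(c) = \bigoplus_{a \in \Ob \cA} M(a) \otimes \cC(p(a), c) = \bigoplus_{a \in \Ob \cA} M(a) \otimes {}_p \cC(a, c).
\end{equation*}
Iterating and substituting, the $n$-simplices of $(\Delta p)_* \cR$ evaluated at $c \in \Ob \cC$ become
\begin{equation*}
  \bigoplus_{a_0, \ldots, a_n \in \Ob \cA} \cR(a_0) \otimes \cA(a_0, a_1) \otimes \cdots \otimes \cA(a_{n-1}, a_n) \otimes {}_p \cC(a_n, c),
\end{equation*}
which is precisely the formula for $B_n(\cR, \cA, {}_p \cC)(c)$.

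Next I would match the simplicial operators under this identification. The face $d_0$ comes from the counit of the free-forgetful adjunction for $\cA$-modules applied in the innermost position, and it collapses $\cR(a_0) \otimes \cA(a_0, a_1)$ via the right $\cA$-action on $\cR$. Intermediate faces $d_i$ for $0 < i < n$ collapse consecutive $\cA$-factors via composition in $\cA$. The outermost face $d_n$ arises from the adjunction transferring $\cA$-modules to $\cC$-modules via $\Color(p)_*$, and it collapses $\cA(a_{n-1}, a_n) \otimes {}_p \cC(a_n, c)$ through the left $\cA$-action on ${}_p \cC$, which is by definition composition in $\cC$ after applying $p$. Degeneracies insert identity morphisms. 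These are exactly the structure maps of the two-sided bar complex, and the right $\cC$-module structure is inherited on both sides from that on ${}_p \cC$.

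The expected main obstacle is purely bookkeeping: ensuring that the order of tensor factors produced by the nested free algebra functors and the signs coming from the alternating-sum differential match the standard conventions used to define $B_\bullet(\cR, \cA, {}_p \cC)$, so that the identification is a genuine isomorphism of chain complexes (and not merely of bigraded modules) at each simplicial level.
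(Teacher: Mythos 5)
Your proposal takes essentially the same route as the paper's, which observes that unwinding the free--forgetful bar construction in the case of categories literally reproduces the two-sided bar complex and then declares the rest 'an inspection.' You carry out that inspection: because a category has no higher-arity multimorphisms, the $S_n$-coinvariants in the formula for $\bF_\cA$ are trivial and the free functor reduces to a single sum over objects, and iterating produces exactly the summand of $B_n(\cR,\cA,{}_p\cC)(c)$. That part is correct, and it matches what the paper means by ``only linear trees contribute'' and ``no coinvariants are needed.''

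The one bookkeeping issue to repair (which you correctly anticipated could arise, though you did not pin it down) is the face map indexing. The paper's convention, both in Section~\ref{sec:chain-complex} for the operadic construction and in Section~\ref{sec:two-sided-bar} for $B_\bullet(\cR,\cC,\cL)$, is that $d_0$ is the face that projects to $\cO$ and composes there — equivalently, the action of $\cC$ on the \emph{left} module $\cL = {}_p\cC$ — while $d_n$ is the face applying the module structure on $\cR$. You have assigned $d_0$ to the collapse at the $\cR$ end and $d_n$ to the collapse at the ${}_p\cC$ end, which is reversed. The two assignments give valid simplicial objects with realisations that differ only by a sign $(-1)^n$ per simplicial degree, so the isomorphism in the lemma is unaffected, but the paper's proof asserts a literal identification of simplicial objects; to match it you should relabel your faces as $d_i \mapsto d_{n-i}$.
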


\begin{proof}
Recall that a right module for a category considered as a multicategory is simply what we referred to as an algebra. The only trees that contribute to the left hand side are the linear ones. Moreover, the symmetric group actions in this case do not play any role and we do not need to take coinvariants at any step. By inspection of the constructions, it follows that the two sides of the equation are precisely the same.
\end{proof}

\subsection{Hollender-Vogt cofinality result}\label{sec-hv}

Consider a commutative square of differential graded functors:
\begin{equation}\label{eq:square_dg}
  \begin{tikzcd}
    \cA \ar[r, "f"] \ar[d, "p"] & \cB \ar[d,"q"]  \\
    \cC \ar[r, "g"] & \cD.
  \end{tikzcd}
\end{equation}
We say that such a diagram is a \emph{homotopy pushout square} if the natural comparison map
  \begin{equation}\label{eq:induced_nat_map} 
    B(f^* \cB, \cA, {}_p \cC ) \to g^* {}_{q} \cD
  \end{equation}
  is a quasi-isomorphism of $\cB-\cC$-bimodules.

\begin{lem}[Proposition 5.4 of \cite{HollenderVogt1997}] \label{lem:hollender-vogt-criterion}
  Given a right module $X : \cB \to \Ch_R$, the comparison map of $\cC$ modules
 \begin{equation} \label{eq:comparison_square_hocolim}
  B( f^*X ,  \cA, {}_p \cC ) \to g^*B(X,  \cB, {}_q \cD).
\end{equation}
is a quasi-isomorphism for each homotopy pushout square as in Equation \eqref{eq:square_dg}.

\end{lem}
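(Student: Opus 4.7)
The plan is to construct a zigzag of quasi-isomorphisms between the two sides of Equation \eqref{eq:comparison_square_hocolim} by routing through a triple bar construction. Let $M_f$ denote the $(\cB, \cA)$-bimodule with value $\cB(b, f(a))$ at a pair $(b,a)$, with contravariant $\cB$-action by precomposition and covariant $\cA$-action via $f$ followed by postcomposition. Form the triple bar construction
\begin{equation*}
T := B(X, \cB, M_f, \cA, {}_p\cC),
\end{equation*}
defined as the realization of an underlying bi-simplicial chain complex with one direction tensoring over $\cB$ and the other over $\cA$; by the associativity of the two-sided bar construction, this is canonically isomorphic to both iterated bar constructions $B(X, \cB, B(M_f, \cA, {}_p\cC))$ and $B(B(X, \cB, M_f), \cA, {}_p\cC)$.

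First I will analyze the latter iterated form. For each fixed $a \in \cA$, the left $\cB$-module $M_f(-, a) = \cB(-, f(a))$ is representable, so the augmented simplicial object $B_\bullet(X, \cB, \cB(-, f(a))) \to X(f(a))$ carries an extra degeneracy (insertion of $1_{f(a)}$) and is therefore simplicially contractible. Naturally in $a$, this yields a chain homotopy equivalence of right $\cA$-modules $B(X, \cB, M_f) \xrightarrow{\sim} f^*X$, and applying $B(-, \cA, {}_p\cC)$ produces a quasi-isomorphism $T \xrightarrow{\sim} B(f^*X, \cA, {}_p\cC)$, identifying $T$ with the left side of \eqref{eq:comparison_square_hocolim}. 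Second, by the hypothesis that the square \eqref{eq:square_dg} is a homotopy pushout, the map $B(M_f, \cA, {}_p\cC) \to g^*{}_q\cD$ is a quasi-isomorphism of $(\cB, \cC)$-bimodules, and applying $B(X, \cB, -)$ yields a quasi-isomorphism $T \to B(X, \cB, g^*{}_q\cD)$. Since $g$ only affects the $\cD$-variable, which is left free by the bar construction, there is an equality $B(X, \cB, g^*{}_q\cD) = g^*B(X, \cB, {}_q\cD)$, so the target is naturally identified with the right side of \eqref{eq:comparison_square_hocolim}. Composing these two zigzags produces the desired map, and an unwinding of the augmentations shows its composite agrees with the natural comparison map of \eqref{eq:comparison_square_hocolim}.

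The main technical obstacle will be the homotopy invariance of the two-sided bar construction in its third (equivalently, first) argument, used in the second step above: a quasi-isomorphism of left $\cB$-modules $M \to M'$ should induce a quasi-isomorphism $B(X, \cB, M) \to B(X, \cB, M')$. At each simplicial level one has the direct sum $\bigoplus X(b_0) \otimes \cB(b_0, b_1) \otimes \cdots \otimes \cB(b_{n-1}, b_n) \otimes M(b_n)$, and the induced map is a quasi-isomorphism provided the intervening tensor factors are sufficiently flat over $R$; the comparison of spectral sequences associated to the simplicial filtration of the realization then yields the claim. In the geometric applications of interest, the required flatness follows from the freeness hypothesis \ref{def:freeness_assumption} already invoked in Proposition \ref{prop-cat-op-comp}, and otherwise the statement is well-known for topologically enriched categories by the cited result \cite[Proposition 5.4]{HollenderVogt1997}.
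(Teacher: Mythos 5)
Your proof is correct and matches the paper's approach: both route through the triple bar construction $B(X,\cB,f^*\cB,\cA,{}_p\cC)$, identified with the two iterated forms, collapse the inner factor $B(X,\cB,f^*\cB)\simeq f^*X$ by acyclicity of the bar resolution (your extra-degeneracy argument), and collapse the other using the homotopy pushout hypothesis. Your closing remark about homotopy invariance of $B(X,\cB,-)$ correctly flags a subtlety the paper's one-line justification of its first arrow leaves implicit; the required degreewise flatness is indeed supplied in the geometric application by the freeness hypothesis of Definition \ref{def:freeness_assumption}.
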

\begin{proof}
  We consider the following sequence of maps
  \begin{align}
    g^*B(X , \cB, {}_q\cD) & = B(X , \cB, g^* {}_q \cD)\\
    &\leftarrow B(X,  \cB, B( f^* \cB ,  \cA, {}_p \cC) )\notag \\
    & \cong B(  f^*B(\cB,  \cB, X) , \cA, {}_p \cC) \notag\\
    & \simeq    B(f^*X , \cA, {}_p \cC ).\notag
  \end{align}
  The first arrow is a quasi-isomorphism by assumption, the second is an isomorphism of chain complexes, and the third is a homotopy equivalence by the acyclicity of the bar complex.
\end{proof}

Returning to the outline of our strategy provided in Section \ref{sec:overv-proof-theor}, find that we can complete the comparison between the operadic Kan extension and the homotopy colimit, in a setting slightly more general than Diagram \eqref{eq:acceleration_diagram-2}, where we replace the fibre $ \cM_{\star}$ by a potentially different category: 
\begin{cor} \label{cor:comparison_operadic_Kan_htpy_colim}
Given a functor $\cN \to \cM_{\star} $ with the property that the Diagram
\begin{equation}\label{eq:acceleration_diagram-3}
\begin{tikzcd}
\cN  \arrow[r] \ar[d] &P \cM  \arrow[d,"\pi"]\\ 
\star  \arrow[r]  &  P \cO  \end{tikzcd}
\end{equation} is a homotopy pushout square, the comparison map
  \begin{equation}
         \hocolim_{\cN} A \to   \mathbb{L}\pi_* A
  \end{equation}
is a  quasi-isomorphism. \qed
\end{cor}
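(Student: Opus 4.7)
The plan is to factor the comparison map through the intermediate object $\mathbb{L}P\pi_* PA(\star)$, using the Hollender-Vogt criterion for the first leg and the earlier PROP-theoretic comparison for the second leg.

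First, I would identify both sides, as well as the candidate intermediate object, with two-sided bar constructions. Applying Lemma \ref{lem:bar=leftkan} to the projection $\cN \to \star$ shows that $\hocolim_\cN A$ is computed by $B(A|_\cN, \cN, R)$, where $R$ denotes the trivial left $\cN$-module. Applying the same lemma to the functor $P\pi: P\cM \to P\cO$ identifies $\mathbb{L}P\pi_* PA$ with the right $P\cO$-module $B(PA, P\cM, {}_{P\pi} P\cO)$; restricting along the inclusion $g: \star \hookrightarrow P\cO$ of the unit object yields $\mathbb{L}P\pi_* PA(\star)$.

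Second, I would invoke Lemma \ref{lem:hollender-vogt-criterion} applied to the homotopy pushout square of Equation \eqref{eq:acceleration_diagram-3}, with the roles $\cA = \cN$, $\cB = P\cM$, $\cC = \star$, $\cD = P\cO$, and with input the right $P\cM$-module $X = PA$. This produces a quasi-isomorphism
\begin{equation}
B(A|_\cN, \cN, R) \to g^* B(PA, P\cM, {}_{P\pi} P\cO),
\end{equation}
which under the above identifications is precisely the natural map $\hocolim_\cN A \to \mathbb{L}P\pi_* PA(\star)$.

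Third, I would compose this with the chain homotopy equivalence $\mathbb{L}P\pi_* PA(\star) \simeq \mathbb{L}\pi_* A$ provided by Proposition \ref{prop-cat-op-comp}, obtaining the desired quasi-isomorphism $\hocolim_\cN A \to \mathbb{L}\pi_* A$. The nontrivial content is entirely absorbed into the two cited results: the homotopy-pushout hypothesis feeds directly into the Hollender-Vogt lemma, while the transition from the categorical (PROP) Kan extension to the operadic one rests on the dg-flatness and freeness inputs of Proposition \ref{prop-cat-op-comp}. No further obstacle arises at this stage; the corollary is a purely formal consequence of combining these comparisons, with the only implicit background assumption being the freeness hypothesis needed for Proposition \ref{prop-cat-op-comp}, which is independent of the choice of $\cN$.
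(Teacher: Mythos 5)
Your proof is correct and fills in exactly the argument that the paper leaves implicit (the corollary is marked with $\qed$ because it is meant to follow directly from Lemma~\ref{lem:bar=leftkan}, Lemma~\ref{lem:hollender-vogt-criterion}, and Proposition~\ref{prop-cat-op-comp}). Your instantiation of the Hollender--Vogt criterion with $\cA=\cN$, $\cB=P\cM$, $\cC=\star$, $\cD=P\cO$ and $X=PA$, followed by composition with the PROP-vs-operad comparison, is precisely the intended route, and you are right to flag that the freeness hypothesis — needed for Proposition~\ref{prop-cat-op-comp} — is a standing assumption on the abstract setup that does not depend on $\cN$.
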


\subsection{Proof of Theorem \ref{thm:comparison}}
\label{sec:proof-prop-refpr}

We need one final ingredient for the proof of the remaining result stated in the introduction: the techniques of this section in general produce quasi-isomorphisms of complexes obtained from various bar constructions. 
To obtain homotopy equivalences, we will thus need the following result:

\begin{prop}\label{prop-app} Let $\Bbbk$ be a field and $\Lambda_{\geq 0}$ be the Novikov ring over $\Bbbk$.
Let $C$ be a ($\bZ$-graded or $\bZ/2$-graded) chain complex over $\Lambda_{\geq 0}$. If the underlying graded module of $C$ is degree-wise free, then its acyclicity implies its contractibility.
\end{prop}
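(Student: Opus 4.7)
The plan is to reduce contractibility to showing that every cycle module is projective, and then to obtain projectivity by combining the finite global dimension of $\Lambda_{\geq 0}$ over a field with a standard syzygy argument. I would work throughout in the $\bZ$-graded case; the $\bZ/2$-graded case follows by unrolling the complex into a $2$-periodic $\bZ$-graded complex, which is still acyclic and degree-wise free.

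First, I would set $Z^n := \ker(d^n) = \im(d^{n-1})$ (the second equality by acyclicity), yielding short exact sequences
\begin{equation*}
0 \to Z^n \to C^n \to Z^{n+1} \to 0.
\end{equation*}
Whenever each $Z^{n+1}$ is projective, each such sequence splits, giving $C^n = Z^n \oplus W^n$ with $d^n$ restricting to an isomorphism $W^n \xrightarrow{\sim} Z^{n+1}$. Defining $h\colon C^{n+1} \to C^n$ to be the inverse of this isomorphism on $Z^{n+1}$ and zero on $W^{n+1}$ gives a contracting homotopy, as a direct check of $dh+hd=\id$ confirms. Thus it suffices to prove each $Z^n$ is projective over $\Lambda_{\geq 0}$.

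Second, the key input is that $\Lambda_{\geq 0}$ has finite global dimension when $\Bbbk$ is a field. Indeed, $\Lambda_{\geq 0}$ is a valuation domain whose value group is $\bR$: divisibility of nonzero elements is governed by their valuations, which are totally ordered. By Osofsky's classification of global dimensions of valuation rings, the countable cofinality of the positive cone of $\bR$ at $0$ forces the global dimension of $\Lambda_{\geq 0}$ to be at most $2$; in particular it is finite. This is precisely the abstract property mentioned in the remark following Theorem \ref{thm:comparison}, and its failure for the integral Novikov ring explains why the hypothesis that $\Bbbk$ is a field is required.

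Third, with $d$ equal to the (finite) global dimension of $\Lambda_{\geq 0}$, I would apply the syzygy characterization of projective dimension. For any integer $m$, acyclicity yields an exact sequence
\begin{equation*}
0 \to Z^m \to C^m \to C^{m+1} \to \cdots \to C^{m+d-1} \to Z^{m+d} \to 0,
\end{equation*}
exhibiting $Z^m$ as the $d$-th syzygy of $Z^{m+d}$ in a resolution by free (hence projective) modules. Since every $\Lambda_{\geq 0}$-module has projective dimension at most $d$, iterated dimension shifting gives $\operatorname{Ext}^1(Z^m, N) \cong \operatorname{Ext}^{d+1}(Z^{m+d}, N) = 0$ for every $N$, so $Z^m$ is projective. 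As $m$ is arbitrary, all cycles are projective and the first step completes the argument. The main obstacle is the global dimension bound for $\Lambda_{\geq 0}$: Osofsky's theorem is classical but depends delicately on the order structure of the value group (here $\bR$, rather than the discrete group of a DVR), so applying it to the Novikov ring requires verifying the cofinality hypothesis from the commutative-algebra side; once that is granted, the rest of the argument is formal homological algebra.
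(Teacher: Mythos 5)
Your proof is correct, but it takes a genuinely different route from the paper's. The paper argues model-categorically: it equips chain complexes over $\Lambda_{\geq 0}$ with the projective model structure, invokes the result of Avramov--Foxby that over a ring of finite global dimension every degreewise projective (even unbounded) complex is DG-projective, i.e.\ cofibrant, obtains the finite global dimension of $\Lambda_{\geq 0}$ from a Stacks Project lemma, and then concludes contractibility from the Whitehead theorem for model categories, with the $\bZ/2$-graded case delegated to a citation of Keller. You instead reprove the needed special case by hand: finite global dimension --- which you get from Osofsky's theorem that a valuation domain all of whose ideals are countably generated has global dimension at most $2$, the countable generation coming from countable coinitiality of cuts in $\bR$ --- forces each cycle module $Z^n$ to be projective by dimension shifting along the exact complex of free modules, the sequences $0 \to Z^n \to C^n \to Z^{n+1} \to 0$ then split, and the splittings assemble into an explicit contracting homotopy. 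This is more elementary and self-contained (no model-category machinery), at the price of redoing in this case what the Avramov--Foxby proposition supplies in general; both arguments hinge on the same key input, namely that $\Lambda_{\geq 0}$ has finite global dimension when $\Bbbk$ is a field, which is exactly where the hypothesis on $\Bbbk$ enters. One small caveat: contractibility of the unrolled $\bZ$-graded complex does not literally descend to the $\bZ/2$-graded one, since the homotopy you construct need not be $2$-periodic; but this is cosmetic, because your actual mechanism --- projectivity of the two cycle modules, proved via the (arbitrarily long, by periodicity) exact sequences of free modules, followed by splitting --- applies verbatim to the $\bZ/2$-graded complex itself.
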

\begin{proof} We consider the projective model structure on $\Ch_{\Lambda_{\geq 0}}$ as in \cite{hovey2007model}. By Remark 2.3.7 of the same reference the cofibrant objects correspond to the DG-projective
complexes of \cite{avramov1991homological}.

It is not difficult to prove that the global dimension of $\Lambda_{\geq 0}$ is finite using \cite[065T]{stacks-project} under the assumption that $\Bbbk$ is a field. Proposition 3.4 of  \cite{avramov1991homological} shows that degree-wise free chain complexes are cofibrant! Lemma 2.3.8 of  \cite{hovey2007model} finishes the proof. For the last step one can also use the general Whitehead theorem for model categories (Theorem 7.5.10 of \cite{hirschhorn2009model}) as all chain complexes are fibrant in the projective model structure.
The $\mathbb{Z}/2$-graded case is a consequence of  \cite[Proposition 5.9]{keller2010}).
\end{proof}

We now assemble the results we have established in the proof of our comparison statement:
\begin{proof}[Proof of Theorem \ref{thm:comparison}]
  We begin by checking that the conditions of  Definition  \ref{def:freeness_assumption} hold in the geometric context. Identity assumption immediately follows from the fact that the objects of $\cF$ form partially ordered sets in the following sense; if there are non-zero morphisms from $x$ to $y$ and vice versa, then $x=y$. The second Freeness assumption follows from the fact that the symmetric group on $k$-letters acts freely on $f \Mbar^{\bR}_{0,k}$, and this implies the first Freeness assumption given the forgetful map from $\cF$ to $f \Mbar^{\bR}_{0}$ (any fixed points in the multimorphism spaces must map to a fixed point in the operad, but none exist).

  We conclude the existence of a canonical quasi-isomorphism
  \begin{equation}
       \mathbb{L}P\pi_*P\Floer(\star)\to \mathbb{L}\pi_*\Floer(\star) ,
  \end{equation}
 by Corollary \ref{cor:borel-iso-ordinary-quotient}. 

 Next, we establish that the fact that the diagram
  \begin{equation}\label{eq:acceleration_diagram}
\begin{tikzcd}
\mathbb{N}_{\Lambda_{\geq 0}}\arrow[r] \arrow[d]& PC_* \cF_K  \arrow[d,"P\pi"]\\ 
\star_{\Lambda_{\geq 0}} \arrow[r] & PC_*f\Mbar^{\bR}_0 \end{tikzcd}
\end{equation}
associated to a cofinal sequence of Hamiltonians is a homotopy pushout square, in order to apply Corollary \ref{cor:comparison_operadic_Kan_htpy_colim}.   Here,  $\mathbb{N}_{\Lambda_{\geq 0}}$ is the category with objects the natural numbers $0,1,2,\ldots$ and morphisms \begin{equation}Hom_{\mathbb{N}_{\Lambda_{\geq 0}}}(i,j)=\begin{cases}  
 \Lambda_{\geq 0} & i\leq j\\ 0  & i>j,
 \end{cases}\end{equation}
and the cofinal sequence of Hamiltonians determines a functor to $ C_* \cF_K$.

Explicitly, we need to show that for each sequence $((H_1,J_1), \ldots, (H_k,J_k))$ of objects of $\cF_K$ the natural map
\begin{equation}
B\left(C_*\cF_K((H_1,J_1), \ldots, (H_k,J_k); \_) , \mathbb{N}^{op}_{\Lambda_{\geq 0}}, \Lambda_{\geq 0}  \right) \to C_{*}f\Mbar^{\bR}_{0,k+1}
\end{equation}
is a quasi-isomorphism. 

If all the Hamiltonians in the cofinal sequence are strictly larger than $(H_1,\ldots H_k)$ the last statement is an immediate consequence of Proposition \ref{lmFrgtHtpyEq}. Therefore, for each fixed sequence $(H_1, \ldots, H_k)$ the statement holds if we start the cofinal sequence from a large enough natural number. But the homotopy type depends only on the tail of the sequence, so the statement holds for  the original cofinal sequence. 

It is a well-known result that the telescope of a diagram $\mathbb{N}_{\Lambda_{\geq 0}}\to C_* \cF_K$ admits a canonical quasi-isomorphism to its homotopy colimit. The latter is of course nothing but the left Kan extension of $\mathbb{N}_{\Lambda_{\geq 0}}\to C_* \cF_K$ over $\star_{\Lambda_{\geq 0}}$

Combining these results with the fact that completion preserves quasi-isomorphisms of chain complexes with torsion free underlying modules (see \cite[Corollary 2.3.6 (3)]{Varolgunes2018} finishes the proof of the first statement of Theorem \ref{thm:comparison}. The homotopy equivalence statement is a direct consequence of Proposition \ref{prop-app} noting that homotopy equivalences are automatically preserved by completion. The homotopy commutativity of the diagram is similar to the proof of Theorem \ref{thm:linear_comparison_from_CF}.

\end{proof}

\appendix

\section{Trees and Riemann surfaces}
\label{sec:trees-riem-surf}
\subsection{Conventions about trees}

\begin{figure}[h]
  \centering
  \begin{tikzpicture}
    \begin{scope}
    \node[circle, fill=black, inner sep=0, minimum size = 6] (v1) at (2.5,-1) {};
    \foreach \i in {1,...,4}
    {\draw (\i,0) -- (v1);
      \node[label=above:{$e_{\i}$}] (e\i) at (\i,0) {};};

    \end{scope}
      \begin{scope}[shift={(5,0)}]
       \node (e...) at (0,0) {$\cdots$};

       \begin{scope}[shift={(0,-1)}]
         \node[circle, fill=black, inner sep=0, minimum size = 6, label=right:{$v_{root}$}] (v) at (0,-2.5) {};
         \draw (v) --  +(-90:1);
         \node[label=below:{$e_{0}$}] (e0) at (0,-3.5) {};
         \foreach \i in {110, 100, ..., 70} \draw[dashed] (v)-- +(\i:0.5);
        \end{scope}
       \end{scope}

    \begin{scope}[shift={(-1,0)}]
    \node[label=above:{$e_{n-1}$}] (en-1) at (7,0) {};
    \node[label=above:{$e_{n}$}] (en) at (8,0) {};
    \node[circle, fill=black, inner sep=0, minimum size = 6] (v'') at (7.5,-2) {};
    \draw (en)--(v'') -- (en-1);

    \end{scope}

\draw[dashed] (v1)--(v)--(v'');    
    
  \end{tikzpicture}
  \caption{A stable tree}
  \label{fig:stable_tree}
\end{figure}
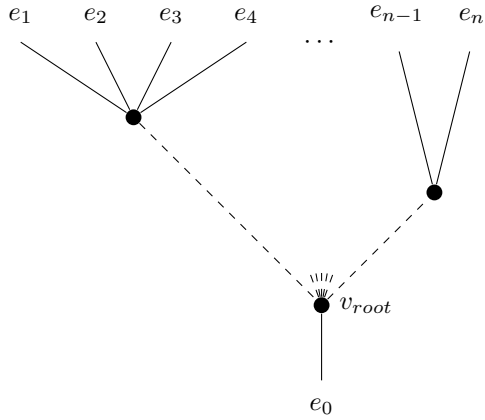

In what follows we will allow trees $T$ to have edges that have one endpoint. We call such edges \emph{external edges} and denote them by $E_{ext}(T)$ and call the edges with two endpoints \emph{internal edges}, which we denote by $E_{int}(T)$. We write $E(v)$ for the set of edges adjacent to a vertex $v$, and denote the \emph{valency} of $v$ by $\upsilon(v):=|E(v)|$. We assume that the valency of each vertex is at least two. Note that our trees (without any other qualification or adjective) are not equipped with a planar structure and we do not allow edges with zero endpoints.

\begin{defin}
  For a natural number $n$, a \emph{pre-stable tree $T$ with $n$ inputs and $1$ output} is a tree equipped with a bijection
\begin{equation}
\{0,\dots,n\}\to E_{ext}(T).
\end{equation}

A pre-stable tree $T$ is called \emph{stable} if $\upsilon(v)\geq 3$ for all $v\in V(T)$.
\end{defin}
We now describe the notation we use, which is illustrated in Figures \ref{fig:stable_tree} and \ref{fig:edges_vertices_labels}. Refer to the external edge labelled by $0$ as the \emph{output}.  The other external edges are the \emph{inputs} of $T$. 
For each vertex $v$, there is a distinguished outgoing edge $e_{out}(v)\in E(v)$ lying on the minimal arc connecting $v$ to the output edge. 
The edges adjacent to $v$ which are not outgoing are called incoming edges:
\begin{equation}
E_{in}(v) \equiv E(v)\setminus\{e_{out}(v)\}.
\end{equation}
Each internal edge is the distinguished outgoing edge of exactly one vertex denoted by $v^-(e)$. The other vertex of $e$ is denoted by $v^+(e)$.

\begin{figure}[h]
  \centering
  \begin{tikzpicture}
                 \node[circle, fill=black, inner sep=0, minimum size = 6, label=right:{$v^-_e$}] (v+e) at (-2,-2) {};
              \foreach \i in {110,100,80,70} \draw[dashed] (v+e)-- +(\i:0.5);

    \node[circle, fill=black, inner sep=0, minimum size = 6, label=right:{$v^+_e$}] (v-e) at (-2,-3) {};
    \draw (v+e)--(v-e);
    \node[label=right:{$e$}] (e) at (-2,-2.5) {};
    \foreach \i in {110, 100, 80,70} \draw[dashed] (v-e)-- +(\i:0.5);
   \draw (v-e)-- +(-90:0.5);
    \draw[dashed] (v-e)-- +(-90:1);

    \begin{scope}[shift={(0,-.5)}]
     \node[label=above:{$E_{in}(v)$}] (Einv) at (2,-1.5) {};
    \node[circle, fill=black, inner sep=0, minimum size = 6, label=right:{$v$}] (v) at (2,-2.5) {};
    \node[label=right:{$e_{out}(v)$}] (eout) at (2,-3) {};
     \draw (v)-- +(-90:0.5);
     \draw[dashed] (v)-- +(-90:1);
     \foreach \i in {110, 100, 80,70}
     {\draw (v)-- +(\i:0.5);
       \draw[dashed] (v)-- +(\i:1);
     };
\end{scope}

  \end{tikzpicture}
  \caption{Conventions for labelling edges and vertices}
  \label{fig:edges_vertices_labels}
\end{figure}
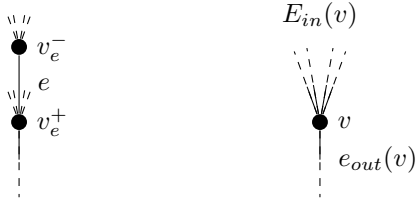

Below, for a pre-stable tree $T$, we abbreviate $v\in T$ to mean $v\in V(T)$.
\subsubsection{Pre-stable Riemann surfaces with cylindrical ends}
We now introduce our conventions for Riemann surfaces equipped with input and output marked points; as discussed in Remark \ref{sec:PROP-structure} in our work, we only consider the case of $1$ output marked point:

\begin{defin}
  A \emph{genus-$0$ punctured Riemann surface} is a Riemann surface $\Sigma$ which is the complement of finitely many points $P$ in a closed genus-$0$ Riemann surface $\overline{\Sigma}$. The elements of $P$ are referred to as the punctures of $\Sigma$.

  A \emph{choice of cylindrical ends on $\Sigma$} is a decomposition $P=P_{in}\coprod P_{out}$ into inputs and outputs and a choice for each $p\in P$ of a map
\begin{align}
  \epsilon_p^{-}:(-\infty,0]\times\bRZ & \to\Sigma  \qquad \textrm{if } p \in P_{in}\\
  \epsilon_p^{+}:[0,\infty)\times\bRZ& \to\Sigma \qquad \textrm{if } p \in P_{out}
\end{align}
which is a biholomorphism onto a punctured neighborhood of $p$ in $\overline{\Sigma}$. We moreover, make the assumption that the images of the cylindrical ends are pairwise disjoint and, in the case of one input and one output, we assume  that $\epsilon^{\pm}$ extend to biholomorphisms of $\bR\times\bRZ$ onto $\Sigma$.

A \emph{genus-$0$ Riemann surface with $n_{in}$ inputs and $n_{out}$ outputs,} is a genus-$0$ punctured Riemann surface, with $|P_{in}| = n_{in}$ and $|P_{out}| = n_{out}$, together with a choice of cylindrical ends.
\end{defin}
Changing coordinates from the half-cylinder to the disc, we see that a cylindrical end of $\Sigma$ at $p$ gives rise to a tangent vector in $T_p \Sigma$. A \emph{framed Riemann surface} is a Riemann surface equipped with the datum of a tangent vector at each marked point, up to positive real dilation. We thus have a forgetful map from the set of  genus-$0$ Riemann surfaces with cylindrical ends to framed Riemann surfaces.

Next, we discuss the notion of pre-stability, which can be formulated abstractly in terms of nodal Riemann surfaces, but which we prefer to describe prosaically in terms of trees labelled by Riemann surfaces:

\begin{defin}\label{def-pre-stable-rational-curve}
  A \emph{pre-stable rational curve $\Sigma$ with $n$ inputs and one output} consists of a pre-stable tree $T$ with $n$ inputs, and, for each vertex $v\in T$, a genus-$0$ Riemann surface $\Sigma_v$ with $|E_{in}(v)|$ inputs and $1$ output, together with a bijection between the edges adjacent to each vertex $v$ and the punctures of the corresponding Riemann surfaces:
  \begin{align}
    E(v)\to & P_{\Sigma_v} \\
    e\mapsto & p_e.
  \end{align}
We require this bijection to map the outgoing edge maps to the output, and denote the inverse bijection by $p\mapsto e_p$.

A \emph{stable rational curve with $n$ inputs and 1 output} is a pre-stable rational curve whose underlying tree $T$ is stable.
\end{defin}

We refer to the data $\left(T,\{\Sigma_v\}_{v\in T}\right)$ as \emph{the underlying pre-stable curve}, and the surfaces $\Sigma_v$ as the \emph{components} of $\Sigma$. For an internal edge $e\in E_{int}(T)$ we denote by $p^{\pm}_e$ the corresponding punctures in $\Sigma_{v^{\pm}(e)}$ respectively.

\begin{defin}\label{dfGluing0}
Consider a pre-stable rational curve $\Sigma$ with $n$ inputs and one output, with underlying tree $T$, and fix an internal edge $e$ in $T$.
  Given a parameter $r\in (0,1]$ we define the corresponding glued Riemann surface as the pre-stable curve $\Gamma_{r,e}(\Sigma)$ given by the following data:
  \begin{enumerate}
  \item The tree $T/e$ obtained from $T$ by removing $e$ and quotienting the vertex set by $v^{-}(e)\sim v^{+}(e)$. We denote by $v_{new}\in T'$ the vertex corresponding to the equivalence class of $v^{-}(e)\sim v^{+}(e)$.
    \item The surface $\Sigma_{v_{new}}$ which is defined by gluing
\begin{equation}
\left(\Sigma_{v_{+}}\setminus \epsilon^-_{p_e^+}((-\infty,\ln r)\times\bRZ)\right)\sqcup_\sim \left(\Sigma_{v_{-}}\setminus \epsilon^+_{p_e^-}((-\ln r,\infty)\times\bRZ)\right),
\end{equation}
along their boundaries by the relation
\begin{equation} \label{eq:gluing_along_boundary}
{\epsilon^-_{p_e^+}(\ln r,t)\sim \epsilon^+_{p_e^-}(-\ln r,t)}.
\end{equation}
$\Sigma_{v_{new}}$ has cylindrical ends which are induced from those on $\Sigma_{v_{in}}$ and $\Sigma_{v_{out}}$. 
\item The surfaces $\Sigma_v$ for $v \neq v_{new} \in V(T/e)$ are given by the corresponding component of $\Sigma$.
  \end{enumerate}
Let us also declare that $\Gamma_{0,e}(\Sigma)=\Sigma$, i.e. gluing with $0$ gluing parameter does not change the curve.
\end{defin}

\begin{defin}\label{def-gluing-general} Given $\vec{r}\in [0,1]^{E_{int}(T)}$, we denote by $\Gamma_{\vec{r}}(\Sigma)$ the gluing of $\Sigma$ with gluing parameter given by $\vec{r}$. That is, we pick an order on the edges of $T$ and we inductively glue each edge $e$ whose associated parameter $r_e$ is non-zero. The result does not depend on the order in which gluing is performed. For a real number $r\in [0,1]$ we write
\begin{equation}
\Gamma_r(\Sigma):=\Gamma_{(r,\dots,r)}(\Sigma).
\end{equation}
That is, we glue all the edges with the same gluing parameter $r$.\end{defin}

A \emph{biholomorphism} $\psi=(\phi,\{\psi_v\})$ of pre-stable rational curves
$(T_i,\{\Sigma_v\}_{v\in T_i}),i=1,2$ consists of an isomorphism $\phi:T_1\to T_2$ of pre-stable trees with $n$ inputs and $1$ output and for each $v\in V(T_1)$ a biholomorphism $\psi_v:\Sigma_v\to\Sigma_{\phi(v)}$ such that the map $P_{\Sigma_v}\to P_{\Sigma_{\phi(v)}}$ is the one given by $E(v)\to E(\phi(v))$. Two pre-stable rational curves with $n$ inputs and one output are said to be \emph{isomorphic} if there is a biholomorphism of the underlying pre-stable curves which intertwines the cylindrical ends at all the vertices.

Denote by $\cD_{0,{k+1}}$ the  set of isomorphism classes of pre-stable rational curves with $k$ inputs and one output. For a pre-stable tree $T$ with $k$ inputs denote by $\cD^T_{0,{k+1}}\subset \cD_{0,{k+1}}$ the subset consisting of curves whose underlying tree is $T$.

\begin{rem}Note that we consider $\cD_{0,{k+1}}$ merely as a set. We are not aware of natural Hausforff topology on $\cD_{0,{k+1}}$. On top of the difficulties caused by unstable components, even if we consider a sequence of stable curves converging (in a reasonable topology) to a stable curve, there will be an ambiguity in the choice of cylindrical ends on the limiting curve whenever breaking is involved.

For a \emph{stable} tree $T$ we do however consider the set $\cD^T_{0,{k+1}}$ of curves modeled on $T$ with its natural topology. \end{rem}

\subsection{Stabilization and the KSV moduli space} \label{sec:stab-ksv-moduli}
Denote by $\overline{\cM}_{0,k+1}$  the Deligne-Mumford moduli space of biholomorphism classes of stable genus $0$ curves with $k+1$ marked points labeled by $\{0,\dots,k\}$. We recommend the reader \cite{McDuffSalamon2012}[Appendix D] for basic definitions and properties. We call the $0^{th}$ marked point, i.e. the marked point labeled with $0$, the output marked point and the others input marked points. Note that $\overline{\cM}_{0,k+1}$ is canonically a smooth manifold (which would not be true for $g>0$) \cite{McDuffSalamon2012}[Theorem D.5.1]. It is a compactification of the space  $\cM_{0,k+1}$ of biholomorphim classes of smooth curves with $k+1$ marked points. Denote by $\cE_k$ the divisor $\overline{\cM}_{0,k+1}\setminus\cM_{0,k+1}$. It is easy to see that $\cE_k$ is a normal crossings divisor \cite{McDuffSalamon2012}[Proposition D.5.4]. For $k\geq 2$ the \emph{Kimura-Stasheff-Voronov moduli space $\underline{\cM}_{0,k+1}$} is defined as the manifold with corners obtained by doing an oriented real blow-up of $\overline{\cM}_{0,k+1}$ at $\cE_k$. Note that the interior of $\underline{\cM}_{0,k+1}$ is canonically identified with ${\cM}_{0,k+1}.$ We define the \textit{framed KSV moduli space $f\overline{\cM}_{0,k+1}^{\bR}$ }as the total space of the $(S^1)^{k+1}$-bundle over $\underline{\cM}_{0,k+1}$ given by choices of a tangent ray at each marked point. A tangent ray at a marked point $x\in \Sigma$ simply means an element of $T_x\Sigma\setminus \{0\}/\mathbb{R}_{>0}$. Note that the group $\bR/2\pi\bZ$ canonically acts on the set of tangent rays at a point of a Riemann surface.

We define the underlying topological spaces of a topological operad $f\Mbar^{\bR}_0$ by taking $f\Mbar^{\bR}_0(1):=\bRZ$ and $f\Mbar^{\bR}_0(k):=f\overline{\cM}_{0,k+1}^{\bR}$ for $k>1$.

Before we define the operations on $f\Mbar^{\bR}_0$ let us describe the points of framed KSV moduli spaces a bit more concretely. Points of $f\overline{\cM}_{0,k+1}^{\bR}$ can be described canonically as the set of equivalence classes of stable curves equipped with a choice of a tangent ray at each marked point and a pair of tangent rays  modulo simultaneous rotation at each node. That is, for any nodal point $p_1\sim p_2$ contained in irreducible components $\Sigma_1$ and $\Sigma_2$, we have a pair of tangent rays $v_i$ in $T_{p_i}\Sigma_i\setminus \{0\}/\mathbb{R}_{>0}$ modulo the equivalence relation
\begin{equation}\label{eq-tangent-ray-id}
(v_1,v_2)\sim \left(e^{i\theta}v_1,e^{-i\theta}v_2\right),\quad\theta\in\bRZ.
\end{equation}

Using this description it is straightforward to define the operations of the operad $f\Mbar^{\bR}_0$:
\begin{equation}f\Mbar^{\bR}_0(n)\times f\Mbar^{\bR}_0(k_1)\times \dots \times f\Mbar^{\bR}_0(k_n)\to f\Mbar^{\bR}_0(k_1+ \dots +k_n).\end{equation}

Namely, \begin{itemize} 
\item if $n=1$, we use the action of $f\Mbar^{\bR}_0(n)=\bRZ$ on the tangent ray of the output marked point of the element of $f\Mbar^{\bR}_0(k_1)$.
\item if $k_i=1$, we use the action of $f\Mbar^{\bR}_0(k_i)=\bRZ$ on the tangent ray of the $k_i^{th}$ input marked point of the element of $f\Mbar^{\bR}_0(n)$. \item if $k_i>1$, we identify the output marked point of $f\Mbar^{\bR}_0(k_i)$ with the $k_i^{th}$ input marked point of $f\Mbar^{\bR}_0(n)$ to obtain a stable curve and use the projection to the equivalence classes described in Equation \eqref{eq-tangent-ray-id}.\end{itemize}
The unit of $f\Mbar^{\bR}_0$ is $[0]\in \bRZ$. We omit the straightforward checking of the axioms. We call $f\Mbar^{\bR}_0$ \emph{the framed KSV operad}.

\begin{rem}
We could in fact define $f\Mbar^{\bR}_0$ as an operad over the symmetric monoidal category of manifolds with corners. We will use the fact each $f\Mbar^{\bR}_0(k)$ is by construction equipped with a manifold with corners structure below.
\end{rem}

Denoting as before by $\cD_{0,k+1}$ the set of pre-stable genus-$0$ Riemann surfaces with $k$ inputs and one output, there is a natural forgetful map
\begin{equation}
\fF:\cD_{0,k+1}\to f\overline{\cM}_{0,k+1}^{\bR},\end{equation}
which we describe now. 

Consider $\Sigma\in\cD_{0,k+1}$ with underlying tree $T$. To each bivalent vertex $v$ of $T$, we can associate a twist parameter in $\bRZ$ as follows. By assumption we have biholomorphisms \begin{align*}
  \epsilon^{-}:\bR\times\bRZ & \to\Sigma_v  \\
  \epsilon^{+}:\bR\times\bRZ& \to\Sigma_v .
\end{align*}Therefore, we obtain a biholomorphism \begin{align*} 
  (\epsilon^{-})^{-1}\circ \epsilon^{+}:\bR\times\bRZ& \to\bR\times\bRZ,
\end{align*} which is a composition of a translation and a rotation by an angle in $\bRZ$. This angle is our twist parameter.

If $k=1$, all the vertices of $T$ have to be bivalent, and we define $\fF(\Sigma)\in\bRZ$ to be the sum of the twist parameters of all the vertices in $T.$ Before we handle the $k>1$ case, we need a couple of definitions.

If we have a tree $T$ with a bivalent vertex $v$ and at least one other vertex, we define the flattening of $T$ at $v$ to be the tree obtained by removing $v$ from the vertex set and identifying the two edges adjacent to $v$ to define the new edge set.

Given a Riemann surface $\Pi$ with $k\geq 2$ inputs and $1$ output, we construct canonically tangent rays at each marked point of $\bar{\Pi}$ as follows. Let $\epsilon$ be the cylindrical end at a puncture. Let us call $s$ and $t\in\bRZ $ the coordinates in the domain of $\epsilon.$ Consider the tangent rays containing $\epsilon_*\partial_s$ obtained along $\epsilon(t=0)$. The limit of these at the marked point gives the desired tangent ray. Let us call $\bar{\Pi}$ with these tangent rays at its marked points the fine compactification of $\Pi$.

Now we go back to defining $\fF(\Sigma)$ for $k>1$. There is at least one non-bivalent vertex of $T.$ Flattening all the bivalent vertices we obtain a stable tree $T'$ whose vertices are in one-to-one correspondence with the non-bivalent vertices of $T$. The correspondence preserves valencies. To each vertex $v$ of $T'$ we can canonically associate the same Riemann surface $\Sigma_v'$ with $\upsilon(v)-1$ inputs and $1$ output. The stable curve $\Sigma'$ underlying $\fF(\Sigma)$ is obtained by taking the fine compactification of $\Sigma_v'$ at each $v\in V(T')$ and identifying the added points to form double points as prescribed by $T'$. At this point $\Sigma'$ is also equipped with the data of a tangent ray at each marked point and double point of its irreducible components.

Moreover, the edges of $T'$ correspond to maximal subtrees of $T$ with only bivalent vertices. These maximal subtrees include edges all of whose endpoint vertices have valency more than $2$. We can add up the twist parameters in each such maximal subtree to obtain an element of $\bRZ$ associated to the edges of $T'$. If there is no vertex of the associated maximal subtree we associate $[0]$ to the edge. Let us call this the total twist of an edge of $T'$.

The existing tangent rays of $\Sigma'$ and the total twists at the edges of $T'$ are then used in the following way to upgrade the stable curve $\Sigma'$ to $\fF(\Sigma)\in f\overline{\cM}_{0,k+1}^{\bR}$. If $q$ is the unique output of $\Sigma'$ with tangent ray $v_q$ and $\tau$ the total twist of its outgoing edge, we let $v_{q}'=e^{i\tau_i}v_{q}$ to be the final tangent ray at $q$. If $q$ is an input, we let $v_{q}':=e^{-i\tau}v_{q}$, where $\tau$ is now the total twist of its incoming edge. If $q$  is a double point $p^+\sim p^-$ of $\Sigma'$ with tangent rays $v_+,v_-$, we take the equivalence class 
\begin{equation}
  (e^{i\tau_i}v_+,v_-)\sim  (v_+,e^{i\tau_i}v_-),
\end{equation}where $\tau$ is the total twist of the associated internal edge.

\begin{defin}
The \emph{stabilization} of a pre-stable rational curve $\Sigma$ with $k$ inputs and $1$ output is the element $\fF(\Sigma)$ of the framed KSV moduli space. 
\end{defin}

\begin{rem}\label{rem-non-haus}
The surjective map $\fF:\cD_{0,k+1}\to f\overline{\cM}_{0,k+1}^{\bR}$ has contractible fibres, as shown in Lemma \ref{lmForSmHoEq}. In order to state a more global result, we could define a cubical set, which is roughly the singular cubical chains on $\cD_{0,k+1}$ (with the non-Hausdorff topology that we omitted defining) equipped with a map of cubical sets to the singular cubical chains of $f\overline{\cM}_{0,k+1}^{\bR}$. This map will be a homotopy equivalence. 

We give a toy example to guide the reader. Let $X$ be an arbitrary topological space and consider what one might call a non-Hausdorff boundary blow-up of $(0,1]$: the quotient space of $(0,1]\times X$ by the equivalence relation \begin{equation}(r,x)\sim (r',x')\text{ if } r=r'<1.\end{equation} The point is that the blow-down map is always a homotopy equivalence. This can be proved by first showing that the blow-up space deformation retracts to $(0,1)$. There is not a homotopy equivalence which preserves boundaries.
\end{rem}

\section{Categorical and algebraic background}
\label{sec:categ-algebr-backgr}

\subsection{Symmetric cubical sets}
\label{sec:cubical-sets}

As discussed in the introduction, we shall use the category of symmetric cubical sets as a model for the homotopy theory of spaces. Underlying this category is the category of symmetric cubes, which can be considered as a subcategory of the category of topological spaces, with objects the cubes $[0,1]^n$, and morphisms given by those maps
\begin{equation}
  [0,1]^n \to [0,1]^m,
  \end{equation}
which can be expressed as a composition of (i) projections, (ii) permutation of coordinates, and (iii) inclusion of faces corresponding to setting some coordinates equal to $0$, and others to $1$. This is not the standard definition, because it is possible to express all generators and relations completely combinatorially, but the geometrically minded reader will hopefully find this definition more amenable to their intuition.

The category of symmetric cubical sets is the category of contravariant functors from symmetric cubes to sets. While we shall use this perspective in explaining our constructions, for proofs and formal definitions, we shall often use the purely combinatorial perspective. A reference for this approach is \cite{Grandis2009}:
\begin{defin}
A \emph{symmetric cubical set} $K_*$ is a sequence of sets $\{K_n\}_{n\geq 0}$   together with a collection of \emph{face maps}
\begin{equation}
d^{\pm}_{n,i}:K_n\to K_{n-1}, \quad n\geq 1,\quad 1\leq i\leq n,\end{equation}
\emph{degeneracy maps}
\begin{equation}
  s_{n-1,i}:K_{n-1}\to K_{n}, \quad n\geq 1,\quad 1\leq i\leq n,
\end{equation}
and  \emph{transposition maps}
\begin{equation}
p_{n,i}:K_n\to K_n\quad n\geq 2,\quad 1\leq i\leq n-1.\end{equation}
These are required to satisfy the following relations for $\mu,\nu\in\{+,-\}:$
\begin{align}
d^\mu_{n-1,i}\circ d^\nu_{n,j}&=d^\nu_{n-1,j-1}\circ d^\mu_{n,i},& i<j\label{eqCubSet1},\\
s_{n,i}\circ s_{n-1,j}&=s_{n,j+1}\circ s_{n-1,i},& i\leq j\label{eqCubSet2},\\
p_{n,i}^2&=\id,\quad (p_{n,i}\circ p_{n,i+1})^3=\id,\label{eqCubSet4}\\
 p_{n,i}\circ p_{n,j}&=p_{n,j}\circ p_{n,i},& i+1<j,\label{eqCubSet5}\\
d^{\mu}_{n,i}\circ s_{n-1,j}&=\begin{cases} 
						s_{n-2,j-1}\circ d^{\mu}_{n,i},&\quad i<j,\\
						s_{n-2,j}\circ d^\mu_{n,i-1},&\quad i>j,\\
						\id,&\quad i=j,\label{eqCubSet3}
					\end{cases}\\
d^\mu_{n,j}\circ p_{n,i}&=\begin{cases}
						p_{n-1,i-1}\circ d^\mu_{n,j},&\quad j<i,\\
						d^{\mu}_{n,i+1},&\quad j=i,\\
						d^{\mu}_{n,i},&\quad j=i+1,\\
						p_{n-1,i}\circ d^\mu_{n,j},&\quad j>i+1,
					\end{cases}\\
p_{n,i}\circ s_{n-1,j}&=\begin{cases}
					s_{n-1,j}\circ p_{n,i-1},&\quad j<i,\\
					s_{n-1,i+1},&\quad j=1,\\
					s_{n-1,i},&\quad j=i+1,\\
					s_{n-1,j}\circ p_{n-1,i},&\quad j>i+1. \label{eqCubSet6}
				\end{cases}
\end{align}
A morphism of cubical sets $f:K^1_{\bullet}\to K^2_{\bullet}$ (of degree $0$) is a map of graded sets which commutes with all face, degeneracy and transposition maps. We denote the category of cubical sets with morphisms of degree $0$ by $\sqset$.  
\end{defin}

\begin{example}\label{example-symmetric-singular}
Let $X$ be a topological space and define $\Box_n(X)$ to be the set of continuous maps from the standard $n$-cube $[0,1]^n$ into $X$. For $\sigma\in \Box_n(X)$ define the face map $d^\pm_{n,i}(\sigma)$ to be the restriction $\sigma|_{x_i=(1/2\pm1/2)}\in \Box_{n-1}(X)$,  the degeneracy map  $s_i(\sigma)$ to be the composition $\sigma\circ\pi_i$ where $\pi_i:[0,1]^{n+1}\to[0,1]^n$ is the projection forgetting the $i$th component, and the transposition map $p_{n,i}(\sigma)$ to be the composition  $\sigma\circ\tau_{n,i}$ for $\tau_{n,i}:[0,1]^n\to [0,1]^n$ the map which transposes the $i$th and $(i+1)$th coordinates. Then $\Box_{\bullet}(X)$ is a symmetric cubical set. We refer to it as the \emph{set of singular (symmetric) cubes in $X$.} The reader may verify that a continuous map $f:X\to Y$ induces a morphism $f_{\bullet}:\Box_{\bullet}(X)\to \Box_{\bullet}(Y)$ of degree $0$ by mapping $\sigma\mapsto f\circ\sigma$. Thus the assignment $X\mapsto \Box_{\bullet}(X)$ is a functor $\Top\to\sqset$. 
\end{example}
We can construct a functor from symmetric cubical sets to spaces by taking a colimit weighted by the tautological standard cube functor, which assigns to a natural number $n$ the associated cube $[0,1]^n$. From the combinatorial perspective, this is given by the following expression:
\begin{defin}
Let $K_{\bullet}$ be a symmetric cubical set. The \emph{geometric realization} $|K_{\bullet}|$ is the topological space obtained by the quotient
\begin{equation}\label{eqGeoRea}
\coprod_{n}K_n\times[0,1]^n/\sim,\end{equation} 
where the relation $\sim$ is generated by 
\begin{equation}\label{eqGeoRea2}
(d^{\pm}_{n,i}(\sigma),t)\sim (\sigma,\iota^{\pm}_{n,i}(t)),\quad (s_{n,i}(\sigma),t)\sim (\sigma,\pi_{n+1,i}(t)),\quad (p_{n,i}(\sigma),t)\sim(\sigma,\tau_{n,i}(t)).\end{equation}
Here $\iota^{\pm}_{n,i}(t):[0,1]^n\to [0,1]^{n+1}$
is the standard embedding as the face $x_i=1/2\pm1/2$,  $\pi_{n+1,i}:[0,1]^{n+1}\to[0,1]^n$ is the projection forgetting the $i$th coordinate, and $\tau_{n,i}:[0,1]^n\to [0,1]^n$ is the map which transposes the $i$th and $(i+1)$th coordinates. 
\end{defin}

Note that the geometric realization is automatically a Hausdorff space. The geometric realization functor $|-|: \sqset\to\Top$ is left adjoint to the singular cubes functor $\Box_{\bullet}:\Top\to\sqset$. 
\subsubsection{The monoidal structure} \label{sec:monoidal-structure}
The category $\sqset$ carries a symmetric monoidal structure $\otimes$ which arises as a left Kan extension with respect to the functor that maps a pair of cubes to their product. Combinatorially, this may be expressed as follows: first observe that by Equations \eqref{eqCubSet4} and \eqref{eqCubSet5} the maps $p_{n,i}$ generate an action of the symmetric group $S_n$ on $K_n$ for any symmetric cubical set $K_{\bullet}$. Using this we define
\begin{equation}\label{dfSymMonStr}
(K^1\otimes K^2)_n:=\coprod_{n_1,n_2\geq 0,n_1+n_2=n}S_n\times_{S_{n_1}\times S_{n_2}}(K^1_{n_1}\times  K^2_{n_2}/\sim).\end{equation}
The equivalence relation $\sim$ is given by
\begin{equation}
(s_{n_1-1,n_1}(\sigma^1),\sigma^2)\sim (\sigma^1,s_{n_2-1,1}(\sigma^2)). \end{equation}
For $u\in S_n,x\in K^1_{n_1},y\in K^2_{n_2}$ we denote by $[u\cdot(x\otimes y)]$ the corresponding element in $K^1\otimes K^2$.

The transposition maps on $K^1\otimes K^2$ are defined by $p_i$ acting as the $i$th transposition on the $S_n$ factor. To define the face and degeneracy maps it suffices to describe the maps $d^{\pm}_{n,1}$ and $s_{n,1}$ as all the others are determined by these together with the transposition maps. For this we identify $S_n$ with the set of bijections $\underline{n}\to\underline{n}$. Define a map $\eta_n:S_n\to S_{n+1}$ by 
\begin{equation}
\eta_n(u)= \id\times u,\end{equation} 
and $\zeta_n:S_n\to S_{n-1}$ by 
\begin{equation}
\zeta_n(u)(j)=\begin{cases} 
			u(j)-1,&\quad j<u^{-1}(1)\\
			u(j+1)-1,&\quad j\geq u^{-1}(1).
		\end{cases}\end{equation}
Finally, writing $i=u^{-1}(1)$, define
\begin{align}
d_{n,1}^{\mu}\left([u\cdot(x\otimes y)]\right)&=\begin{cases}
							[\zeta_n(u)\cdot (d^\mu_{n_1,i}(x)\otimes y)],\quad i\leq n_1\\
							[\zeta_n(u)\cdot(x\otimes d^\mu_{n_2,i-n_1}(y)],\quad i>n_1.
						\end{cases}\\
s_{n,1}\left([u\cdot(x\otimes y)]\right)&=[\eta_n(u)\cdot(s_{n_1,1}(x)\otimes y].
\end{align}
The functorial perspective is useful in establishing the following result, which is analogous to the characterisation of maps with domain a tensor product of vector spaces in terms of bilinear maps:
\begin{lem}
  \label{lem:universal_property_cubical}
  There is a natural isomorphism between the set of maps of cubical sets with domain $K^1\otimes K^2 $ and target $K$, and the data of maps $K^1_{n_1} \times K^2_{n_2}$ to $K_{n_1+n_2}$ for all pairs $n_1$ and $n_2$ of integers which
  \begin{enumerate}
  \item are equivariant with respect to the $S_{n_1} \times S_{n_2}$,
  \item intertwine the face maps $d^{\pm}_{i,1} $ with $d^{\pm}_{i}$ (for $1 \leq i \leq n_1$), and $d^{\pm}_{j,2} $ with $d^{\pm}_{n_1+j}$ (for $1 \leq j \leq n_2$), and
  \item  intertwine the degeneracy maps $s_{i,1} $ with $s_{i}$ (for $1 \leq i \leq n_1+1$), and $s_{j,2} $ with $s_{n_1+j}$ (for $1 \leq j \leq n_1+1$).
  \end{enumerate} \qed
\end{lem}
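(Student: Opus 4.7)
The plan is to construct the bijection explicitly in both directions and verify the compatibility conditions by unwinding the definition of the tensor product. In one direction, given a map of cubical sets $f \co K^1 \otimes K^2 \to K$, define
\begin{equation*}
\phi_{n_1,n_2}(x,y) := f\bigl([e \cdot (x \otimes y)]\bigr),
\end{equation*}
where $e \in S_{n_1+n_2}$ is the identity. The $S_{n_1}\times S_{n_2}$-equivariance is immediate from the definition of $[u\cdot(x\otimes y)]$ as an $S_{n_1}\times S_{n_2}$-orbit, combined with the fact that $f$ commutes with transposition maps (which define the $S_n$-action on $K_n$). Compatibility of $\phi$ with the face maps $d^{\pm}_{i,1}$ and $d^{\pm}_{j,2}$ follows by applying the formula for $d^{\pm}_{n,1}$ on $(K^1 \otimes K^2)_n$ with the identity permutation, composed with the appropriate transpositions to move the coordinate being restricted to the first position; since $f$ commutes with both face and transposition maps, this is just a chase through the definitions. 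The compatibility with degeneracies $s_{i,1}$ and $s_{j,2}$ is analogous, and also encodes the equivalence relation $\sim$ built into the definition of the tensor product.

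In the other direction, given a collection $\{\phi_{n_1,n_2}\}$ satisfying (1)--(3), define
\begin{equation*}
f\bigl([u \cdot (x \otimes y)]\bigr) := u \cdot \phi_{n_1,n_2}(x,y),
\end{equation*}
using the $S_n$-action on $K_n$ generated by the transposition maps. Well-definedness requires checking two things: first, that the right-hand side is invariant under the $S_{n_1}\times S_{n_2}$-action that is used to take the quotient in the definition of $(K^1\otimes K^2)_n$, which is exactly the equivariance property (1); and second, that the relation
$$(s_{n_1-1,n_1}(\sigma^1),\sigma^2)\sim (\sigma^1,s_{n_2-1,1}(\sigma^2))$$ is respected, which follows from condition (3) applied with $j=n_1+1$ in both factors.

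Verifying that $f$ is a morphism of symmetric cubical sets reduces to checking compatibility with $d^{\pm}_{n,1}$, $s_{n,1}$, and $p_{n,i}$, since these generate all the structural maps. Compatibility with transpositions is built in by construction, since $p_{n,i}$ simply acts on the $S_n$-factor. For face and degeneracy maps, one writes the explicit formulas given in Equation \eqref{dfSymMonStr} and uses conditions (2) and (3) together with the identity $d^{\pm}_{j}(u \cdot z) = \zeta_n(u) \cdot d^{\pm}_{u^{-1}(1)}(z)$ (and its degenerate analogue for $s_{n,1}$), which is a direct consequence of the last two relations \eqref{eqCubSet5}--\eqref{eqCubSet6} governing how faces and degeneracies interact with transpositions. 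These two constructions are manifestly inverse to one another, since starting with $\phi$, building $f$, and extracting $\phi$ back out recovers $\phi_{n_1,n_2}(x,y) = e \cdot \phi_{n_1,n_2}(x,y)$, and in the other order, any element of $(K^1\otimes K^2)_n$ is by definition of the form $[u\cdot(x\otimes y)]$.

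The main obstacle will be the careful bookkeeping in establishing the identity $d^{\pm}_{n,j}(u \cdot z) = \zeta_n(u) \cdot d^{\pm}_{n,u^{-1}(j)}(z)$ (and analogues) needed to match the somewhat intricate formulas of Equation \eqref{dfSymMonStr}; these are not deep, but require keeping careful track of how permutations act on the face indices. Apart from this combinatorial verification the argument is a direct Yoneda-style manipulation.
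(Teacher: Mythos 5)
The paper leaves this lemma without a proof (the statement carries an immediate $\qed$), treating it as a direct consequence of the construction of the monoidal structure in Equation \eqref{dfSymMonStr}; your proposal fills in the details correctly. The two explicit formulas $\phi_{n_1,n_2}(x,y) := f([e\cdot(x\otimes y)])$ and $f([u\cdot(x\otimes y)]) := u\cdot\phi_{n_1,n_2}(x,y)$ are the right ones, the well-definedness checks are the right ones (the $S_{n_1}\times S_{n_2}$-balancing in the quotient and the degeneracy relation $\sim$), and the verification that the constructions are mutually inverse is correct because the $S_n$-action on $(K^1\otimes K^2)_n$ is by left multiplication on the $S_n$-factor while $f$ commutes with the transposition maps.

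Two small bookkeeping slips worth flagging, neither of which affects the soundness of the argument. First, in checking that the relation $(s_{n_1-1,n_1}(\sigma^1),\sigma^2)\sim(\sigma^1,s_{n_2-1,1}(\sigma^2))$ is respected, the relevant instances of condition (3) are $i=n_1$ in the first factor and $j=1$ in the second (both of which then land on $s_{n_1}$ of $\phi_{n_1-1,n_2-1}(\sigma^1,\sigma^2)$), not ``$j=n_1+1$ in both factors.'' Second, the identity $d^{\pm}_{n,j}(u\cdot z)=\zeta_n(u)\cdot d^{\pm}_{n,u^{-1}(j)}(z)$ as you wrote it in the final paragraph is only literally correct for $j=1$, since $\zeta_n$ is defined in terms of $u^{-1}(1)$; for general $j$ the permutation on the right must be adjusted accordingly. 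Fortunately you only need the $j=1$ case, because, as you note, the full face structure on $K^1\otimes K^2$ is generated by $d^\pm_{n,1}$, $s_{n,1}$, and the transpositions, so verifying compatibility with those three suffices.
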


\subsubsection{Homotopy between morphisms of cubical sets}
The \emph{path functor} $P:\sqset\to\sqset$ is given by setting $PK_{\bullet}=K_{\bullet+1}$  and discarding all the operations $d_{n,1}^{\pm},s_{n,1}$ and $\tau_{n,1}$. The discarded face and degeneracy maps give rise to natural transformations between the identity functor and the path functor $d^{\pm}=d_{\bullet,1}:PK_{\bullet}\to K_{\bullet}$ and $s=s_{\bullet,1}:K_{\bullet}\to PK_{\bullet}$.

\begin{defin}
  A \emph{cubical homotopy} between maps $f^{\pm} \co K^1 \to K^2$ of cubical sets is a morphism
  \begin{equation}
    H \co K^1\to P K^2    
  \end{equation}
  whose composition with $d^{\pm} $ agrees with $f^{\pm}$. We say that $f^\pm$ are homotopic if there exists a homotopy between them.  
 \end{defin} 
 
We can  describe a cubical homotopy as a map  $H:K^1_{*}\to K^2_{*+1}$ of sets satisfying 
\begin{equation}\label{eqDeg1Mor}
H\circ d^\pm_{n,i}=d^{\pm}_{n+1,i+1}\circ H,\quad H\circ s_{n,i}=s_{n+1,i+1}\circ H,\quad i=1,\dots,n,\end{equation} 
and
\begin{equation}\label{eqDeg1Mor2}
H\circ\tau_{n,i}=\tau_{n+1,i+1}\circ H\end{equation}
The morphisms $f^\pm$ corresponding to the endpoints of the interval can be recovered from a homotopy $H$ via the formula
\begin{equation}
f^{\pm}  =  d_1^{\pm}\circ H.\end{equation}

\begin{lemma}\label{lmCubHtpy}
A homotopy $H$ between morphisms $f,g:K^1_{\bullet}\to K^2_{\bullet}$ of cubical sets induces a homotopy 
\begin{equation}
|H|:[0,1]\times |K^1_{\bullet}|\to |K^2_{\bullet}|\end{equation}
between the geometric realization $|f|$ and $|g|$. 
\end{lemma}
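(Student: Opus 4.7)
The plan is to build $|H|$ by first defining a continuous map on the pre-realization level, then showing it descends to the quotient, and finally verifying the endpoint conditions.

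Concretely, I would define $\widehat{H} \co [0,1] \times \coprod_n K^1_n \times [0,1]^n \to \coprod_m K^2_m \times [0,1]^m$ by
\begin{equation}
\widehat{H}(t, \sigma, x_1, \ldots, x_n) = (H(\sigma), t, x_1, \ldots, x_n),
\end{equation}
where the output lies in $K^2_{n+1} \times [0,1]^{n+1}$, with the new coordinate $t$ placed in the first slot (matching the convention that the path functor $P$ discards the operations indexed by $1$). Composing with the quotient map to $|K^2_{\bullet}|$ gives a continuous map $[0,1] \times \coprod_n K^1_n \times [0,1]^n \to |K^2_{\bullet}|$, and the task is to show that it factors through the relation $\sim$ in Equation \eqref{eqGeoRea} applied to the second factor.

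The three types of identifications must be checked in turn. For a face identification $(d^{\pm}_{n,i}(\sigma), t, y) \sim (\sigma, t, \iota^{\pm}_{n,i}(y))$, applying $\widehat H$ to both sides yields $(H(d^{\pm}_{n,i}\sigma), t, y)$ and $(H(\sigma), t, \iota^{\pm}_{n,i}(y))$; since $t$ sits in coordinate $1$, the latter equals $(H(\sigma), \iota^{\pm}_{n+1,i+1}(t,y))$, so the first relation of \eqref{eqDeg1Mor}, namely $H\circ d^{\pm}_{n,i} = d^{\pm}_{n+1,i+1}\circ H$, makes the two sides equivalent in $|K^2_{\bullet}|$. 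The degeneracy and transposition compatibilities follow in the same fashion from the remaining identities of \eqref{eqDeg1Mor} and from \eqref{eqDeg1Mor2}, once one notes that $\pi_{n+1,i}$ and $\tau_{n,i}$ acting on the last $n$ coordinates of $[0,1]^{n+1}$ correspond to $\pi_{n+2,i+1}$ and $\tau_{n+1,i+1}$ acting on all $n+1$ coordinates. Hence $\widehat H$ descends to a continuous map $|H|\co [0,1] \times |K^1_{\bullet}| \to |K^2_{\bullet}|$.

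Finally, setting $t=0$ and $t=1$ corresponds, in the target, to applying the face maps $d^{-}_{n+1,1}$ and $d^{+}_{n+1,1}$ to $H(\sigma)$ (again because the new coordinate was inserted in position $1$). By the definition of a cubical homotopy these equal $f(\sigma)$ and $g(\sigma)$ respectively, so $|H|(0,\cdot)=|f|$ and $|H|(1,\cdot)=|g|$. The main obstacle is purely bookkeeping: keeping straight that the homotopy direction is coordinate $1$ of the $(n+1)$-cube rather than a last coordinate, which is what forces the index shifts $i \mapsto i+1$ on the right-hand sides of Equations \eqref{eqDeg1Mor}--\eqref{eqDeg1Mor2} and makes the compatibilities match the geometric realization relations \eqref{eqGeoRea2} exactly.
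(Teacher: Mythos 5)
Your proof is correct and follows essentially the same approach as the paper: define the map on representatives by $(t,(\sigma,x))\mapsto (H(\sigma),(t,x))$ with the homotopy coordinate inserted in the first slot, check descent to the quotient using the compatibilities \eqref{eqDeg1Mor}--\eqref{eqDeg1Mor2}, and read off the endpoints from the definition of a cubical homotopy. Your write-up is more explicit about the descent verification; also note that the paper's displayed formula reads $(f(\sigma),(s,t))$ where the intended meaning is $(H(\sigma),(s,t))$, which your version states correctly.
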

\begin{proof}
We show that $H$ induces a continuous map $[0,1]\times |K^1|\to |K^2|$. This is defined for $s\in[0,1], (\sigma,t)\in |K^1|$ by 
\begin{equation}
(s,(\sigma,t))\mapsto(f(\sigma),(s,t)).\end{equation}
Then $|H|$ is well defined on equivalence classes under the relations given in \eqref{eqGeoRea2}. It follows that $|H|$ is continuous. Moreover $|d^{\pm}\circ H|$ is precisely the restriction of $|H|$ to $s=1/2\pm1/2$. The claim follows.
\end{proof}

\subsubsection{Symmetric normalised cubical chains}
\label{sec:symm-norm-cubic}

While most of the constructions of this paper take place at the level of cubical sets, the objects that we are ultimately interested in are formulated as algebraic structures in category of chain complexes. The key construction in passing from one category to the other is the notion of \emph{symmetric normalised cubical chains}, which is the chain complex $C_*(K)$, which in degree $n$ is the quotient of the complex freely generated by the $n$-cubes, modulo those which are in the image of a degeneracy map, and the relation which identifies a cube with the negative of its image under a transposition:
\begin{equation}
  C_n(K) \equiv \frac{\bZ[K_n]}{ \sum_{i=1}^{n} \mathrm{Im}(s_{n-1,i}:K_{n-1} \to K_n) +  \sum_{i=1}^{n-1} \mathrm{Im}(1 + p_{n,i}: K_{n} \to K_n) }.
\end{equation}
The alternating sum of the face maps define a map
\begin{equation}
  d = \sum_{i=1}^{n}  (-1)^{i}\left( d^{+}_{i} - d^{-}_i \right) :   C_n(K) \to C_{n-1}(K)
\end{equation}
which is easily seen to square to $0$. Since a map of symmetric cubical sets respects the three operations that enter in the definition of the complex $C_*(K)$, this construction yields a functor from the category of symmetric cubical sets to the category of chain complexes of abelian groups. We shall repeatedly use the fact that this functor is compatible with the monoidal structure on the two sides:

\begin{prop}[Lemma B.4 of \cite{Abouzaid2022}]
  The normalised symmetric cubical chain functor is (lax) symmetric monoidal. \qed
\end{prop}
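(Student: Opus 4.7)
My plan is to exhibit explicit unit and multiplication transformations and then verify the coherence axioms. The monoidal unit of $\sqset$ is the terminal symmetric cubical set $I_\bullet$ with a single cube in each degree; since every positive-degree cube is degenerate, $C_*(I_\bullet) \cong \bZ$ concentrated in degree zero, and the unit transformation $\eta \co \bZ \to C_*(I_\bullet)$ is the evident identification of the unique $0$-cube.

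For the multiplicative structure, I would define
\[
\mu_{K^1,K^2} \co C_{n_1}(K^1) \otimes C_{n_2}(K^2) \to C_{n_1+n_2}(K^1 \otimes K^2)
\]
on generators by $[x] \otimes [y] \mapsto [[\id \cdot (x \otimes y)]]$, where $\id \in S_{n_1+n_2}$ is the canonical representative of the $S_{n_1+n_2} \times_{S_{n_1} \times S_{n_2}}$-orbit described in Equation \eqref{dfSymMonStr}. By Lemma \ref{lem:universal_property_cubical}, these maps assemble to a morphism of cubical sets provided they are $S_{n_1} \times S_{n_2}$-equivariant and intertwine the face and degeneracy operators; this is direct from the explicit formulas defining the monoidal structure. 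Well-definedness on \emph{normalized} chains then requires checking that (a) if $x$ is degenerate then $[\id \cdot (x \otimes y)]$ is degenerate in $K^1 \otimes K^2$ (immediate from the formula for $s_{n,1}$ on the tensor product together with the transposition action, which lets one express every $s_{n,i}$ for $1 \leq i \leq n_1$ as acting on the first factor, and symmetrically on the second), and (b) the chosen identity permutation is consistent with the sign-of-transposition convention on normalized chains.

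The chain-map property reduces to comparing $\sum_j (-1)^j (d^+_{n,j} - d^-_{n,j})[\id \cdot (x \otimes y)]$ with the Leibniz expansion $d[x] \otimes [y] + (-1)^{n_1} [x] \otimes d[y]$. Using the explicit formula for $d^{\pm}_{n,1}$, and the fact that every higher face $d^{\pm}_{n,j}$ is obtained from $d^{\pm}_{n,1}$ by pre-composing with transpositions (which act as signs on normalized chains), the sum splits into contributions from faces of $x$ for $1 \leq j \leq n_1$ (producing $d[x] \otimes [y]$) and contributions from faces of $y$ for $n_1 < j \leq n$ (producing $(-1)^{n_1}[x] \otimes d[y]$, with the correct Koszul sign supplied by the shift of summation index).

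For the coherence data, the associator becomes strict once one fixes the canonical representative $\id \in S_{n_1+n_2+n_3}$, so both bracketings of a triple tensor produce the same element of $C_*(K^1 \otimes K^2 \otimes K^3)$; the unit axioms are immediate. Symmetry reduces to the observation that the braiding on $(K^1 \otimes K^2) \to (K^2 \otimes K^1)$ is given, on the $(n_1,n_2)$-summand, by the block permutation $\tau_{n_1,n_2} \in S_{n_1+n_2}$, whose image under normalization is the sign $(-1)^{n_1 n_2}$ — precisely the Koszul sign of the symmetry on $\Ch$. The main obstacle I anticipate is the careful bookkeeping of signs in the chain-map verification: one must consistently match signs coming from three sources — the alternating sum defining $d$, the sign action of transpositions on normalized chains, and the Koszul sign on $C_*(K^1) \otimes C_*(K^2)$ — across every face of the product cube. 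Once this is in place, all remaining coherence axioms follow formally from the fact that $\id \in S_n$ is the identity element of the symmetric group.
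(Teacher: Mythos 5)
Your argument is essentially correct, and it supplies a direct verification where the paper simply cites Lemma~B.4 of \cite{Abouzaid2022}, so there is no internal proof in this paper to compare against; what you are doing is unpacking the cited reference. The structure map you write down, $[x]\otimes[y]\mapsto[\id\cdot(x\otimes y)]$, is exactly the cubical cross product on the canonical representatives appearing in Equation \eqref{dfSymMonStr}, and the checks you identify --- descent through the degeneracy and transposition relations, the Leibniz rule, and the associativity/unitality/symmetry diagrams --- are the right ones. Two places where the phrasing is slightly off, neither fatal. First, the appeal to Lemma \ref{lem:universal_property_cubical} is misdirected: that lemma characterises morphisms of \emph{cubical sets} out of a tensor product, whereas here the target is the chain complex $C_*(K^1\otimes K^2)$; all you actually need is that $(x,y)\mapsto[\id\cdot(x\otimes y)]$ respects the degeneracy and transposition relations that define normalized chains, which is what you go on to verify. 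Second, the parenthetical about transpositions acting as signs is not where the Koszul sign originates: feeding the identity representative through $d^\pm_{n,j}=d^\pm_{n,1}\circ p_{n,1}\circ\cdots\circ p_{n,j-1}$ and then through the explicit formula for $d^\pm_{n,1}$ (with $\zeta_n$) yields $[\id\cdot(d^\pm_{n_1,j}x\otimes y)]$ for $j\leq n_1$ and $[\id\cdot(x\otimes d^\pm_{n_2,j-n_1}y)]$ for $j>n_1$, with no extraneous signs, and the factor $(-1)^{n_1}$ comes entirely from the re-indexing $j\mapsto j-n_1$ inside the alternating sum $\sum_j(-1)^j(d^+_j-d^-_j)$, which you do ultimately attribute correctly. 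The remaining observations --- the terminal cubical set as monoidal unit with $C_*(I_\bullet)\cong\bZ$, strict associativity from the identity representative, and the block permutation realising the sign $(-1)^{n_1n_2}$ for symmetry --- all check out.
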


\subsection{Multicategories}
\label{sec:multicategories}
We choose to encode multiplicative structures in Floer theory in the language of multicategories (these are also referred to as coloured operads in the literature). We introduce the basic definitions here (c.f. \cite{LeinsterBook})

\begin{defin}
  A \emph{multicategory} enriched in a symmetric monoidal category $(V,\otimes,I)$ consists of the following data
  \begin{enumerate}
  \item an object set $\cX$,
  \item an object $\cC(\vec{x},y)$ of $V$ referred to as $n$-ary multimorphisms for each object $y\in\cX$ and $n$-tuple $\vec{x}\in\cX^n$,
    \item a distinguished morphism $\id_x: I \to \cC(x,x)$, and 
    \item multicomposition  maps
\begin{equation} \label{eq:multicategory-glue-at-one-edge}
\circ_i:\cC(\vec{x},y_i)\otimes\cC(\vec{y},z)\to\cC(\vec{x}\circ_i\vec{y},z)\end{equation}
where $y_i$ is the $i$th element of $\vec{y}$, and $\vec{x}\circ_i\vec{y}$ denotes the replacement of the $i$th element in $\vec{y}$ by the sequence $\vec{x}$.
  \end{enumerate}

\begin{figure}[h]
  \centering
  \begin{tikzpicture}[baseline=(current  bounding  box.center), xscale=0.75]
    \begin{scope}[shift={(-1,0)}]
      \node at (1.5,0) {$\cC(\vec{y}, z) $};
      \draw (0,-2) -- (0,2);
    \draw (0,-2) -- (3,0);
    \draw (0,2) -- (3,0);
    \draw (3,0) -- (3.5,0) node[right]{$z$} ;
    \draw (-.25,-1.75)  node[left]{$y_{k_1}$}  -- (0,-1.75);
    \draw (-.25,-1.25)   -- (0,-1.25);
    \foreach \i in {1,...,6} {\draw (-.25,-1.25 + 0.5* \i)   -- (0,-1.25 + 0.5 *\i) ;} ;
    \draw (-.25,1.25)  -- (0,1.25) ;
    \draw (-.25,1.75) node[left]{$y_1$} -- (0,1.75);
    \end{scope}

    \begin{scope}[shift={(-4.25,0.25)}]
       \node at (1,0) {$\cC(\vec{x},y_i)$};
      \draw (0,-1) -- (0,1);
    \draw (0,-1) -- (2,0);
    \draw (0,1) -- (2,0);
    \draw (2,0) -- (2.25,0) node[right]{$y_i$};
    \draw (-.25,-.75)  node[left]{$x_{k_2}$}  -- (0,-.75);
    \draw (-.25,-.5)   -- (0,-.5);
    \draw (-.25,-.25)   -- (0,-.25);
    \draw (-.25,0)   -- (0,0);
    \draw (-.25,.25)   -- (0,.25);
    \draw (-.25,.5)  -- (0,.5) ;
    \draw (-.25,.75) node[left]{$x_1$} -- (0,.75);
  \end{scope}
  \begin{scope}[shift={(5,0)}]
     \node at (1.75,0) {$\cC(\vec{x} \circ_i \vec{y}, z)$};
      \draw (0,-3) -- (0,3);
    \draw (0,-3) -- (3.5,0);
    \draw (0,3) -- (3.5,0);
    \node[left] at (-.25,1.25) {$y_{i-1}$};
    \node[left] at (-.25,.75) {$x_{1}$};
    \node[left] at (-.25,-.25) {$x_{k_2}$};
    \node[left] at (-.25,-.75) {$y_{i+1}$};

    \draw (3.5,0) -- (4,0) node[right]{$z$} ;
    \draw (-.25,-2.75)  node[left]{$y_{k_1}$}  -- (0,-2.75);
    \foreach \i in {0,...,9} {\draw (-.25,-2.25 + 0.5* \i)   -- (0,-2.25 + 0.5 *\i) ;} ;
    \draw (-.25,2.75) node[left]{$y_1$} -- (0,2.75);
    \end{scope}
  \end{tikzpicture}
  
  \caption{Multicomposition maps}
\end{figure}
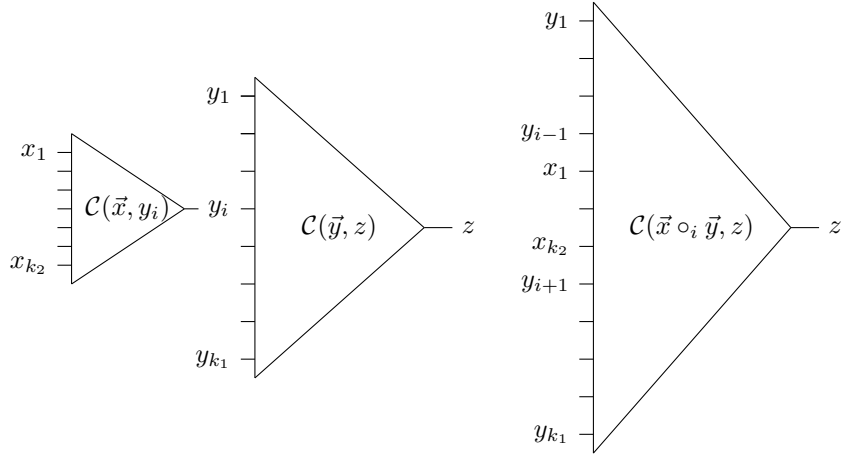

 The composition maps are required to satisfy the associativity relations
\begin{equation}\label{eqMultComp1}
a \circ_{i+j-1} (b \circ_{j} c)=(a \circ_{i} b) \circ_{j} c,\end{equation}
whenever $a\in\cC(\vec{x},y_{i}),b\in\cC(\vec{y},z_{j}),c\in\cC(\vec{z},w)$, and
\begin{equation}\label{eqMultComp2}
a \circ_{i_1} (b \circ_{i_2} c)=b \circ_{i_2+i_1-1} (a \circ_{i_1} c),\end{equation}
whenever $a\in\cC(\vec{x}_1,y_{i_1}),b\in\cC(\vec{x}_2,y_{i_2}),c\in\cC(\vec{y},z)$ and $i_1<i_2$, and,
\begin{equation}\label{eqMultComp3}
a \circ_1 \id=id \circ_i a =a.\end{equation}

\begin{figure}[h]
  \centering
  \begin{tikzpicture}[baseline=(current  bounding  box.center)]
    \begin{scope}[shift={(2,0)}]
      \node at (1.5,0) {$\cC(\vec{z},w)$};
      \draw (0,-2) -- (0,2);
    \draw (0,-2) -- (3,0);
    \draw (0,2) -- (3,0);
    \draw (3,0) -- (3.5,0) node[right]{$w$} ;
    \draw (-.25,-1.75)  -- (0,-1.75);
    \draw (-.25,-1.25)   -- (0,-1.25);
    \foreach \i in {1,...,6} {\draw (-.25,-1.25 + 0.5* \i)   -- (0,-1.25 + 0.5 *\i) ;} ;
    \draw (-.25,1.25)  -- (0,1.25) ;
    \draw (-.25,1.75) -- (0,1.75);
    \end{scope}
    \begin{scope}[shift={(-1.25,0.25)}]
       \node at (1,0) {$\cC(\vec{y},z_{j})$};
      \draw (0,-1) -- (0,1);
    \draw (0,-1) -- (2,0);
    \draw (0,1) -- (2,0);
    \draw (2,0) -- (2.5,0) node[right]{$z_{j}$};
    \draw (-.25,-.75) -- (0,-.75);
    \draw (-.25,-.5)   -- (0,-.5);
    \draw (-.25,-.25)   -- (0,-.25);
    \draw (-.25,0)   -- (0,0);
    \draw (-.25,.25)   -- (0,.25);
    \draw (-.25,.5)  -- (0,.5) ;
    \draw (-.25,.75) -- (0,.75);
    \end{scope}
\begin{scope}[shift={(-5.75,-0.25)}]
      \node at (1.5,0) {$\cC(\vec{x},y_i)$};
      \draw (0,-2) -- (0,2);
    \draw (0,-2) -- (3,0);
    \draw (0,2) -- (3,0);
    \draw (3,0) -- (3.5,0) node[right]{$y_i$} ;
    \draw (-.25,-1.75)  -- (0,-1.75);
    \draw (-.25,-1.25)   -- (0,-1.25);
    \foreach \i in {1,...,6} {\draw (-.25,-1.25 + 0.5* \i)   -- (0,-1.25 + 0.5 *\i) ;} ;
    \draw (-.25,1.25)  -- (0,1.25) ;
    \draw (-.25,1.75) -- (0,1.75);
  \end{scope}
\end{tikzpicture}
\caption{The first associativity relation}
\end{figure}
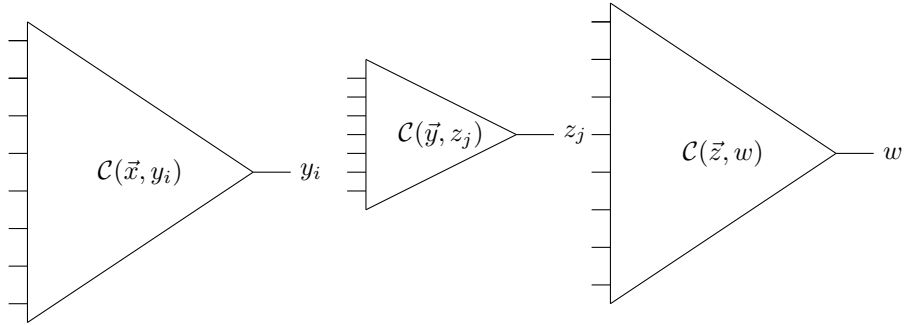

\begin{figure}[h]
  \centering
  \begin{tikzpicture}[baseline=(current  bounding  box.center)]
    \begin{scope}[shift={(2,0)}]
      \node at (1.5,0) {$\cC(\vec{y}, z)$};
      \draw (0,-2) -- (0,2);
    \draw (0,-2) -- (3,0);
    \draw (0,2) -- (3,0);
    \draw (3,0) -- (3.5,0) node[right]{$z$} ;
    \draw (-.25,-1.75)  -- (0,-1.75);
    \draw (-.25,-1.25)   -- (0,-1.25);
    \foreach \i in {1,...,6} {\draw (-.25,-1.25 + 0.5* \i)   -- (0,-1.25 + 0.5 *\i) ;} ;
    \draw (-.25,1.25)  -- (0,1.25) ;
    \draw (-.25,1.75) -- (0,1.75);
    \end{scope}
    \begin{scope}[shift={(-1.25,1.25)}]
       \node at (1,0) {$\cC(\vec{x}_1,y_{i_1})$};
      \draw (0,-1) -- (0,1);
    \draw (0,-1) -- (2,0);
    \draw (0,1) -- (2,0);
    \draw (2,0) -- (2.5,0) node[right]{$y_{i_1}$};
    \draw (-.25,-.75) -- (0,-.75);
    \draw (-.25,-.5)   -- (0,-.5);
    \draw (-.25,-.25)   -- (0,-.25);
    \draw (-.25,0)   -- (0,0);
    \draw (-.25,.25)   -- (0,.25);
    \draw (-.25,.5)  -- (0,.5) ;
    \draw (-.25,.75) -- (0,.75);
  \end{scope}
      \begin{scope}[shift={(-1.25,-1.25)}]
       \node at (1,0) {$\cC(\vec{x}_2,y_{i_2})$};
      \draw (0,-1) -- (0,1);
    \draw (0,-1) -- (2,0);
    \draw (0,1) -- (2,0);
    \draw (2,0) -- (2.5,0) node[right]{$y_{i_2}$};
    \draw (-.25,-.75) -- (0,-.75);
    \draw (-.25,-.5)   -- (0,-.5);
    \draw (-.25,-.25)   -- (0,-.25);
    \draw (-.25,0)   -- (0,0);
    \draw (-.25,.25)   -- (0,.25);
    \draw (-.25,.5)  -- (0,.5) ;
    \draw (-.25,.75) -- (0,.75);
    \end{scope}
  \end{tikzpicture}
  \caption{The second associativity relation}
\end{figure}
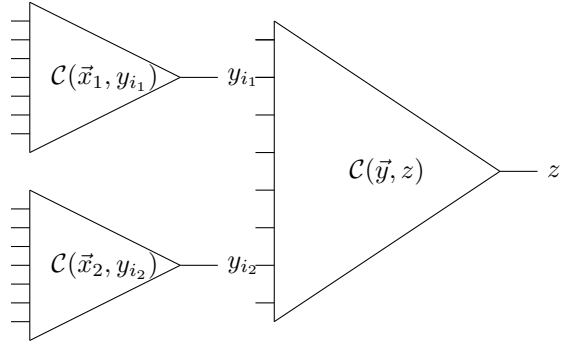

\end{defin}

\begin{defin}\label{def:symmetric-multi-category}
A \emph{symmetric multi-category} is a multi-category together with an action of the symmetric group $S_k$ on the set of $k$-morphisms, given by a map
\begin{equation}
\sigma^*:\cC(x_1,\dots,x_k;y)\to\cC(x_{\sigma(1)},\dots,x_{\sigma(k)};y)\end{equation}
for each $\sigma\in S_k$. The action is required to be compatible with multi-compositions. To formulate this consider ordered sequences  $\vec{x},\vec{y}$ of length $k,j$ respectively. The composition $\vec{y}\circ_i\vec{x}$ induces embeddings $\zeta_i:S_k\to S_{k+j-1}$ and $\eta_i:S_j\to S_{k+j-1}$. $\zeta_i(\sigma)$ acts by applying $\sigma$ to the subsequence of length $k$ starting at the $i$th place, while $\eta_i(\sigma)$ acts by treating the same sub-sequence as a single element. We require that
\begin{align}\label{eqSymAc1}
\circ_i(\vec{x},\sigma^*(\vec{y}))& =\eta_i(\sigma)^*(\circ_i(\vec{x},\vec{y})),\quad\forall\sigma\in S_j \\
\label{eqSymAc2}
\circ_i(\sigma^*(\vec{x}),\vec{y})& =\zeta_i(\sigma)^*(\circ_i(\vec{x},\vec{y})),\quad\forall\sigma\in S_k.
\end{align}
\end{defin}

The collection of all multicategories forms a category, with morphisms defined as follows: a multifunctor between multicategories $G: \cC\to \cD$ consists of\begin{itemize}
\item a map of object sets $G: Ob(\cC)\to Ob(\cD)$
\item a chain map $\cC(\vec{x};y)\to \cD(\vec{Gx};Gy)$ for each $\vec{x}\in Ob(\cC)^n$ and $y\in Ob(\cC)$.
\end{itemize}
We again omit the standard list of properties, but note that this includes a compatibility with the symmetric structure. 

A natural transformation of multifunctors $G_0,G_1: \cC\to \cD$ consists of a morphism $\cD(G_0(x); G_1(x))$ for every $x\in Ob(\cC)$ compatible with all multimorphisms.

\section{Dissipative cubes}
\label{sec:app-diss}
In this appendix we indicate the adjustments required for the geometrically bounded case. Symplectic cohomology and all its associated structures are defined and are invariant in this more general setting. However the proof of invariance is more involved and requires the Floer multi-functor to be indexed by a multi-category consisting of \emph{dissipative Floer data}. In this case, we need to involve the almost complex structures at the outset. The category of dissipative Floer data will be defined as a proper submulticategory $\cF^d_{\bullet}\subset\cF_{\bullet}$ of the multicategory $\cF_{\bullet}$ of Floer data as defined in Section \S\ref{sec:floer-functor}.

\subsection{Dissipative Floer data on cylinders}

We begin by constraining the class of almost complex structures that we shall consider; a (compatible) almost complex structure on a symplectic manifold determines a Riemannian metric, and the constraints that we impose will depend only on this data.

For a Riemannian metric $g$ on a manifold $M$ and a point $p\in M$ we denote by $\inj_g(p)$ the radius of injectivity and by $\Sec_g(p)$ the maximal sectional curvature at $p$. We drop $g$ from the notation when it is clear from the context.

\begin{defin}\label{dfIntBounded}
Let $(M,g)$ be a complete Riemannian manifold. For $a>0$, the metric $g$ is said to be \emph{$a$-bounded} at a point $p\in M$ if $\inj(x)\geq\frac1a$ and $|\Sec(x)|\leq a^2$ for all $x\in B_{1/a}(p)$.

We say that $g$ is \emph{strictly intermittently bounded}  if there is an exhaustion $K_1\subset K_2\subset \dots$ of $M$ by precompact sets and a sequence $\{a_i\}_{i\geq 1}$ of positive numbers such that the following holds.
\begin{enumerate}
    \item $d( K_i,\partial K_{i+1})> \frac1{a_i}+\frac1{a_{i+1}}.$
    \item $g$ is $a_i$-bounded on $\partial K_i$.
    \item the series obtained by adding the squares of the inverses of $a_i$ diverges:
        \begin{equation}\label{Eqtame}
            \sum_{i=1}^{\infty}\frac1{{a_i}^2}=\infty.
        \end{equation}
\end{enumerate}
The data $\{K_i,a_i\}_{i\geq 1}$ is called taming data for $(M,g)$. The open neighborhood $V=\cup_iB_{1/a_i}(K_i)$ is said to \emph{support  taming data for $g$}.
\end{defin}

More generally we allow a slight weakening in the definition:
\begin{defin}
  A Riemannian metric $g$ is \emph{intermittently bounded}, abbreviated \emph{i-bounded}, if there exists a metric $g'$ that is strictly intermittently bounded with taming data $(K_i,a_i)$, and a sequence of constants $C_i$ such that         $g$ is $C_i$-quasi-isometric to $g'$ on $B\left(\partial K_i,\frac1{a_i}\right)$ and \begin{equation}\label{Eqtame}
            \sum_{i=1}^{\infty}\frac1{{(C_ia_i)}^2}=\infty.
        \end{equation}
        In this case we will refer to the sequence $(K_i,a_i,C_i)$ as the taming data of $g$.
\end{defin}
We remind the reader that the quasi-isometry condition in the definition asserts that, on $B_{1/a_i}(\partial K_i)$, the lengths of any tangent vector $X$ with respect to $g$ and $g'$ are mutually bounded with respect to each other as follows:
        \begin{equation}
            \frac1{C_i}\|X\|_g \leq \|X\|_{g'}\leq C_i\|X\|_g
        \end{equation}

For a symplectic manifold $(M,\omega)$, an $\omega$-compatible almost complex structure $J$ is called \emph{i-bounded} if the associated metric $g_J$ is i-bounded.

We now turn our attention to the class of Hamiltonians that we will consider: given a Floer datum $(H,J)$ on a symplectic manifold $M$, we refer to a Floer solution with domain of the form  $[a,b]\times \bR/\bZ$, with $a<b\in\bR$ as a \emph{partial Floer trajectory}.

We then associate to each proper function $F:M\to\bR$, a map
\begin{equation}
   \Gamma^F_{H,J} \co \bR^2 \to \bR 
\end{equation}
defined as the infimum over all $E$ for which there is a partial Floer trajectory   $u$ of geometric energy $E$  with one end of $u$ contained in $F^{-1}([-r_1,r_1])$ and the other end in $F^{-1}(\bR\setminus (-r_2,r_2))$. Note that $\Gamma^F_{H,J}(r_1,r_2)$ may take the value of infinity.

\begin{defin}

A Floer datum $(H,J)$ is \emph{loopwise dissipative (LD)} if for some $F$ (hence any $F$) and any  $r_1$ we have $\Gamma^F_{H,J}(r_1,r)\to\infty$ as $r\to\infty$. We say that $(H,J)$ is \emph{robustly loopwise dissipative (RLD)} if  in the uniform $C^1\times C^0$ topology determined by $g_J$ there is an open neighborhood of the datum $(H,J)$ such that all elements are loopwise dissipative.
\end{defin}

We now formulate the compatibility condition between boundedness of the almost complex structure and dissipativity of the Hamiltonian:
\begin{defin}
A Floer datum $(H,J)$ is called \emph{dissipative} if
\begin{enumerate}
\item The datum $(H,J)$ is robustly loopwise dissipative.
\item 
for each $t$ the almost complex structure $J_t$ is intermittently bounded and there exist taming data which are independent of $t$ and are supported on some set $V$ such that the following properties hold:
\begin{enumerate}
\item For any $t_0,t_1$, the associated metrics $g_{J_{t_0}}$ and $g_{J_{t_1}}$ restricted to  $V$ are quasi-isometric. 
\item The restriction of $H$ to $V\times S^1$ is uniformly Lipschitz with respect to the induced metric.

\item The function $\min_{t\in \bR/\bZ}H_t(x)$ can be approximated uniformly from below by a function $\tilde{H}:M\to\bR$ whose restriction to $V$ is uniformly Lipschitz . 
\end{enumerate}
\end{enumerate}
\end{defin}
\begin{rem}
The definition of dissipativity given here is a little more restrictive than the one given in \cite{Groman2015} which instead of the Lipschitz condition on $H$ requires only that the Gromov metric determined by $J$ and $H$ be intermittently bounded. 
\end{rem}

\begin{rem}
Observe that dissipativity is an open condition. 
\end{rem}

\begin{lemma}
Suppose that $(M,\omega)$ is geometrically bounded in the sense that it carries a geometrically bounded almost complex structure $J$. If $K\subset M$ is a compact subset, then the set of Hamiltonian $H\in\cH_K$ for which there exists a $J$ so that $(H,J)$ is dissipative is cofinal in $\cH_K$. 
\end{lemma}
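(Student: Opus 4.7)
The plan is to give, for each $H_0 \in \cH_K$, an explicit construction of a pair $(H,J)$ that is dissipative, with $H$ non-degenerate, negative on $K$, and pointwise larger than $H_0$ by any prescribed margin. The $J$ will be essentially time-independent and built from the given geometrically bounded almost complex structure, while $H$ will be a controlled function of a suitable radial exhaustion, plus a small perturbation for non-degeneracy.

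First, I would exploit geometric boundedness of $J$ to produce taming data. Recall that geometric boundedness of $J$ gives a global lower bound on injectivity radius and an upper bound on the sectional curvature of $g_J$; in particular the associated metric is intermittently bounded in the sense of Definition \ref{dfIntBounded}, with taming data $\{K_i,a_i\}_{i\geq 1}$ in which the $a_i$ can be taken to be a single constant $a>0$, $V=M$, and $\sum 1/a_i^2 = \infty$ trivially. We may arrange that $K\subset K_1$. Next I would choose a smooth proper function $\rho \co M \to \bR_{\geq 0}$ such that $\rho$ vanishes on an open neighborhood of $K$, is $C$-Lipschitz with respect to $g_J$ for some uniform constant $C$, and $\rho \to \infty$ at infinity; this exists because the geometrically bounded metric $g_J$ is complete, so one can smooth the distance function to $K$ to a uniformly Lipschitz proper function.

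Given $H_0$, I would choose a smooth non-decreasing function $f\co \bR_{\geq 0}\to \bR$ with $f(0)<0$, $f(\rho(x)) > 2\cdot\max_t H_0(x,t)$ for all $x\in M$, and with $|f'|$ bounded (possibly slowly going to infinity, but compatibly with the $\sum 1/a_i^2$ condition; since our $a_i$ are constant here, a uniform bound on $f'$ is sufficient). Set $H := f\circ\rho + h$, where $h$ is a $C^\infty$-small, time-dependent perturbation supported in a compact set containing all relevant $1$-periodic orbits, chosen so that $H$ has non-degenerate $1$-periodic orbits; this is generic, so can be arranged with $|h|$ and $|dh|$ arbitrarily small. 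Similarly, modify $J$ to a family $J_t$ agreeing with $J$ outside a large compact set and $C^\infty$-close to $J$ everywhere. By construction, $H$ is negative on $K$, pointwise larger than $H_0$, uniformly Lipschitz with respect to $g_J$ (hence with respect to each $g_{J_t}$ after the small perturbation), and $\min_t H_t(x) = f(\rho(x)) + \min_t h(x,t)$ is uniformly approximable by the Lipschitz function $f\circ\rho$. Quasi-isometry of $g_{J_{t_0}}$ and $g_{J_{t_1}}$ on $V=M$ follows from the $C^\infty$-smallness of the $t$-dependent perturbation.

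The main obstacle is verifying robust loopwise dissipativity, which is the heart of the matter and where I would invoke the a~priori estimates from \cite{Groman2015}. The key point is the following monotonicity estimate: on a region where the metric is $a$-bounded and the Hamiltonian is $L$-Lipschitz, the geometric energy of a partial Floer cylinder whose image crosses an annulus of width $w \leq 1/a$ is bounded below by a universal positive function of $w$, $L$, and $a$. Applying this annulus-by-annulus to a partial Floer trajectory from $F^{-1}([-r_1,r_1])$ to $F^{-1}(\bR\setminus (-r_2,r_2))$, and using that the image must cross arbitrarily many such annuli as $r_2\to\infty$, one obtains $\Gamma^F_{H,J}(r_1,r_2)\to\infty$. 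The analogous estimates for $(H',J')$ in a sufficiently small $C^1\times C^0$-neighborhood give the same lower bounds up to a constant, yielding robustness; hence $(H,J)$ is RLD. Combined with the straightforward Lipschitz and boundedness conditions verified above, this shows $(H,J)$ is dissipative and establishes cofinality.
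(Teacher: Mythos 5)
The paper's proof of this lemma is a single sentence citing Theorems~6.6 and~6.10 and Lemma~8.11 of \cite{Groman2015}, so there is no explicit argument in the paper to compare against; your proposal is an attempt to reconstruct what those references establish.

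The general strategy—use geometric boundedness of $J$ to get taming data, build $H$ as a reparametrized proper Lipschitz exhaustion function $f\circ\rho$ plus a perturbation for non-degeneracy, and then invoke annulus-crossing energy lower bounds for robust loopwise dissipativity—is essentially the right one. However, there is a genuine gap in the construction of $f$. You insist on $|f'|$ bounded and $a_i$ constant (with $V = M$), and conclude from this that $f\circ\rho$ is uniformly Lipschitz with respect to $g_J$. But you simultaneously require $f(\rho(x)) > 2\max_t H_0(x,t)$ for \emph{arbitrary} $H_0 \in \cH_K$. Since $\rho$ has bounded gradient, $f\circ\rho$ with $f'$ bounded grows at most linearly in $g_J$-distance; but a general $H_0$ in $\cH_K$ has no growth restriction whatsoever. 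Your parenthetical dismisses this (``since our $a_i$ are constant here, a uniform bound on $f'$ is sufficient'') — this is precisely where the argument fails, because a linear-growth $H$ cannot dominate a super-linear $H_0$.

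The missing idea is the essential content of the intermittent boundedness framework: the taming data $\{K_i,a_i\}$ and the supporting set $V$ should be chosen \emph{adaptively, depending on $H_0$}, rather than once and for all from the geometric boundedness of $J$. Concretely, one lets $f$ be (say) locally constant on a sequence of disjoint annular regions $B_{1/a_i}(\partial K_i)$, with the annuli placed where the gradient of $H$ is zero (or controlled by a fixed constant), while $f$ is allowed to grow arbitrarily fast between them to outrun $H_0$. The cost is that the annuli must thin out ($a_i\to\infty$), and the constraint $\sum 1/a_i^2 = \infty$ is exactly the condition ensuring that the annuli are not spaced too sparsely for the a~priori estimates to accumulate. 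With $V$ taken to be this union of annuli — and not all of $M$ — the Lipschitz requirement in Definition~\ref{dfDisptvMultm}(2)(b) becomes a requirement only on $V$, which is easy to satisfy. This adaptive choice of taming data is what allows cofinality among Hamiltonians of arbitrary growth; your construction, taking $V=M$ and constant $a_i$, only dominates the linearly-growing ones.
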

\begin{proof}
See \cite[Theorem 6.6, Theorem 6.10, Lemma 8.11]{Groman2015}. 
\end{proof}
\begin{rem}
It is another matter to ask for a cofinal \emph{sequence}. If $M$ is non-compact, no such sequence exists. However,  one can show that given a monotone sequence $(H_i,J_i)$ of dissipative data converging pointwise to the function $H_K$, the map from the homotopy colimit of the truncated Floer complexes over the sequence to the homotopy colimit over all of $\cH_K$ induces an isomorphism in each truncation level. 
\end{rem}

\subsection{Dissipative Floer data on general Riemann surfaces}
\label{sec:diss-floer-data}

We proceed to discuss the notion of dissipativity for multimorphisms. Let $\Sigma$ be a Riemann surface with complex structure $j_{\Sigma}$, equipped with an area form $\omega_{\Sigma}$. Let $J$ be a $\Sigma$-parametrized family of $\omega$-compatible almost complex structures on $M$  and let $H:\Sigma\times M\to \bR$ be a smooth function. We assume that for each $z\in\Sigma$ the function $H_z$ is proper and bounded below. Finally we fix a $1$-form $\alpha$ on $\Sigma$ and assume that for each  $z \in \Sigma$, we have $dH_z\wedge\alpha\geq 0$. To this data one associates an almost complex structure  on $\Sigma\times M$ defined by
\begin{equation}\label{eqGrTrick}
J_{H}:=J_M+j_{\Sigma}+X_H\otimes \alpha+JX_H\otimes \alpha\circ j_{\Sigma}.
\end{equation}
This almost complex structure is compatible with the symplectic form
\begin{equation}\label{eqGrTrick2}
\omega_{H}:=\pi_1^*\omega_{\Sigma}+\pi_2^*\omega+ d(H\alpha)
\end{equation}
on $\Sigma\times M$. We denote the induced metric on $\Sigma\times M$ by $g_{J_{H}}$. We refer to the metric $g_{J_H}$ as the \emph{Gromov metric}. We stress that \emph{the Gromov metric depends on the choice of area form on $\Sigma$}.

In order to define the condition of dissipativity for multimorphisms  we shall need to consider the notion of intermittent boundedness relative to a projection. Let $U$ be a possibly open Riemann surface equipped with an area form $\omega_U$ and a complex structure $j_U$. Let $\pi:M\times U\to U$ be a Hamiltonian fibration over $U$. Let $J$ be an almost complex structure on $M\times U$ preserving the fibers of $\pi$.
\begin{defin}
The almost complex structure $J$ is \emph{uniformly strictly intermittently bounded rel $\pi$} if there is 
\begin{enumerate}
\item an exhaustion  of $M\times U$ by subsets $K_i$ for which $\pi|_{K_i}$ is proper, and
\item positive real numbers $a_i>0$
\end{enumerate}
so that 
\begin{enumerate}
\item $d(\partial K_{i+1},K_i)>\frac1{a_i}+\frac1{a_{i+1}},$
\item  the Gromov metric is $a_i$-bounded on $\partial K_i$,  and
\item $\sum \frac1{a_i}^2=\infty$.
\end{enumerate}
We say that $J$ is \emph{uniformly intermittently bounded rel $\pi$}  if the above inequalities hold only up to constants $C_i$ as in Definition \ref {dfIntBounded}.  
\end{defin}
\begin{rem}
Note that when $U$ is a point the last definition reduces to the definition of intermittent boundedness.
\end{rem}

\begin{lemma}\label{lmDisjointSupport}
Let $J_1,J_2$ be a pair of almost complex structures on $M\times U$ which are uniformly intermittently bounded rel $\pi$ with taming data supported on sets $V_1,V_2$ respectively. Then there are subsets $W_i\subset V_i$ such that $W_1\cap W_2=\emptyset$ and such that $W_1,W_2$ still support taming data for $J_1,J_2$ respectively. 
\end{lemma}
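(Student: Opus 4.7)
The plan is to extract compatible subsequences of the two taming data so that the resulting tubular neighborhoods are interleaved in disjoint annular regions of $M \times U$, while still preserving the divergence condition from Equation \eqref{Eqtame}. Since naive subsequencing of single indices risks destroying divergence, I will work at the level of blocks of indices.

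First, I would use Equation \eqref{Eqtame} to partition the indices of each taming sequence into consecutive blocks $B^{(m)}_1, B^{(m)}_2, \ldots$ so that $\sum_{i \in B^{(m)}_n} 1/(C^{(m)}_i a^{(m)}_i)^2 \geq 1$ for every $n$. Then, I would inductively choose block indices $n_1 < n_2 < \cdots$ and $\mu_1 < \mu_2 < \cdots$ and interleave the selected hypersurfaces: having chosen blocks through $B^{(2)}_{\mu_k}$, I pick $n_{k+1}$ large enough that the compact set $K^{(1)}_{\min B^{(1)}_{n_{k+1}}}$ contains a buffer of width strictly exceeding $1/a^{(1)}_i + 1/a^{(2)}_j$ around the union of the $1/a^{(2)}_j$-tubes chosen in $B^{(2)}_{\mu_k}$, and symmetrically when passing from $B^{(1)}_{n_{k+1}}$ to $B^{(2)}_{\mu_{k+1}}$. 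This is always feasible because both $(K^{(1)}_i)_i$ and $(K^{(2)}_j)_j$ exhaust $M \times U$, so any prescribed compact set together with a fixed-width buffer is eventually engulfed by some $K^{(m)}$. Setting $W_m$ to be the union, over $k$, of the $1/a^{(m)}_i$-neighborhoods of $\partial K^{(m)}_i$ for $i$ in the chosen blocks (with the corresponding relabelling) yields disjoint subsets $W_m \subset V_m$.

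The hard part will be verifying that the resulting sub-sampled family is honest taming data for $J_m$. Uniform $a$-boundedness on each chosen $\partial K^{(m)}_i$ is automatic, since we keep a subset of the original indices. The divergence condition survives because each retained block contributes at least $1$ to the new series. The delicate point is the distance condition between consecutive selected $K^{(m)}$'s: here I would exploit that if $i < j$ are original indices, then by a telescoping argument across crossings of the intermediate $\partial K^{(m)}_k$ one has
\[
d(K^{(m)}_i, \partial K^{(m)}_j) \geq \sum_{k=i+1}^{j} \left( 1/a^{(m)}_{k-1} + 1/a^{(m)}_k \right) \geq 1/a^{(m)}_i + 1/a^{(m)}_j,
\]
so the spacing requirement is inherited after subsampling. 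The cross-family disjointness $W_1 \cap W_2 = \emptyset$ is exactly what the interleaving step of the construction enforces, and combining these ingredients produces the required taming data on $W_1$ and $W_2$.
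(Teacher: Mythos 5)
Your interleaving strategy is the natural first attempt: block‑partitioning so each block contributes at least $1$ to the divergent series is correct, and your telescoping estimate showing that the shell‑to‑shell distance condition survives subsampling is also correct. However, there is a genuine gap in the step where you interleave the two families. You want to choose $n_{k+1}$ so that $K^{(1)}_{\min B^{(1)}_{n_{k+1}}}$ contains the previously selected family‑$2$ tubes together with a buffer, and you justify this by calling that set ``compact'' and appealing to the fact that an exhaustion engulfs any compact set. In the ``rel $\pi$'' setting neither half of that is available: the definition only requires $\pi|_{K_i}$ to be proper, and the $K_i$ cannot be compact once $U$ is non‑compact (e.g.\ a cylindrical end), nor are the shells $\partial K^{(2)}_j$ and their tubes compact. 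More seriously, two $\pi$-proper exhaustions of $M \times U$ need not be mutually nested even after passing to subsequences, so the desired $n_{k+1}$ may simply not exist. Concretely, on $M \times U = \bR^2 \times \bR^2$ with the flat product Gromov metric, take $K^{(1)}_i = \overline{B_i(0)} \times U$ and $K^{(2)}_j = \{(x,u) : |x-\phi(u)| \leq j\}$ for an unbounded $1$-Lipschitz map $\phi \colon U \to M$. Both are valid taming data (with constant $a_i$ and divergent series), yet \emph{every} shell $\partial K^{(1)}_i$ meets \emph{every} shell $\partial K^{(2)}_j$, so no subsampling of the original tubes can produce disjoint $W_1, W_2$. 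Thus the conclusion, if it holds, cannot be reached purely by thinning out the given taming data; one has to re-choose the exhaustions inside $V_1, V_2$, and your argument provides no mechanism for doing so.

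A secondary issue: the buffer radius ``$1/a^{(1)}_i + 1/a^{(2)}_j$'' adds lengths measured in two different Gromov metrics $g_{J_1}$ and $g_{J_2}$, which the hypotheses do not make quasi-isometric to each other. The disjointness estimate has to be explicit about which metric governs which radius, and in general one metric gives you no control over balls in the other. The paper itself proves this lemma only by citing \cite[Theorem 4.3]{Groman2015}, so this is not a case of your approach differing from the paper's — you should consult that reference directly to see how the nesting problem is resolved there, and in particular whether a hidden assumption (such as being able to take both exhaustions from a common proper function, or precompactness of $U$) is in force.
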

\begin{proof}
See \cite[Theorem 4.3]{Groman2015}. 
\end{proof}

We call an area form on  a Riemann surface  $\Sigma$ with cylindrical ends \emph{admissible} if it is compatible with $j_{\Sigma}$ and it is of the form $\omega_{\Sigma}=ds\wedge dt$ in the ends. 

\begin{defin}\label{dfDisptvMultm}
Let $\{F^i=(H^i,J^i)\}_{i=0}^{n}$  be a sequence of dissipative Floer data.  
Let $\fd$ be a pre-multimorphism from $(F^1,\dots F^n)$ to $F^0$ as in Definition \ref{def:pre-multimorphism-F}, which is modeled on a tree $T$.   We say that $\fd$ is \emph{dissipative} if
\begin{enumerate}
\item
 the Floer datum $F_e$ at each edge $e$ of $T$ is dissipative,
 \item 
for each vertex $v$ of $T$  there is a finite open cover $\cU=\{U_1,\dots,U_N\}$ of $\Sigma_v$ so that fixing any admissible area form on $\omega_{\Sigma_v}$,  the induced Gromov metric on $M\times U_i$ is intermittently bounded rel the projection $\pi:M\times U\to U$ for each $i$. We assume the interior of each cylindrical end is an element of the cover $\cU$.
 \end{enumerate}
\end{defin}

Clearly, the property of being dissipative is preserved under equivalence of pre-multimorphisms. Accordingly, we refer to dissipativity as a property of multi-morphisms. 
\begin{rem}
Note that changing the admissible area form induces a Gromov metric that is quasi-isometric to the original one. The property of dissipativity thus depends only on the data of the pre-multimorphism and not on the additional datum of the area form.
\end{rem}

For each $i$ indexing an element of the cover $\cU$, let $V_i\subset M$ be the support of the taming data for the Gromov metric on $M\times U_i$. In light of Lemma \ref{lmDisjointSupport} we will always assume that the $V_i$ are pairwise disjoint. We shall refer to the union $V=\cup_iV_i$ as the support of the taming data for the Gromov metric on $\Sigma\times M$. 

The following is a criterion that we will repeatedly use:
\begin{lemma}\label{lmRelDissCriterion}
Suppose $U$ is a Riemann surface, $J$ is intermittently bounded rel $\pi:M\times U\to U$ with taming data supported on some set $V$, $\alpha$ is a closed $1$-form on $U$ and $H:M\to\bR$ is uniformly Lipschitz on $V$. Then the corresponding Gromov metric is intermittently bounded. 
\end{lemma}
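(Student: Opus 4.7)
The plan is to use the same exhaustion $\{K_i\}$ that comes with the taming data for $J$ rel $\pi$ as the underlying exhaustion for $g_{J_H}$, and to show that the passage from $J$ to $J_H$ modifies the Gromov metric only by a controlled amount on the balls where the taming data is supported. First I would compute the difference between $g_J$ and $g_{J_H}$ pointwise using the formula \eqref{eqGrTrick}: the almost complex structure $J_H$ differs from $J+j_{\Sigma}$ by the perturbation $X_H\otimes\alpha+JX_H\otimes\alpha\circ j_{\Sigma}$, and since $dH$ has only a vertical component (because $H$ depends only on $M$) and $\alpha$ is closed, one obtains a simple identity: on vertical vectors $(v,0)$ the two metrics agree, while on horizontal vectors $(0,\xi)$ one gets
\begin{equation}
g_{J_H}\!\bigl((0,\xi),(0,\xi')\bigr)=g_{J}\!\bigl((0,\xi),(0,\xi')\bigr)+|X_H|_{g_J}^{2}\,(\alpha\circ j_{\Sigma})(\xi)\,(\alpha\circ j_{\Sigma})(\xi'),
\end{equation}
with a similar mixed term. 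The perturbation is therefore quadratic in the quantities $|X_H|_{g_J}$ and $|\alpha|$, both measured in the background Gromov metric $g_J$.

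Next, I would extract uniform bounds for each of these two quantities on the balls $B_{1/a_i}^{g_J}(\partial K_i)\subset V$. The uniform Lipschitz constant $L$ for $H$ on $V$ gives $|X_H|_{g_J}\le L$ on $V$ directly by definition of the Hamiltonian vector field; this is where the Lipschitz hypothesis is used in an essential way. For $|\alpha|$, the point is that the restriction of $g_J$ to the horizontal distribution of $\pi$ induces a metric on the image of $\pi|_{B_{1/a_i}(\partial K_i)}$ which is bounded-geometry with constants $a_i,C_i$; since $\alpha$ is a fixed smooth closed $1$-form on $U$ and the ball has diameter $\le C_i/a_i$ in the base, a standard mean-value argument bounds $|\alpha|_{g_J}$ on $B_{1/a_i}(\partial K_i)$ by a constant $A_i$ in terms of $|\alpha|$ and $|\nabla\alpha|$ on an appropriate compact piece of $U$ together with the taming constants $a_i,C_i$. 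Combining these, the two metrics $g_J$ and $g_{J_H}$ are $D_i$-quasi-isometric on $B_{1/a_i}(\partial K_i)$ with $D_i$ a polynomial function of $L$, $A_i$, $a_i$, and $C_i$.

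I would then conclude by the standard fact that quasi-isometries with constant $D$ between smooth metrics on a neighborhood of radius $r$ preserve bounded geometry in the sense of Definition \ref{dfIntBounded}, with the injectivity radius and sectional curvature bounds degrading by a polynomial factor in $D$ (see Lemma 6.5 of \cite{Groman2015}). This gives taming data $(K_i,a_i,C_i')$ for $g_{J_H}$ rel $\pi$ where $C_i'=C_i\cdot P(L,A_i,a_i,C_i)$ for some fixed polynomial $P$. The divergence condition $\sum 1/(C_i'a_i)^2=\infty$ remains intact provided the $A_i$ grow sufficiently slowly, which will hold because $\alpha$ is a fixed smooth $1$-form on $U$ and the balls $B_{1/a_i}(\partial K_i)$ project into $U$ with uniformly controlled diameters once we shrink them to a slightly smaller ball.

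The main obstacle is verifying that $|\alpha|$ can be bounded on $B_{1/a_i}(\partial K_i)$ by a constant $A_i$ that grows slowly enough to be absorbed into the quasi-isometry constant without destroying the divergence condition $\sum 1/(C_i'a_i)^2=\infty$. This is the only genuinely geometric step: once the two metrics are shown to be uniformly quasi-isometric on each annular region with constants whose growth is at most polynomial in the original $C_i$, the remaining verification is a bookkeeping exercise with the definition of intermittent boundedness. I expect that one should in fact replace the original exhaustion by a coarser one (grouping several $K_i$ together) in order to ensure that within each resulting annular region $\alpha$ and its derivatives are bounded in terms of the existing taming data, much in the spirit of the reduction carried out in Section 4 of \cite{Groman2015}.
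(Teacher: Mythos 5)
Your basic strategy — show that $g_J$ and $g_{J_H}$ are pointwise quasi-isometric with a constant controlled by $|X_H|$ and $|\alpha|$, then transfer the taming data — is the expected one (the paper cites \cite[Lemma A.3]{GromanVarolgunes2021} rather than give a proof, so there is no explicit argument here to compare against). The cleanest way to see the quasi-isometry is via the pointwise shear $\Phi(v,\xi) = (v + \alpha(\xi)X_H, \xi)$, which carries the product horizontal distribution to the Hamiltonian-connection horizontal distribution and satisfies $\Phi^*g_{J_H} = g_J$; since $\|\Phi\|, \|\Phi^{-1}\| \le 1 + |\alpha|\,|X_H|$ with respect to $g_J$, this is the quasi-isometry constant. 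Your explicit formula is close in spirit, though the $\alpha\circ j_\Sigma$ factors should be $\alpha$, and the perturbation is not purely quadratic: the vertical--horizontal cross term is linear in $|\alpha|\,|X_H|$. Neither of these affects the conclusion.

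The genuine gap is the bound on $|\alpha|$, which you correctly flag as the crux and then do not actually obtain. The hypotheses of the lemma give no quantitative control of $\alpha$ beyond smoothness and closedness, so neither a mean-value argument nor coarsening the exhaustion can produce one: a closed $1$-form on a noncompact Riemann surface can have norm tending to infinity in the induced metric, and the balls $B_{1/a_i}(\partial K_i)$ project to increasingly large (compact, but growing) pieces of $U$. If $|\alpha|$ is unbounded there, the quasi-isometry constant blows up and the conclusion fails. In the paper's applications this never arises because $\alpha$ is always $w\,dt$ on the cylindrical ends (hence uniformly bounded in any admissible area form) and the remainder of the surface is precompact; a uniform bound on $|\alpha|$ is therefore an implicit or contextual hypothesis of the lemma, not something derivable from what is written. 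Your proof becomes correct once this is imported as an assumption rather than attempted as a derivation.
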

\begin{proof}
See \cite[Lemma A.3]{GromanVarolgunes2021}  

\end{proof}

We now arrive at the main definition of this appendix:
\begin{defin}\label{dfDisptvCube}
A $k$-dimensional cube $\fd\in\cF_{\bullet}$ of unbroken multi-morphisms is dissipative if there is 
\begin{itemize}
\item a partition of the $k$-cube into subsets $A_1,\dots, A_N$,
\item  for each $1 \leq i \leq N$, a smooth trivialization of the family of  Riemann surfaces underlying $\fd$ which identifies each fiber with a fixed  Riemann surface $\Sigma$ with $n$ inputs and $1$ output preserving the cylindrical ends, and
\item  data taming $\fd_\sigma$ for all $\sigma\in A_i$, with respect to this identification. 
\end{itemize}

An \emph{arbitrary} cube $\fd\in\cF$ is called dissipative if  
the image of $\fd$ under the deformation retraction 
to smooth cubes as in Lemma \ref{lmsmincHoEq}\footnote{Strictly speaking, Lemma \ref{lmsmincHoEq} refers to only multimorphisms in $\cH$ without the data of almost complex structures. But for cubes of Floer data with strict gluing, the same construction goes through seamlessly.} is dissipative. 
\end{defin}

\begin{rem}
Note the difference between Definition \ref{dfDisptvMultm} where we required a finite open cover on which there is fixed taming data and Definition \ref{dfDisptvCube} where we only require a partition. The reason for this difference is that Floer's equation contains derivatives with respect to coordinates on the underlying Riemann surface but not in the direction of the parameter $\sigma$ of the cube. Thus for $C_0$ estimates we need every point on $\Sigma$ to be contained in a disc of radius $r$ bounded away from $0$ for which there is taming data. No such disc is required for the neighborhood of a point in the cube. 

\end{rem}

With these definitions in place we have the following. Denote by $\cF^d_{\bullet}\subset \cF_{\bullet}$ the subset of objects which are dissipative Hamiltonians, and the subset of morphisms which are dissipative multi-morphisms. 
\begin{prop}
$\cF^d_{\bullet}\subset \cF_{\bullet}$ forms a submulticatgory. 
\end{prop}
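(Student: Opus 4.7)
The plan is to verify the three properties required of a submulticategory: closure under units, closure under multicomposition, and closure under the symmetric cubical operations (face, degeneracy, transposition). Units pose no problem, since identities in $\cF$ are formally adjoined; the only nontrivial content is therefore conditions (2) and (3).

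For the cubical structure, each of the face, degeneracy, and transposition maps acts on the underlying partition $\{A_1,\dots,A_N\}$ of the parameter cube in Definition \ref{dfDisptvCube} by restriction, pullback along a projection, or coordinate permutation, and the accompanying trivializations and taming data transport along these operations without modification. So I would first record, essentially as a one-line check, that these three cubical operations preserve dissipativity.

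The substantive step is multicomposition. Given dissipative cubes $\fd_1\in\cF^d_{n_1}(\vec F^1;F^{2,i})$ and $\fd_2\in\cF^d_{n_2}(\vec F^2;F^0)$ with labelled trees $T_1,T_2$, the composition $\fd_1\circ_i\fd_2$ is the $(n_1{+}n_2)$-cube sending $(x,y)$ to $\fd_2(y)\circ_i\fd_1(x)$, with underlying tree $T_1\circ_i T_2$. For condition (1) of Definition \ref{dfDisptvMultm}, the edge Floer data of $T_1\circ_i T_2$ are the union of those of $T_1$ and $T_2$, so dissipativity of every edge datum is inherited. For condition (2), the vertex set decomposes as $V(T_1)\sqcup V(T_2)$, and at each vertex we keep the open cover and taming data supplied by the corresponding factor; in particular, Lemma \ref{lmRelDissCriterion} together with Lemma \ref{lmDisjointSupport} lets us arrange the supports to be disjoint across the different vertices. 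For the cube structure, I would take the product partition $\{A_{1,j}\times A_{2,k}\}$ of $[0,1]^{n_1+n_2}$ with trivializations given by the product trivializations of the two factors, so that the taming data assemble into taming data on each product piece.

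The main obstacle will be handling cubes with \emph{nontrivial} gluing atlases, where Definition \ref{dfDisptvCube} defers to the deformation retraction onto smooth cubes from (the Floer-theoretic analogue of) Lemma \ref{lmsmincHoEq}. I would need to check that multicomposition, faces, degeneracies and transpositions commute up to the retraction, so that dissipativity passes unambiguously through the formal definition. This reduces to an inspection of Equation \eqref{eqTildG}: the retraction acts separately on each vertex of the underlying tree by interpolating only its incident internal gluing parameters. Since the new edge produced in $T_1\circ_i T_2$ by multicomposition is not an internal edge of either factor (it carries no gluing parameter), the product of the retractions for $\fd_1$ and $\fd_2$ coincides with the retraction for $\fd_1\circ_i\fd_2$; an analogous and easier observation covers the cubical operations. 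With this functoriality in hand, dissipativity of the retracted representative transfers back to the original cube, completing the proof.
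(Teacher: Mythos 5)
Your overall strategy tracks the paper's: reduce to smooth cubes via the retraction of Lemma \ref{lmsmincHoEq}, then handle the composition of smooth cubes using product partitions. The verification of condition (1), the use of Lemmas \ref{lmRelDissCriterion} and \ref{lmDisjointSupport} to arrange disjoint supports across the vertices of $T_1\circ_i T_2$, and the product partition $\{A_{1,j}\times A_{2,k}\}$ with product trivializations all appear in the paper's argument as well. However, there is a genuine gap in the step that is meant to close the argument.

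You assert that ``the product of the retractions for $\fd_1$ and $\fd_2$ coincides with the retraction for $\fd_1\circ_i\fd_2$'' because the new edge of $T_1\circ_i T_2$ ``carries no gluing parameter.'' This is not correct. The new edge \emph{is} an internal edge of the composite tree $T_1\circ_i T_2$, and in the product gluing atlas of $\fd_1\circ_i\fd_2$ it carries a gluing parameter that is identically $0$. Tracing Equation \eqref{eqTildG} for this edge with $g_{f_0,e}\equiv 0$ and $S_{e,f_0}$ equal to the full set of faces above $f_0$ (the composite cube never collapses $e$) gives $\tilde g_{f_0,e}(t,x)=t/2$, so the retraction \emph{does} glue at the new node, reaching parameter $1/2$. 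Consequently $d^+\rho(\fd_1\circ_i\fd_2)$ is a smooth cube, while the multicomposition $d^+\rho(\fd_1)\circ_i d^+\rho(\fd_2)$ of the retracted factors is a once-broken cube; the two cannot coincide, so the claimed functoriality as stated fails.

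What is missing is the observation, supplied explicitly in the paper, that if a broken multimorphism is dissipative then so is its gluing with any gluing parameter: one assembles an open cover witnessing dissipativity of the glued surface from the open covers on each component, using the fact that the cylindrical ends are among the cover elements, and noting that the taming data on each cover element is unchanged under gluing. This observation is what lets you pass from dissipativity of the (broken) composite of smooth cubes, which your product-partition argument establishes, to dissipativity of the smooth cube $d^+\rho(\fd_1\circ_i\fd_2)$, which is what Definition \ref{dfDisptvCube} actually requires. Without it the argument does not connect the conditions you verify to the definition being invoked.
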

\begin{proof}
We need to prove that given a pair $\fd_1,\fd_2$ of dissipative multimorphisms such that the output of one matches the $i$th input of the other, the multi-composition is still dissipative. The retraction of Lemma  \ref{lmsmincHoEq} applied to the multicomposition of $\fd_1,\fd_2$ is readily seen to be the same as the one applied to the multicomposition of the retractions. This can be deduced from Equations \eqref{eqTildG}, 
\eqref{eqGlRetF} and the observation that the order of gluing at distinct nodes is immaterial. It thus does no harm to assume that $\fd_1,\fd_2$ are smooth cubes. We further point out that if a broken multimorphism is dissipative, then so is its gluing with any gluing parameter. Indeed, we obtain an open cover witnessing dissipativity  of the glued curve from the open covers of each component by keeping in mind that the ends are elements in such a cover. Observe in particular that the taming data for each element in the cover remains the same under gluing. 

 Let $k_1$ and $k_2$ respectively denote the dimensions  of $\fd_1$ and $\fd_2$. Let $\{A_1,\dots,A_{N_1}\}$ and $\{B_1,\dots,B_{N_2}\}$ be the corresponding partitions on which there are fixed taming data for $\fd_1,\fd_2$ respectively\footnote{Note that trivializations of the pair of families of smooth Riemann surfaces underlying $\fd_1,\fd_2$ induce a trivialization of the family obtained by gluing them.}.  Then by the above  observation, we have fixed taming data on $A_i\times B_j$ for the image of $\fd_1\times\fd_2$ under the retraction of Lemma  \ref{lmsmincHoEq}.  
\end{proof}
\begin{prop}
  For any dissipative cube $\fd$ and any compact set $K$ there exists a function $R=R(\fd, K,E)$ so that any pseudo-holomorphic curve, solving an equation belonging to the family  parametrised by $\fd$,  which intersect $K$ and which has geometric energy at most $E$ is contained in $B_R(K)$.  
\end{prop}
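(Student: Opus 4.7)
The plan is to reduce the statement to a family version of the standard monotonicity estimate for pseudo-holomorphic curves with respect to intermittently bounded almost complex structures, using the Gromov trick to work on the total space $\Sigma\times M$ equipped with the almost complex structure $J_H$ and symplectic form $\omega_H$ from Equations~\eqref{eqGrTrick}--\eqref{eqGrTrick2}. Recall that, under this trick, a solution $u\colon\Sigma\to M$ of the Cauchy--Riemann equation corresponds to a $J_H$-holomorphic section of $\Sigma\times M\to\Sigma$, and the topological energy coincides with the symplectic area of the graph with respect to $\omega_H$. In particular, geometric energy of $u$ bounded by $E$ translates into an $\omega_H$-area bound on the image graph $\tilde u\subset \Sigma\times M$, and the intersection with $\pi^{-1}(U_i)\cap (V_i\text{-neighbourhood})$ is a $J_H$-holomorphic curve in an intermittently bounded ambient geometry.

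First, I would reduce to the case of a cube of multimorphisms modelled on smooth (unbroken) Riemann surfaces. By Definition~\ref{dfDisptvCube}, the dissipativity of $\fd$ is defined by applying the retraction of Lemma~\ref{lmsmincHoEq} to obtain a cube $\rho(\fd)$ whose fibres are smooth families with fixed taming data over each piece of a partition $A_1,\dots,A_N$ of the underlying cube. Since the retraction is obtained by compactly supported gluing and an energy bound is preserved (up to a uniform factor) under the gluing construction, it suffices to prove the proposition for $\rho(\fd)$; the original cube parametrises Floer equations obtained by gluing, and the corresponding solutions are contained, modulo components on attached cylinders controlled by the Floer equations on the edges (which are themselves dissipative and hence subject to the same estimate), in the same compact region up to a uniform enlargement.

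Next, using the finiteness of the partition $\{A_i\}$, it is enough to fix a single $i$, a fixed trivialisation of the family over $A_i$ with fibre a Riemann surface $\Sigma$, a finite open cover $\cU=\{U_1,\dots,U_N\}$ of $\Sigma$ (including the interiors of the cylindrical ends), and taming data for the Gromov metric on each $M\times U_j$ supported, after Lemma~\ref{lmDisjointSupport}, on pairwise disjoint sets $V_j\subset M$. I would then combine these into a single intermittent boundedness structure on the Gromov metric on $\Sigma\times M$: away from the finitely many $\pi^{-1}(U_j)\cap(M\times V_j)$, the image of any $J_H$-holomorphic curve lies in a fixed compact subset of $\Sigma\times M$ (because $\Sigma$ is exhausted by $K_1^{M\times U_j}$-type sets in each $U_j$, and $M\setminus\bigcup_j V_j$ is mapped only through controlled compact neighbourhoods of the chosen starting point by the Lipschitz control on $H$ provided by dissipativity of the edges' Floer data).

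The core step is then the standard monotonicity argument: a $J_H$-holomorphic curve passing through a point $p\in K\subset\Sigma\times M$ with $\omega_H$-area at most $E$ must satisfy, for each shell $\{x\in \Sigma\times M : r_i<\mathrm{dist}(x,p)<r_{i+1}\}$ which lies in a region where $g_{J_H}$ is $a_i$-bounded, a local area bound of the form $\mathrm{Area}\geq c\,(r_{i+1}-r_i)^2\sim c/a_i^2$, see \cite[Theorem~4.7.1]{Sikorav1994}-type results adapted to intermittently bounded geometry. Summing, the divergence condition $\sum 1/a_i^2=\infty$ in the definition of intermittent boundedness forces the curve to be contained in $K_N$ for $N=N(E)$ chosen so that $c\sum_{i=1}^{N}1/(C_ia_i)^2>E$. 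Applying this in the $\pi$-relative formulation and projecting back to $M$ yields a compact set $B_R(K)\subset M$ depending only on $E$, the finite partition, the cover, and the taming data, which is all encoded by $\fd$ and $K$. The main technical obstacle is purely bookkeeping: ensuring that the constants extracted from the finite partition $\{A_i\}$, the finite cover $\cU$, and the Lipschitz bounds on the Hamiltonians at the various edges can be assembled into a single uniform function $R(\fd,K,E)$; this is guaranteed by compactness of each $A_i$ and the finiteness of the cover, together with the fact that broken configurations add only finitely many Floer cylinder components each of which is itself controlled by dissipativity of the edge labels.
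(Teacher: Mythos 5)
The paper's own proof is a single sentence: a dissipative cube conforms to Definition~5.10 of \cite{Groman2015}, so the confinement result is Theorem~6.3 of that reference. Your sketch, instead of citing, reconstructs the argument underlying that theorem: Gromov trick to pass to $J_H$-holomorphic sections in $\Sigma\times M$, reduction via the finite partition and finite cover to fixed intermittent-boundedness data, and then the shell-by-shell monotonicity estimate whose iteration is closed by the divergence condition $\sum 1/(C_ia_i)^2 = \infty$. This is indeed the correct mechanism, and the more self-contained route is instructive; the trade-off is that the paper's citation instantly settles the analytical details (uniformity of the monotonicity constant in the presence of the Hamiltonian perturbation term, the relative-to-$\pi$ formulation, behaviour near the cylindrical ends), all of which your sketch compresses into a reference to ``Sikorav-type results adapted to intermittently bounded geometry.''

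One place where your argument is noticeably thinner than it should be is the reduction from broken to unbroken cubes. Definition~\ref{dfDisptvCube} defines dissipativity of a general cube in terms of its image under the retraction $\rho$ of Lemma~\ref{lmsmincHoEq}, but the proposition is a statement about \emph{all} equations in the family parametrised by $\fd$, including those on broken (pre-stable) domains on the boundary faces. Your sentence asserting that broken solutions are ``contained, modulo components on attached cylinders controlled by the Floer equations on the edges\ldots in the same compact region up to a uniform enlargement'' is the crux of this reduction, and it requires an argument: one must check that the gluing parameters produce a uniformly quasi-isometric family of Gromov metrics with a common intermittent-boundedness structure, so that the monotonicity constant does not degenerate as the gluing parameter goes to zero, and that the Floer cylinder components are controlled by the dissipativity of the edge data $F^e$ uniformly in the number of such components allowed by the tree. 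These are exactly the kinds of uniformity checks that Groman's Definition~5.10 is designed to encode, which is why the paper defers to it rather than re-deriving them.
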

\begin{proof}
Observe that a dissipative cube  conforms to Definition 5.10 of \cite{Groman2015}, so that we can apply Theorem 6.3 of said paper. 
\end{proof}

\begin{defin}
Let $F^0=(H^0,J^0), F^1=(H^1,J^1)\dots,F^n=(H^n,J^n)$  be dissipative Floer data.  We say that  $F^0>\vec{F}$ if  $H^0>\vec{H}$ as per Definition \ref{defHamOrdering}. 
\end{defin}

\begin{prop}
Suppose $F^0>\vec{F}$. The forgetful map $\fF:\cF^d_{\bullet}(\vec{F},F)\to f\Mbar^{\bR}_{0}(k)_{\bullet}$ is a homotopy equivalence of cubical sets.
\end{prop}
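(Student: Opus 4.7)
The plan is to mirror the proof of Proposition \ref{lmFrgtHtpyEq}, keeping the same three-step architecture (reduce to smooth dissipative data on the interior, then to almost complex/Hamiltonian-free framed surfaces, then to the KSV moduli space), and checking at every stage that the dissipativity condition does not destroy contractibility of the fibres or break the gluing-collar retraction.

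First I would introduce $\cF^{d,sm}_0(\vec{F},F^0)$, the subspace of $0$-dimensional dissipative multimorphisms whose underlying curve is smooth, and establish the analog of Lemma \ref{lmForSmHoEq}: the forgetful map $\cF^{d,sm}_0(\vec{F},F^0)\to f\cM_{0,k+1}$ is a homotopy equivalence of topological spaces. The proof factors as in Equation \eqref{eq:factor-forgetful-map} with an extra step forgetting $J$; I need to show each fibration has contractible fibres. The only non-convex fibre is the space of dissipative pairs $(H_\Sigma,J_\Sigma)$ with prescribed cylindrical ends, $1$-form, and asymptotic Floer data. For existence I would refine the three-piece construction in Lemma \ref{lmExMonH}: on the extended cylindrical ends use the asymptotic data $(H^i,J^i)$ (which are dissipative by assumption, so taming data is inherited); in the compact core use a dissipative pair $(H,J)$ on $M$ whose values bracket those on the ends (existence of such cofinal dissipative data is the content of \cite[Theorems 6.6, 6.10]{Groman2015}); and interpolate in the collar annuli, where the uniform Lipschitz condition on $H$ in the support of the taming data lets the interpolation stay dissipative by Lemma \ref{lmRelDissCriterion}. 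For contractibility I would use the fact that dissipativity is an open condition, robustly loopwise dissipative data form an open set in the $C^1\times C^0$ topology, and the intermittent boundedness condition is preserved under quasi-isometric deformations. The interpolation between two dissipative choices can be carried out on the disjoint taming-data supports guaranteed by Lemma \ref{lmDisjointSupport}, giving contractibility at least up to quasi-isometry, which is all that is needed.

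Second, I would prove the analog of Lemma \ref{lmsmincHoEq}: the inclusion $\Box^{sm}_\bullet(\cF^{d,sm}_0(\vec{F},F^0))\hookrightarrow \cF^d_\bullet(\vec{F},F^0)$ is a homotopy equivalence. The retraction $\rho_\bullet$ constructed in the proof of Lemma \ref{lmsmincHoEq} is built entirely out of gluings of the existing broken data on each face, with no modification to the Floer data on individual components. Dissipativity is preserved under gluing (this is exactly what was verified in the proof that $\cF^d_\bullet\subset \cF_\bullet$ is a sub-multicategory: taming data on components, including on the extended cylindrical ends that are part of the open cover, are glued to taming data on the result). Hence $\rho_\bullet$ restricts to a deformation retraction of $\cF^d_\bullet(\vec{F},F^0)$ onto its smooth subcomplex. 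The only point requiring care is to observe that the partition of each cube from Definition \ref{dfDisptvCube} is compatible with the gluing retraction, which follows because $\rho_\bullet$ only alters gluing parameters and leaves the combinatorial type of local models untouched away from breaking loci.

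Third, I would assemble the conclusion from the commutative diagram
\begin{equation*}
  \begin{tikzcd}
    \Box_\bullet(\cF^{d,sm}_0(\vec{F},F^0))\ar[d] &\Box^{sm}_\bullet(\cF^{d,sm}_0(\vec{F},F^0))\ar[l]\ar[r]\ar[d] &\cF^d_\bullet(\vec{F},F^0)\ar[d]\\
    \Box_\bullet(f\cM_{0,k+1}) &\Box^{sm}_\bullet(f\cM_{0,k+1})\ar[l]\ar[r] &f\Mbar^{\bR}_0(k)_\bullet,
  \end{tikzcd}
\end{equation*}
exactly as in the proof of Proposition \ref{lmFrgtHtpyEq}: the left vertical arrow is a homotopy equivalence by the topological-space analog obtained in the first step; the top right horizontal arrow is the dissipative version of Lemma \ref{lmsmincHoEq} proved in the second step; the horizontal smoothing comparisons are the standard Eilenberg smooth-approximation arguments (which are insensitive to the open condition of dissipativity); and the bottom right factorization is as in the original proof, relying only on $f\Mbar^{\bR}_0(k)$ being a manifold with corners.

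The hard part will be the first step: producing dissipative Hamiltonians and almost complex structures on a smooth cobordism with prescribed dissipative asymptotics, and showing the space of such choices is contractible. Because dissipativity is not convex, the naive linear interpolation of Lemma \ref{lmExMonH} fails, and one must instead work in a coordinate chart supporting taming data and use the disjointness result of Lemma \ref{lmDisjointSupport} together with the robustness clause in the definition of RLD to contract the space in a neighbourhood-by-neighbourhood manner. This is essentially the parametric version of the existence theorems already developed in \cite{Groman2015} and \cite{GromanVarolgunes2021}, and I expect the argument to go through but to require the bulk of the technical work.
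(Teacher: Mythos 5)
Your architecture differs from the paper's, and the difference matters. You try to replicate the three-step fibration argument of Proposition \ref{lmFrgtHtpyEq}: show the forgetful map on $0$-cubes with smooth underlying curve is a topological homotopy equivalence, reduce all dissipative cubes to smooth singular cubes of that space, and assemble via a diagram chase. The paper instead constructs a section $\mathfrak{G}:\tilde{\cD}^{sm}_{\bullet}\to\cF^d_{\bullet}(\vec{F},F^0)$ of the forgetful map $\fF'$ and then builds a cubical homotopy from $\id$ to $\mathfrak{G}\circ\fF'$ by induction on cube dimension, using a partition-of-unity lemma that interpolates two dissipative families supported on disjoint taming loci. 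The two routes are not cosmetically different: the paper's approach never needs to topologize the space of dissipative Floer data nor to prove contractibility of its fibres as a topological space, and that is precisely the point where your proposal has a genuine gap.

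The gap is in your first step. You correctly note that dissipativity is not convex, so the linear-interpolation argument from Lemma \ref{lmExMonH} fails to produce a contraction of the fibre of $\cF^{d,sm}_0(\vec{F},F^0)\to f\cM_{0,k+1}$. Your proposed remedy --- ``contract the space in a neighbourhood-by-neighbourhood manner'' using Lemma \ref{lmDisjointSupport} and the openness of dissipativity, arriving at ``contractibility at least up to quasi-isometry'' --- is not an argument: you would need to produce an actual contracting homotopy of an infinite-dimensional space, and it is unclear that one exists. What Lemma \ref{lmDisjointSupport} and openness actually buy is the ability to interpolate \emph{two given} dissipative data over a parameter interval by modifying the partition-of-unity function $g$ on $M\times[0,1]^n$ so that the combination is dissipative. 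That is a parametrised existence statement over a cube, which is exactly what the cube-by-cube inductive construction of the homotopy $\eta$ requires, and it is strictly weaker than contractibility of the topological fibre. The paper's proof is structured around precisely this distinction: the homotopy $\eta$ is defined for each non-degenerate $n$-cube, using the previously defined $\eta$ on faces and the interpolation lemma on a collar, and then a smooth approximation inside the collar. Reconstructing this requires abandoning your step 1 in favour of the section-plus-inductive-homotopy strategy.

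You also spend unnecessary effort on the analog of Lemma \ref{lmsmincHoEq}: Definition \ref{dfDisptvCube} defines a general cube $\fd$ to be dissipative if and only if its image under the retraction of Lemma \ref{lmsmincHoEq} is dissipative, so the deformation retraction from dissipative cubes to smooth dissipative cubes is a tautology, not something to re-derive by checking that gluing preserves taming data. Your observation that gluing preserves dissipativity is still used elsewhere (for multicomposition), but it is not the input to the retraction step.
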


\begin{rem}
The last proposition is all we need if we are willing to use virtual techniques. If we rather want to use regular data we need to show in addition that the  inclusion of $\cF^{d,reg}_{\bullet}:=\cF^d_{\bullet}\cap \cF^{reg}$ into $\cF^d_{\bullet}$ is a homotopy equivalence. The proof is the same as that of Proposition \ref{prop:regular_moduli_for_monotone}. The only thing to note is that dissipativity is an open condition on multimorphisms. In particular,  in an arbitrarily small neighborhood of a dissipative cube one can find a regular dissipative one. 
\end{rem}

By definition, the set of all dissipative cubes deformation retracts to the set of smooth dissipative cubes, so we only need to prove:
\begin{prop}\label{prpDissSmHotpyEquiv}
The restriction of the forgetful map $\fF$ to the set of smooth dissipative cubes is a homotopy equivalence.
\end{prop}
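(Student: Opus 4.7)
The plan is to adapt the proof of Proposition \ref{lmFrgtHtpyEq}, using its three-step structure (contractibility of fibers on smooth stratum, smooth approximation, density of smooth cubes). Since the statement already restricts to smooth dissipative cubes, the role played by Lemma \ref{lmsmincHoEq} in the original argument is moot here; what I need is a dissipative analog of Lemma \ref{lmForSmHoEq}. Concretely, writing $\cF^{d,sm}_0(\vec{F},F^0)$ for the space of dissipative $0$-dimensional multimorphisms with smooth underlying curve (topologized as a subspace of $\cF^{sm}_0(\vec{F},F^0)$), I would first show that
\begin{equation}
\fF \co \cF^{d,sm}_0(\vec{F},F^0) \to f\cM_{0,k+1}
\end{equation}
is a Serre fibration with contractible fibers, and then deduce the proposition via the same diagram chase with smooth singular cubes used at the end of Section \ref{sec:hamilt-index-categ}.

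For the fiber analysis, I would factor $\fF$ as in Equation \eqref{eq:factor-forgetful-map}, but now with almost complex structures included: successively forget $(H_\Sigma,J_\Sigma)$, then $\alpha_\Sigma$, then the cylindrical ends. Forgetting the cylindrical ends has contractible fibers by the argument of Lemma \ref{lmForSmHoEq}, which does not interact with dissipativity. Forgetting $\alpha_\Sigma$ produces fibers that are convex (the residue and monotonicity conditions are both convex), and since dissipativity of the Gromov metric depends on $\alpha_\Sigma$ only via the open-ended criterion of Lemma \ref{lmRelDissCriterion}, the dissipative subspace inherits this convexity up to a straightforward rescaling. The crucial step is forgetting $(H_\Sigma,J_\Sigma)$: I would prove non-emptiness by a direct dissipative refinement of Lemma \ref{lmExMonH}, constructing $H_\Sigma$ as the piecewise interpolation used there, with the interior value $H$ chosen to be uniformly Lipschitz with respect to an $i$-bounded $J$ whose taming data at the punctures comes from the dissipative inputs and outputs (Lemma \ref{lmDisjointSupport} lets us assume these supports are disjoint, so they can be glued together with taming data on a bulk region of uniform geometry). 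Contractibility of the fiber then follows by deformation retracting any dissipative datum to the canonical one just constructed, using the openness of dissipativity and the fact that the straight-line homotopy between two uniformly Lipschitz Hamiltonians on a common taming region remains uniformly Lipschitz there (the almost complex structure part is handled by a separate retraction through $i$-bounded $J$'s, which is contractible on its own).

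To upgrade fiberwise contractibility to the statement for cubical sets, I would run the same smooth-approximation square as in the proof of Proposition \ref{lmFrgtHtpyEq}, namely
\begin{equation}
  \begin{tikzcd}
    \Box^{}_{\bullet}(\cF^{d,sm}_0(\vec{F},F^0))\ar[d] & \Box^{sm}_{\bullet}(\cF^{d,sm}_0(\vec{F},F^0))\ar[l]\ar[r]\ar[d] & \cF^{d,sm}_\bullet(\vec{F},F^0)\ar[d] \\
    \Box^{}_{\bullet}(f\cM_{0,k+1}) & \Box^{sm}_{\bullet}(f\cM_{0,k+1})\ar[l]\ar[r] & f\Mbar^{\bR}_0(k)_\bullet,
  \end{tikzcd}
\end{equation}
where the left vertical map is a homotopy equivalence by the fibration result above, the two leftmost horizontal maps are homotopy equivalences by Eilenberg's smooth approximation, and the two rightmost horizontal maps are homotopy equivalences because $f\cM_{0,k+1}$ is the interior of the manifold-with-corners $f\Mbar^{\bR}_0(k)$ and because a smooth dissipative cube in $\cF^{d,sm}_\bullet$ is, by Definition \ref{dfDisptvCube}, precisely a smooth cube into $\cF^{sm}_0$ satisfying an open dissipativity condition whose smooth-approximation inclusion is standard.

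The hard part will be the non-emptiness and contractibility of the $(H_\Sigma,J_\Sigma)$-fiber in the dissipative setting; more precisely, producing taming data on the whole $\Sigma\times M$ that agree with the input/output taming data on the cylindrical ends. Lemma \ref{lmDisjointSupport} reduces this to a local extension problem, but one must then verify that the series condition $\sum 1/(C_i a_i)^2=\infty$ from Definition \ref{dfIntBounded} is preserved under the extension, rather than merely the boundedness along each taming level. All other steps in the plan are straightforward adaptations of the corresponding steps in Section \ref{sec:hamilt-index-categ}.
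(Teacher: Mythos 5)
Your strategy does not match the paper's, and it contains a genuine gap at precisely the step you flag as the hard part: the contractibility of the $(H_\Sigma,J_\Sigma)$-fiber. The problem is that dissipativity is \emph{not} a convex condition on the pair $(H_\Sigma,J_\Sigma)$, and there is no ``common taming region'' to appeal to. Given two dissipative data $\fd_0=(H_0,J_0)$ and $\fd_1=(H_1,J_1)$ over the same framed surface and $1$-form, the taming data for the two Gromov metrics live over essentially arbitrary (and in general disjoint) subsets $V_0, V_1\subset M$; a straight-line homotopy $(1-t)H_0 + t H_1$ (together with any path of $J$'s) need not be Lipschitz with respect to the $t$-dependent taming support, and the intermediate Gromov metrics need not be intermittently bounded. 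Your remark that Lemma~\ref{lmDisjointSupport} lets the supports be taken disjoint is not a fix --- it is exactly what makes the convexity argument collapse. Similarly, ``a separate retraction through $i$-bounded $J$'s'' is not available off the shelf: the usual contraction of the space of compatible almost complex structures has no reason to preserve intermittent boundedness, let alone boundedness \emph{rel} $\pi$ in families, and in any case the dissipativity condition couples $H$ and $J$ through the Gromov metric, so they cannot be forgotten independently.

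The paper avoids this entirely by a different route. Rather than trying to show the forgetful map is a fibration with contractible fibers, it (i) constructs an explicit section $\mathfrak{G}\co\tilde{\cD}^{sm}_{\bullet}\to\cF^d_{\bullet}(\vec{F},F^0)$ with $\fF'\circ\mathfrak{G}=\id$, using one fixed datum $(\tilde H,\tilde J)$ and a $M$-dependent cut-off to match the boundary data, and then (ii) builds a cubical homotopy $\eta\co\cF^d_{\bullet}\to P\cF^d_{\bullet}$ from $\id$ to $\mathfrak{G}\circ\fF'$ by induction on the dimension of the cube. The engine of this induction is the patching lemma, whose crucial feature is that the interpolating function is a partition of unity $\{g,1-g\}$ on $[0,1]^n\times M$, i.e., \emph{depending on the point of $M$}, subordinate to the disjoint taming regions $V_0,V_1$; this is what lets one glue two dissipative families into a single dissipative family without ever needing the dissipativity of the straight-line (in $H$ and $J$) interpolation. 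That position-dependent cut-off is the key idea missing from your argument. The openness of dissipativity is used in the paper only to smooth a piecewise-defined extension near a cube's boundary, not to produce the homotopy. If you want to repair your approach, you would need to replace ``straight-line homotopy on a common taming region'' by the paper's $\chi$-interpolation on $M$, at which point you are essentially reproducing the patching lemma; proving that the resulting fibers are contractible in a way that lets you upgrade to a Serre fibration would still take additional work compared to the direct cube-by-cube induction.
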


We will prove this result at the end of this Appendix, after some preliminary results. Denote by $\tilde{\cD}^{sm}_{\bullet}$ the cubical set of  curves with $n$ inputs and one output, equipped with a $1$-form $\alpha$ and whose underlying curve is smooth. The forgetful map $\tilde{\cD}^{sm}_{\bullet}\to f\Mbar^{\bR}_{0}(k)_{\bullet}$ is a homotopy equivalence as was shown during the proof of Lemma \ref{lmForSmHoEq}. Thus it remains to show that the forgetful map $\fF':\cF^d_{\bullet}(\vec{F},F)\to \tilde{\cD}^{sm}_{\bullet}$ is a homotopy equivalence. The first order of business is to construct a homotopy inverse. Unlike before, we work at the outset with cubical sets since the dissipative cubes are not the (smooth) singular cubes of a topological space.

\begin{lemma}
There is a morphism $\mathfrak{G}: \tilde{\cD}^{sm}_{\bullet}\to \cF^d_{\bullet}(\vec{F},F^0)$  such that $\fF'\circ\mathfrak{G}=\id$. 
\end{lemma}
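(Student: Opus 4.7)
The plan is to extend, in a canonical way depending only on the cube in $\tilde{\cD}^{sm}_{\bullet}$ and on the fixed dissipative data on the inputs and the output, the Hamiltonian and almost complex structure from the cylindrical ends to the interior of $\Sigma$. The essential input will be a fixed background dissipative datum $(H^{bg},J^{bg})$ on $M$ provided by the geometric boundedness hypothesis, which we shall splice into each curve away from the cylindrical ends.

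First I would construct $\mathfrak{G}$ on a $0$-cube, namely a smooth framed curve $(\Sigma,\alpha)$ with prescribed weights. Extend each cylindrical end slightly into $\Sigma$, as in the proof of Lemma \ref{lmExMonH}, to a strictly larger embedded half-cylinder $C_p$. On $C_p$ use the Floer datum $(w_p H^{e_p}, J^{e_p})$ pulled back via the end. On the complement $\Sigma\setminus \bigcup_p C_p$, use the fixed datum $(H^{bg}, J^{bg})$, where $H^{bg}$ is chosen as in Lemma \ref{lmExMonH} so that $H^{bg}$ strictly separates $w_p\max_t H^{e_p}_t$ from the constraint dictated by $H^0>\vec{H}$. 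On the interpolation annuli, linearly interpolate $H$ radially between these values (this preserves monotonicity) and interpolate $J$ along a fixed path of compatible almost complex structures from $J^{e_p}$ to $J^{bg}$. The resulting datum is a dissipative pre-multimorphism: the open cover consists of the (extended) ends together with the bulk region. On each end, $J^{e_p}$ is i-bounded, the Lipschitz property for the Hamiltonian on the end is inherited from dissipativity of $F^{e_p}$, and Lemma \ref{lmRelDissCriterion} applies to show that the associated Gromov metric is intermittently bounded rel the projection. The same argument applies on the bulk region, using that $(H^{bg}, J^{bg})$ is dissipative on $M$ and that $\alpha$ and $H$ are domain-independent there. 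On the interpolation annuli, compactness of the annulus together with uniform Lipschitz properties of $H$ with respect to the quasi-isometrically equivalent metrics gives intermittent boundedness as well.

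Next I would upgrade this to cubes of arbitrary dimension. On a smooth $n$-cube $\fd$ of $\tilde{\cD}^{sm}_{\bullet}$, the data $(\Sigma_\sigma,\alpha_\sigma)$ varies smoothly with $\sigma$, as do the weights. Performing the same pointwise construction produces a smoothly varying family of Floer data, which is precisely an $n$-cube in $\cF^d_\bullet$ modelled on a single vertex tree (no breaking). Since the partition in Definition \ref{dfDisptvCube} may be taken to be the trivial one $N=1$, with the open cover of $\Sigma$ chosen as above and the taming data varying continuously (and uniformly) in $\sigma$, the resulting family cube is dissipative. By construction it is trivially a smooth cube, and $\fF'\circ\mathfrak{G}=\id$ because we have not altered the underlying curve or the $1$-form.

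Finally, I would verify that $\mathfrak{G}$ is a morphism of symmetric cubical sets. Compatibility with face maps is immediate because the construction is local in the parameter $\sigma$; compatibility with degeneracy maps holds because if $\fd$ is independent of one coordinate then so are all the choices made above; and compatibility with transposition maps is immediate since the construction does not refer to the ordering of the cube coordinates. The main obstacle in carrying this out rigorously is to check that all choices (the interpolation profile, the path of almost complex structures from $J^{e_p}$ to $J^{bg}$, and the cut-off functions on the annuli) can be made universally and canonically as functions of $(\Sigma,\alpha)$ alone, so that the construction is strictly, not just up to homotopy, cubical; once those choices are fixed a priori (independent of $\sigma$ and of the cube), the verification is routine.
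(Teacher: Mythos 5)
Your overall plan is right — pick a domain‑independent datum in the bulk satisfying the monotonicity pinching $\min_t H^0_t > \tilde H > 2^{k-1}\max_t H^i_t$, keep the input/output data on the cylindrical ends, and interpolate on annuli — and this is also the skeleton of the paper's argument. But there is a genuine gap at the heart of the construction: the interpolation step on the annuli.

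You interpolate $H$ radially and interpolate $J$ ``along a fixed path of compatible almost complex structures from $J^{e_p}$ to $J^{bg}$,'' and then assert that compactness of the annulus plus Lipschitz properties give intermittent boundedness on the interpolation region. This does not work, and it is precisely the difficulty the definition of dissipativity is engineered to address. The annulus is compact, but $M$ is not, and intermittent boundedness of the Gromov metric is a global condition on $M$. If $J^{e_p}$ and $\tilde J$ are each i-bounded, a convex combination (or Gromov-trick path) of them at an intermediate $s$ has no reason to be i-bounded: the hypersurfaces $\partial K_i$ on which the injectivity-radius and curvature bounds hold, together with the taming regions $V$, can be entirely different for the two structures. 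You cannot simply invoke quasi-isometry here — the intermediate almost complex structure is a new object with no taming data supplied.

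What the paper does instead is make the interpolation profile depend on the point $x\in M$, not just on the coordinate $s$ on the annulus. Using Lemma \ref{lmDisjointSupport}, one chooses disjoint sets $V_1, V_2\subset M$ supporting taming data for $\tilde J$ and $J^{e_p}$ respectively, and builds a function $g(x,s)$ (the formula $g(x,s)=f(1-s)\bigl(f(s)\chi(x)-1\bigr)+1$ with a cut-off $\chi$ interpolating between $V_1$ and $V_2$) which is monotone in $s$, equals $0$ near $s=0$ and $1$ near $s=1$, \emph{and} is constantly $0$ (resp.\ $1$) over $V_2$ (resp.\ $V_1$) for a wide range of $s$. The upshot is that on $V_2$ the Floer datum equals $(H^{e_p},J^{e_p})$ throughout most of the strip, and on $V_1$ it equals $(\tilde H,\tilde J)$ throughout most of the strip, so the existing taming data for each endpoint apply verbatim; the genuine interpolation only happens over $M\setminus(V_1\cup V_2)$, away from the taming hypersurfaces, where no estimates are required. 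Monotonicity $\partial_s H\geq 0$ survives because $g$ is monotone in $s$ for each fixed $x$. This $M$-dependent cut-off is the missing ingredient in your argument; without it, the intermittent boundedness claim on the interpolation region is unjustified, and the constructed cube is not known to lie in $\cF^d_\bullet$.

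The remainder of your proposal — upgrading to $n$-cubes and verifying compatibility with face, degeneracy, and transposition maps — is fine once the $0$-cube construction is fixed, and matches the paper's remark that $\tilde H, \tilde J, \chi, f, g$ can all be chosen once and for all depending only on inputs and output, so that the dependence on $\sigma$ enters only through the smooth family $(\Sigma_\sigma,\alpha_\sigma)$.
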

\begin{proof}

As in the proof of Lemma \ref{lmExMonH}, we can find a  time independent function $\tilde{H}$ satisfying
\begin{equation}
\min_{t\in \bRZ}H^0_t> \tilde{H}>2^{k-1} \max_{t\in\bRZ} H^i_t
\end{equation}  
which by the definition of dissipativity can  in addition be taken to be Lipschitz with respect to $\tilde{J}=J_t$ for some $t\in S^1$ on an open set $V$ supporting taming data for $\tilde{J}$. 

Fix a cube $\sigma\mapsto (\Sigma_{\sigma},\alpha_{\sigma})$ in $\tilde{\cD}^{sm}$ where for each $\sigma$ the $1$-form $\alpha_{\sigma}$ has weights $w_{i,\sigma}\geq 2^{1-k}$. We first 
observe that by Lemma \ref{lmRelDissCriterion}, for each $\sigma$ the datum $(\tilde{H},\alpha_{\sigma},\tilde{J})$ determines an intermittently bounded Gromov metric on $\Sigma_{\sigma}\times M$. 

It remains to interpolate between $\tilde{F}=(\tilde{H},\tilde{J})$ and the $F^i$ near the ends for each $\sigma$ in a way which maintains the monotonicity and is smooth in $\sigma$. This is done in a marginally different setting in Lemma 7.6 of \cite{Groman2015}. In the interest of self containment and to prepare the ground for the proof of the main proposition we spell out the proof. 

For each end, we will carry out the interpolation within the cylindrical end. That is, at the $i$th input we need to piece together the datum $\tilde{F}=(w^i\tilde{H}dt,\tilde{J})$ with the datum $F^i=(H^idt, J^i)$ in a dissipative manner while maintaining monotonicity. It suffices to produce  a Floer datum $F_{s,t}=(H_{s,t},J_{s,t})$ on  $[0,1]\times \bRZ\times M $  such that the following conditions are satisfied
\begin{itemize}
\item $\partial_s F_{s,t}$ vanishes identically near the boundary of $[0,1]\times \bRZ\times M $, and thus extends to a Floer datum on $\bR\times \bRZ\times M $ interpolating between $(H^i,J^i)$ and $(w^i\tilde{H},J)$.
\item Denoting by $\pi: \bR\times \bRZ\times M $ the projection to $\bR\times \bRZ$, we have that the restriction of the Gromov almost complex structure $J_{H}$ to each of $\pi^{-1}((1/3,\infty)\times \bRZ)$ and to $\pi^{-1}((-\infty,2/3)\times \bRZ)$ is intermittently bounded relative to $\pi$. 
\item $\partial_sH_s\geq 0$ everywhere. 
\end{itemize}
Other than the last condition, the construction would be the same as in the proof of \cite[Theorem 4.3]{Groman2015}. We show that the monotonicity requirement  does not affect the proof. Fix two disjoint open sets $V_1$ and $V_2$ of $M$ such that there are taming data for $\tilde{J}$ and $J^i$ which are respectively supported in $[0,1]\times \bRZ\times V_j$ for $j=1,2$. We may assume that each of the $V_j$ is a disjoint union of pre-compact sets. Let $\chi: M\to[0,1]$ be a function which equals $0$ on $V_0$ and $1$ on $V_1$. Let $f:[0,1]\to[0,1]$ be a monotone function which is identically $0$ near $0$ and identically $1$ on $[1/3,1]$. Let $g:M\times [0,1]\to[0,1]$ be defined by
\begin{equation}
g(x,s)= f(1-s)(f(s)\chi(x)-1)+1
\end{equation}
Then $g$ is
\begin{itemize}
\item monotone increasing in $s$, 
\item identically $0$ for all $x$ when $s$ is near $0$,
\item identically $1$ for all $x$ when $s$ is near $1$,
\item identically $0$ on $[0,2/3]\times V_1$, and,
\item identically $1$ on $[1/3,1]\times V_2$. 
\end{itemize}
Take $H_{s,t}=g(x,s)w^i\tilde{H}_t+(1-g(x,s))H^i_t$. Then $H_{s,t}$ is also monotone increasing in $s$. Moreover, $H$ is fixed and equal to $H^i$ on $[0,2/3]\times V_1$ and to $w^i\tilde{H}$ on $[1/3,1]\times V_2$. A similar interpolation can be defined for $\tilde{J}$. To describe this by a similar formula we may use Gromov's deformation retraction from the space of metrics to the space of almost complex structures. 

All the functions $\tilde{H},\tilde{J},\chi,f,g$ can be fixed once and for all to depend only on the inputs and output. Applying the above procedure to any cube in  $\tilde{\cD}^{sm}_{\bullet}$ produces a cube in $\cF^d_{\bullet}(\vec{F},F^0)$. Note that the family produced is smooth in $\sigma$. Indeed the dependence in $\sigma$ is implicit in the choice of the Riemann surfaces $\Sigma_\sigma$ with their cylindrical end, which are assumed to be smooth in $\sigma$. The procedure clearly commutes with face, degeneracy and symmetry maps. This concludes the proof.

\end{proof}

\begin{lemma}
Let $h:[0,1]^n\to [0,1]$ be a smooth surjective function. Suppose that we are given dissipative families of unbroken multimorphisms $\fd_0,\fd_1$ in $\cF(\vec{F},F^0)$ parameterized respectively by $U_0:=h^{-1}([0,1))$ and $U_1:=h^{-1}(0,1])$, and suppose in addition that for each $\sigma$ in the intersection $U_0\cap U_1$ the projection under $\fF'$ of $\fd_0(\sigma)$ agrees with that of $\fd_1(\sigma)$. Then here exists a partition of unity $\{g,1-g\}$ on $[0,1]^n\times M$ subordinate to the cover $\{U_0\times M,U_1\times M\}$  such that $g\fd_1+(1-g)\fd_2$ is dissipative. 
\end{lemma}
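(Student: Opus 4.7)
\medskip
\noindent\textbf{Proof Plan.} The plan is to adapt the interpolation procedure from the proof of the preceding lemma (which constructed $\mathfrak{G}$), now using $h(\sigma)$ as the interpolation parameter in place of the cylindrical-end coordinate $s$, with care taken to handle the $\sigma$-dependence of the supports of taming data.

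First I would intersect the partitions of the cube $[0,1]^n$ arising from the dissipativity data of $\fd_0$ and $\fd_1$ to obtain a common refined partition $\{C_\alpha\}$ of $U_0 \cap U_1$. On each cell $C_\alpha$ the almost complex structures of $\fd_0$ and $\fd_1$ come equipped with fixed taming data supported on subsets $V_0^\alpha, V_1^\alpha \subset M$, which by Lemma \ref{lmDisjointSupport} may be taken to be disjoint. Choose smooth cutoffs $\chi_\alpha:M\to[0,1]$ vanishing on $V_0^\alpha$ and equal to one on $V_1^\alpha$, together with a fixed smooth function $f:[0,1]\to[0,1]$ identically zero near $0$ and identically one on $[1/3,1]$. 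On each $C_\alpha$ define
\begin{equation*}
g_\alpha(\sigma,x) := f(1 - h(\sigma))\bigl(f(h(\sigma))\chi_\alpha(x) - 1\bigr) + 1.
\end{equation*}
The same elementary computations as in the preceding lemma show that $g_\alpha$ vanishes near $h(\sigma)=0$, equals one near $h(\sigma)=1$, vanishes on $V_0^\alpha$ whenever $h(\sigma)\leq 2/3$, and equals one on $V_1^\alpha$ whenever $h(\sigma)\geq 1/3$. Combining the $g_\alpha$ via a partition of unity on $[0,1]^n$ subordinate to a slight thickening of $\{C_\alpha\}$, and extending by zero over $U_0\setminus U_1$ and by one over $U_1\setminus U_0$, produces a global smooth function $g:[0,1]^n\times M\to[0,1]$ with $g$ supported in $U_1\times M$ and $1-g$ supported in $U_0\times M$.

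The combined Floer datum $\fd_g := g\fd_1 + (1-g)\fd_0$ is defined by taking the convex combination on the Hamiltonian side and, via Gromov's deformation retraction from the space of compatible metrics to the space of compatible almost complex structures, on the almost complex structure side. Since $\fd_0$ and $\fd_1$ agree on $U_0\cap U_1$ under $\fF'$, the underlying family of framed Riemann surfaces and $1$-forms is unchanged. The monotonicity inequality $d_z H_g\wedge \alpha\geq 0$ follows immediately from the convexity of this condition together with the fact that $g$ is independent of the Riemann surface coordinate $z$, so that $d_z H_g = g\, d_zH_1 + (1-g)\,d_zH_0$ is a non-negative combination of non-negative $2$-forms after wedging with $\alpha$.

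The main obstacle, and the last step, is verifying the dissipativity condition for $\fd_g$. On each refined cell the construction is designed precisely so that the combined almost complex structure agrees with that of $\fd_0$ on $V_0^\alpha$ (where $g=0$) and with that of $\fd_1$ on $V_1^\alpha$ (where $g=1$). By interleaving the exhaustions of $M$ associated to these two sets of taming data, using subsequences with boundaries alternately lying in $V_0^\alpha$ and $V_1^\alpha$, the series $\sum 1/a_i^2$ still diverges since both subseries do, yielding taming data for the Gromov metric of $\fd_g$ on each cell; Lemma \ref{lmRelDissCriterion}, together with the Lipschitz property of the Hamiltonian convex combination inherited from the Lipschitz assumptions in the definition of dissipativity, then upgrades this to dissipativity relative to the projection. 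Robust loopwise dissipativity is handled exactly as in \cite[Theorem 6.10]{Groman2015}, since on the regions where $g$ is not locally constant the Hamiltonian is compactly supported in the fiber direction and the dissipation estimates from the two original data combine.
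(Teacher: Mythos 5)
Your proof deviates from the paper's in one essential structural choice, and the deviation introduces a genuine gap. The paper does not work cell-by-cell: it invokes Lemma~\ref{lmDisjointSupport} to produce a \emph{single} pair of disjoint open sets $V_0,V_1\subset M$ supporting taming data for $\fd_0$ and $\fd_1$, respectively, uniformly over all of $U_0$ and $U_1$, and then uses one cutoff $\chi$ (vanishing on $V_0$, equal to $1$ on $V_1$) to define the global interpolant $g$ directly. By contrast you refine the cell decompositions, produce separate $V_0^\alpha, V_1^\alpha$ and $\chi_\alpha$ on each cell $C_\alpha$, define $g_\alpha$, and then combine the $g_\alpha$ by a partition of unity $\{\psi_\alpha\}$ on the cube. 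This last combining step is where the argument breaks: on a transition region where $\psi_\alpha,\psi_\beta>0$, the function $g=\sum\psi_\alpha g_\alpha$ vanishes only where all relevant $g_\alpha$ vanish, i.e.\ only on $\bigcap_\alpha V_0^\alpha$, and similarly equals $1$ only on $\bigcap_\alpha V_1^\alpha$. Since you never arrange the $V_i^\alpha$ to coincide across cells (and indeed the whole point of passing to cells was to avoid having to), there is no reason these intersections still support taming data, and the key property that $\fd_g$ agrees with $\fd_0$ on a taming-data-supporting set and with $\fd_1$ on a disjoint one is lost precisely on the overlaps. The fix is exactly what the paper does: apply Lemma~\ref{lmDisjointSupport} once globally (using finiteness of the cell decomposition) to get a single pair $V_0,V_1$ \emph{before} defining any cutoff, and then a single $\chi$ and a single formula for $g$ suffice.

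Two smaller remarks. First, your interleaving-of-exhaustions argument is unnecessary once $V_0,V_1$ are chosen globally: near the boundaries $\partial K_i$ of the exhaustion coming from the $\fd_0$ taming data (which lie in $V_0$, where $g\equiv 0$ for $h(\sigma)\le 2/3$) the blended datum $\fd_g$ coincides with $\fd_0$, so the taming data for $\fd_0$ already serves for $\fd_g$ there, and symmetrically near $h(\sigma)\ge 1/3$ using $V_1$; no alternating subsequence is needed. Second, the claim that ``on the regions where $g$ is not locally constant the Hamiltonian is compactly supported in the fiber direction'' does not follow from the setup — $\chi$ is not required to have compact support in $M$, only to be $0$ on $V_0$ and $1$ on $V_1$ — and this is not the mechanism the paper invokes; the paper instead defers the dissipativity check to the construction in the preceding lemma (and ultimately to \cite[Theorem~4.3]{Groman2015}).
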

\begin{proof}
Let $V_0$ and $V_1$ be disjoint be open subsets of $M$, respectively supporting taming data for $\fd_0$ and $\fd_1$. This means that $V_i$ contains the support of some local taming data for each $\sigma\in U_i$.  Let $\chi:M\to[0,1]$ be  a function which is identically $0$ on $V_0$ and identically  $1$ on $V_1$. Let $f:[0,1]\to[0,1]$ be a monotone function which is identically $0$ near $0$ and identically $1$ on $[1/3,1]$. Define a function $g:M\times [0,1]^n\to[0,1]$ by
\begin{equation}
g(x,\sigma)= f(1-h(\sigma))(f(h(\sigma))\chi(x)-1)+1.
\end{equation}
We point out that since the function $g$ is constant on $\Sigma$ for fixed $(x,\sigma)$,  $F^{\sigma}$ satisfies the monotonicity condition \eqref{eq:monotonicity_multimorphism} for each $\sigma$ so long as $F^1$ and $F^2$ do. As in the previous Lemma, $\{g,1-g\}$ is a partition of unity as desired.

\end{proof}

We have now arranged all the necessary pieces to prove our main result:

\begin{proof}[Proof of Proposition \ref{prpDissSmHotpyEquiv}]
It remains to construct a homotopy from the identity to $\mathfrak{G}\circ\fF'$. Such a homotopy involves constructing a map of cubical sets $\eta:\cF^d\to P\cF^d$ such that $d^-\circ\eta=\id$ and $d^+\circ\eta=\mathfrak{G}\circ\fF'$. 

We will proceed inductively. We first define $\eta$ on $0$-cubes. Let $\fd_0$ be any dissipative multimorphism and let $\fd_1= \mathfrak{G}\circ\fF'(\fd_1)$. Extend $\fd_0$ and $\fd_1$ trivially to $[0,1)$ and to $(0,1]$ and continue to denote this extension by $\fd_i$. Applying the previous Lemma we get a dissipative $1$-cube which is as required.

Inductively, suppose that we have defined $\eta$ on all $n-1$-cells so that it respects face, degeneracy and symmetry maps and that it is a homotopy preserving the fiber of $\fF'$. Given a non-degenerate dissipative $n$-cube $\fd$ we need to define $\eta(\fd)$ so that it agrees with $\eta$ as already defined on the faces and so that it preserves fibers. Consider the $n+1$-cube $[0,1]\times \Box^n$. We have dissipative data on its boundary defined on $[0,1]\times \partial \Box^n$ by applying $\eta$ to the boundary of $\fd_1$ and on $\{0\}\times\Box^n,\{1\}\times\Box^n$ respectively by $\fd$ and $\mathfrak{G}\circ\fF'(\fd)$. We extend this to $[0,1]^{n+1}\setminus [2\epsilon,1-2\epsilon]^{n+1}$ as follows. First we define the underlying curve and $1$-form in the unique way so that it is independent of the first coordinate. Then we fix a flow on $[0,1]^{n+1}\setminus [2\epsilon,1-2\epsilon]^{n+1}$   going from the inner to the outer boundary, for example, the Euler flow. We also fix a diffeomorphism between the family of Riemmann surfaces underlying $\fd$ and the trivial family in such a way that cylindrical ends go to cylindrical ends. We then extend the data so that it is constant along flow lines. The resulting family of multimorphisms is dissipative except that the dependence on the parameters of the cube is not smooth. To fix this, observe that dissipativity is an open condition and we can therefore replace our extension to $[0,1]^{n+1}\setminus [2\epsilon,1-2\epsilon]^{n+1}$ by an arbitrarily close one which is smooth. We also define a dissipative family on the cube $[\epsilon,1-\epsilon]^{n+1}$ by pulling back $\fd$ via the projection forgetting the first coordinate. We then interpolate the two families using the previous Lemma by taking $h$ to be a function such that $h\equiv 0$ on $[0,1]^{n+1}\setminus [\epsilon,1-\epsilon]^{n+1}$ and $h\equiv 1$ on $[2\epsilon,1-2\epsilon]^{n+1}$. It is clear that the inductive hypothesis is now satisfied for all $n+1$-cubes.

\end{proof}

\bibliographystyle{alpha}
\bibliography{large-bib}

\end{document}